\documentclass[11pt,leqno,oneside,letterpaper]{amsart}
\usepackage[letterpaper,left=1.75truein, top=1truein]{geometry}
\usepackage{amsmath,amsfonts,amssymb,amscd,amsthm,amsbsy,epsf,graphics}

\usepackage{color}

\usepackage[colorlinks,linkcolor=blue,citecolor=blue]{hyperref}
\usepackage{epsfig}
\usepackage{graphicx}

\textwidth=6.25in \textheight=9in
\addtolength{\evensidemargin}{-.75in}
\addtolength{\oddsidemargin}{-.75in}

\parindent=0pt
\parskip=8pt

\newtheorem{thm}{Theorem}[section]
\newtheorem{cor}[thm]{Corollary}
\newtheorem{lemma}[thm]{Lemma}
\newtheorem{prop}[thm]{Proposition}
\newtheorem{conj}[thm]{Conjecture}

\newtheorem{rem}[thm]{Remark}
\newtheorem{claim}[thm]{Claim}

\theoremstyle{definition}

\def\zed{{\mathbb Z}}
\def\D{{\mathcal D}}

\newtheoremstyle{cases}
  {12pt plus 6 pt}
  {2pt}
  {\bfseries}   
  {}
  {\bfseries}
  {.}
  {.5em}
  {}
\theoremstyle{cases}

\numberwithin{subcase}{case} \numberwithin{subsubcase}{subcase}
\numberwithin{equation}{subsection}
\usepackage{numbering}

\def\sfrac#1#2{\kern.1em\raise.5ex\hbox{$#1$}
    \kern-.1em/\kern-.05em\lower.25ex\hbox{$#2$}}
\def\R{{\mathcal R}}

\def\wD{\widetilde{D}}
\def\R{{\mathcal R}}
\def\T{{\mathcal T}}
\def\wT{\widetilde{\T}}
\def\tilt{\tilde t}

\def\que{{\mathbb Q}}
\def\zed{{\mathbb Z}}

\def\zed{{\mathbb Z}}

\def\sfrac#1#2{\kern.1em\raise.5ex\hbox{$#1$}
        \kern-.1em/\kern-.05em\lower.25ex\hbox{$#2$}}


\def\T{{\mathcal T}}

\def\zed{{\mathbb Z}}


\def\G{{\Gamma}}
 \def\d{{\delta}}
 
 \def\e{{\epsilon}}
 
 \def\L{{\Lambda}}

   \def\s{{\sigma}}
 
 \def\a{{\alpha}}
 \def\b{{\beta}}
 \def\p{{\partial}}

 \def\g{{\gamma}}
 \def\D{{\Delta}}

 \def\2{{\mathbb Z_2}}
 
 \def\t{{\tau}}

 \def\sl2{{SL(2,\mathbb C)}}
 
 \def\qed{{\hspace{2mm}{\small $\diamondsuit$}}}

 \def\pf{{\noindent{\bf Proof.\hspace{2mm}}}}

 \def\sl{{{\mbox{\tiny $\L$}}}}

\begin{document}

\title{Dehn fillings of knot manifolds containing essential once-punctured tori\footnotetext{2000 Mathematics Subject Classification. Primary 57M25, 57M50, 57M99}}

\author[Steven Boyer]{Steven Boyer}
\thanks{Steven Boyer was partially supported by NSERC grant RGPIN 9446-2008}
\address{D\'epartement de Math\'ematiques, Universit\'e du Qu\'ebec \`a Montr\'eal, 201 avenue du Pr\'esident-Kennedy, Montr\'eal, QC H2X 3Y7.}
\email{boyer.steven@uqam.ca}
\urladdr{http://www.cirget.uqam.ca/boyer/boyer.html}

\author{Cameron McA. Gordon}
\thanks{Cameron Gordon was partially supported by NSF grant DMS-0906276.}
\address{Department of Mathematics, University of Texas at Austin, 1 University Station, Austin, TX 78712, USA.}
\email{gordon@math.utexas.edu}
\urladdr{http://www.ma.utexas.edu/text/webpages/gordon.html}

\author{Xingru Zhang}
\address{Department of Mathematics, University at Buffalo, Buffalo, NY, 14214-3093, USA.}
\email{xinzhang@buffalo.edu}
\urladdr{http://www.math.buffalo.edu/~xinzhang}

\maketitle
\vspace{-.6cm}
\begin{center}
\today
\end{center}

\maketitle

\begin{abstract}
In this paper we study exceptional Dehn fillings on hyperbolic knot
manifolds which contain an essential once-punctured torus.
Let $M$ be such a knot manifold and let $\beta$ be the boundary slope
of such an essential once-punctured torus. We prove that if Dehn filling
$M$ with slope $\alpha$ produces a Seifert fibred manifold,
then $\D(\alpha,\beta)\leq 5$. Furthermore we classify the triples $(M; \alpha,\beta)$
when $\D(\alpha,\beta)\geq 4$. More precisely, when $\D(\alpha,\beta)=5$, then $M$ is
the (unique) manifold $Wh(-3/2)$ obtained by Dehn filling one boundary
component of the Whitehead link exterior with slope $-3/2$,  and $(\alpha, \beta)$ is
the pair of slopes $(-5, 0)$. Further, $\D(\alpha,\beta)=4$ if and only if $(M; \alpha,\beta)$ is the
triple $\displaystyle (Wh(\frac{-2n\pm1}{n}); -4, 0)$ for some integer $n$ with $|n|>1$. Combining this with known results,
we classify all hyperbolic knot manifolds $M$ and pairs of slopes $(\beta, \gamma)$ on $\partial M$ where $\beta$ is the boundary slope of an
essential once-punctured torus in $M$ and $\gamma$ is an exceptional filling slope of distance $4$ or more from $\beta$.
Refined results in the special case of hyperbolic
genus one knot exteriors in $S^3$ are also given.
\end{abstract}

\section{Introduction}
This is the second of four papers in which we investigate the following conjecture of the second named author (see \cite[Conjecture 3.4]{Go2}). Recall that a {\it hyperbolic knot manifold} is a compact, connected, orientable $3$-manifold with torus boundary whose interior admits a complete, finite volume hyperbolic structure.

\begin{conj}\label{conj} {\rm (C. McA. Gordon)} Suppose that $M$ is a hyperbolic knot
manifold and $\alpha, \beta$ are slopes on $\partial M$ such that $M(\alpha)$ is Seifert fibred and $M(\beta)$ toroidal. If $\Delta(\alpha, \beta) > 5$, then $M$ is the figure eight knot exterior.
\end{conj}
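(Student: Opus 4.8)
\medskip

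\noindent\emph{Strategy of proof.}
The plan is to reduce Conjecture~\ref{conj} to a statement about hyperbolic knot manifolds that contain an essential \emph{punctured} torus, and then to organise the argument according to the number of punctures. So suppose $M$ is a hyperbolic knot manifold with $M(\alpha)$ Seifert fibred and $M(\beta)$ toroidal, and suppose, for contradiction, that $\Delta(\alpha,\beta)\geq 6$. First I would fix an essential torus $\widehat T\subset M(\beta)$ meeting the core $K_\beta$ of the $\beta$-filling solid torus transversely in the least possible number $n$ of points. Since $M$ is hyperbolic --- hence irreducible, atoroidal and anannular --- we have $n\geq 1$, and standard incompressibility arguments (using the minimality of $n$ and the $\partial$-irreducibility of $M$) show that $T=\widehat T\cap M$ is an essential, $\partial$-incompressible $n$-punctured torus in $M$, all of whose boundary curves have slope $\beta$. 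Thus it is enough to prove: if a hyperbolic knot manifold $M$ contains an essential $n$-punctured torus of boundary slope $\beta$ and $M(\alpha)$ is Seifert fibred, then $\Delta(\alpha,\beta)\leq 5$ unless $M$ is the figure eight knot exterior.

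The case $n=1$ is precisely the subject of this paper: our main theorem gives $\Delta(\alpha,\beta)\leq 5$ with no exceptions in that case, equality holding only for $(M;\alpha,\beta)=(Wh(-3/2);-5,0)$, so no one-puncture configuration can violate Conjecture~\ref{conj}. Hence a counterexample would have to arise from an essential torus meeting the filling core at least twice, i.e.\ with $n\geq 2$, and I would treat those cases using the companion papers in this series, where one shows that $\Delta(\alpha,\beta)\leq 5$ except when $M$ is the figure eight exterior --- which is what makes the figure eight the unique conjectured exception. Combining the $n=1$ bound of this paper with the $n\geq 2$ analysis then gives Conjecture~\ref{conj}.

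The hard part will be controlling the Seifert fibred filling $M(\alpha)$, since ``Seifert fibred'' is a weak hypothesis: I would split it by topological type --- $M(\alpha)$ reducible, a lens space, a small Seifert fibred space over $S^{2}$ with three exceptional fibres, or a ``large'' Seifert fibred space. The reducible and lens space cases are governed by existing bounds. When $M(\alpha)$ is large it contains an essential torus, necessarily meeting $K_\alpha$, which yields an essential punctured torus in $M$; confronting this with $T$ through the Gordon-Wu and Culler-Gordon-Luecke-Shalen theory of labelled intersection graphs, and bounding the edges and faces of the resulting graph pair carried by $T$, produces the desired bound on $\Delta(\alpha,\beta)$. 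The genuinely new difficulty is the small Seifert fibred case, where no essential vertical torus is available: here one must supplement the combinatorics on $T$ with the Seifert structure and with analytic input --- for instance Culler-Shalen seminorms on the $PSL_{2}(\mathbb{C})$-character variety of $M$ and a close analysis of the curves they detect --- in order to bound $\Delta(\alpha,\beta)$, and then read off from the equality and near-equality cases the exceptional triples (the $Wh$-manifolds when $n=1$, the figure eight exterior when $n\geq 2$). I expect this last analysis --- the combinatorial and analytic bookkeeping for small Seifert fibred fillings --- to absorb the bulk of the work.
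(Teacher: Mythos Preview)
The statement you are addressing is a \emph{conjecture}, not a theorem: the paper does not prove it, and explicitly presents itself as the second in a series of four papers working toward it. So there is no ``paper's own proof'' to compare against; what can be compared is your proposed strategy against the programme the authors actually pursue.

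Your high-level organisation is broadly right and matches theirs. Theorem~\ref{reduction2} (proved in \S\ref{sec: reduction2}) disposes of the case where $M(\alpha)$ is a \emph{toroidal} Seifert manifold, reducing to irreducible atoroidal small Seifert fillings (Corollary~\ref{smallseifert}); the once-punctured case $n=1$ is exactly what this paper handles (Theorem~\ref{once-punctured}); and the companion papers \cite{BGZ2}, \cite{BGZ3} treat the remaining configurations. One correction to your scheme: the organising principle is not simply the minimal number $n$ of punctures, but whether $M$ contains an essential punctured torus of slope $\beta$ that is a fibre, a semi-fibre, or has at most two boundary components; \cite{BGZ2} already gives $\Delta\leq 5$ when no such surface exists, so the remaining work is the $n\leq 2$ analysis, not $n\geq 2$.

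Where your outline diverges substantially is in the \emph{method} for the small Seifert case. You propose Culler--Shalen seminorms and $PSL_2(\mathbb C)$-character-variety techniques. The paper does something quite different. Because $M_F$ (the manifold cut along $F$) is forced to be a genus~$2$ handlebody (Proposition~\ref{reduction}(c)), the hyperelliptic involution of $\partial M_F$ extends over $M_F$ and glues to the obvious involution on a neighbourhood of $F$ to give an orientation-preserving involution $\tau$ on $M$ whose quotient is a solid torus $V$ (\S\ref{involution 1}). This $\tau$ extends to $\tau_\alpha$ on $M(\alpha)$, with quotient a lens space $L(\bar p,\bar q)$, $\bar p\geq 2$ when $\Delta\geq 4$. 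One then (i)~classifies the possible branch sets of involutions on small Seifert manifolds with lens-space quotient (Lemmas~\ref{cyclic case}--\ref{--quotient}), (ii)~constrains the branch link $L\subset V$ via the Seifert structure on $M(\beta)_{\widehat F}$ (Lemma~\ref{types}, Figure~\ref{fig3}), and (iii)~derives contradictions by comparing these two descriptions --- sometimes via cyclic covers (Lemma~\ref{cyclic-cover}), sometimes via intersection-graph combinatorics (\S\ref{m=1 Seifert}, \S\ref{prism-section}), sometimes by showing two candidate links in $S^3$ have non-homeomorphic double branched covers (\S\ref{dist7}, \S\ref{delta = 5}). Character varieties play no role. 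If you intend to carry out the small Seifert case yourself, the involution/branched-cover route is what actually works here; seminorm methods, while powerful for cyclic and finite fillings, do not by themselves deliver the sharp bound $\Delta\leq 5$ against a toroidal slope, nor the classification of the extremal triples.
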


Our first result reduces the verification of the conjecture to the case where the Seifert filling is atoroidal.

\begin{thm}\label{reduction2}
Suppose that $M$ is a hyperbolic knot
manifold and $\alpha, \beta$ are slopes on $\partial M$ such that
$M(\alpha)$ is a toroidal Seifert fibred manifold and $M(\beta)$
is toroidal. Then $\Delta (\alpha,\beta) \leq 4$.
Furthermore, if $\Delta (\alpha,\beta) =4 $ then $(M;\alpha,\beta)\cong
(N(-\frac12,-\frac12); -4,0)$ where $N$ is the exterior of the 3-chain link \cite{MP}.
\end{thm}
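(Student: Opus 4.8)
The plan is to convert the hypotheses into a configuration of two essential punctured tori in $M$, to use the Seifert structure of $M(\alpha)$ to constrain one of them severely, and then to run the standard intersection--graph analysis.

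\emph{Step 1: producing essential punctured tori.} Let $K_\alpha$ be the core of the filling solid torus in $M(\alpha)$. Since $M(\alpha)$ is toroidal its Seifert fibration is unique up to isotopy, and $K_\alpha$ cannot be isotopic to a fibre: otherwise $M$, being $M(\alpha)$ with an open fibred neighbourhood of $K_\alpha$ removed, would be Seifert fibred, contradicting hyperbolicity. Choose an essential torus $\hat T_\alpha \subset M(\alpha)$, isotoped to be vertical or horizontal in the fibration and to meet $K_\alpha$ transversely in the minimal number of points. Any compressing disc for $\hat T_\alpha$ in $M$ is also one in $M(\alpha)$, so $\hat T_\alpha \cap M$ is incompressible; it is not boundary-parallel, since a boundary-parallel torus in $M$ would cobound a product region with $\partial M$ and hence bound a solid torus in $M(\alpha)$; and it cannot be disjoint from $K_\alpha$, since then it would be an essential torus in the hyperbolic manifold $M$. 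Hence $T_\alpha := \hat T_\alpha \cap M$ is an essential punctured torus in $M$ with a positive number of boundary components, all of slope $\alpha$ (after a routine adjustment to arrange boundary-incompressibility, or treating that case separately). The essential torus $\hat T_\beta \subset M(\beta)$ yields in the same way an essential punctured torus $T_\beta \subset M$ of slope $\beta$.

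\emph{Step 2: graphs, with the Seifert constraint.} Isotope $T_\alpha, T_\beta$ into minimal position and form the usual pair of labelled fat-vertex graphs $G_\alpha \subset \hat T_\alpha$ and $G_\beta \subset \hat T_\beta$: the vertices are the meridian discs of the two fillings, the edges are the arcs of $T_\alpha \cap T_\beta$, and every vertex of one graph meets every vertex of the other in exactly $\Delta := \Delta(\alpha,\beta)$ endpoints. The general theory of toroidal Dehn fillings already gives $\Delta \le 8$, so the task is to improve this to $4$. This is where the Seifert structure enters: $\hat T_\alpha$ is vertical or horizontal while $K_\alpha$ is not a fibre. If $\hat T_\alpha = \pi^{-1}(c)$ is vertical over an essential curve $c$ in the base orbifold, then, since no essential torus in $M(\alpha)$ is disjoint from $K_\alpha$, the projection of $K_\alpha$ meets every essential curve of the base orbifold; this pins down the way $K_\alpha$ threads the fibration and forces $G_\alpha$ into a narrow ``annular'' family of patterns. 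If $\hat T_\alpha$ is horizontal, the base orbifold is Euclidean and $M(\alpha)$ is one of finitely many flat or $\mathrm{Nil}$ manifolds, whose toroidal fillings of hyperbolic knot manifolds are already understood. In either case, feeding these restrictions on $G_\alpha$ into the parity rule, the bounds on the numbers of mutually parallel edges, and the Euler-characteristic count for $G_\alpha$ and $G_\beta$ forces $\Delta \le 4$.

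\emph{Step 3: the equality case, and the main obstacle.} When $\Delta = 4$ the analysis of Step 2 leaves only a short explicit list of possibilities for $(G_\alpha,G_\beta)$ --- small vertex and edge numbers, prescribed up to symmetry. From each surviving pattern one reconstructs $M$ together with the slopes $\alpha,\beta$ from the gluing data, and compares with the known classification of exceptional fillings of small-volume manifolds, in particular of the $3$-chain link exterior $N$ and its fillings \cite{MP}; all but one pattern are incompatible with ``$M(\alpha)$ Seifert fibred'', and the exception yields exactly $(M;\alpha,\beta) \cong (N(-\frac12,-\frac12);-4,0)$. The genuinely hard part is Step 2: converting ``$\hat T_\alpha$ vertical/horizontal and $K_\alpha$ not a fibre'' into restrictions on $G_\alpha$ strong enough, when played off against the $G_\beta$ side, to lower the bound from $8$ to $4$; the subtlest situation is a vertical $\hat T_\alpha$ over a base orbifold with several exceptional fibres, where $K_\alpha$ may wind around the fibration intricately. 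A secondary difficulty is the endgame --- recognising the extremal combinatorial configuration as the specific triple $(N(-\frac12,-\frac12);-4,0)$ rather than merely as some hyperbolic manifold realising those graphs.
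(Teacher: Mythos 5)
Your proposal takes a fundamentally different route from the paper's, and the central step is a genuine gap rather than a proof.

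The paper does not attempt to lower the toroidal--toroidal distance bound by playing the Seifert structure of $M(\alpha)$ against the intersection graphs. Instead it starts from the already-known, complete classification of hyperbolic knot manifolds $M$ admitting two toroidal filling slopes at distance at least $4$: Gordon \cite{Go1} gives exactly four such triples $(M;\alpha,\beta)$ at distance $\geq 6$, and Gordon--Wu \cite{GW} show that at distance $4$ or $5$ any such $(M;\alpha,\beta)$ is one of fourteen $(M_i;\alpha_i,\beta_i)$ or a Dehn filling thereof (for the four $M_i$ with two boundary components). Given this finite list, Theorem \ref{reduction2} becomes a case-by-case verification that at most one of the two toroidal fillings can be Seifert fibred, and that the unique such case is $(N(-\frac12,-\frac12);-4,0)$. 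The checks are concrete: for $M_6,\dots,M_{13}$ one works with the tangle / double-branched-cover descriptions and rules out Montesinos links (Lemmas \ref{lem1}--\ref{lem2}); for $M_4$ one reads off $\pi_1$ presentations from the Gordon--Wu graphs and shows the candidate Seifert fibrations don't match up (Lemma \ref{lem3}); for $M_5$ and for the two-cusped cases $M_1, M_2, M_3, M_{14}$ one identifies them with fillings of the $3$-chain link exterior $N$ (or with Lee's manifold) and consults the Martelli--Petronio tables (Lemmas \ref{lem4}--\ref{lem56}).

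Your Step 2, by contrast, is not a proof but a plan, and you say so yourself: ``feeding these restrictions on $G_\alpha$ into the parity rule, the bounds on the numbers of mutually parallel edges, and the Euler-characteristic count \dots forces $\Delta \le 4$'' asserts precisely the thing to be proven. The listed tools (parity rule, parallel-edge bounds, Euler characteristic) are exactly what yield the general $\Delta \le 8$ bound for a pair of toroidal slopes; squeezing this to $4$ using only ``$\hat T_\alpha$ is vertical or horizontal and $K_\alpha$ is not a fibre'' is the entire content of the theorem and is left unaddressed. In effect you are proposing to re-prove, with an extra hypothesis, a strengthening of the Gordon--Wu memoir \cite{GW}, which is not close to being carried out in the sketch. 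Step 3 has the same problem in miniature: it posits a ``short explicit list of possibilities for $(G_\alpha, G_\beta)$'' and an identification of the survivor with $(N(-\frac12,-\frac12);-4,0)$ without producing either. Step 1 is fine as far as it goes (the core $K_\alpha$ is indeed not a fibre, $\hat T_\alpha$ must meet $K_\alpha$, etc.), but those observations are not used anywhere decisive in the paper, which bypasses them by invoking the classification.
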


We have that $N(-\frac12,-\frac12,-4)$ is Seifert fibred with base orbifold
$P^2 (2,3)$, and $N(-\frac12,-\frac12,0)$ contains an incompressible
torus separating $N(-\frac12,-\frac12,0)$ into Seifert fibred manifolds
with base orbifolds $D^2 (2,2)$ and $D^2 (2,3)$.
(See \cite[Table 2]{MP}.)

A {\it small Seifert} manifold is a $3$-manifold which admits a Seifert structure with base orbifold of the form $S^2(a,b,c)$ where $a, b, c \geq 1$. For instance, a closed, atoroidal Seifert manifold is small Seifert.

A small Seifert manifold is a {\it prism manifold} if its base orbifold is $S^2(2,2,n)$ for some $n \geq 2$.

Since the distance between a toroidal filling slope and a reducible filling slope is at most $3$ (\cite{Oh}, \cite{Wu1}), Theorem \ref{reduction2} reduces our analysis of Conjecture \ref{conj} to understanding the case where the Seifert Dehn filling is irreducible and small Seifert.
In an earlier paper \cite{BGZ2} we verified the conjecture in the case where $M$ admits no essential punctured torus of boundary slope $\beta$ which is a fibre or  semi-fibre, or which has fewer than three boundary components; more precisely, we showed that in this case $\Delta(\alpha, \beta) \leq 5$. Here we focus on the case where $M$ admits an essential punctured torus with one boundary component.

Let {\it $Wh$} denote the left-handed Whitehead link exterior (see Figure \ref{bgz4-whitehead}). We parameterise the slopes on a boundary component of {\it $Wh$}  using the standard meridian-longitude coordinates.

\begin{thm} \label{once-punctured}
Let $M$ be a hyperbolic knot manifold and $\alpha$ a slope on $\partial M$ such that $M(\alpha)$ is small Seifert. If $M$ admits an essential, once-punctured torus $F$ of boundary slope $\beta$ then $\Delta(\alpha, \beta) \leq 5$. Further, if $\Delta(\alpha, \beta) > 3$, then $F$ is not a fibre and $\pi_1(M(\alpha))$ is finite. More precisely, \\
$(1)$ if $\Delta(\alpha, \beta) = 4$, then  $(M;\alpha,\beta)\cong (Wh(\frac{-2n\pm1}{n}); -4,0)$
 for some integer $n$ with $|n|>1$ and $M(\alpha)$ has base orbifold $S^2(2,2,|\mp 2n-1|)$, so
 $M(\alpha)$ is a prism manifold; \\
$(2)$ if $\Delta(\alpha, \beta) = 5$, then $(M; \alpha, \beta) \cong (Wh(-3/2); -5, 0)$, and $M(\alpha)$ has base orbifold $S^2(2,3,3)$.
\end{thm}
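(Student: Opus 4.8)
\emph{Proof plan.} The strategy is: handle the case where $F$ is a fibre separately; exclude the possibility that $\pi_1(M(\alpha))$ is infinite by a seminorm argument; then run a combinatorial surface--intersection analysis to pin $M$ down to the Whitehead family; and finish by direct computation. Throughout we may assume $\Delta:=\Delta(\alpha,\beta)\ge 4$, since the assertions are vacuous otherwise. Write $W=M|F$ for the manifold obtained by cutting $M$ along $F$; its boundary is a closed genus two surface assembled from two copies $F_+,F_-$ of $F$ and the annulus $A_0=\partial M\smallsetminus N(\partial F)$, and $M(\alpha)$ is recovered from $W$ and the $\alpha$--filling solid torus by re-identifying $F_+$ with $F_-$. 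Note at the outset that $F$ cannot be a semi--fibre: that would exhibit $F$ as the orientation double cover of a non--orientable surface and force $\chi(F)$ to be even, contradicting $\chi(F)=-1$.

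\emph{Preliminary reductions.} First suppose $F$ is the fibre of a fibration $M\to S^1$, so that $W\cong F\times I$ and $M$ is a hyperbolic once--punctured torus bundle with Anosov monodromy, $\beta$ being the boundary slope of the fibre. The exceptional Dehn fillings of such bundles are known (through the Floyd--Hatcher description of their incompressible surfaces and the ensuing classification of their non--hyperbolic fillings), and those which are Seifert fibred occur only at slopes $\alpha$ with $\Delta(\alpha,\beta)\le 3$; this contradicts $\Delta\ge 4$, so $F$ is not a fibre, which establishes that assertion of the theorem. Next suppose $\pi_1(M(\alpha))$ is infinite; then $M(\alpha)$ is an irreducible, aspherical small Seifert fibred space, and a Culler--Shalen seminorm argument (exploiting that the essential once--punctured torus $F$ forces $\beta$ to be a boundary slope of small seminorm), or, when an essential horizontal surface is present, the usual Gordon--Luecke graph estimates applied to its intersection with $F$, gives $\Delta(\alpha,\beta)\le 3$, again a contradiction. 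Hence $\pi_1(M(\alpha))$ is finite, i.e. $M(\alpha)$ is a spherical space form. It remains to treat the case in which $F$ is not a fibre and $M(\alpha)$ is elliptic.

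\emph{The structural core.} With $F$ essential, not a fibre and not a semi--fibre, $\partial W$ is incompressible and $W$ contains no essential torus. The heart of the proof is a combinatorial argument: one selects a convenient essential surface (e.g. a system of meridian discs of the filling solid torus, or a $2$--complex dual to the elliptic Seifert structure on $M(\alpha)$), makes it intersect $F$ efficiently, and analyses the resulting pair of labelled intersection graphs on $F$ and on the second object, using Euler characteristic counts, the parity and conjugacy restrictions of Gordon--Luecke type, and the hyperelliptic symmetry of $F$ to control the combinatorics. Combined with the facts that $\partial F$ has slope $\beta$ and $\Delta\ge 4$, this forces the topological type of $W$, and hence of $M$: the pair $(M,\beta)$ must lie in the explicit family in which $M=Wh(\gamma)$ for a slope $\gamma$ from a short list and $F$ is the once--punctured torus of boundary slope $0$ surviving from the Whitehead link exterior. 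This identification is the principal obstacle, since a priori $W$ can be geometrically intricate and the surface--intersection estimates needed to exclude the generic behaviour are delicate; I expect the bulk of the work to be here.

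\emph{Endgame.} Once $M=Wh(\gamma)$ and $\beta=0$, the remaining problem is finite and explicit: the manifolds $Wh(\gamma)$ are themselves one--cusped Dehn fillings of the small manifold $Wh$, so their exceptional slopes and the Seifert invariants of the resulting closed manifolds can be enumerated directly. Doing so will show that a small Seifert filling at distance $\ge 4$ from $0$ occurs only in the two stated families: $(Wh(\tfrac{-2n\pm1}{n});-4,0)$ with $|n|>1$, where $M(\alpha)$ has base orbifold $S^2(2,2,|\mp 2n-1|)$ and is a prism manifold, and $(Wh(-3/2);-5,0)$, where $M(\alpha)$ has base orbifold $S^2(2,3,3)$; and that no small Seifert filling is at distance $\ge 6$, giving $\Delta(\alpha,\beta)\le 5$. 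Reading off the Seifert invariants in each case confirms that $M(\alpha)$ is elliptic, which completes the argument.
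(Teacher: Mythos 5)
Your proposal diverges substantially from the paper's actual argument, and several of the claimed reductions are not substantiated and would not survive scrutiny.

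\textbf{The central mechanism is missing.} The engine of the paper's proof is the following observation: because $\Delta(\alpha,\beta)>3$, the cut-open manifold $M_F$ is a genus-two \emph{handlebody} (Proposition \ref{reduction}(c)), so the hyperelliptic involution of $F$ extends across $M_F$ to an orientation-preserving involution $\tau$ of $M$ with quotient a \emph{solid torus} $V$ and branch set a hyperbolic link $L$ meeting each meridian disk in three points. This extends to $\tau_\alpha$ on $M(\alpha)$, whose quotient is a lens space $L(\bar p,\bar q)$ with $\bar p$ determined by $\Delta$, and the proof proceeds by pitting the two descriptions of the branch set against each other: the form of $L\subset V$ coming from the structure of $M(\beta)_{\widehat F}$ (Lemma \ref{types}, Figure \ref{fig3}) versus the form of $L_\alpha\subset L(\bar p,\bar q)$ coming from the classification of involutions on small Seifert manifolds (Lemmas \ref{cyclic case}, \ref{+-quotient}, \ref{--quotient}). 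You mention ``the hyperelliptic symmetry of $F$'' only as one tool among several in a generic graph-combinatorics argument, and propose as the ``convenient essential surface'' a meridian-disk system or a $2$-complex dual to the Seifert fibration; neither appears in the paper, and neither gives you the passage to a link in a lens space which is what makes the problem tractable.

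\textbf{The reduction to finite $\pi_1(M(\alpha))$ is not a stand-alone lemma.} You propose to first rule out infinite $\pi_1(M(\alpha))$ ``by a Culler--Shalen seminorm argument'' and only then classify. This reverses the actual logic: in the paper, finiteness of $\pi_1(M(\alpha))$ is a \emph{corollary} of the classification, not an input to it. The seminorm-type estimates that are actually available (Proposition \ref{main1}, quoting \cite{BGZ2}) distinguish between $M(\alpha)$ \emph{very small} (giving $\Delta\le 6$) and \emph{not very small} (giving $\Delta\le 8$), which does not match the finite/infinite dichotomy: $S^2(2,3,6)$-Seifert manifolds are very small with infinite $\pi_1$, while $S^2(2,3,7)$ is not very small. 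In fact the paper must work hard in Sections \ref{m=1 Seifert}--\ref{delta = 5} to rule out $\Delta\in\{5,6,7\}$ even when the base orbifold could be hyperbolic (hence $\pi_1$ infinite), using the cyclic-cover trick of Lemma \ref{cyclic-cover}, tangle decompositions, Heegaard-genus bounds for Seifert manifolds, etc. There is no short seminorm proof that $\Delta>3$ forces $\pi_1(M(\alpha))$ finite.

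\textbf{The fibre case is asserted, not proved.} You dismiss the case where $F$ is a fibre by appealing to Floyd--Hatcher and ``the ensuing classification of non-hyperbolic fillings'' of once-punctured torus bundles. That classification does not directly yield the distance bound you need, and the paper does not use it. Instead Lemma \ref{holds} again runs through the involution: when $F$ is a fibre, $L$ is the closure of a $3$-braid $\sigma$ in $V$, and the proof analyses $\bar\sigma^p a^{-6q}$ in $B_3/\langle a^3\rangle\cong\mathbb{Z}/2*\mathbb{Z}/3$ using the Schreier normal form and results of Futer--Kalfagianni--Purcell on braids, together with the constraint (from Claim \ref{+}) that the lifted link $\widehat L$ in $S^3$ must be a Seifert link. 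This is a genuine argument, not a citation. Your version leaves a gap precisely where the paper does nontrivial work.

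In short, the endgame you sketch (once one knows $M=Wh(\gamma)$ with $\beta=0$, compute the exceptional fillings explicitly) is fine and essentially what the paper does in Section \ref{prism-section} and elsewhere, but the path to the Whitehead identification is the whole content of the theorem, and the route you propose — exclude infinite $\pi_1$, then a generic Gordon--Luecke style graph argument on $F$ — is neither the paper's route nor visibly a viable one. You need the involution, the lens-space quotient, and the explicit forms of the branch sets $L$ and $L_\alpha$.
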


Baker \cite{Ba} has proven Theorem \ref{once-punctured} in the case that $M(\alpha)$ is a lens space. We provide an alternate proof of his result.

Theorem \ref{once-punctured} is sharp; see the infinite family of examples in \S \ref{d=4} for (1) and \cite[Table A.3]{MP} for (2).  Another family of examples is provided by hyperbolic twist knots. These are genus one knots in the $3$-sphere whose exteriors admit small Seifert filling slopes of distance $1, 2,$ and $3$ from the longitudinal slope. Finally, Baker \cite[Theorem 1.1(IV)]{Ba} has constructed an infinite family of non-fibred hyperbolic knot manifolds which admit a once-punctured essential torus whose boundary slope is of distance $3$ to a lens space filling slope.

Here is an outline of the proof of Theorem \ref{once-punctured}. We begin by showing that the result holds unless, perhaps, $M$ admits an orientation-preserving involution $\tau$ with non-empty branch set $L$ contained in the interior of the quotient $M/\tau$, which is a solid torus. The results of \cite{BGZ2} reduce us to the case that $L$ has a very particular form (see Figure \ref{fig3}). On the other hand, $\tau$ extends to an involution $\tau_\alpha$ of $M(\alpha)$ with branch set $L_\alpha$ contained in the lens space $M(\alpha)/\tau_\alpha$. The fundamental group of $M(\alpha)/\tau_\alpha$ is non-trivial if the distance between $\alpha$ and $\beta$ is at least $3$. Since the involutions on small Seifert manifolds with such quotients are well-understood, we can explicitly describe the branch set $L_\alpha$ of $\tau_\alpha$. Comparing this description with the constraints we have already deduced on $L$ leads to the proof of the theorem.

Recall that an {\it exceptional filling slope} on the boundary of a hyperbolic $3$-manifold is a slope $\gamma$ such that $M(\gamma)$ is not hyperbolic. Geometrisation of $3$-manifolds implies that a slope $\gamma$ is exceptional if and only if $M(\gamma)$ is either reducible, toroidal, or Seifert fibred. Theorem \ref{once-punctured} combines with \cite{Oh}, \cite{Wu1}, \cite{Go1}, \cite{GW}, and Proposition \ref{compresses} to yield  the next result.

\begin{thm} \label{once-punctured-exceptional}
Let $M$ be a hyperbolic knot manifold which admits an essential,
once-punctured torus $F$ of boundary slope $\beta$ and let $\gamma$ be an exceptional filling slope on $\partial M$. \\
$(1)$ $\Delta(\gamma, \beta) \leq 7$. \\
$(2)$ If $\Delta(\gamma, \beta) > 3$, then $M(\gamma)$ is either toroidal or has a finite fundamental group.  \\
$(3)$ If $\Delta(\gamma, \beta) > 3$ and $M(\gamma)$ is toroidal, then either \\
\indent \hspace{.3cm} $(a)$ $\Delta(\gamma, \beta) = 4$ and $(M; \gamma, \beta) \cong (Wh(\delta); -4, 0)$ for some slope $\delta$; or \\
\indent \hspace{.3cm} $(b)$ $\Delta(\gamma, \beta) = 5$ and $(M; \gamma, \beta) \cong (Wh(-4/3); -5, 0)$ or $(Wh(-7/2); -5/2, 0)$; or  \\
\indent \hspace{.3cm} $(c)$ $\Delta(\gamma, \beta) = 7$ and $(M; \gamma, \beta) \cong (Wh(-5/2); -7/2, 0)$. \\
$(4)$ If $\Delta(\gamma, \beta) > 3$ and $\pi_1(M(\gamma))$ is finite, then either \\
\indent \hspace{.3cm} $(a)$ $\Delta(\gamma, \beta) = 4$, $(M;\gamma,\beta)\cong (Wh(\frac{-2n\pm1}{n}); -4,0)$
 for some integer $n$ with $|n|>1$, and $M(\gamma)$ has base orbifold $S^2(2,2,|\mp 2n-1|)$; or
\\
\indent \hspace{.3cm} $(b)$ $\Delta(\gamma, \beta) = 5, (M; \gamma, \beta) \cong (Wh(-3/2); -5, 0)$, and $M(\gamma)$ has base orbifold $S^2(2,3,3)$.
\end{thm}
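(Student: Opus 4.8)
The plan is to derive Theorem~\ref{once-punctured-exceptional} from Theorem~\ref{once-punctured} by a case analysis on the geometric type of $M(\gamma)$. Since $\gamma$ is exceptional, geometrisation tells us that $M(\gamma)$ is reducible, toroidal, or Seifert fibred, and these are the three cases to treat. Write $\hat F \subset M(\beta)$ for the closed torus obtained by capping off $\partial F$ with a meridian disk of the filling solid torus. The case in which $\hat F$ compresses in $M(\beta)$ is handled by Proposition~\ref{compresses}, which describes that situation explicitly; the conclusions (1)--(4) are then read off directly from that description. So for the remainder of the argument we may assume $\hat F$ is incompressible, so that $\beta$ is itself a toroidal filling slope and $M(\beta)$ is irreducible.

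Now the three cases. If $M(\gamma)$ is reducible, then since $\beta$ is a toroidal slope the reducible-versus-toroidal distance bound of Oh~\cite{Oh} and Wu~\cite{Wu1} gives $\Delta(\gamma,\beta)\le 3$; this is consistent with (1) and leaves the reducible case vacuous for (2)--(4). If $M(\gamma)$ is Seifert fibred but neither reducible nor toroidal, then $M(\gamma)$ is small Seifert (a closed atoroidal Seifert manifold is small Seifert), and Theorem~\ref{once-punctured} applies verbatim: $\Delta(\gamma,\beta)\le 5$, and if $\Delta(\gamma,\beta)>3$ then $F$ is not a fibre and $\pi_1(M(\gamma))$ is finite, with $(M;\gamma,\beta)$ exactly as in Theorem~\ref{once-punctured}(1)--(2); this is alternative (4)(a)--(b). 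Finally, if $M(\gamma)$ is toroidal then $M$ simultaneously carries the essential once-punctured torus $F$ of slope $\beta$ and an essential torus coming from the filling $M(\gamma)$, which is precisely the setting of Gordon's work on boundary slopes of punctured tori~\cite{Go1}, of the Gordon--Wu classification of hyperbolic manifolds admitting two toroidal Dehn fillings~\cite{GW}, and --- in the subcase that $M(\gamma)$ is also Seifert fibred --- of Theorem~\ref{reduction2}. Together these give $\Delta(\gamma,\beta)\le 7$ and, after intersecting the Gordon--Wu list of toroidal pairs at distance $\ge 4$ with the requirement that one of the two slopes bound an essential once-punctured torus, force $(M;\gamma,\beta)$ to be one of the Whitehead-link-exterior fillings recorded in (3)(a)--(c).

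Assembling the three cases yields the global bound $\max\{3,5,7\}=7$, which is (1); statement (2) follows because for $\Delta(\gamma,\beta)>3$ the reducible case is impossible and the Seifert case produces finite fundamental group; and (3), (4) are the classifications obtained in the toroidal and small-Seifert cases respectively.

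I expect the toroidal case to be the main obstacle. Extracting the precise finite list of triples occurring at distances $4$, $5$, and $7$ requires a careful pass through the Gordon--Wu classification and a verification that, among all hyperbolic manifolds admitting two toroidal fillings at distance $\ge 4$, exactly the Whitehead-link-exterior fillings admit an essential once-punctured torus at the relevant slope --- with no omissions, with the correct identification of the companion slope $\delta$ in case (3)(a), and with the stated base orbifolds. By contrast, once Proposition~\ref{compresses} has reduced us to $\hat F$ incompressible the reducible case is immediate from known distance bounds, and the small-Seifert case is a direct quotation of Theorem~\ref{once-punctured}.
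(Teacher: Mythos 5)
Your proposal takes essentially the same route as the paper: reduce via Proposition~\ref{compresses} to the case where the capped-off torus $\hat F$ is incompressible, then split according to whether $M(\gamma)$ is reducible, small Seifert, or toroidal, and quote the relevant distance bounds and classifications. Two remarks. First, for the reducible case the paper invokes \cite[Lemma~4.1]{BZ1}, which gives the sharper bound $\Delta(\beta,\gamma)=1$; your appeal to Oh~\cite{Oh} and Wu~\cite{Wu1} yields only $\Delta(\beta,\gamma)\le 3$, which still suffices here, so this is a harmless variant. Second, and more significantly, you flag the toroidal case as ``the main obstacle'' and do not actually carry it out --- but that case is almost the entire content of the paper's argument. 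What has to be done there is: (i) quote from \cite{GW} that distance $4$ between a once-punctured-torus slope and a toroidal slope forces $(M;\gamma,\beta)\cong(Wh(\delta);-4,0)$; (ii) observe that the only examples in \cite{Go1} and \cite{GW} with a once-punctured torus ($n_\beta=1$) at distance $\ge 5$ are $Wh(-5/2)$ at distance $7$ and the two manifolds $M_5$, $M_{10}$ at distance $5$; and (iii) identify $M_{10}\cong Wh(-7/2)$ and $M_5\cong Wh(-4/3)$ by matching the second toroidal filling described in \cite[Lemma~22.2]{GW} against Tables~A.3 and A.4 of \cite{MP}, and then read off the slope pairs. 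Until those identifications are carried out, the explicit list in part~(3) is asserted rather than proved; the approach is right, but the proof has not yet been completed.
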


Next we specialize to the case where $M$ is the exterior of a hyperbolic knot in the $3$-sphere.

\begin{thm} \label{genusones3}
Let $K \subset S^3$ be a hyperbolic knot of genus one with exterior $M_K$ and suppose $p/q$ is an exceptional filling slope on $\partial M_K$. \\
$(1)$ $M_K(0)$ is toroidal but not Seifert. \\
$(2)$ $M_K(p/q)$ is either toroidal or small Seifert with hyperbolic base orbifold.  \\
$(3)$ If $M_K(p/q)$ is small Seifert with hyperbolic base orbifold, then $0 < |p| \leq 3$.    \\
$(4)$ If $M_K(p/q)$ is toroidal, then $|q| = 1$ and $|p| \leq 4$ with equality implying $K$ is a twist knot.
\end{thm}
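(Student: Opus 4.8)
The plan is to deduce Theorem~\ref{genusones3} from Theorem~\ref{once-punctured-exceptional} by exploiting the special features of knot exteriors in $S^3$: the homological constraints coming from the fact that $M_K$ is a homology $S^1\times D^2$, and the fact that a genus-one Seifert surface caps off to give an essential once-punctured torus of boundary slope $\beta = 0$ (the longitude). First I would fix $F$ to be the once-punctured torus obtained from a minimal-genus Seifert surface, so $\beta=0$, and note that $0$ is an exceptional slope because $M_K(0)$ has $H_1 = \mathbb{Z}$ and hence cannot be hyperbolic; moreover by Gabai's theorem $F$ is a fibre if and only if $K$ is fibred, and in any case $M_K(0)$ is not a small Seifert manifold since a Seifert fibred space with $H_1=\mathbb{Z}$ and the relevant structure is ruled out---this, combined with the toroidality of $M_K(0)$ guaranteed by the cabling/other known results for boundary slopes of Seifert surfaces, gives~(1). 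The key input here is that $\Delta(0,0)=0 \le 3$, so in fact $M_K(0)$ falls outside the sporadic list and we must argue directly that it is toroidal-but-not-Seifert; I would cite the relevant statement from the literature on exceptional surgeries on genus-one knots (e.g.\ work flowing from \cite{Ga} on sutured manifolds, or the fact that $0$-surgery on a non-fibred genus one knot contains an essential torus).

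Next, for part~(2): any exceptional slope $p/q$ other than $0$ satisfies $\Delta(p/q,0) = |p| \ge 1$. If $\Delta(p/q,0)\le 3$ then $M_K(p/q)$ is reducible, toroidal, or Seifert; I would rule out the reducible case using the Cabling Conjecture status for genus-one knots or, more simply, the fact that a reducible filling slope is at distance $\le 1$ from $0$ (since $0$ is a boundary slope) forcing $p=\pm1$, and then invoking that $M_K(\pm1)$ reducible would make $K$ a cable or trivial knot, contradicting hyperbolicity. For the Seifert case with $\Delta(p/q,0)\le 3$, I need to show the base orbifold is hyperbolic, i.e.\ rule out lens spaces and prism/small-Seifert-with-spherical-or-Euclidean base; a lens space filling would be detected by Theorem~\ref{once-punctured} (it would force $(M_K;p/q,0)\cong (Wh(\cdot);\cdot,0)$, but $Wh(\cdot)$ has $H_1 \ne \mathbb{Z}$ in the relevant cases, or one checks the Whitehead-link fillings are not knot exteriors in $S^3$ with the right homology). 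Finally when $\Delta(p/q,0)>3$, Theorem~\ref{once-punctured-exceptional}(2) already gives that $M_K(p/q)$ is toroidal or has finite $\pi_1$, and in the finite-$\pi_1$ case parts~(3a)(3b)--wait, (4a),(4b)--of that theorem pin $M$ down to a specific $Wh(\cdot)$, which I then rule out as a knot exterior in $S^3$ by a homology or Alexander polynomial computation; this yields part~(2) and feeds into~(3).

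For part~(3): if $M_K(p/q)$ is small Seifert with hyperbolic base orbifold, either $\Delta(p/q,0)\le 3$, giving $|p|\le 3$ immediately, or $\Delta(p/q,0)\ge 4$, but then Theorem~\ref{once-punctured}(1)--(2) forces $(M_K;p/q,0)\cong (Wh(\frac{-2n\pm1}{n});-4,0)$ or $(Wh(-3/2);-5,0)$, and I dispose of these because those manifolds are not exteriors of knots in $S^3$ (again a first-homology check: a knot exterior in $S^3$ has $H_1=\mathbb{Z}$, whereas one verifies $Wh(r)$ has $H_1$ with torsion for these $r$, or that the Seifert surface/longitude data is incompatible). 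Hence the Seifert-with-hyperbolic-base case forces $\Delta \le 3$, i.e.\ $0<|p|\le 3$ (with $p\ne 0$ since $M_K(0)$ is not Seifert, by~(1)). For part~(4): if $M_K(p/q)$ is toroidal and $\Delta(p/q,0)=|q|\cdot 0 + \dots$--more precisely $\Delta(p/q,0)=|p|$--then when $|p|\le 3$ I invoke the known classification of toroidal surgeries on knots in $S^3$ at small distance from a boundary slope to get $|q|=1$ (a toroidal slope on a knot exterior at distance $\le$ the standard bound is integral; here I would cite Gordon--Luecke or the specific genus-one results), while when $|p|\ge 4$, Theorem~\ref{once-punctured-exceptional}(3) says $\Delta(p/q,0)\in\{4,5,7\}$ with $(M_K;p/q,0)\cong (Wh(\delta);-4,0)$, $(Wh(-4/3);-5,0)$, etc.; the distance-$5$ and distance-$7$ cases are excluded as before ($Wh(\cdot)$ is not an $S^3$-knot exterior), leaving $\Delta=4$, $(Wh(\delta);-4,0)$, and I must identify exactly which $Wh(\delta)$ are genus-one knot exteriors in $S^3$ --- these turn out to be precisely the twist-knot exteriors, the twist knots being genus one and having a well-known toroidal surgery at distance $4$ from the longitude (slope $\pm 4$). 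The main obstacle I anticipate is precisely this last identification: showing that among the infinitely many fillings $Wh(\delta)$ of the Whitehead link exterior, the ones that arise as genus-one hyperbolic knot exteriors in $S^3$ with a distance-$4$ toroidal filling are exactly the twist knots --- this requires recognizing the twist knots as Dehn fillings of the Whitehead link and matching slope parameters, which I would handle via an explicit isotopy/Kirby-calculus picture together with the homological normalization $\beta=0$.
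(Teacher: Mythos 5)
Your outline misses two inputs that the paper's proof hinges on, and one of your proposed steps would actually fail.

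First, the claim in part~(2) that the base orbifold is \emph{hyperbolic} cannot be obtained from Theorems~\ref{once-punctured} or~\ref{once-punctured-exceptional} alone: those theorems constrain the filling only when $\Delta(\alpha,\beta) > 3$, so they say nothing at all about lens space or finite fillings with $0 < |p| \le 3$, and your suggestion that ``a lens space filling would be detected by Theorem~\ref{once-punctured}'' is simply not available in that range. The paper instead disposes of the fibred case separately (a hyperbolic fibred genus-one knot is the figure eight, and the theorem is known there), and for non-fibred $K$ invokes Ni's theorem \cite{Ni} that knot Floer homology detects fibredness: non-fibred $K$ admits no $L$-space surgery, hence no finite or lens space surgery, which kills the spherical base orbifold case at \emph{every} distance. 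The Euclidean case is then eliminated by looking at $H_1(S^2(a,b,c))$. Without some replacement for the Ni input, your part~(2) has a genuine hole.

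Second, your proof of $|q|=1$ in part~(4) gestures at ``the known classification of toroidal surgeries at small distance'' but does not supply the needed mechanism. Non-integral toroidal surgery slopes on hyperbolic knots in $S^3$ occur only for Eudave-Mu\~noz knots, and the paper records explicitly that a non-fibred $K$ is not Eudave-Mu\~noz (item~(c) in its list); this is what forces integrality. You would need this, or a substitute, before invoking \cite{Go1}, \cite{Te1} and \cite[Theorem~24.4]{GW} for the $|p|\le 4$ bound and the twist-knot identification. Finally, your recurring tactic of ruling out the $Wh(\delta)$ candidates ``because they are not $S^3$ knot exteriors, by a homology check'' is unsound as stated: many fillings $Wh(\delta)$ \emph{are} exteriors of genus-one knots in $S^3$ (the twist knots are precisely of this form), so a blanket homology obstruction does not exist. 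For part~(3) you can instead simply observe that the base orbifolds $S^2(2,2,|\mp 2n-1|)$ and $S^2(2,3,3)$ arising at distance $4$ and $5$ are spherical, not hyperbolic, so the hypothesis of~(3) already excludes them and forces $\Delta \le 3$; this is in effect what the paper's argument (via~\cite{Ni}) achieves more systematically.
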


Here is how the paper is organised. We prove Theorem \ref{reduction2} in \S \ref{sec: reduction2}. In \S \ref{background} we show that there are strong topological constraints on $M$ which must be satisfied if Theorem \ref{once-punctured} doesn't hold. These constraints will be applied later in the paper to construct an involution on $M$. In \S \ref{involutions} we describe the branching set of an orientation-preserving involution on a small Seifert manifold with quotient space a lens space with non-trivial fundamental group. Using this we reduce the proof of Theorem \ref{once-punctured} to five problems involving links in lens spaces in \S \ref{sec: once-punctured},
and a problem in which $\D(\alpha,\beta)=4$ and $M(\a)$ is a prism manifold. These problems are resolved in  \S \ref{m=1 Seifert}, \S \ref{dist7}, \S \ref{lens space delta = 5}, \S \ref{sec6.3},  \S \ref{delta = 5} and \S \ref{prism-section} respectively. The infinite family of examples realising  distance $4$ in Theorem \ref{once-punctured} is constructed in \S \ref{d=4}. Theorems \ref{once-punctured-exceptional} and \ref{genusones3} are dealt with in \S \ref{sec: genus 1}.

\section{The case where $M(\alpha)$ is toroidal} \label{sec: reduction2}

In this section we prove Theorem \ref{reduction2}. Recall from the introduction that $N$ denotes the exterior of the 3-chain link of \cite{MP}. Note that $N(-\frac12,-\frac12)$ is obtained by Dehn filling on $N(-\frac12)$,
which is the exterior of the rational link associated with the rational
number $10/3$.

To prove Theorem \ref{reduction2} we consider all $(M;\alpha,\beta)$ where
$M$ is hyperbolic, $M(\alpha)$ and $M(\beta)$ are toroidal and
$\Delta (\alpha,\beta)\ge 4$.
For $\Delta (\alpha,\beta)\ge 6$ there are only four such $(M;\alpha,\beta)$
\cite{Go1}, and in all four cases neither $M(\alpha)$ nor $M(\beta)$ is Seifert
fibred.

For $\Delta (\alpha,\beta) = 4$ or 5, the triples $(M;\alpha,\beta)$ are
determined in \cite{GW}:
there are 14 hyperbolic manifolds $M_i$, $1\le i\le 14$, each with a pair
of toroidal filling slopes $\alpha_i,\beta_i$ at distance 4 or 5, where
$M_1,M_2,M_3$ and $M_{14}$ have two (torus) boundary components, and the
others, one.
It is shown in \cite{GW} that a hyperbolic manifold $M$ has two toroidal
filling slopes $\alpha$ and $\beta$ at distance 4 or 5 if and only if
$(M;\alpha,\beta) \cong (M_i;\alpha_i,\beta_i)$ for some $1\le i\le 14$,
or $(M;\alpha,\beta) \cong (M_i(\gamma);\alpha_i,\beta_i)$ for $i= 1,2,3$ or
14 and some slope $\gamma$ on the second boundary component of $M_i$.
(We adopt the convention that in the above homeomorphisms either
$\alpha \mapsto \alpha_i$, $\beta\mapsto\beta_i$, or $\alpha\mapsto \beta_i$,
$\beta\mapsto \alpha_i$.)
We prove Theorem \ref{reduction2} by showing that firstly, for $i\ne 1,2,3$
or 14, neither of the toroidal manifolds $M_i (\alpha_i)$ or $M_i(\beta_i)$
is Seifert fibred, secondly, for $i = 1,3$ or 14, there is no hyperbolic
manifold of the form $M_i(\gamma)$ with either $M_i(\gamma)(\alpha_i)$
or $M_i (\gamma)(\beta_i)$ toroidal Seifert fibred, and thirdly, there is
a unique example $(M_2(\gamma);\alpha_2,\beta_2)$ (up to homeomorphism)
where $M_2(\gamma)$ is hyperbolic, $M_2(\gamma)(\alpha_2)$ and
$M_2(\gamma)(\beta_2)$ are toroidal, and one is Seifert fibred; this is the
example described in Theorem \ref{reduction2}.

We first consider the manifolds $M_i$, $6\le i\le 13$.
The toroidal fillings on $M_i$, $M_i(0)$ and $M_i(\beta_i)$, are
described in Lemma~22.2 of \cite{GW}.
We adopt the notation introduced in \cite[p.116]{GW}.

\begin{lemma}\label{lem1}
For $6 \le i\le 13$, $M_i(0)$ is not Seifert fibred.
\end{lemma}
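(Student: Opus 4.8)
The plan is to work through the eight manifolds $M_i$, $6\le i\le 13$, one at a time, using the explicit description of the toroidal filling $M_i(0)$ supplied by Lemma~22.2 of \cite{GW}. In each case that lemma exhibits an essential torus in $M_i(0)$ and identifies the two pieces into which it cuts the manifold, each piece being a Seifert fibred space over a disk or a once-punctured surface with a prescribed number of exceptional fibres. The strategy is then purely a matter of recognising when the two pieces can be assembled into a globally Seifert fibred manifold: this is possible only when the Seifert fibrations on the two sides can be chosen to agree on the separating torus, i.e.\ when the fibre slopes induced on that torus by the two Seifert structures coincide. So for each $i$ I would read off from \cite{GW} the two pieces and the gluing data, compute (or recall from \cite{GW}) the fibre slopes of each Seifert piece on the common boundary torus, and check that they are distinct.

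Concretely, for the cases where one of the two pieces is a cable space or a twisted $I$-bundle over the Klein bottle, or more generally a Seifert piece whose fibre slope on the boundary torus is uniquely determined (no two distinct Seifert fibrations), the verification is immediate: the fibres on the two sides simply do not match up, so no Seifert structure on $M_i(0)$ restricts compatibly, and hence $M_i(0)$ — being toroidal and not a Seifert fibred space over a surface with the torus as a vertical torus — is not Seifert fibred at all. (One must also rule out the torus being horizontal in a putative Seifert fibration; this is handled by noting that a Seifert fibred space containing a horizontal torus fibres over the torus or Klein bottle, which is incompatible with the base orbifolds of the pieces listed in \cite{GW}.) For the one or two remaining cases where both pieces are Seifert fibred over a disk with few exceptional fibres — so that a priori a reparametrisation of the fibration might be available — I would invoke the classification of Seifert fibrations of such small pieces (only $D^2(2,2)$ admits two inequivalent fibrations, with fibre slopes differing by the well-known amount) and check numerically against the gluing matrix from \cite{GW} that the slopes still fail to agree.

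The routine but slightly tedious part is simply keeping the \cite{GW} notation straight across eight cases and correctly extracting the gluing matrices and fibre slopes from \cite[Lemma~22.2]{GW}; I would organise this as a short table. The main obstacle I anticipate is the handful of borderline cases in which both Seifert pieces are fibred over a disk with exactly two cone points of order $2$ — there the ambiguity in the fibration is genuine, and one has to argue carefully that even after switching to the alternative fibration on one side the fibre slopes are still incommensurable with those on the other side, so that the torus is essential in \emph{every} Seifert structure and the glued-up manifold is therefore not Seifert fibred. Once all eight cases are checked, the lemma follows.
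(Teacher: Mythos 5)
Your overall strategy is the same as the paper's: decompose $M_i(0)$ along the essential torus described in \cite[Lemma 22.2]{GW} into two Seifert pieces, observe that $M_i(0)$ could be Seifert fibred only if (after possibly changing fibration on a piece with non-unique fibration, or after observing the torus cannot be horizontal because the two pieces are not both twisted $I$-bundles over the Klein bottle) the fibre slopes on the separating torus agree, and then verify that in each case they do not. Where you diverge is in the mechanism for that verification. The paper stays entirely in the tangle picture that \cite{GW} provides: $M_i(0)$ is presented as the double branched cover of a union $T(p_1,q_1;p_2,q_2)$ of two Montesinos tangles, so one can identify the image of the fibre slope $\varphi_1$ of $X(p_1,q_1)$ on the boundary of the tangle $T(p_1,q_1)$, cap off $T(p_2,q_2)$ with the corresponding rational tangle, and check that the resulting link is neither composite nor split — that being what the $\varphi_1$-filling of $X(p_2,q_2)$ being irreducible translates to downstairs. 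This avoids ever converting the \cite{GW} tangle data into explicit gluing matrices. Your plan, by contrast, would extract gluing matrices and compare numerical fibre slopes; this is logically fine but requires a translation step out of the tangle language that \cite[Lemma 22.2]{GW} does not hand you directly, and it is more exposed to bookkeeping/sign errors over eight cases. One small slip worth flagging: in your penultimate paragraph you list the twisted $I$-bundle over the Klein bottle among the pieces whose fibre slope on the boundary is \emph{uniquely} determined — but that is precisely the piece with two inequivalent Seifert fibrations (over the M\"obius band and over $D^2(2,2)$), as you yourself correctly note a few lines later; in the paper this is exactly the subtlety that forces the extra care when $i=8$, where $p_2=q_2=2$.
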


\begin{proof}
$M_i(0)$ is of the form $X(p_1,q_1;p_2,q_2)$; it is the double branched
cover of the tangle $Q_i(0)$, which is of the form $T(p_1,q_1;p_2,q_2)$,
the union of two Montesinos tangles.
Assume the numbering is chosen so that $p_1,q_1$ are not both 2;
(actually this is only an issue when $i=8$).
Then the Seifert fibre $\varphi_1$ of $X(p_1,q_1)$ is unique.
Since $X(p_1,q_1)$ and $X(p_2,q_2)$ are not both twisted $I$-bundles, to
show that $M_i(0)$ is not Seifert fibred it suffices to  show that, in the
gluing of $X(p_1,q_1)$ and $X(p_2,q_2)$, $\varphi_1$ is not identified
with the Seifert fibre $\varphi_2$ of $X(p_2,q_2)$.
(When $i=8$, $p_2 = q_2 =2$ and there are two possible choices for
$\varphi_2$.)
We do this by identifying the image of $\varphi_1$ in the boundary of the
tangle $T(p_1,q_1)$, and then capping off the tangle $T(p_2,q_2)$ with the
corresponding rational tangle; in the double branched cover this corresponds
to doing Dehn filling on $X(p_2,q_2)$ along the slope $\varphi_1$.
If $M_i(0)$ were Seifert fibred then this Dehn filling would be reducible,
and so the corresponding rational tangle filling on $T(p_2,q_2)$ would
give a link that is either composite or split.
One checks that this is not the case.
\end{proof}

\begin{lemma}\label{lem2}
For $6\le i\le 13$, $M_i(\beta_i)$ is not Seifert fibred.
\end{lemma}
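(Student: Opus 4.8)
The plan is to mimic closely the argument used for Lemma~\ref{lem1}, but now with the filling slope $\beta_i$ in place of $0$. Recall from \cite[Lemma~22.2]{GW} that for $6\le i\le 13$ the toroidal filling $M_i(\beta_i)$ is again of the form $X(p_1',q_1';p_2',q_2')$, the double branched cover of the tangle $Q_i(\beta_i) = T(p_1',q_1';p_2',q_2')$, a union of two Montesinos tangles glued along their four-punctured sphere boundary. So the first step is simply to read off, case by case for $i = 6,\dots,13$, the relevant tangle data from \cite[p.116]{GW} and record the two Montesinos pieces $X(p_1',q_1')$ and $X(p_2',q_2')$ together with the gluing.

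Next, as in Lemma~\ref{lem1}, I would arrange the numbering so that the pair $(p_1',q_1')$ is not $(2,2)$ (checking that this can always be done, with the only possible exceptional case, if any, being handled separately as in the $i=8$ subcase of Lemma~\ref{lem1}); then the Seifert fibre $\varphi_1'$ of the Montesinos piece $X(p_1',q_1')$ is unique. Since the two pieces are not both twisted $I$-bundles, $M_i(\beta_i)$ is Seifert fibred only if, under the gluing, $\varphi_1'$ is carried to a Seifert fibre $\varphi_2'$ of $X(p_2',q_2')$. To rule this out I would locate the image of $\varphi_1'$ as a slope on the four-punctured sphere boundary of the tangle $T(p_1',q_1')$, cap off $T(p_2',q_2')$ with the rational tangle corresponding to that slope, and observe that in the double branched cover this is precisely Dehn filling $X(p_2',q_2')$ along $\varphi_1'$. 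If $M_i(\beta_i)$ were Seifert fibred this filling would be reducible, so the capped-off tangle would yield a composite or split link; one then checks in each case that the resulting link is prime and non-split, a contradiction.

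The main obstacle — the one place real work happens rather than bookkeeping — is this last verification: showing for each of the eight values of $i$ that the specific rational-tangle closure of the Montesinos tangle $T(p_2',q_2')$ along the slope $\varphi_1'$ is neither composite nor split. This is a concrete but genuinely case-by-case computation in tangle calculus (e.g.\ computing the fraction of a Montesinos tangle after the capping and checking that numerator/denominator closures do not visibly factor), and the slopes $\beta_i$ being more complicated than $0$ means the arithmetic is messier than in Lemma~\ref{lem1}. A secondary point to watch is the degenerate situation where a piece of $Q_i(\beta_i)$ is itself a twisted $I$-bundle or where $(p_2',q_2') = (2,2)$ gives two candidate fibres $\varphi_2'$ (as happened for $i=8$ before); in such cases one must check that $\varphi_1'$ avoids \emph{every} admissible choice of $\varphi_2'$. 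Once all eight cases check out, the lemma follows.
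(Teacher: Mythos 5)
Your proposal starts from a false premise: \cite[Lemma~22.2]{GW} does \emph{not} describe $M_i(\beta_i)$ as a union of two Montesinos tangles $X(p_1',q_1';p_2',q_2')$ for all $6\le i\le 13$. That description holds only for $i=7$, where $M_7(\beta_7)\cong X(2,3;2,2)$ and your Lemma~\ref{lem1}-style fibre-matching argument is indeed exactly what the paper does. For $i\ne 7$, the data in \cite[Lemma~22.2]{GW} instead present $M_i(\beta_i)$ as the double branched cover of $S^3$ over a specific 2-component link $L_i$: a cabled Hopf link $C(p_1,q_1;p_2,q_2)$ with $p_1,p_2>1$ when $i\in\{6,8,9,12\}$, the link $C(C;2,1)$ when $i\in\{10,11\}$, and a 2-string cable of the trefoil when $i=13$. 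Your step of ``reading off $(p_1',q_1')$ and $(p_2',q_2')$ and matching fibres across the gluing'' therefore cannot get started for these seven cases, because no such tangle splitting is supplied (and it is not obvious one exists in the required form).

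The paper handles $i\ne 7$ with a different and cleaner argument that you would need to discover: each $L_i$ is a toroidal link whose exterior is not Seifert fibred, so if the double branched cover $M_i(\beta_i)$ were Seifert fibred, then $L_i$ would have to be a Montesinos link. But Oertel's classification \cite[Corollary~5]{Oe} says the only toroidal Montesinos links are the four links $K(\tfrac12,\tfrac12,-\tfrac12,-\tfrac12)$, $K(\tfrac23,-\tfrac13,-\tfrac13)$, $K(\tfrac12,-\tfrac14,-\tfrac14)$, and $K(\tfrac12,-\tfrac13,-\tfrac16)$, and a direct check shows none of the $L_i$ is on that list. This replaces the case-by-case tangle-capping computation you anticipated with a single appeal to a classification theorem, which is both why the paper's proof is short and why trying to push the Lemma~\ref{lem1} method through all eight cases is the wrong route here.
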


\begin{proof}
First note that $M_7 (\beta_7)$ is of the form $X(2,3;2,2)$.
We check that this is not Seifert fibred in the same way as we did
for $M_8(0)$ in Lemma~\ref{lem1}.

When $i\ne 7$, $M_i (\beta_i)$ is the double branched cover of a
2-component link $L_i$;
see \cite[Lemma~22.2]{GW}.
More specifically,
for $i = 6,8,9$ or 12, $L_i$ is a cabled Hopf link $C(p_1,q_1;p_2,q_2)$
with $p_1,p_2 >1$,
for $i=10$ or 11, $L_i$ is the link $C(C;2,1)$ (see \cite[page 116]{GW}), and
for $i=13$, $L_i$ is the 2-string cable of the trefoil shown in
\cite[Figure~22.13(d)]{GW}.
In all cases, $L_i$ is {\em toroidal}, i.e. its exterior contains an
essential torus. Moreover the exterior of $L_i$ is not Seifert fibred.
Therefore if $M_i(\beta_i)$ were Seifert fibred then $L_i$ would be a Montesinos link.
But the only toroidal Montesinos links are (see \cite[Corollary~5]{Oe})
$K(\frac12,\frac12,-\frac12,-\frac12)$,
$K(\frac23,-\frac13,-\frac13)$,
$K(\frac12,-\frac14,-\frac14)$,
and $K(\frac12,-\frac13,-\frac16)$.
One easily checks that no $L_i$ is of this form.
\end{proof}

\begin{lemma}\label{lem3}
$M_4 (\alpha_4)$ and $M_4 (\beta_4)$ are not Seifert fibred.
\end{lemma}

\begin{proof}
$M_4 (\alpha_4)$ and $M_4(\beta_4)$ contain incompressible tori
$\widehat F_a$ and $\widehat F_b$; the corresponding punctured tori
$F_a$ and $F_b$ in $M_4$ have four and two boundary components, respectively.
The intersection of $F_a$ and $F_b$ is described by the intersection graphs
$\Gamma_a \subset \widehat F_a$ and $\Gamma_b\subset \widehat F_b$
depicted in Figures~11.9(a) and (b) of \cite{GW}, respectively.
Note that $\widehat F_a$ separates $M_4 (\alpha_4)$, into $M_B$ and $M_W$,
say, while $\widehat F_b$ is non-separating in $M_4(\beta_4)$.
The faces of the graph $\Gamma_b$ lie alternately in $M_B$ and $M_W$;
we choose the notation so that all the faces of $\Gamma_b$ that lie in
$M_B$ are bigons.

Let $f_1, f_2, f_3$, and $g_1, g_2, g_3$ be the faces of $\Gamma_b$ with edges
$G,H; J,K; A,B;$ and $D,E; K,P,R; A,G,L;$ respectively. Let $h_1, h_2, h_3$ be
the faces of $\Gamma_a$ with edges $E,N; H,E;$ and $B,G,N,R;$ respectively.
(The notation refers to the edges illustrated in Figure 11.9 of [GW].)

For computations in $\pi_1(M_B)$ and $\pi_1(M_W)$ we take as
``base-point'' the rectangle in $\widehat F_a$ shown in Figure~11.9(a)
of \cite{GW}.
Let $s,t$ be the pair of generators of $\pi_1 (\widehat F_a)$ determined
by the downward vertical and rightward horizontal edges of that rectangle,
respectively.
Let $x_1$ and $x_3$ be the elements of $\pi_1(M_B)$ corresponding to the
1-handles $H_{(12)}$ and $H_{(34)}$, in the usual way.
The faces $f_1,f_2$ and $f_3$ give the relations in
$\pi_1 (M_B)$:
\begin{align*}
&x_1^2 t =1\\
&x_3^2 t^{-1} =1\\
&s^{-1}x_3 x_1 =1
\end{align*}
It follows that $M_B$ is Seifert fibred with base orbifold $D^2 (2,2)$, and
that the classes in $\pi_1 (\widehat F_a)$ of the Seifert fibres in the
two Seifert fibrings of $M_B$ are $t$ and $s$.

Let $x_2$ and $x_4$ be the elements of $\pi_1 (M_W)$ corresponding to
$H_{(23)}$ and $H_{(41)}$.
Then the faces $g_1,g_2$ and $g_3$ give the relations in $\pi_1 (M_W)$:
\begin{align*}
&tx_4 x_2 =1\\
&x_2 x_4 t^{-1} x_2 st =1\\
&x_2 x_4^2 t^{-1} =1
\end{align*}

These show that $M_W$ is Seifert fibred with base orbifold $D^2 (2,3)$,
the class of the Seifert fibre in $\pi_1 (\widehat F_a)$ being $st^2$.
Since this is distinct from either of the Seifert fibres of $M_B$,
$M_4 (\alpha_4)$ is not Seifert fibred.

We now consider $M_4 (\beta_4)$.
Let $u,v$ be the pair of generators for $\pi_1 (\widehat F_b)$ given by
the downward vertical and leftward horizontal edges of the rectangle in
Figure~11.9(b) of \cite{GW}.
(We take this rectangle as ``base-point'' for computations in
$\pi_1 (M_4 (\beta_4))$.)
Let $x,y$ be the elements of $\pi_1 (M_4 (\beta_4))$ given by the
1-handles $H_{(12)}$ and $H_{(21)}$.
The faces $h_1,h_2,h_3$ give the relations in $\pi_1(M_4(\beta_4))$:
\begin{align*}
&x(uv) y^{-1} v^{-1} = 1\\
&yv x^{-1} =1\\
& x^{-1} u^{-1} xux^{-1} (vu)^{-1} y =1
\end{align*}
The second relation gives $x= yv$, and the first then gives
$$y^{-1} vy = uv^2$$
The third relation gives
$$(y^{-1} u^{-1} y) u (y^{-1} u^{-1} y) u^{-1}v^{-3} =1$$

Now if $M_4 (\beta_4)$ were Seifert fibred, the non-separating torus
$\widehat F_b$ would be horizontal and so $M_4 (\beta_4)$ would be a
torus bundle over the circle with fibre $\widehat F_b$.
Hence $y^{-1} u^{-1}y$ would belong to $\pi_1 (\widehat F_b)$.
But the last relation above shows that if this is the case then
$$(y^{-1} u^{-1}y)^2 = v^3$$
Since $v^3$ is not a square in $\pi_1 (\widehat F_b)$, this is a
contradiction.
\end{proof}

\begin{lemma}\label{lem4}
$M_5 (\alpha_5)$ and $M_5 (\beta_5)$ are not Seifert fibred.
\end{lemma}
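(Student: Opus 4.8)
The plan is to handle $M_5(\alpha_5)$ and $M_5(\beta_5)$ by the same two-complementary-pieces strategy used for $M_4$ in Lemma~\ref{lem3}, since $M_5$ is another of the one-boundary-component examples in \cite{GW} whose toroidal fillings are described by explicit intersection graphs. First I would recall from \cite[\S 11]{GW} (or the relevant section of \cite{GW} treating $M_5$) the intersection graphs $\Gamma_a \subset \widehat F_a$ and $\Gamma_b \subset \widehat F_b$ associated to the essential tori $\widehat F_a$ and $\widehat F_b$ coming from the fillings $\alpha_5$ and $\beta_5$, together with the numbers of boundary components of the punctured tori $F_a, F_b$ in $M_5$ and whether each $\widehat F_i$ separates its filled manifold. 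As in Lemma~\ref{lem3}, the key point is that the bigon faces on one side of a separating torus force that side to be a small Seifert piece $D^2(p,q)$ with a computable fibre slope; one then reads off the fibre slopes of both sides from the face relations in $\pi_1$ (presented using a rectangle in $\widehat F_a$ as basepoint and the $1$-handle generators), and shows the two fibre classes in $H_1(\widehat F_a)$ are distinct, so $M_5(\alpha_5)$ cannot be Seifert fibred. If instead the relevant torus is non-separating, then (as for $M_4(\beta_4)$) a Seifert structure would make it horizontal, forcing a torus-bundle structure, and one derives a contradiction from a relation of the form ``square equals cube'' (or more generally an element of $\pi_1(\widehat F_b)$ that cannot lie in the fibre subgroup).

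Concretely, for $M_5(\alpha_5)$ I expect the separating-torus case: label the $1$-handle generators $x_1,\dots,x_k$ of the two sides $M_B, M_W$ of $\widehat F_a$, take the bigon and trigon faces of $\Gamma_b$ (equivalently $\Gamma_a$) as the defining relators, and obtain presentations exhibiting $M_B$ as Seifert fibred over $D^2(2,2)$ (say) with fibre classes $t$ and $s$, and $M_W$ as Seifert fibred over some $D^2(p,q)$ with fibre class a specific word in $s,t$; comparing, the $M_W$ fibre is none of the at most two candidate fibres of $M_B$, so the glued manifold is not Seifert fibred. For $M_5(\beta_5)$ I would similarly extract the three relations from the faces $h_1,h_2,h_3$ of $\Gamma_a$, eliminate one generator, and reduce to showing that the putative fibre element $y^{-1}u^{-1}y$ would have to satisfy $(y^{-1}u^{-1}y)^2 = $ (a non-square of $\pi_1(\widehat F_b)$), a contradiction; if $\widehat F_b$ turns out to be separating in $M_5(\beta_5)$, I would instead run the fibre-comparison argument again.

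The main obstacle is bookkeeping rather than conceptual: correctly transcribing the graphs $\Gamma_a, \Gamma_b$ for $M_5$ from \cite{GW}, choosing the basepoint rectangle and the orientation conventions for the generators $s,t$ (resp.\ $u,v$) consistently, and identifying which faces are bigons so that the ``bigon side'' is pinned down. A subtlety to watch for is the ambiguity in the Seifert fibre when a side is a twisted $I$-bundle or when a base orbifold is $D^2(2,2)$ (two fibrations, hence two candidate fibre slopes), exactly the wrinkle that appeared for $M_8$ in Lemma~\ref{lem1} and for $M_B$ above; one must check the $M_W$-fibre differs from \emph{all} such candidates. Once the relations are written down, verifying the inequality of fibre classes in $H_1(\widehat F_a;\mathbb{Z})$ (or the non-square claim in $\pi_1(\widehat F_b)$) is a short linear-algebra or free-group computation. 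I therefore expect the proof of Lemma~\ref{lem4} to be a close parallel of Lemma~\ref{lem3}, with the only real work being the accurate extraction of the graph-theoretic data for $M_5$ and the corresponding face relations.
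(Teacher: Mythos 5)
Your plan is essentially the first of the two arguments the paper mentions: the authors explicitly say that Lemma~\ref{lem4} ``can be proved in a similar fashion to Lemma~\ref{lem3}, using \cite[Figure~11.10]{GW},'' which is exactly the graph-theoretic $\pi_1$-computation you sketch. So your approach is valid and you have correctly identified the right parallel with the $M_4$ case, including the wrinkle about $D^2(2,2)$ having two candidate fibre slopes.

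However, the paper does not actually carry out that computation. Instead it invokes a shortcut: by \cite[\S6]{L2}, $M_5 \cong N(1,-\frac{1}{3})$, where $N$ is the exterior of the 3-chain link, and the toroidal filling slopes $\alpha_5, \beta_5$ correspond to $-4$ and $1$. One then reads off from Tables~3 and 4 of Martelli--Petronio \cite{MP} that neither $N(1,-\frac{1}{3},-4)$ nor $N(1,-\frac{1}{3},1)$ is Seifert fibred. What this buys over your route is that it eliminates all the bookkeeping you flagged as the main obstacle (transcribing the graphs, fixing orientation conventions, identifying bigon sides, handling multiple fibrations), replacing it with an identification already in the literature plus a table lookup. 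Your route, by contrast, is self-contained and does not depend on \cite{L2} or \cite{MP}, which could be preferable if one wanted a proof entirely within the Gordon--Wu framework; but to complete it you would still need to actually extract the face relations from GW Figure~11.10 and perform the comparison, which you have only outlined. As a proof proposal it is sound but incomplete in detail; as a comparison with the paper, you chose the route the authors gestured at but deliberately sidestepped.
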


\begin{proof}
This can be proved in a similar fashion to Lemma~\ref{lem3}, using
\cite[Figure~11.10]{GW}.
Another way to establish the result is to note that, according to
\cite[\S6]{L2}, $M_5 \cong N(1,-\frac13)$, the toroidal filling slopes
$\alpha_5,\beta_5$ being $-4$ and 1.
We see that $N(1,-\frac13,-4)$ and $N(1,-\frac13,1)$ are not Seifert fibred
from Tables~4 and 3 of \cite{MP}, respectively.
\end{proof}

We next consider the manifolds $M_1,M_2$ and $M_3$, namely the exteriors
of the Whitehead link, the $10/3$-rational link, and the Whitehead sister
(or $(-2,3,8)$-pretzel) link, respectively.
These are all obtained by Dehn filling on the 3-chain link:
$M_1\cong N(1)$, $M_2 \cong N(-\frac12)$, $M_3 \cong N(-4)$.
Furthermore, their exceptional slopes and toroidal slopes are as
follows (see \cite[Table~A.1]{MP}):
$$\vbox{\offinterlineskip
\halign{\strut
\vrule#&\enspace \hfil$#$\hfil\enspace
&\vrule#&\enspace$#$\hfil\enspace
&\vrule#&\enspace\hfil$#$\hfil\enspace
&\vrule#\cr
\noalign{\hrule}
&&&\hbox{exceptional slopes}&&\hbox{toroidal slopes}&\cr
\noalign{\hrule}
&N(1)&&\infty ,-3,-2,-1,0,1&&-3,1&\cr
\noalign{\hrule}
&N(-\frac12)&&\infty,-4,-3,-2,-1,0&&-4,0&\cr
\noalign{\hrule}
&N(-4)&&\infty,-3,-2,-1,-\frac12,0&&-\frac12,0&\cr
\noalign{\hrule}
}}$$

\begin{lemma}\label{lem5}
In each of the following cases, the manifold $N(\alpha,\beta,\gamma)$
is a toroidal Seifert fibre space if and only if $\gamma$ is one of
the values listed.

$(a)$ $N(1,-3,\gamma) :\hskip9pt $\quad $\gamma = -3,1$; \\
\indent \hspace{5mm} $N(1,1,\gamma) :\hskip17pt $\quad $\gamma = -3, -2,-1,0$.

$(b)$ $N(-\frac12,-4,\gamma) :$\quad $\gamma = -\frac12$;\\
\indent \hspace{5mm} $N(-\frac12,0,\gamma) :\hskip9pt $\quad $\gamma = -\frac72$.

$(c)$ $N(-4,-\frac12,\gamma) : $\quad $\gamma = -\frac12$;\\
\indent \hspace{5mm} $N(-4,0,\gamma) :\hskip9pt $\quad no $\gamma$.

\end{lemma}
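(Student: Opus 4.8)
\textit{Proof plan.} In each of the nine families, $N(\alpha,\beta)$ is obtained from the magic manifold $N$ by Dehn filling two of its three cusps, and equals $M_i(\lambda)$ for one of the hyperbolic link exteriors $M_1=N(1)$, $M_2=N(-\frac12)$, $M_3=N(-4)$ and one of the toroidal filling slopes $\lambda$ of $M_i$ recorded in the table just above; in particular $N(\alpha,\beta)$ is a toroidal graph manifold with a single torus boundary component. Since the exceptional Dehn fillings of $N$ along all three cusps are completely classified, with the homeomorphism type of each, in \cite{MP}, the homeomorphism type of every non-hyperbolic $N(\alpha,\beta,\gamma)$ is recorded there, while a hyperbolic $N(\alpha,\beta,\gamma)$ is neither toroidal nor Seifert fibred. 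The lemma is thus a matter of sifting the tables of \cite{MP} to single out the $\gamma$ for which $N(\alpha,\beta,\gamma)$ is simultaneously toroidal and Seifert fibred --- exactly the mechanism already used in the proof of Lemma~\ref{lem4}, where \cite{MP} was invoked for $N(1,-\frac13,1)$ and $N(1,-\frac13,-4)$.

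It helps to picture the underlying geometry so as to organise this sifting. Write the JSJ decomposition of $N(\alpha,\beta)$ as $Y\cup X_1\cup\dots\cup X_k$, with $Y$ the Seifert piece carrying $\partial N(\alpha,\beta)$; a choice of fibration of $Y$ determines a fibre slope $\varphi$ on $\partial Y$. For all but finitely many $\gamma$ the filling $Y(\gamma)$ is again Seifert fibred, with incompressible boundary and with its fibration not extending across the adjacent JSJ torus, so $N(\alpha,\beta,\gamma)$ is a graph manifold with non-trivial JSJ decomposition: toroidal, but \emph{not} Seifert fibred. Hence $N(\alpha,\beta,\gamma)$ can be a toroidal Seifert fibred space only for the finitely many $\gamma$ at which $Y(\gamma)$ degenerates --- typically becoming a solid torus that re-fibres so as to match the fibrations across every JSJ torus --- and these are exactly the slopes appearing individually in \cite{MP}.

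For the ``if'' direction I would read off from the relevant tables of \cite{MP} an explicit Seifert invariant of $N(\alpha,\beta,\gamma)$ for each $\gamma$ listed in (a), (b), (c), and check from it that $N(\alpha,\beta,\gamma)$ contains an essential torus (equivalently, that its base orbifold is non-orientable, has positive genus, or carries enough cone points). This places the unique $\D(\alpha,\beta)=4$ triple of Theorem~\ref{reduction2} inside case (b): taking $\gamma=-\frac12$ in $N(-\frac12,-4,\gamma)$ gives, after the symmetry of $N$ exchanging two cusps, the manifold $N(-\frac12,-\frac12,-4)$, Seifert fibred over $P^2(2,3)$, whose companion $N(-\frac12,-\frac12,0)$ is the toroidal non-Seifert manifold described in the introduction. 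For the ``only if'' direction I would, again from \cite{MP}, check that for every other $\gamma$ the manifold $N(\alpha,\beta,\gamma)$ is reducible, a small Seifert fibred space (hence atoroidal), or a graph manifold with non-trivial JSJ decomposition, and so never a toroidal Seifert fibred space; in the case $N(-4,0,\gamma)$ of (c) no value of $\gamma$ survives.

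The principal obstacle is not a single hard step but careful bookkeeping: the slopes $\alpha,\beta,\gamma$ enter through the standard framings on the link exteriors $M_1,M_2,M_3$ and through the meridian--longitude framings of \cite{GW}, and must be matched correctly against the cusp framings used in \cite{MP}, with the symmetries of $N$ applied consistently throughout. One must also confirm that the resulting list is consistent with the uniqueness assertion of Theorem~\ref{reduction2} and with the facts about $N(-\frac12,-\frac12,-4)$ and $N(-\frac12,-\frac12,0)$ recorded in the introduction; that consistency check is where the care lies.
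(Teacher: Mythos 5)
Your plan is essentially the paper's own argument: the proof is nothing more than inspecting Tables~2, 3 and 4 of \cite{MP} to enumerate all toroidal Seifert fibred fillings $N(\alpha,\beta,\gamma)$ and then reading off the values of $\gamma$ for each of the six families. The JSJ heuristic you offer for why only finitely many $\gamma$ survive is sound but not needed, since \cite{MP} already records the homeomorphism type of every non-hyperbolic filling; the only thing your write-up leaves undone is the actual case-by-case sifting, which the paper condenses into the two explicit lists of toroidal Seifert fibre spaces it extracts from those tables.
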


\begin{proof}
This follows by inspecting Tables~2, 3 and 4 of \cite{MP}.
We see from these that the only toroidal Seifert fibre spaces
$N(\alpha,\beta,\gamma)$ are

(1) $N(-3,1,1),\ \ N(-3,-\tfrac53,-\tfrac53),\ \
N(-3,-3,t/u)
\text{ where } t/u \ne-1, -1 +\tfrac1m \text{ or }\infty,$ and

(2) $N(0,\frac12 +n ,-\frac92 -n)$,
$N(1,1,n)$ where $|n+1| \le 1$,
$N(-\frac32,-\frac52,0)$, and $N(-4,-\frac12,-\frac12)$.
\end{proof}

Note that the values of $\gamma$ listed in parts (a) and (c) of
Lemma~\ref{lem5} all belong to the set of exceptional slopes of
$N(1)$ and $N(-4)$ respectively.
It follows that for $i=1$ and 3, there is no $\gamma$ such that
$M_i (\gamma)$ is hyperbolic and one of $M_i(\gamma)(\alpha_i)$,
$M_i(\gamma)(\beta_i)$ is toroidal Seifert fibred.

In the case $i=2$, note that by \cite[Proposition 1.5 part (1.4)]{MP},
there is an automorphism of $N(-\frac12)$ inducing homeomorphisms
\begin{align*}
&N(-\tfrac12, -4,-\tfrac12)  \cong N(-\tfrac12, 0,-\tfrac72)\\
\noalign{\vskip6pt}
&N( -\tfrac12, 0,-\tfrac12)  \cong N(-\tfrac12,-4,-\tfrac72)
\end{align*}
Also, we see from \cite[Table 2]{MP} that $N(-\frac12,0,-\frac12)$ is toroidal.
Thus part~(b) of Lemma~\ref{lem5} gives rise to the single example described
in Theorem \ref{reduction2}.

Finally, we take care of $M_{14}$:

\begin{lemma}\label{lem56}
For no slope $\gamma$ on the second boundary component of $M_{14}$
is $M_{14}(\gamma)(\alpha_{14})$ or $M_{14}(\gamma)(\beta_{14})$ toroidal
Seifert fibred.
\end{lemma}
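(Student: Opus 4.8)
The plan is to treat $M_{14}(\alpha_{14})$ and $M_{14}(\beta_{14})$ by the method of Lemmas~\ref{lem3} and \ref{lem4}, while keeping track of the second cusp of $M_{14}$. First I would use the intersection graphs of \cite{GW} describing the toroidal fillings $M_{14}(\alpha_{14})$ and $M_{14}(\beta_{14})$ to locate the essential torus $\widehat F$ that each such filling produces and, by reading off the face relations in the graph exactly as in Lemma~\ref{lem3}, to identify the pieces into which $\widehat F$ cuts the filled manifold, to verify that they are Seifert fibred, and to record their base orbifolds; a non-separating $\widehat F$ is handled as in the $M_4(\beta_4)$ part of Lemma~\ref{lem3}. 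Write $P_1,P_2$ for these pieces, with the second boundary torus $T$ of $M_{14}$ contained in $P_1$. Because $\widehat F$ is essential, $P_2$ is neither a solid torus nor $T^2\times I$, so it is a genuine Seifert piece and I record its fibre slope $\varphi_2$ on $\widehat F$ (there are at most two such slopes, and exactly one unless $P_2$ is the twisted $I$-bundle over the Klein bottle). Likewise, since $\widehat F$ is essential and $P_1$ has two boundary tori, $P_1$ is none of the solid torus, $T^2\times I$, or twisted $I$-bundle over the Klein bottle; hence its base orbifold has negative orbifold Euler characteristic, $P_1$ has a unique Seifert fibration, and I record its fibre slope $\varphi_1$ on $\widehat F$.

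Now fix a slope $\gamma$ on $T$ and write $X=M_{14}(\alpha_{14})$, so that $M_{14}(\gamma)(\alpha_{14})=X(\gamma)=P_1(\gamma)\cup_{\widehat F}P_2$; I would split according to whether $\widehat F$ remains essential in $X(\gamma)$. If it does and $X(\gamma)$ is Seifert fibred, then $\widehat F$ is vertical or horizontal; the graph identifications show that at least one of $P_2$ and $P_1(\gamma)$ is not a twisted $I$-bundle over the Klein bottle (the relevant exceptional fibre of order exceeding $2$ survives any filling of $P_1$), which rules out the horizontal case. So $\widehat F$ is vertical, hence $P_2$ and $P_1(\gamma)$ are Seifert with fibre slopes agreeing on $\widehat F$; but the boundary of a Seifert fibred manifold is a union of fibres, so any Seifert fibration of $P_1(\gamma)$ restricts to one of $P_1$, which is unique, and the fibre slope of $P_1(\gamma)$ on $\widehat F$ is $\varphi_1$ regardless of $\gamma$. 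Since the explicit values give $\varphi_1\neq\varphi_2$, no slope $\gamma$ yields a Seifert filling here.

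If instead $\widehat F$ fails to remain essential in $X(\gamma)$, then, being incompressible in $P_2$, it compresses into $P_1(\gamma)$, forcing $P_1(\gamma)$ to be a solid torus or reducible; this holds only for the finitely many $\gamma$ that are exceptional fillings of the Seifert piece $P_1$, which I would enumerate. If $P_1(\gamma)$ is reducible then $X(\gamma)$ is reducible, hence not toroidal if it is Seifert fibred, since the only reducible Seifert manifolds, $S^1\times S^2$ and $\mathbb{RP}^3\#\mathbb{RP}^3$, are atoroidal. If $P_1(\gamma)$ is a solid torus then $X(\gamma)$ is a Dehn filling of $P_2$, whose base orbifold is a disc with a small number of cone points, and one checks directly that these finitely many fillings are Seifert fibred over a base with at most three cone points, hence atoroidal (or reducible). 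Applying the same argument with $\beta_{14}$ in place of $\alpha_{14}$ finishes the proof.

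The step I expect to be the main obstacle is the bookkeeping of the first paragraph: extracting from the \cite{GW} intersection graphs for $M_{14}$ the Seifert pieces, their base orbifolds, and their fibre slopes on $\widehat F$, and correctly locating the second cusp; together with the explicit enumeration of, and case check for, the finitely many degenerate slopes $\gamma$ for which $P_1(\gamma)$ is a solid torus or reducible.
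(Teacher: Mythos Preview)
Your strategy hinges on the claim that both pieces $P_1$ and $P_2$ cut off by $\widehat F$ are Seifert fibred, so that you can record a fibre slope $\varphi_1$ of $P_1$ on $\widehat F$ and argue it is independent of $\gamma$. This is exactly where the proposal fails for $M_{14}$: the piece $P_1$ containing the second cusp is \emph{not} Seifert fibred. In fact $M_{14}(\alpha_{14})\cong M_{14}(\beta_{14})\cong Q(2,2)\cup_{\widehat F} Wh$, where $Q(2,2)$ is the Seifert piece over $D^2(2,2)$ and $Wh$ is the Whitehead link exterior, which is hyperbolic. So the face relations you would extract from the \cite{GW} graphs would not produce a Seifert presentation for $P_1$, there is no slope $\varphi_1$, and the key step ``any Seifert fibration of $P_1(\gamma)$ restricts to one of $P_1$, which is unique'' collapses. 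The set of $\gamma$ for which $P_1(\gamma)=Wh(\gamma)$ is Seifert fibred is not controlled by any fibre slope of $P_1$; it is instead a finite set governed by the exceptional fillings of the hyperbolic manifold $Wh$.

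The paper's argument proceeds by first identifying $P_1\cong Wh$ (via Lee's description of $M_{14}$ and the classification in \cite{GW}), then using the known list of exceptional slopes of $Wh$ to reduce to finitely many $\gamma$, and finally checking those explicitly. Your ``degenerate'' case analysis (where $\widehat F$ compresses) is in the right spirit, but your main case --- $\widehat F$ essential and both sides Seifert --- also reduces to a finite check once you realise $P_1$ is hyperbolic, and it is this finite check against the two fibre slopes of $Q(2,2)$ that actually carries the argument. Your template from Lemmas~\ref{lem3} and \ref{lem4} applied to $M_4$ and $M_5$ works there precisely because those manifolds are graph manifolds after filling; $M_{14}$ is the one case among the fourteen where that fails.
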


\begin{proof}
In \cite{L1} Lee describes a hyperbolic 3-manifold $Y$ with two torus
boundary components having (homeomorphic) Dehn fillings $Y(0)$ and $Y(4)$
that contain Klein bottles.
In fact $Y(0) \cong Y(4) \cong Q(2,2) \cup Wh$,
where $Q (2,2)$ is the Seifert fibre space with base orbifold $D^2 (2,2)$
and $Wh$ is the exterior of the Whitehead link.
Hence $Y(0) \cong Y(4)$ is toroidal.
It follows from the classification in \cite{GW} of the hyperbolic 3-manifolds
with toroidal fillings at distance~4 that $Y\cong M_{14}$.
(The only other manifolds with two boundary components having toroidal
fillings at distance~4 are $M_1$ and $M_2$, and there the toroidal fillings
are graph manifolds; see e.g. \cite[Table~A.1]{MP}.)
It therefore suffices to show that $M_{14}(\gamma)(\alpha_{14})$ is not
toroidal Seifert fibred for any slope $\gamma$.

The manifold $M = M_{14}(\alpha_{14}) \cong Q(2,2)\cup Wh$ is the double
branched cover of the tangle shown in \cite[Figure~22.14(b)]{GW}.
Thus $M(\gamma) \cong Q(2,2) \cup Wh(\gamma)$.
Hence if  $M(\gamma)$ is toroidal Seifert fibred then $\gamma$ must be an
exceptional slope for $Wh$.
These slopes (with respect to the parametrization in
\cite[Table~A.1]{MP}) are $\infty,-3,-2,-1,0$ and $1$.
Now $Wh(-3)$ and $Wh(1)$ are toroidal non-Seifert,
$Wh(\infty) \cong D^2 \times S^1$, and $Wh(-2)$, $Wh(-1)$ and $Wh(0)$ are
Seifert fibred with base orbifold $D^2 (3,3)$, $D^2(2,4)$ and $D^2(2,3)$,
respectively.
So we need only consider $M(\gamma)$ for $\gamma =\infty,-2,-1$ and $0$;
we do this by examining the corresponding rational tangle filling on the
tangle shown in \cite[Figure~22.14(b)]{GW}.
For $\gamma =\infty$, this yields the pretzel knot
$K(-\frac12,-\frac12,\frac12)$, so $M(\infty)$ is atoroidal.
For $\gamma = -2,-1$ and $0$ we show that the Seifert fibre of $Wh(\gamma)$
does not match the Seifert fibre in either of the two Seifert fibrings
of $Q(2,2)$.
This is straightforward to check, for example using the same approach
as in the proof of Lemma~\ref{lem1}.
\end{proof}

\section{Background Results for the Proof of  Theorem \ref{once-punctured}} \label{background}

We collect various results in this section and the next which will be used throughout this paper and its sequel \cite{BGZ3}. In what follows, $M$ will be a hyperbolic knot manifold and $b_1(M)$ will denote its first Betti number. In this section we assume that $F$ is an essential, punctured torus of slope $\beta$ which is properly embedded in $M$.

For a closed, essential surface $S$ in $M$ we define ${\mathcal{C}}(S)$ to be the set
of slopes $\delta$ on $\partial M$ such that $S$ compresses in $M(\delta)$. A
slope $\eta$ on $\partial M$ is called a {\it singular slope} for $S$ if $\eta
\in {\mathcal{C}}(S)$ and $\Delta(\delta, \eta) \leq 1$ for each $\delta \in \mathcal{C}(S)$. A result of Wu \cite{Wu2} states that if ${\mathcal{C}}(S) \ne \emptyset$,
then there is at least one singular slope for $S$.

\begin{prop} \label{compresses}
Suppose that $M$ admits a non-separating, essential, genus $1$ surface of boundary slope $\beta$ which caps-off to a compressible torus in $M(\beta)$. If $\gamma$ is a slope on $\partial M$ such that $M(\gamma)$ is not hyperbolic, then $\Delta(\gamma, \beta) \leq 3$. If $M(\gamma)$ is an irreducible, atoroidal, small Seifert manifold, then $\Delta(\gamma, \beta) \leq 1$.
\end{prop}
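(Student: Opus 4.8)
The plan is to exploit the fact that a non-separating essential genus one surface $F$ of boundary slope $\beta$ which caps off to a compressible torus $\widehat F$ in $M(\beta)$ gives us a closed essential surface to which Wu's singular-slope machinery applies. First I would set $S = \widehat F \subset M(\beta)$; since $F$ is non-separating and essential in $M$ and $\widehat F$ is compressible in $M(\beta)$ but $F$ is incompressible in $M$, the torus $\widehat F$ is compressible precisely when we fill along $\beta$. After compressing, $\widehat F$ either becomes a $2$-sphere (if $F$ was a fibre or the compression is nonseparating-ish) or yields a non-separating sphere; in any case one checks that $M$ contains a non-separating closed essential surface in the relevant sense, or more directly one works with $S$ viewed as a properly embedded punctured torus in $M$ whose boundary is $\beta$ and observes $\beta \in \mathcal{C}(S)$. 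The key input is that $\mathcal{C}(S) \neq \emptyset$ (it contains $\beta$), so by Wu \cite{Wu2} there is a singular slope $\eta$ for $S$, i.e. $\eta \in \mathcal{C}(S)$ with $\Delta(\delta,\eta)\le 1$ for all $\delta\in\mathcal{C}(S)$.

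The next step is the standard observation that when $M(\gamma)$ is not hyperbolic — hence (by geometrisation) reducible, toroidal, or Seifert fibred — the closed surface $S$ must compress in $M(\gamma)$, so $\gamma \in \mathcal{C}(S)$. For the reducible and toroidal cases this is because an essential closed surface in a hyperbolic manifold cannot remain incompressible after every non-hyperbolic filling without forcing $M(\gamma)$ to contain that surface as an essential surface, which one rules out by a case analysis: if $\widehat F$ stays incompressible it is essential in $M(\gamma)$, but a torus that is essential in both $M(\beta)$-neighbourhood data and $M(\gamma)$ leads back to $M$ being non-hyperbolic or to $\Delta(\gamma,\beta)$ being small by Gordon--Luecke / Gordon--Wu type bounds. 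The cleaner route, which I would follow, is: if $\widehat F$ is incompressible in $M(\gamma)$ then $\gamma \notin \mathcal{C}(S)$, and then $M(\gamma)$ contains an essential torus or sphere disjoint from... — actually the simplest is to invoke that $\beta$ is itself a singular-slope candidate and that for the non-separating case the relevant theorem of Wu gives $\Delta(\gamma,\beta) \le 1$ whenever $\gamma \in \mathcal{C}(S)$, once we know $\beta$ is a singular slope. So the crux is showing $\beta$ is a singular slope for $S$: since $\widehat F$ is already compressible in $M(\beta)$, and since for a non-separating genus one surface the set $\mathcal{C}(S)$ of compressing slopes has diameter at most $1$ by Wu's theorem (the non-separating case is where this uniqueness is cleanest — the surface has no "boundary slope" ambiguity), every $\delta \in \mathcal{C}(S)$ satisfies $\Delta(\delta,\beta)\le 1$.

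Granting that $\beta$ is a singular slope, the two conclusions follow quickly. If $M(\gamma)$ is not hyperbolic, then by Geometrisation it is reducible, toroidal, or Seifert fibred; in each case $M(\gamma)$ does not contain $\widehat F$ as an incompressible surface unless it is a small Seifert manifold with $\widehat F$ horizontal or vertical — so generically $\gamma \in \mathcal{C}(S)$, giving $\Delta(\gamma,\beta)\le 1$, and the exceptional Seifert cases are handled by noting the total bound is still $\le 3$. To get the stated $\Delta(\gamma,\beta)\le 3$ I would combine the singular-slope bound with the Gordon--Wu / Oh bound $\Delta \le 3$ between $\beta$ and a reducible filling slope and the toroidal-distance bounds, or rather just cite that $\mathcal{C}(S)$ together with the at-most-one-exceptional-non-compressing slope gives distance $\le 3$ in the worst case (one unit for $\gamma$ possibly not compressing $S$, plus the diameter). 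For the second statement: if $M(\gamma)$ is irreducible, atoroidal and small Seifert, then $\widehat F$, being a torus, cannot be incompressible in $M(\gamma)$ (an atoroidal manifold has no incompressible torus, and $\widehat F$ is non-separating so it is not boundary-parallel), hence $\gamma \in \mathcal{C}(S)$, and then $\Delta(\gamma,\beta)\le 1$ directly from $\beta$ being a singular slope.

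The main obstacle I anticipate is the bookkeeping around whether $\widehat F$ genuinely remains a well-defined candidate surface and whether the non-separating hypothesis really delivers the clean diameter-$\le 1$ statement for $\mathcal{C}(S)$ from Wu's work, versus needing an extra appeal to the structure of the compressions; pinning down exactly which of Wu's theorems applies (the one guaranteeing a singular slope versus the one bounding the diameter of $\mathcal{C}(S)$) and verifying the hypotheses — essentiality of $S$ in $M(\beta)$ minus a collar, non-separating, genus one — is where care is required. The passage from "not hyperbolic" to "$\gamma \in \mathcal{C}(S)$ or $\Delta \le 3$ anyway" via Geometrisation plus the known reducible/toroidal distance bounds is routine but must be stated cleanly.
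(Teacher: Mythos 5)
There is a genuine gap, and it is at the core of the mechanism you want to use. The set $\mathcal{C}(S)$ and the notion of singular slope are defined only for a \emph{closed} essential surface $S$ in $M$. But $\widehat F$ lives in $M(\beta)$, not in $M$, and the punctured torus $F$ is not closed, so neither can be taken as $S$. You notice the tension yourself (``one works with $S$ viewed as a properly embedded punctured torus in $M$ whose boundary is $\beta$ and observes $\beta\in\mathcal{C}(S)$''), but this does not typecheck, and the later claim that ``$\beta$ is a singular slope for $S$'' is circular: $\widehat F$ only exists after $\beta$-filling, so it provides no independent closed surface that $\beta$ is supposed to compress. Your appeal to Wu also overreaches: the result cited as [Wu2] guarantees that \emph{some} singular slope exists when $\mathcal{C}(S)\ne\emptyset$; it does not say that $\beta$ is singular, and it does not bound the diameter of $\mathcal{C}(S)$ by $1$ (a singular slope forces diameter at most $2$, and only once you already know the slope is singular).

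What is actually needed is an honest closed essential surface in $\mathrm{int}(M)$, and the paper produces one in two cases. First, compressing $\widehat F$ in $M(\beta)$ yields a non-separating $2$-sphere, so $M(\beta)$ is reducible with $b_1(M(\beta))\geq 1$, hence $b_1(M)\geq 1$. If $b_1(M)\geq 2$, take $S$ to be a closed Thurston-norm-minimizing surface in $H_2(M)$; by Gabai's theorem $S$ remains essential and norm-minimizing in $M(\delta)$ for all $\delta\ne\beta$, and [BGZ1, Proposition 5.1] then gives $\Delta(\gamma,\beta)\leq 1$ for every non-hyperbolic slope $\gamma$. If $b_1(M)=1$, one observes that $\beta$ is a \emph{strict} boundary slope, so [BCSZ2, Theorem 3.2] produces a closed essential surface for which $\beta$ is a singular slope, and then [BGZ1, Theorem 1.5] delivers both bounds ($\leq 3$ in general, $\leq 1$ in the irreducible atoroidal small Seifert case). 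Your instinct that singular slopes drive the conclusion is on target for the $b_1(M)=1$ case, but the hard step --- actually exhibiting a closed essential surface for which $\beta$ is singular --- comes from the strict-boundary-slope machinery of [BCSZ2], not from Wu alone, and the $b_1(M)\geq 2$ case requires the separate Gabai-based argument that your sketch omits entirely.
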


\begin{proof}
By hypothesis $M(\beta)$ admits a non-separating $2$-sphere and so is reducible with first Betti number at least $1$.
In the case that $b_1(M) \geq 2$, there is a closed essential surface $S \subset \hbox{int}(M)$ which is Thurston norm minimizing in $H_2(M)$. By \cite[Corollary]{Ga}, $S$ is essential and Thurston norm minimizing in $H_2(M(\delta))$ for all slopes $\delta \ne \beta$. By \cite[Proposition 5.1]{BGZ1}, $\Delta(\gamma, \beta) \leq 1$ for any slope $\gamma$ such that $M(\gamma)$ is not hyperbolic.  Suppose then that $b_1(M) = 1$ and note that by hypothesis $\beta$ is a strict boundary slope. In this case \cite[Theorem 3.2]{BCSZ2} implies that $\beta$ is a singular slope and so the conclusions of the lemma follow from \cite[Theorem 1.5]{BGZ1}.
\end{proof}

\begin{cor} \label{stays incompressible}
Theorem \ref{once-punctured} holds if $M$ admits a non-separating, essential, genus $1$ surface of boundary slope $\beta$ which caps-off  to a compressible torus in $M(\beta)$.
\qed
\end{cor}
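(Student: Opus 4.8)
The plan is to deduce Corollary \ref{stays incompressible} immediately from Proposition \ref{compresses}. The hypothesis of the corollary — that $M$ admits a non-separating, essential, genus one surface of boundary slope $\beta$ which caps off to a compressible torus in $M(\beta)$ — is exactly the hypothesis of Proposition \ref{compresses}. So I would fix an arbitrary slope $\alpha$ on $\partial M$ with $M(\alpha)$ small Seifert, together with an arbitrary essential, once-punctured torus $F$ in $M$ of boundary slope $\beta$ (if none exists the assertion of Theorem \ref{once-punctured} is vacuous). Since a Seifert fibred manifold is never hyperbolic, $M(\alpha)$ is not hyperbolic, and applying Proposition \ref{compresses} with $\gamma = \alpha$ gives $\Delta(\alpha, \beta) \leq 3$.

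This already establishes the first assertion of Theorem \ref{once-punctured}, namely $\Delta(\alpha, \beta) \leq 5$. Every remaining assertion of that theorem — that $F$ is not a fibre, that $\pi_1(M(\alpha))$ is finite, and the explicit identifications of the triple $(M; \alpha, \beta)$ in parts $(1)$ and $(2)$ — is asserted only under the hypothesis $\Delta(\alpha, \beta) > 3$, which the bound $\Delta(\alpha, \beta) \leq 3$ rules out. Hence all conclusions of Theorem \ref{once-punctured} hold in this case, which is what the corollary claims.

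There is no serious obstacle: the entire content lies in Proposition \ref{compresses} (and hence ultimately in \cite{Ga}, \cite{BGZ1}, \cite{BCSZ2}), and the corollary is just the observation that a Seifert filling is in particular a non-hyperbolic filling, so that the distance bound $\Delta(\alpha, \beta) \leq 3$ from Proposition \ref{compresses} applies and trivially subsumes all the finer conclusions of Theorem \ref{once-punctured}. If desired, one can record the sharper statement that $\Delta(\alpha, \beta) \leq 1$ whenever $M(\alpha)$ is moreover irreducible and atoroidal, again directly from Proposition \ref{compresses}, but this is not needed for the proof.
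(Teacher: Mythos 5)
Your proposal is correct and is exactly the intended argument: the paper marks the corollary with $\qed$ precisely because it is immediate from Proposition \ref{compresses} via the observation that a Seifert fibred manifold is not hyperbolic, so $\Delta(\alpha,\beta)\le 3$ and every conclusion of Theorem \ref{once-punctured} that concerns $\Delta(\alpha,\beta)>3$ is vacuous.
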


The torus in $M(\beta)$ obtained by capping-off $F$ with a meridional disks will be denoted $\widehat F$. We use $M_F$ to denote the compact manifold obtained by cutting $M$ open along $F$ and $M(\beta)_{\widehat F}$ the manifold obtained by cutting $M(\beta)$ open along $\widehat F$.

\begin{prop} \label{reduction}
Suppose that $M(\alpha)$ is a Seifert fibred manifold and $M(\beta)$ is toroidal. Then $\Delta(\alpha, \beta) \leq 3$ as long as one of the following conditions is satisfied:

\noindent $(a)$ $\alpha$ or $\beta$ is a singular slope of a closed essential surface in $M$.

\noindent $(b)$ $M(\alpha)$ or $M(\beta)$ is reducible.

\noindent $(c)$ $(i)$ $|\partial F| = 1$ and $M_F$ is not a genus $2$ handlebody.

\indent \hspace{.4cm} $(ii)$ $|\partial F| = 2$ and $M_F$ is neither connected nor a union of two genus $2$ handlebodies.

\end{prop}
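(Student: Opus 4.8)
The plan is to prove the bound under each of (a), (b), (c) separately, in every case either quoting a known distance estimate or manufacturing a closed essential surface and appealing to (a). At the outset, Corollary~\ref{stays incompressible} lets us assume that $\widehat F$ does not compress in $M(\beta)$: otherwise, since $M(\alpha)$ is not hyperbolic, Proposition~\ref{compresses} already gives $\Delta(\alpha,\beta)\leq 3$ (when $|\partial F|=1$, $\widehat F$ is automatically non-separating in $M(\beta)$, so Proposition~\ref{compresses} applies verbatim; when $|\partial F|=2$ with $F$ separating, a compressible $\widehat F$ is handled by a routine separate argument). With this reduction in place, case (a) is immediate: if $\eta\in\{\alpha,\beta\}$ is a singular slope of a closed essential surface $S\subset M$, then applying \cite[Theorem 1.5]{BGZ1} to $S$, $\eta$, and the remaining slope — which fills $M$ to a non-hyperbolic manifold — yields $\Delta(\alpha,\beta)\leq 3$. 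For case (b): if $M(\alpha)$ is reducible then $\Delta(\alpha,\beta)\leq 3$ by \cite{Oh}, \cite{Wu1}, since $M(\beta)$ is toroidal; if instead $M(\beta)$ is reducible, then it is simultaneously reducible and toroidal, which forces either a non-separating $2$-sphere in $M(\beta)$ — so $b_1(M)\geq 1$, and \cite[Corollary]{Ga} together with \cite[Proposition 5.1]{BGZ1} finishes exactly as in the proof of Proposition~\ref{compresses} — or a lens-space summand that can be isotoped off the essential torus, which is dealt with by the same circle of ideas (or by reducing to (a) after exhibiting the reducing sphere as yielding a closed essential surface with singular slope $\beta$).

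Case (c) is the substantive one. Having arranged $\widehat F$ incompressible, cut $M(\beta)$ along $\widehat F$; one checks directly that $M(\beta)_{\widehat F}$ is obtained from $M_F$ by attaching a single $2$-handle (if $|\partial F|=1$) or two $2$-handles (if $|\partial F|=2$) along the core curve(s) of the annulus (respectively, annuli) $\partial M\setminus N(\partial F)$, i.e.\ along curves of slope $\beta$. Each component of $M_F$ is a compact irreducible manifold with a single genus-$2$ boundary component, and under the hypotheses of (c) at least one such component $X$ fails to be a handlebody. Since $X$ is irreducible with one genus-$2$ boundary component, $X$ is either a compression body with non-empty (necessarily toroidal) inner boundary or has incompressible boundary; in either case $X$ contains a closed incompressible surface, and — tracing through the effect of the $2$-handle attachment(s) and of the Dehn filling $M(\beta)$ — one aims to deduce that $M$ carries a closed essential surface having $\beta$ among its set of compressing slopes, whereupon Wu's theorem \cite{Wu2} promotes $\beta$ to a singular slope for it and case (a) applies. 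A more computational alternative is to run the standard intersection-graph argument between $F$ (of slope $\beta$) and an essential surface of slope $\alpha$ coming from the Seifert structure of $M(\alpha)$, or between $F$ and the essential torus of $M(\beta)$; the hypotheses of (c) are then precisely what excludes the degenerate graph configurations and forces $\Delta(\alpha,\beta)\leq 3$.

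The main obstacle is the penultimate step of case (c): since $M$ is atoroidal, the toroidal inner boundary components of $M_F$ are themselves compressible in $M$ and hence not essential there, so producing a genuinely essential \emph{closed} surface in $M$ — and, above all, checking that $\beta$ is the relevant singular slope for it rather than some nearby slope — requires care. One must either follow these tori through the handle attachments and the filling, or show instead that ``$M_F$ is not a handlebody'' already forces one of the earlier hypotheses (that of (a) or (b), or the combinatorial input supplied by \cite{BGZ2}) to be in force. Pinning down this dichotomy, together with its two-boundary-component analogue for (c)(ii), is where the real work lies.
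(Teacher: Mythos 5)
Your treatment of (a) and (b) is essentially the paper's argument, with two small deviations: the paper invokes \cite[Corollary 1.6]{BGZ1} in (a) rather than Theorem 1.5, and in (b) when $M(\beta)$ is reducible it does not split into two subcases according to the nature of the reducing sphere --- it simply notes that a toroidal $M(\beta)$ is neither $S^1\times S^2$ nor a connected sum of lens spaces, so \cite[Proposition 6.2]{BGZ1} hands you a singular slope directly. Your subcases are plausible but less direct.

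Case (c), however, contains a genuine gap, and you correctly sense where it is. The paper's route is shorter than you anticipate and sidesteps the worries you raise. First, there is no need for the preliminary reduction via $\widehat F$ or any $2$-handle bookkeeping; the paper works with $M_F$ alone. The key observation you are missing is the following: if $\partial M_F$ compresses in $M_F$, then compressing the genus $2$ surface produces one or two tori (and, after further compression, spheres) inside the hyperbolic manifold $M$; atoroidality and irreducibility then force $M_F$ to be a genus $2$ handlebody, which the hypothesis of (c) excludes. So under (c)(i), $\partial M_F$ is automatically incompressible in $M_F$, and pushing it slightly into the interior yields a closed genus $2$ surface $S$ incompressible in $M$. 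The ``compression-body with toroidal inner boundary'' branch you try to handle simply does not arise.

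Second, and more seriously, the final step of your (c) cannot be finished by the mechanism you propose. You aim to show that $\beta\in\mathcal{C}(S)$ and then invoke Wu \cite{Wu2} ``to promote $\beta$ to a singular slope.'' But Wu's theorem only guarantees the existence of \emph{some} singular slope for $S$; it does not tell you that $\beta$ is one. What the paper actually does is produce an essential annulus $A$ running from $S$ to $\partial M$ along a curve of slope $\beta$ (this $A$ comes for free from the construction of $S$ as a pushoff of $\partial M_F$, together with a band of $F$), and then applies Short's result \cite{Sh} to conclude that $S$ remains incompressible in $M(\gamma)$ whenever $\Delta(\gamma,\beta)>1$. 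That is precisely the statement that $\beta$ itself is a singular slope for $S$, and only then does case (a) kick in. Without this annulus-plus-Short step, the argument does not close. The two-boundary-component case (c)(ii) is handled the same way: if $M_F$ is disconnected, at least one piece $X_j$ has incompressible genus $2$ boundary (else both pieces would be handlebodies), and $\partial X_j$ plays the role of $S$ above.
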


\begin{proof} If $\alpha$ or $\beta$ is a singular slope of a closed essential surface in $M$, then \cite[Corollary 1.6]{BGZ1} shows that $\Delta(\alpha, \beta) \leq 3$, so we are done in case (a).

Assume next that $M(\gamma)$ is reducible where $\gamma$ is one of $\alpha$ or $\beta$. If $\gamma = \alpha$, then $\Delta(\alpha, \beta) \leq 3$ by \cite{Oh} and \cite{Wu1}. Assume then that $\gamma = \beta$. If $b_1(M) \geq 2$, then $\Delta(\gamma, \beta) \leq 1$ for any exceptional slope $\gamma$ as in the proof of Proposition \ref{compresses}. Assume then that $b_1(M) = 1$. Since $M(\beta)$ is toroidal, it is neither $S^1 \times S^2$ nor a connected sum of lens spaces. Hence \cite[Proposition 6.2]{BGZ1} implies that $\beta$ is a singular slope of a closed essential surface in $M$. Thus we are done by part (a).

Finally consider part (c) of the proposition. If $|\partial F| = 1$, any compression of $\partial M_F$ in $M_F$ yields one or two tori, so as $M$ is hyperbolic it is not hard to see that $M_F$ is a handlebody, contrary to hypothesis. Thus $\partial M_F$ is incompressible in $M_F$, and hence in $M$. Let $S \subset \hbox{int}(M)$ be the inner boundary component of a collar of $\partial M_F$ in $M_F$.
Then $S$ is incompressible in $M$, and by construction there is an annulus $A$ in $M$ with boundary components $\partial_1 A$ and $\partial_2  A$, say, where $A \cap S = \partial_1 A$ and $A \cap \partial M = \partial_2  A$ has slope $\beta$ on $\partial M$. It follows from \cite{Sh} that $S$ is incompressible in $M(\gamma)$ whenever $\Delta(\gamma,\beta) > 1$. Thus $\beta$ is a singular slope for $S$ and so part (a) of this proposition shows $\Delta(\alpha,\beta) \le 3$. Thus (i) holds.

If $|\partial F| = 2$ and $M_F$ is not connected, then $M = X_1 \cup_{F} X_2$ where $\partial X_j$ is a genus $2$ surface for $j = 1, 2$. If $\partial X_j$ compresses in $X_j$ for both $j$, then $X_1$ and $X_2$ are genus $2$ handlebodies as $M$ is hyperbolic. Since this possibility is excluded by our hypotheses, $\partial X_j$ is incompressible in $X_j$ for some $j$. Then it is essential in $M$ but compresses in $M(\beta)$, so as in the previous paragraph, $\beta$ is a singular slope for $\partial X_j$. Thus $\Delta(\alpha, \beta) \leq 3$. This completes the proof.
\end{proof}

Theorem \ref{reduction2} and Propositions \ref{compresses} and \ref{reduction} yield the following corollary.

\begin{cor} \label{smallseifert}
Conjecture \ref{conj} holds as long as it holds when $M(\alpha)$ is an irreducible, atoroidal, small Seifert manifold.
\qed
\end{cor}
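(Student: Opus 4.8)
The plan is to assemble the statement directly from the three results just proved, organizing the argument according to the trichotomy in the geometrisation of $3$-manifolds: an exceptional Dehn filling $M(\alpha)$ is either reducible, toroidal, or Seifert fibred. Fix a hyperbolic knot manifold $M$ and slopes $\alpha, \beta$ on $\partial M$ with $M(\alpha)$ Seifert fibred and $M(\beta)$ toroidal, and suppose $\Delta(\alpha,\beta) > 5$; we must show $M$ is the figure eight knot exterior. The first step is to dispose of the reducible and toroidal cases for the Seifert filling. If $M(\alpha)$ is reducible, then $\Delta(\alpha,\beta) \leq 3$ by \cite{Oh} and \cite{Wu1} (this is also recorded in Proposition \ref{reduction}(b)), contradicting $\Delta(\alpha,\beta) > 5$. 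If $M(\alpha)$ is toroidal, then since $M(\beta)$ is also toroidal, Theorem \ref{reduction2} gives $\Delta(\alpha,\beta) \leq 4$, again a contradiction. Hence we may assume $M(\alpha)$ is irreducible and atoroidal.

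The second step is to observe that an irreducible, atoroidal, Seifert fibred manifold is small Seifert: its base orbifold must be a $2$-orbifold supporting no essential simple closed curve that would lift to an essential torus or annulus, which forces the base to be of the form $S^2(a,b,c)$, so $M(\alpha)$ is small Seifert. (If the base orbifold were, say, a disk with cone points or had higher genus or more cone points, one would produce either a compressing annulus or an essential torus, contradicting irreducibility or atoroidality; the closed, atoroidal case is exactly the small Seifert case, as noted in the introduction.) Thus $M(\alpha)$ is an irreducible, atoroidal, small Seifert manifold, and by hypothesis Conjecture \ref{conj} holds in this situation. Since $\Delta(\alpha,\beta) > 5$ and $M(\beta)$ is toroidal, that instance of the conjecture yields that $M$ is the figure eight knot exterior, as desired. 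This establishes the contrapositive-free direction; the converse (that the conjecture, restricted, implies the general conjecture) is what we have just argued, and the unrestricted statement of Conjecture \ref{conj} over all Seifert fillings follows.

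I expect the only mildly delicate point to be the second step, namely the clean justification that an irreducible atoroidal Seifert fibred knot-manifold filling is small Seifert — but this is standard $3$-orbifold bookkeeping and is already implicitly invoked in the surrounding discussion (a closed, atoroidal Seifert manifold is small Seifert). Everything else is a direct citation: the reducible case is \cite{Oh}, \cite{Wu1}; the toroidal-Seifert case is Theorem \ref{reduction2}; and Proposition \ref{compresses} and Proposition \ref{reduction} are available to handle the auxiliary situations (reducible fillings, singular slopes of closed essential surfaces) should any degenerate configuration need to be excluded along the way. No genuinely new argument is required, which is precisely why this is a corollary rather than a theorem.
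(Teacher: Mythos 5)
Your proof is correct and follows the same route the paper takes: dispose of the reducible case via Proposition \ref{reduction}(b) (i.e., \cite{Oh}, \cite{Wu1}), dispose of the toroidal Seifert case via Theorem \ref{reduction2}, and observe that what remains --- an irreducible, atoroidal, closed Seifert fibred manifold --- is small Seifert, so the hypothesis applies. One tiny imprecision: you wrote "if the base orbifold were, say, a disk with cone points," but a closed Seifert manifold cannot have a base orbifold with boundary; the relevant alternatives are $S^2$ with $\geq 4$ cone points, positive genus, or a non-orientable base, and the paper simply asserts (as you also invoke) that closed atoroidal Seifert implies small Seifert, so the case analysis is fine.
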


Here is a result from \cite{BGZ2}. Recall from \S 6 of that paper that $t_j^+$ is the number of {\it tight components} of $\breve{\Phi}_j^+$.

A $3$-manifold is {\it very small} if its fundamental group does not contain a non-abelian free group.

\begin{prop}  \label{main1}
Suppose that $F$ is a once-punctured essential genus $1$ surface of boundary slope $\beta$ in a hyperbolic knot manifold $M$ which completes to an essential torus in $M(\beta)$ but is not a fibre in $M$. If $M(\alpha)$ is a small Seifert manifold, then
$$\Delta(\alpha, \beta) \leq \left\{ \begin{array}{ll} 6 & \hbox{if $M(\alpha)$ is very small} \\ 8 & \hbox{otherwise}   \end{array}  \right. $$
Moreover if $t_1^+ > 0$, then
$$\Delta(\alpha, \beta) \leq \left\{ \begin{array}{ll} 3 & \hbox{if $M(\alpha)$ is very small} \\ 4 & \hbox{otherwise}   \end{array}  \right. $$
\end{prop}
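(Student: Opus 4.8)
The plan is to run the intersection-graph argument of \S 6 of \cite{BGZ2}; the two displayed inequalities are precisely the numerical output of that analysis, and below I indicate how it is set up and where the real work lies.

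First I would clear away the easy configurations with Proposition \ref{reduction}. If $M(\alpha)$ or $M(\beta)$ is reducible, if $\alpha$ or $\beta$ is a singular slope of a closed essential surface in $M$, or if $M_F$ is not a genus $2$ handlebody, then $\Delta(\alpha,\beta)\le 3$ and there is nothing more to prove. So one may assume that $M(\alpha)$ and $M(\beta)$ are irreducible, that no closed essential surface in $M$ has $\alpha$ or $\beta$ as a singular slope, and that $M_F$ is a genus $2$ handlebody, while $F$ is still essential and non-fibred and caps off to the essential torus $\widehat F\subset M(\beta)$. Now introduce the Seifert structure of $M(\alpha)$: with $\gamma_\alpha$ the core of the $\alpha$-filling solid torus, choose a surface $Q$ carried by the fibration, genuinely met by $\gamma_\alpha$, and whose complexity is measured by the orbifold Euler characteristic $\chi^{\mathrm{orb}}$ of the base (a horizontal surface when the Euler number vanishes, and a suitable substitute otherwise); isotope $\gamma_\alpha$ to meet $Q$ coherently and in a minimal number of points, set $P = Q\cap M$, and --- after discarding trivial circles and boundary-parallel arcs of $F\cap P$ --- form the labelled intersection graphs $\Gamma_F\subset\widehat F$ and $\Gamma_P\subset\widehat Q$. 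Since $|\partial F|=1$, the graph $\Gamma_F$ has a single fat vertex, of valence $\Delta(\alpha,\beta)\,|\partial P|$, whereas each vertex of $\Gamma_P$ has valence $\Delta(\alpha,\beta)$.

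The engine is a face-and-edge count. Feeding the bound on $\chi(Q)$ coming from $\chi^{\mathrm{orb}}$ into an Euler-characteristic count on the torus $\widehat F$ produces an inequality of the form $\Delta(\alpha,\beta)\le c(Q)\cdot p$, where $p$ is the largest size of a family of mutually parallel edges in $\Gamma_F$. The crux is to bound $p$: a long parallel family carries a Scharlemann cycle, and because $M$ is hyperbolic and $M_F$ is a handlebody, Scharlemann cycles of large length are obstructed --- they would produce an essential annulus in $M$, a compressing disc, or force $F$ to be a fibre or semi-fibre, all of which are excluded. The reduced graphs $\breve\Phi_j^+$ of \cite[\S 6]{BGZ2} are exactly the device that organises this bookkeeping: their components record the parallel families that survive these reductions, the \emph{tight} ones being those that resist them, and $t_j^+$ is the number of tight components. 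Turning the crank gives $\Delta(\alpha,\beta)\le 6$; the bound weakens to $\le 8$ precisely when $M(\alpha)$ is not very small, that is, when the base orbifold of $M(\alpha)$ is hyperbolic, so that $\chi^{\mathrm{orb}}<0$, the surface $Q$ has strictly larger complexity, and the constant $c(Q)$ worsens accordingly --- equivalently, when $\pi_1(M(\alpha))$ contains a non-abelian free group.

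For the refinement one assumes $t_1^+>0$. A tight component of $\breve\Phi_1^+$ is a configuration forcing an extended Scharlemann cycle (or a comparable obstruction) on that side, and a direct count then shows that its presence keeps almost every parallel family of $\Gamma_F$ short; substituting the resulting much stronger bound on $p$ back into the face-and-edge inequality gives $\Delta(\alpha,\beta)\le 3$, or $\le 4$ when $M(\alpha)$ is not very small. I expect the main obstacle to be exactly the Scharlemann-cycle and ``great web'' analysis that bounds $p$, carried out uniformly over the admissible choices of $Q$ while tracking the very-small versus not-very-small dichotomy; this is the technical heart of \S 6 of \cite{BGZ2}, from which the proposition is then read off.
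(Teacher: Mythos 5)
Your proposal outlines the intersection-graph machinery that underlies \cite[\S 6 and \S 13]{BGZ2}, and your framing of the problem (reduce via Proposition \ref{reduction} to the case $M_F$ a genus-$2$ handlebody, build the graphs $\Gamma_F$, $\Gamma_P$, run an Euler-characteristic count, and control long parallel families via Scharlemann-cycle obstructions) is not wrong in broad strokes. But what you have written is a description of a strategy, not a proof, and it misses the observation that actually closes the argument in the paper.

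The paper's proof of the first inequality is a direct citation of \cite[Proposition 13.2]{BGZ2}; there is nothing to redo. The substantive content is the refinement under the hypothesis $t_1^+>0$, and here your sketch has a genuine gap: you never identify the value of $t_1^+$. The key fact is that $t_1^+$ is even and that $|\partial F| \ge \tfrac12 t_1^+$. Since $F$ is once-punctured, $|\partial F|=1$, so $t_1^+\le 2$, and $t_1^+>0$ together with parity forces $t_1^+=2$. This is precisely what puts you in the range of \cite[Proposition 13.1]{BGZ2}, which gives $\Delta(\alpha,\beta)\le 4$ outright, and inspection of the first two paragraphs of that proof (according as the reduced graph $\overline{\Gamma}_S$ does or does not have a vertex of valency $\le 3$) yields $\Delta(\alpha,\beta)\le 3$ when $M(\alpha)$ is very small. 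Your version of the $t_1^+>0$ case instead appeals vaguely to ``a direct count'' showing that a tight component ``keeps almost every parallel family of $\Gamma_F$ short,'' with the conclusion asserted rather than derived. Without fixing $t_1^+=2$ first, one cannot invoke \cite[Proposition 13.1]{BGZ2}, nor reproduce its case analysis, and the claimed bounds do not follow from what you have written.

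A secondary issue: your explanation of the $6$ versus $8$ dichotomy in terms of ``the constant $c(Q)$ worsening'' because $\chi^{\mathrm{orb}}<0$ is a guess at the mechanism, not the actual source of the split in \cite[Proposition 13.2]{BGZ2}; stating it as if it were established could mislead. If you want a self-contained proof rather than a citation, you would need to carry out the bookkeeping of \cite[\S\S 6--13]{BGZ2} in detail; if you are content to cite (as the paper does), you must still supply the parity/boundary-count step that pins down $t_1^+=2$.
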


\begin{rem} \label{t1+ > 0}
{\rm  When $t_1^+ = 0$, $M(\beta)_{\widehat F}$
 is Seifert with base orbifold an annulus with one cone point \cite[Lemma 7.9]{BGZ2}. }
\end{rem}

\begin{proof}[Proof of Proposition \ref{main1}] The first inequality is the conclusion of \cite[Proposition 13.2]{BGZ2}. To deduce the second we use the notation and results of \cite{BGZ2}.

Suppose next that $t_1^+ > 0$. Since $t_1^+$ is even and the number of boundary components $F$ is bounded below by $\frac{1}{2} t_1^+$, we have $t_1^+ = 2$. Proposition 13.1 of \cite{BGZ2} then shows that $\Delta(\alpha, \beta) \leq 4$. Suppose that $M(\alpha)$ is very small.
The first paragraph of the proof of \cite[Proposition 13.1]{BGZ2} shows that $\Delta(\alpha, \beta) \leq 3$ if $\overline{\Gamma}_S$ has a vertex of valency $3$ or less while the second shows the same inequality holds if it doesn't. This completes the proposition's proof.
\end{proof}

\section{Involutions on small Seifert manifolds} \label{involutions}

We collect several results about involutions on small Seifert manifolds in this section.

\begin{lemma} \label{invariant}
Let $W$ be a small Seifert manifold and $\tau$ an orientation-preserving involution on $W$ with non-empty fixed point set. Then there is a $\tau$-invariant Seifert structure on $W$ with base orbifold of the form $S^2(a,b,c)$ where $1 \leq a \leq b \leq c$.
\end{lemma}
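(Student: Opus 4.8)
\textbf{Proof proposal for Lemma \ref{invariant}.}

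The plan is to fix a Seifert fibration on $W$ and study how $\tau$ interacts with it, using the fact that a small Seifert manifold has "few" Seifert fibrations to force invariance after an isotopy. First I would dispose of the degenerate cases: if $W$ is a lens space or $S^1 \times S^2$ or a connected sum of two lens spaces (these are exactly the small Seifert manifolds whose Seifert structures are highly non-unique), one can appeal to the classical classification of involutions on such spaces (Hodgson--Rubinstein, and the spherical/$S^2\times S^1$ cases) to exhibit an invariant fibration of the required type directly, or simply note that every such manifold admits a Seifert structure over $S^2(a,b,c)$ and a convenient involution respecting it; since the statement only asserts existence of some invariant fibration, it is enough to know $\tau$ is conjugate into the isometry group of a geometric structure compatible with a base orbifold of this form. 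So assume from now on that $W$ is not of this exceptional type.

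The main step is the following. Choose any Seifert fibration $\mathfrak{F}$ on $W$ with base orbifold $B$. If $W$ is "sufficiently large" or geometric of type other than spherical/$S^2\times\mathbb{R}$, the fibration is unique up to isotopy (Waldhausen, Jaco--Shalen, Scott, and the analysis of geometric Seifert manifolds). Then $\tau_*\mathfrak{F}$ is another Seifert fibration, hence isotopic to $\mathfrak{F}$; by an equivariant isotopy / averaging argument (using that $\tau$ is finite order), one can isotope $\tau$ so that it genuinely permutes the fibers of $\mathfrak{F}$. This makes $\tau$ fiber-preserving, so it descends to an orientation-preserving or reversing homeomorphism $\bar\tau$ of the base orbifold $B$, which is $S^2$ with at most three cone points. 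An involution of such an orbifold fixing the underlying $S^2$ setwise can be put in standard form; in every case $B$ inherits an involution, and by relabeling we get the normalization $1 \le a \le b \le c$. The subtlety is that $\tau$ might reverse the orientation of the fibers while preserving orientation of $W$, forcing $\bar\tau$ to reverse orientation of $B$; but this does not obstruct the conclusion — we still obtain a $\tau$-invariant fibration, which is all that is claimed. One must also check that the exceptional fibers are permuted consistently with their multiplicities, which is automatic since $\tau$ is a homeomorphism.

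The hard part, and the step I expect to require the most care, is the reduction to a fiber-preserving map: the uniqueness-of-fibration results are "up to isotopy," and promoting an isotopy between $\mathfrak{F}$ and $\tau_*\mathfrak{F}$ to an \emph{equivariant} statement — i.e. actually isotoping $\tau$ itself to be fiber-preserving — needs either an equivariant version of Waldhausen's theorem (available for Haken Seifert manifolds and, via geometrization and the resolution of the Smith conjecture-type statements, in the geometric cases) or a direct appeal to the classification of finite group actions on small Seifert manifolds (Meeks--Scott, and the spherical space form literature). Once $\tau$ is fiber-preserving the remaining bookkeeping with the base orbifold and the ordering $a \le b \le c$ is routine. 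I would therefore organize the write-up as: (1) exceptional cases handled by citation; (2) uniqueness of the fibration up to isotopy; (3) equivariant promotion to a fiber-preserving involution, citing the appropriate equivariant theorems; (4) descent to $S^2(a,b,c)$ and relabeling.
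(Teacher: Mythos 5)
The paper's proof takes a different and more direct route than yours: rather than starting from a fibration with base $S^2(a,b,c)$ and trying to isotope $\tau$ to preserve it, the paper applies the orbifold theorem to the quotient $W/\tau$ (for $W$ not a lens space) to geometrize it, observes that since the branch set is a link the geometric orbifold $W/\tau$ is itself Seifert fibred over a $2$-orbifold (citing Dunbar), and pulls this fibration back to $W$. This sidesteps the delicate ``equivariant Waldhausen'' issue you flag as the hard part. The cost is that the $\tau$-invariant fibration so produced need not have base $S^2(a,b,c)$, and the paper then does extra work to fix this.

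That extra work is exactly where your proposal has a genuine gap. You treat lens spaces, $S^1 \times S^2$ and connected sums of lens spaces as the exceptional cases with non-unique fibrations and assume uniqueness (up to isotopy) elsewhere. But prism manifolds, i.e.\ $W$ with base orbifold $S^2(2,2,c)$, also carry a second Seifert fibration, over $P^2(d)$; so the uniqueness you invoke in step (2) simply fails for them, and they are not covered by your exceptional-case list. Concretely: the equivariant results you cite (Meeks--Scott, or the orbifold theorem) guarantee that $\tau$ is fibre-preserving for \emph{some} invariant fibration, but that fibration may be the $P^2(d)$ one. Your closing remark that ``by relabeling we get the normalization $1 \le a \le b \le c$'' only renames cone points on an $S^2$ base; it does not convert a $P^2(d)$-invariant fibration into an $S^2(2,2,c)$-invariant one. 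The paper handles this case explicitly: when $d>1$ the unique exceptional fibre $\phi$ of the $P^2(d)$ structure is $\tau$-invariant, its exterior $E$ is a twisted $I$-bundle over the Klein bottle on which $\tau$ acts, and one shows there is a $\tau|E$-invariant Seifert structure on $E$ with base $D^2(2,2)$ which can be extended across a fibred neighbourhood of $\phi$ equivariantly; when $d=1$ one first finds an invariant fibre using that every self-map of $P^2$ has a fixed point. You would need to add an argument of this kind (or an equivalent) for the prism manifold case before your proof is complete.
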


\begin{proof} If $W$ is a lens space, the result follows from \cite{HR}.
Assume then that this isn't the case and fix a Seifert structure on $W$ with base orbifold $S^2(a,b,c)$ where
$a \leq b \leq c$. The assumption that $\pi_1(W)$ is not cyclic implies
that $a \geq 2$ and $a, b, c$ are determined by $W$.

Let $L \subset W/\tau$ be the branch set of $\tau$. The orbifold theorem implies that the orbifold $W/ \tau$ is geometric and since $L$ is a link, $W/\tau$ admits a Seifert structure with a $2$-dimensional base orbifold \cite{Du}. Thus $W$ admits a $\tau$-invariant Seifert structure. We claim that we can assume this structure has base orbifold $S^2(a,b,c)$. If $b \ne 2$, all Seifert structures on $W$ have this form, so assume $a = b = 2 \leq c$. If the base orbifold of the $\tau$-invariant structure is not $S^2(a,b,c)$, it must be $P^2(d)$ for some integer $d \geq 1$. When $d > 1$, there is a unique singular fibre $\phi$ in this structure, and it must be invariant under $\tau$. Then $\tau$ leaves the exterior $E$ of this fibre invariant, which is a twisted $I$-bundle over the Klein bottle. By assumption, $\tau$ leaves the Seifert  structure on $E$ with base orbifold a M\"{o}bius band invariant. There is exactly one other Seifert structure on $E$, up to isotopy, and its base orbifold is $D^2(2,2)$. Moreover, there is at least one such structure which is $\tau|E$-invariant. This structure can be extended across a fibred neighbourhood of $\phi$ in a $\tau$-invariant fashion yielding the desired $\tau$-invariant structure on $W$.

The argument is similar if $d = 1$, for $\tau$ induces an involution of the base orbifold $P^2$ of $W$, and since any self-map of $P^2$ has a fixed point, there is a $\tau$-invariant fibre $\phi$ in $W$. Now proceed as in the case $d > 1$.
\end{proof}

For our next three results we let $W$ denote a small Seifert manifold and $\tau$ an orientation-preserving involution on $W$ with non-empty fixed point set such that the quotient $W/\tau$ is a lens space $L(\bar p, \bar q) \not \cong S^3$. We use $L_\tau$ to denote the branch set of $\tau$ in $L(\bar p, \bar q)$.

Fix a $\tau$-invariant Seifert structure on $W$ with base orbifold of the form $S^2(a,b,c)$ where $1 \leq a \leq b \leq c$ (Lemma \ref{invariant}) and let $\bar \tau$ be the involution of $S^2(a,b,c)$ (possibly the identity) induced by $\tau$.

Since the $\tau$-invariant Seifert structure on $W$ has an orientable base orbifold, its fibres can be coherently oriented.

Hodgson and Rubinstein have classified orientation-preserving involutions on lens spaces with non-empty fixed point sets. In particular, their work yields the following result.

\begin{lemma} {\rm (\cite[\S 4.7]{HR})} \label{cyclic case}
Suppose that $W$ is the lens space $L(p,q)$ and $W / \tau = L(\bar p, \bar q) \not \cong S^3$.

$(1)$ If $p$ is odd, then $L_\tau$ is connected and is either
\vspace{-.35cm}
\begin{itemize}

\item[(a)] the core of a solid torus of a genus one Heegaard splitting of $L(\bar p, \bar q)$;

\item[(b)] the boundary of a M\"{o}bius band spine of a Heegaard solid torus of $L(\bar p, \bar q)$;

\end{itemize}
\vspace{-.35cm}
$(2)$ If $p$ is even, then $L_\tau$ has two components and is either
\vspace{-.35cm}
\begin{itemize}

\item[(a)] the union of the cores of the two solid tori of a genus one Heegaard splitting of $L(\bar p, \bar q)$;

\item[(b)] the boundary of an annular spine of a Heegaard solid torus of $L(\bar p, \bar q)$.
\qed

\end{itemize}
\vspace{-.35cm}
\end{lemma}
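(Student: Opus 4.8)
The plan is to linearise the pair $(W,\tau)$. By the orbifold theorem — invoked already in the proof of Lemma~\ref{invariant} — the quotient orbifold $W/\tau$ is geometric. Since $W=L(p,q)\to W/\tau$ is a degree-$2$ orbifold covering and $\pi_1(W)\cong\mathbb Z/p$ is finite, $\pi_1^{\mathrm{orb}}(W/\tau)$ is finite of order $2p$, so $W/\tau$ is a spherical orbifold: $W/\tau\cong S^3/G$ for a finite $G\le SO(4)$ (orientation-preserving since $\tau$ is), and $W\cong S^3/C$ where $C\triangleleft G$ is the index-$2$ subgroup carrying $\pi_1(W)$. As $C$ acts freely on $S^3$, after conjugation $C=\langle\zeta\rangle$ with $\zeta=\mathrm{diag}(e^{2\pi i/p},e^{2\pi iq/p})\in U(2)$, $\gcd(q,p)=1$. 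Lift $\tau$ to $\tilde\tau\in G\setminus C$; a fixed point of $\tau$ lifts to a fixed point of some element of the coset $\tilde\tau C$, so after replacing $\tilde\tau$ within its coset we may assume $\mathrm{Fix}(\tilde\tau)\ne\emptyset$. Then $\tilde\tau^2\in C$ fixes $\mathrm{Fix}(\tilde\tau)$ pointwise, while $C$ acts freely, so $\tilde\tau^2=1$; an eigenvalue count in $SO(4)$ shows $\tilde\tau$ is the $\pi$-rotation of $S^3$ about a great circle $A$, and $A$ is the core of one solid torus $U$ of the associated Clifford Heegaard splitting $S^3=U\cup U'$, with $\tilde\tau|_U$ the $\pi$-rotation fixing the core and $\tilde\tau|_{U'}$ a free involution.

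Next I would classify $G$. The conjugation $\zeta\mapsto\tilde\tau\zeta\tilde\tau^{-1}$ is an involution of $C\cong\mathbb Z/p$, hence $\zeta\mapsto\zeta^s$ with $s^2\equiv1\pmod p$, and after putting $A$ in standard position relative to $C$ there are only finitely many possibilities; the two governing cases are $G$ abelian ($s=1$, so $\tilde\tau$ commutes with $\zeta$ and $C$ preserves both $A$ and $A'$) and $G$ ``dihedral'' ($s=-1$, so $\zeta(A)\ne A$ in general). The branch set is $L_\tau=\pi\bigl(\bigcup_{h\in\tilde\tau C}\mathrm{Fix}(h)\bigr)$ where $\pi\colon S^3\to S^3/G$, and each $\mathrm{Fix}(h)$ is empty or a great circle; counting $C$-orbits of these circles shows that the number of components of $L_\tau$ is controlled by the parity of $p$. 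Indeed $h=\tilde\tau\zeta^j$ has $h^2=\zeta^{2j}$, so $h$ is an involution precisely when $2j\equiv0\pmod p$: when $p$ is odd this (together with the fact that the remaining $h$ then have no fixed points) yields a single $C$-orbit and $L_\tau$ connected, while when $p$ is even a second family of fixed great circles appears and $L_\tau$ has two components. This is exactly where the fact that $c\mapsto c^2$ is a bijection of $\mathbb Z/p$ iff $p$ is odd enters.

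Finally I would identify the isotopy type of $L_\tau$ in $W/\tau=L(\bar p,\bar q)$, using $\bar p\ge2$ by hypothesis. The image $\pi(U)\cup\pi(U')$ of the Clifford splitting is a genus-one Heegaard splitting of $L(\bar p,\bar q)$ with cores $\pi(A)$ and $\pi(A')$. In the abelian case $C$ preserves $U$ and $U'$, so the branch circle(s) are the core(s) of $\pi(U)$ (and $\pi(U')$), giving cases $(1)(a)$ ($p$ odd) and $(2)(a)$ ($p$ even). In the dihedral case $C$ does not preserve $U$, and analysing a regular neighbourhood of the branch locus — equivalently, tracing how the fixed great circles sit with respect to the $C$-action — shows the branch circle double-covers a Heegaard core, so it bounds a M\"obius-band spine of a Heegaard solid torus in the connected case, and is the boundary of an annular spine in the two-component case, giving $(1)(b)$ and $(2)(b)$. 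The main obstacle is precisely this last identification: carrying the finitely many explicit linear models through the quotient $\pi$ and verifying in each that the branch locus is exactly a Heegaard core, or exactly the boundary of a M\"obius/annular spine, and that no other knot type can occur — this is the content of \S4.7 of \cite{HR}. A more self-contained alternative, closer to Hodgson--Rubinstein's original treatment, avoids geometrisation: first produce a $\tau$-invariant genus-one Heegaard torus in $L(p,q)$ by equivariant Heegaard-splitting methods, then run the same bookkeeping using the classification of orientation-preserving involutions of solid tori and of $T^2$.
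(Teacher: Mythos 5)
The paper does not actually prove Lemma~\ref{cyclic case}: it is stated as a quoted result of Hodgson--Rubinstein, \cite[\S 4.7]{HR}, with the \textsf{q.e.d.} symbol in the statement itself, so there is no in-paper argument to compare yours against. That said, your linearisation sketch has a genuine computational slip at the step that is supposed to produce the parity dichotomy. With $\tilde\tau\zeta\tilde\tau^{-1}=\zeta^s$, the element $h=\tilde\tau\zeta^j$ satisfies $h^2=\zeta^{j(s+1)}$, not $\zeta^{2j}$; your formula is correct only in the abelian case $s=1$. In the case $s=-1$ one gets $h^2=1$ for \emph{every} $j$, so every element of the coset $\tilde\tau C$ is an involution, and each does fix a great circle. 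The component count in that case therefore cannot come from ``which $h$ are involutions'' (they all are): it comes from counting $C$-orbits of the $p$ fixed circles, where $\zeta^k\cdot\mathrm{Fix}(\tilde\tau\zeta^j)=\mathrm{Fix}(\tilde\tau\zeta^{j-2k})$ gives one orbit when $p$ is odd and two when $p$ is even. The parity phenomenon is the same, but the bookkeeping in the two cases is different, and the sentence ``so $h$ is an involution precisely when $2j\equiv0\pmod p$'' is simply false when $s=-1$.

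Two further points. First, the square roots of $1$ in $(\mathbb Z/p)^\times$ are not in general just $\pm1$, and even inside $SO(4)$ the normaliser picture allows $\tilde\tau$ to swap the two eigenlines of $\zeta$ (possibly composed with conjugation), which produces $s\equiv\pm q\pmod p$ subject to $q^2\equiv1\pmod p$; one must either rule these out or fold them into the two families, so ``the two governing cases are $s=1$ and $s=-1$'' needs justification. Second, and more important, you are explicit that the final identification of the branch locus in $L(\bar p,\bar q)$ — Heegaard core versus boundary of a M\"obius-band or annular spine — is asserted and deferred to \cite{HR}. Since that identification \emph{is} the statement of the lemma (the component count and the case split into (a) and (b) are both part of what needs proving), the proposal as written is an outline of a strategy rather than a proof. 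The strategy itself is sound and consistent with Hodgson--Rubinstein's geometric treatment, but the component-count computation needs to be repaired, the list of possible $s$ needs to be controlled, and the quotient-by-$\pi$ identification needs to be carried out explicitly in each linear model before this can stand on its own.
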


Next we suppose that $W$ is not a lens space. In this case $2 \leq a \leq b \leq c$.

\begin{lemma} \label{+-quotient}
Suppose that $W$ is not a lens space and that $\tau$ preserves the orientations of the Seifert fibres of $W$. Then there is an induced Seifert structure on $W/\tau$ such that $L_\tau$ is a union of at most three Seifert fibres where at least one of the fibres is regular. Further, $\bar \tau$ is either the identity or has two fixed points and

$(1)$ if $\bar \tau$ is the identity then $a = 2$, $|L_\tau|$ is the number of cone points of $S^2(a,b,c)$ of even order, and the components of $L_\tau$ which are regular fibres correspond to the cone points of order $2$.

$(2)$ if $\bar \tau$ is not the identity then $L_\tau$ has at most two components. Exactly one of its components is a regular fibre.

\end{lemma}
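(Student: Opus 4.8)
The plan is to analyze the quotient map $W \to W/\tau$ via the $\tau$-invariant Seifert structure, using the hypothesis that $\tau$ preserves fibre orientations to push the whole analysis down to the base orbifold. First I would observe that since $\tau$ sends fibres to fibres preserving their coherent orientation, it induces an involution $\bar\tau$ of the base orbifold $S^2(a,b,c)$, and the quotient $W/\tau$ inherits a Seifert structure whose base orbifold is $S^2(a,b,c)/\bar\tau$ (with the orbifold structure enriched along the image of the fixed-point set). The branch set $L_\tau$ is the image of $\mathrm{Fix}(\tau)$, which is a union of fibres and hence projects to a union of fibres of the quotient Seifert structure; so $L_\tau$ is a union of Seifert fibres of $W/\tau$. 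The fact that it is nonempty is a hypothesis carried down from the setup preceding the lemma.

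Next I would pin down $\bar\tau$. Since $\tau$ is orientation-preserving on the $3$-manifold and preserves fibre orientation, it must preserve the orientation of the base orbifold, so $\bar\tau$ is an orientation-preserving involution of $S^2$. Such an involution is either the identity or conjugate to a rotation by $\pi$, which has exactly two fixed points; this gives the dichotomy in the statement. In the identity case, $\tau$ acts fibrewise; on a regular fibre (a circle) it is then an orientation-preserving involution of $S^1$, which is either the identity or the antipodal map — and it cannot be the identity on a whole fibred neighbourhood without fixing a $2$-disc's worth of directions (contradicting that $\mathrm{Fix}(\tau)$ is a link), so on regular fibres it is free; the fixed fibres are therefore among the singular fibres, where the local model forces the cone order to be even, and the exceptional fibres of order exactly $2$ are precisely the ones where $\tau$ restricts to a free involution of a regular fibre in the surrounding model — giving statement (1), including $a=2$ since there must be at least one such fibre. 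In the non-identity case, $\bar\tau$ swaps most fibres in pairs; its two base-orbifold fixed points carry fibres that are $\tau$-invariant, and over a neighbourhood of such a fixed point the local model (a solid torus or fibred neighbourhood of a singular fibre with a $\mathbb{Z}/2$-action covering a rotation of the base disc) shows that $L_\tau$ meets each such fibre's neighbourhood in a single fibre, so $|L_\tau| \le 2$; a direct inspection of the two local models shows exactly one of these is a regular fibre of $W/\tau$ while the other sits over an orbifold point — giving statement (2). In both cases one checks $|L_\tau|\le 3$ and that at least one component is regular: in case (1) the order-$2$ cone points give regular-fibre components and at least one exists; in case (2) this is explicit.

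The main obstacle I expect is the careful bookkeeping of the local quotient models at the special fibres — both at singular fibres of $W$ that happen to be $\tau$-invariant, and at the fibres lying over the two fixed points of $\bar\tau$ — and in particular verifying precisely which quotient fibres become regular versus exceptional in $W/\tau$, and that $L_\tau$ never acquires a fourth component. This amounts to classifying $\mathbb{Z}/2$-actions on a fibred solid torus that cover a given action on the base disc and preserve the fibre orientation; there are only a few such models, but matching them up with the cone-order constraints (even order, order exactly $2$) and with the claim that exactly one component is regular in case (2) requires a careful case check. I would organize this by first treating fibred neighbourhoods of regular fibres, then exceptional fibres, in each of the two cases for $\bar\tau$, and read off $L_\tau$ and its regular/exceptional components from the resulting quotient models.
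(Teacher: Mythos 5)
Your overall strategy — push everything down to the base orbifold via the $\tau$-invariant Seifert structure, classify $\bar\tau$ as identity or a $\pi$-rotation, and then read off $L_\tau$ from local models at invariant fibres — is exactly the paper's strategy, and the parts you do carry out (orientation-preservation of $\bar\tau$, $\tau$ acting by $\pi$-rotation on regular fibres when $\bar\tau=\mathrm{id}$, fixed fibres having even order, $|L_\tau|\le 2$ when $\bar\tau\ne\mathrm{id}$) are correct.

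However, there is a genuine gap in both cases: you never invoke the standing hypothesis, set up just before the lemma, that $W/\tau$ is a lens space $L(\bar p,\bar q)$, and hence that its induced Seifert structure has at most two exceptional fibres. This constraint is load-bearing twice. In case (1), the nonemptiness of $\mathrm{Fix}(\tau)$ only tells you that some cone order is even; it does not give you a cone point of order exactly $2$. (E.g.\ $(a,b,c)=(3,4,5)$ would be consistent with everything you say so far: $\mathrm{Fix}(\tau)$ is the order-$4$ fibre, and its image is an exceptional fibre of order $2$.) What rules this out is that the images of the three exceptional fibres have orders $a/\gcd(a,2)$, $b/\gcd(b,2)$, $c/\gcd(c,2)$, and at most two of these can exceed $1$ since the quotient is a lens space; this forces some cone order to equal $2$, hence $a=2$. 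In case (2), the local model over the regular fixed point $x_0$ of $\bar\tau$ does \emph{not} by itself decide whether $\tau$ fixes the fibre $\phi$ over $x_0$ pointwise or rotates it freely by $\pi$; both are consistent with the local model, and in the latter case $\phi\not\subset\mathrm{Fix}(\tau)$ and the image of $\phi$ is an exceptional fibre of order $2$. If that happened, $L_\tau$ could consist solely of the image of the exceptional fibre over the fixed cone point, which is never regular, contradicting part (2). The paper closes this by observing that the two swapped exceptional fibres map to one exceptional fibre, the fixed exceptional fibre maps to another exceptional fibre, and since the lens space has at most two exceptional fibres, the image of $\phi$ must be regular — which then forces $\tau|\phi=\mathrm{id}$ and $\phi\subset\mathrm{Fix}(\tau)$. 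You should add the lens-space fibre count to both cases; without it the proof does not go through.
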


\begin{proof} The hypotheses imply that there is an induced Seifert structure on $L(\bar p, \bar q)$ whose fibres are the images of the fibres of $W$. Since $W$ has three exceptional fibres, $\bar \tau$ fixes precisely one or three cone points. In the latter case, $\bar \tau$ is the identity.

Suppose first that $\bar \tau$ is the identity on $S^2(a,b,c)$. Since $\tau$ has a $1$-dimensional fixed point set, $\tau$ rotates the regular fibres of $W$ by $\pi$. Its fixed point set is the union of the fibres of even multiplicity and therefore $L_\tau$ is a union of Seifert fibres. The reader will verify that if a fibre of $W$ has multiplicity $k$, then its image in $L(\bar p, \bar q)$ has multiplicity $\bar k = \frac{k}{\gcd(k,2)}$. Hence as $L(\bar p, \bar q)$ has at most two exceptional fibres, $a = 2$.

Suppose next that $\bar \tau$ fixes precisely one cone point of $S^2(a,b,c)$. In this case its fixed point set consists of this cone point and a regular point. Thus the fixed point set of $\tau$ is contained in a union of two fibres, so $L_\tau$ has at most two components.  The reader will verify that each exceptional fibre of $W$ is sent to an exceptional fibre of $L(\bar p, \bar q)$, two of them to the same fibre. Thus the $\tau$-invariant regular fibre of $W$ is sent to a regular fibre of $L(\bar p, \bar q)$. It follows that this fibre lies in the fixed-point set of $\tau$ and therefore $L_\tau$ contains a regular fibre of $L(\bar p, \bar q)$.
\end{proof}

\begin{lemma} \label{--quotient}
Suppose that $W$ is not a lens space and that $\tau$ reverses the orientations of the Seifert fibres of $W$.
If $W/\tau = L(\bar p, \bar q) \not \cong S^3$, then

\noindent $(1)$ $W$ has base orbifold $S^2(\bar p, \bar p, m)$ where $m \geq 2$ and the Seifert
invariants of the exceptional fibres of order $\bar p$ are the same. Hence if
 $W$ is not a prism manifold, $\bar p \ne 2$.

\noindent $(2)$ There is an integer $n$ coprime with $m$ such that $L_\tau$ is isotopic
to the closure $K(m/n)$ of an $m/n$ rational tangle in a genus $1$ Heegaard
solid torus of $W / \tau$ as depicted in Figure \ref{bgz4-fig24}.
In particular,
$$|L_\tau| = \left\{ \begin{array}{ll}
1 & \hbox{ if } n \hbox{ is odd} \\
2 & \hbox{ if } n \hbox{ is even}
\end{array} \right. $$

\end{lemma}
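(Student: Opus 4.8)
The plan is to analyze the structure of $W$ and its quotient using the fact that $\tau$ reverses fibre orientations. First I would observe that because $\tau$ acts on the base orbifold $S^2(a,b,c)$ and reverses the orientation of the fibres, the induced map $\bar\tau$ on $S^2(a,b,c)$ reverses the orientation of the base as well (so that $\tau$ is orientation-preserving on $W$); hence $\bar\tau$ is an orientation-reversing involution of $S^2$, i.e. a reflection, whose fixed-point set is a circle. This circle must pass through some of the cone points and permute the others in pairs. Since the quotient orbifold must support the lens space $W/\tau$, which has at most two exceptional fibres, a counting argument on how cone points map down (analogous to the multiplicity computation in the proof of Lemma \ref{+-quotient}, but now accounting for the fact that a fibre meeting the fixed circle of $\bar\tau$ contributes differently from a pair of fibres swapped by $\bar\tau$) forces the base orbifold of $W$ to be $S^2(\bar p,\bar p,m)$ with the two order-$\bar p$ exceptional fibres swapped by $\tau$ and carrying identical (or mirror-identical) Seifert invariants, while the order-$m$ fibre lies over a fixed point of $\bar\tau$. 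This gives part (1); the remark that $\bar p \neq 2$ unless $W$ is a prism manifold is then immediate from the definition of prism manifold, since base orbifold $S^2(2,2,m)$ is exactly the prism case.

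For part (2), I would pass to the quotient and identify $W/\tau$ concretely. The involution $\tau$ swaps the two order-$\bar p$ fibres and acts on a neighbourhood of the order-$m$ fibre; quotienting the Seifert structure, the image of the two swapped fibres becomes a single fibre, and one checks that the resulting base orbifold of $W/\tau$ is a disk with (at most) the image of the order-$m$ point, so that $W/\tau$ is a lens space presented as a genus-one splitting. The branch set $L_\tau$ is the fixed-point set of $\tau$ downstairs; since $\tau$ reverses fibre orientations, on each fibre it either acts freely (fibres moved by $\bar\tau$, or regular fibres over non-fixed points) or acts as a rotation by $\pi$ composed with the base reflection — the fixed points occur exactly over the fixed circle of $\bar\tau$. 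Tracing through, the fixed set sits inside the preimage of the fixed arc of $\bar\tau$ in the base disk of $W/\tau$, and this preimage is precisely a genus-one Heegaard solid torus in which $L_\tau$ appears as the closure of a rational tangle. The slope of that tangle is governed by the Seifert invariant $m/n$ of the exceptional fibre fixed by $\bar\tau$, with $\gcd(m,n)=1$; whether the tangle closure has one or two components is the standard fact that an $m/n$ rational tangle closes up to a knot when $n$ is odd and to a two-component link when $n$ is even. This yields the formula for $|L_\tau|$ and the identification with $K(m/n)$ in Figure \ref{bgz4-fig24}.

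The main obstacle I expect is part (1): pinning down that the base orbifold is exactly $S^2(\bar p,\bar p,m)$ with matching Seifert invariants, rather than merely $S^2(a,a,c)$ for some $a$. The subtlety is bookkeeping the Seifert invariants through the quotient map when a pair of exceptional fibres is identified — one must verify both that their orders agree (forced by $\bar\tau$ being a fibre-preserving homeomorphism swapping them) and that their normalized Seifert invariants agree up to the sign introduced by the orientation-reversal, and then check that the order $\bar p$ of these fibres equals the order of the exceptional fibre of the lens space $L(\bar p,\bar q)$ they descend to, which is where the notation $\bar p$ comes from. I would handle this by working in a $\tau$-invariant fibred neighbourhood of the pair of swapped fibres, writing down explicit coordinates for the Seifert pieces $D^2 \times S^1$ with the relevant exceptional framing, and computing the quotient directly; the orbifold theorem (already invoked in Lemma \ref{invariant}) guarantees a posteriori that the quotient is geometric, which rules out pathological identifications. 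Everything else is then a matter of carefully reading off the rational tangle from the quotiented Seifert data.
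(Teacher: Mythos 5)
Your plan follows essentially the same route as the paper: identify $\bar\tau$ as a reflection of the base $2$-sphere, show the fixed equator contains exactly one cone point (the order-$m$ one) and swaps the other two, compute the quotient $2$-orbifold as a disk with a cone point and a corner-reflector, conclude $W/\tau$ is a lens space $L(r,t)$ and match $r = \bar p$ via fundamental groups, and for part (2) analyze the fixed set of $\tau$ lying over the fixed circle of $\bar\tau$ via a Montesinos-type solid-torus argument. The ``or mirror-identical'' hedge on the Seifert invariants is unnecessary (reversing both base and fibre orientations preserves the normalized invariant), but it is not an error.

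There is, however, a genuine gap in part (1): you never rule out the case where the fixed circle of $\bar\tau$ passes through all three cone points. The number of cone points on the equator must be odd (the rest are paired off by the reflection), so it is $1$ or $3$, and in the latter case $\tau$ is the Montesinos involution, with quotient $W/\tau \cong S^3$ and branch set a Montesinos link. Your counting argument --- that the quotient is a lens space with at most two exceptional fibres --- does not detect this: $S^3$ \emph{is} a lens space, with no exceptional fibres at all, and its quotient $2$-orbifold (a disk with three corner-reflectors) is perfectly consistent with such a count. The hypothesis $W/\tau \not\cong S^3$ is precisely what forecloses the three-cone-point case, and the paper invokes it explicitly at exactly this step; your argument as written never uses that hypothesis, so you would need to add this observation before the rest of the analysis goes through.
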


\begin{figure}[!ht]
\centerline{\includegraphics{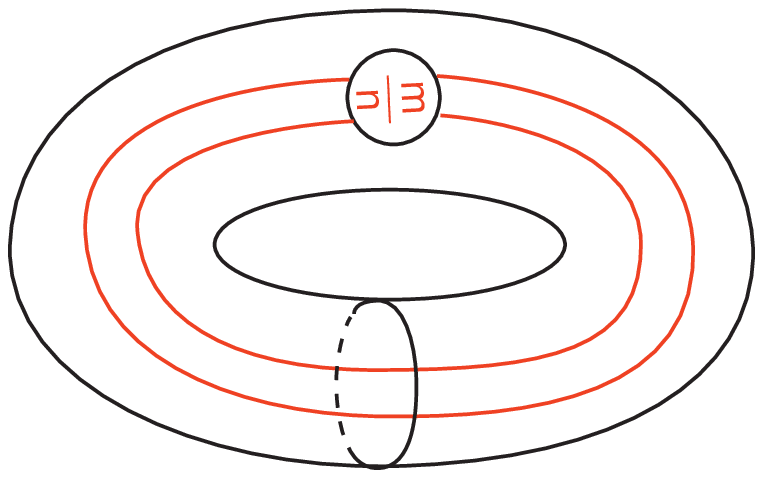}} \caption{}\label{bgz4-fig24}
\end{figure}

\begin{proof} The fixed point set of $\bar \tau$ is non-empty so as it reverses orientation, it is reflection in an equator  of $S^2(a,b,c)$. This equator cannot contain all three cone points as otherwise $\tau$ would be the Montesinos involution on $W$ and therefore $L(\bar p, \bar q)$ would be $S^3$. Thus it contains exactly one cone point and $\bar \tau$ permutes the other two. It follows that up to relabeling, $(a,b,c) = (r,r,m)$ for some integers $r, m \geq 2$. Further, $S^2(r,r,m)/ \bar \tau = D^2(r; m)$, where $D^2(r; m)$ is the $2$-orbifold with underlying space a $2$-disk and singular set consisting of a cone point of order $r$, a corner-reflector point $x$ of order $m$, and a reflection line $\partial D^2 \setminus \{x\}$. Therefore $L(\bar p, \bar q) = W / \tau \cong L(r, t)$ for some integer $t$. Thus $r = \bar p$, which proves part (1).

A Montesinos-type analysis of the quotient of the $\tau$-invariant solid torus given by the inverse image in $W$ of a small annular neighbourhood of $\hbox{Fix}(\bar \tau)$ in $S^2(\bar p, \bar p, m)$ shows that the branch set of this quotient is of the form described in part (2). It is well known that this branch set has one component if $n$ is odd and two otherwise, so part (2) holds.
\end{proof}

\section{Beginning of the proof of Theorem \ref{once-punctured}} \label{sec: once-punctured}

\subsection{Assumptions} \label{assumptions 1}

We assume throughout the rest of the paper that $M$ is a hyperbolic knot manifold containing an essential once-punctured torus $F$ of boundary slope $\beta$ which caps off to an essential torus in $M(\beta)$ (cf. Corollary \ref{stays incompressible}) and that $M(\alpha)$ is an atoroidal, irreducible, small Seifert manifold (cf. Corollary \ref{smallseifert}). We assume as well that $\Delta(\alpha, \beta) > 3$, and (therefore) $M_F$ is a genus $2$ handlebody by Proposition \ref{reduction}.

We will show that under these assumptions, $\Delta(\alpha, \beta) \leq 5$, $F$ is not a fibre, $\pi_1(M(\alpha))$ is finite non-cyclic, and

(a)  if $\Delta(\alpha, \beta) = 4$,  $(M;\alpha,\beta)\cong (Wh(\frac{-2n\pm1}{n}); -4,0)$
 for some integer $n$ with $|n|>1$ and $M(\alpha)$ has base orbifold $S^2(2,2,|\mp 2n-1|)$;

(b) if $\Delta(\alpha, \beta) = 5$, then $(M; \alpha, \beta) \cong (Wh(-3/2); -5, 0)$ and $M(\alpha)$ has base orbifold $S^2(2,3,3)$.

\begin{figure}[!ht]
\centerline{\includegraphics{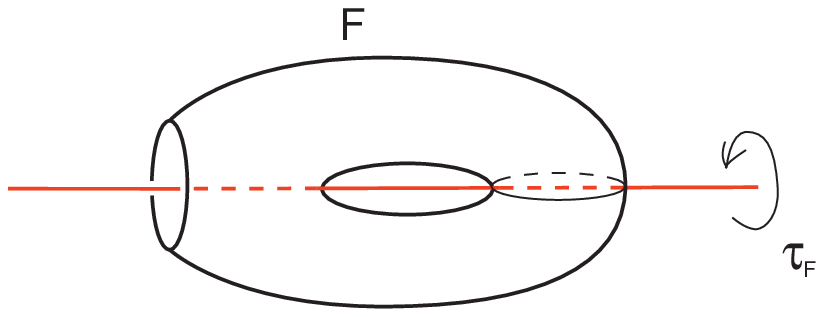}} \caption{ }\label{bgz4-fig0}
\end{figure}

\subsection{An involution on $M$} \label{involution 1}
\noindent There is an involution $\tau_F$ on $F$ with exactly three fixed points whose
action on $\partial F$ is rotation by $\pi$. See Figure
\ref{bgz4-fig0}. Thus $F / \tau_F$ is the $2$-orbifold $D^2(2,2,2)$. Let $N \cong F \times I$
be a small neighbourhood of $F$ in $M$ and extend $\tau_F$ to an involution
$\tau_N$ in the obvious way. Then $\tau_N| F \times \partial I$ extends to a
hyperelliptic involution of $\partial M_F$. Since $M_F$ is a genus $2$ handlebody, the latter extends to an
involution $\tau_{M_F}$ of $M_F$. Piecing together $\tau_N$ and $\tau_{M_F}$  we
obtain an orientation-preserving involution $\tau:M \to M$ with non-empty
$1$-dimensional fixed point set $\widetilde L \subset \hbox{int}(M)$. Further,
$V := M/\tau$ is a solid torus containing the branch set $L$ of $\tau$. By construction, this is a hyperbolic link which
intersects some meridional disk of $V$ transversely and in three points. When $F$ is a fibre in $M$, $L$ is braided in $V$.

Note that $L$
cannot intersect any meridional disk in one point as $M$ is
$\partial$-irreducible.

The slopes on $\partial M$ can be identified with $\pm$-classes of primitive elements of $H_1(\partial M)$.
In particular we assume $\alpha, \beta \in H_1(\partial M)$. Let $\mu$ be any dual slope to $\beta$. This means that $1 = \Delta(\mu, \beta) = |\mu \cdot \beta|$. Hence $\{\mu, \beta\}$ form a basis for $H_1(\partial M)$. Write
\begin{equation}\label{alpha 1}
\text{\em $\alpha = p \mu + q \beta$}
\end{equation}
where $p, q$ are coprime. After possibly changing the signs of $\mu$ and $\beta$ we may assume that
\begin{equation}\label{p 1}
\text{\em $p = \Delta(\alpha, \beta)$}
\end{equation}
Without loss of generality we may suppose that $p \geq 1$.
The map $M \to V$ is a double cover when restricted to $\partial M$. It sends $\beta$ to a slope $\bar \beta$, a meridian of $V$, and sends $\mu$ to $\bar \mu$, a longitude of $V$.

For each slope $\gamma$ on $\partial M$, $\tau$ extends to an involution $\tau_\gamma: M(\gamma) \to M(\gamma)$. Moreover, if $\widetilde U_\gamma$ denotes the filling torus in $M(\gamma)$ and $\widetilde K_\gamma$ its core, then
\begin{equation}\label{fix 1}
\text{\em $\hbox{Fix}(\tau_\gamma) = \left\{ \begin{array}{ll}
\widetilde L & \hbox{ if } \Delta(\gamma, \beta) \hbox{ is odd} \\
\widetilde L \cup \widetilde K_\gamma & \hbox{ if } \Delta(\gamma, \beta)  \hbox{ is even}
\end{array} \right.$}
\end{equation}

It is clear that $\widetilde U_\gamma/\tau_\gamma$ is a solid torus $U_\gamma$.
Denote its core $\widetilde K_\gamma / \tau_\gamma$ by $K_\gamma$. Thus
$M(\gamma) / \tau_\gamma = V \cup_{\bar \g} U_\gamma$ is a lens space. Indeed,
if $\gamma = r\mu + s \beta$, then under the double cover $\partial M \to
\partial V$ we have $\gamma \mapsto r \bar \mu + 2 s \bar \beta$. Let $\bar
\gamma = \frac{1}{\gcd(2,r)}(r \bar \mu + 2 s\bar \beta)$ denote the associated
slope and $L_\gamma$ the branch set in $M(\gamma) / \tau_\gamma$. Then
$$(M(\gamma) / \tau_\gamma, L_\gamma) = (V(\bar \gamma), L_\g) \cong \left\{ \begin{array}{ll}
(L(r, 2s), L) & \hbox{ if } r  \hbox{ is odd} \\
(L(\frac{r}{2}, s), L \cup K_\gamma)  & \hbox{ if } r  \hbox{ is even}
\end{array} \right.$$
We are interested in the case $\gamma = \alpha$. Set
\begin{equation}\label{pbar 1}
\text{\em $\bar p = p/ \gcd(p,2)$ \;\;\;\;\;  \hbox{ and } \;\;\;\;\;
$\bar q = 2q/\gcd(p,2)$}
\end{equation}
so that $\bar \alpha = \bar p \bar \mu + \bar q \bar
\beta$ and
$$M(\alpha) / \tau_\alpha \cong L(\bar p, \bar q)$$
From \ref{fix 1} we see that
\begin{equation}\label{comps 1}
\text{\em $|L_\alpha| = \left\{ \begin{array}{ll}
|L| & \hbox{ if } p  \hbox{ is odd} \\
|L| + 1 & \hbox{ if } p \hbox{ is even}
\end{array} \right. $}
\end{equation}

Fix a $\tau_\alpha$-invariant Seifert structure on $M(\alpha)$ with base orbifold $S^2(a,b,c)$ where $1 \leq a \leq b \leq c$ (Lemma \ref{invariant}).

Let $\bar \tau_\alpha$ be the involution of $S^2(a,b,c)$ (possibly the identity) induced by $\tau_\alpha$.

\begin{lemma} \label{+-quotient 2}
Suppose that assumptions \ref{assumptions 1} hold. Suppose as well that $M(\alpha)$ is not a lens space and that $\tau_\alpha$ preserves the orientations of the Seifert fibres of $M(\alpha)$. Then there is a Seifert structure on $L(\bar p, \bar q)$ in which $L_\alpha$ is a union of at most three fibres, at least one of which is regular. Further, $L_\alpha = L$ so that $p = \Delta(\alpha, \beta)$ is odd.
\end{lemma}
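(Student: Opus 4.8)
The plan is to combine Lemma \ref{+-quotient} (applied with $W = M(\alpha)$ and $\tau = \tau_\alpha$) with the parity and component-count constraints coming from the double-branched-cover setup of \S\ref{involution 1}. Since $M(\alpha)$ is assumed not to be a lens space, we have $2 \le a \le b \le c$, and since $\tau_\alpha$ preserves the fibre orientations and $M(\alpha)/\tau_\alpha = L(\bar p, \bar q) \not\cong S^3$, Lemma \ref{+-quotient} applies directly: there is an induced Seifert structure on $L(\bar p, \bar q)$ in which $L_\alpha$ is a union of at most three fibres, at least one of them regular. That gives the first assertion immediately. It remains to prove $L_\alpha = L$, equivalently (by \ref{comps 1}) that $p$ is odd.

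The key step is to rule out $p$ even. Suppose $p$ is even. Then by \ref{comps 1} we have $|L_\alpha| = |L| + 1$, and recall from \S\ref{involution 1} that $|L| = |\hbox{Fix}(\tau)| \ge 2$ since $L$ meets a meridian disk of $V$ in exactly three points and cannot meet any meridian disk in just one point (so $L$ has at least two components — in fact $|L| \in \{2, 3\}$, but we only need $|L| \ge 2$ here). Hence $|L_\alpha| \ge 3$. On the other hand, I would split according to the two cases of Lemma \ref{+-quotient}. If $\bar\tau_\alpha$ is not the identity, then $|L_\alpha| \le 2$, an immediate contradiction. So $\bar\tau_\alpha$ must be the identity, and then case (1) of Lemma \ref{+-quotient} forces $a = 2$, with $|L_\alpha|$ equal to the number of cone points of $S^2(a,b,c)$ of even order — so $|L_\alpha| \le 3$, forcing $|L_\alpha| = 3$, all three of $a, b, c$ even, and $|L| = 2$.

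The final contradiction comes from the parity of $\bar p$. When $p$ is even, \ref{pbar 1} gives $\bar p = p/2$; but more usefully, from the description in \S\ref{involution 1}, when $p$ (i.e. $r$ in the notation there) is even we are in the case $(V(\bar\alpha), L_\alpha) \cong (L(\frac{p}{2}, q), L \cup K_\alpha)$, so $K_\alpha$ is one of the three components of $L_\alpha$ and is the core $\widetilde K_\alpha/\tau_\alpha$ of the filling solid torus $U_\alpha$. In the Seifert picture of Lemma \ref{+-quotient}(1) with $\bar\tau_\alpha$ the identity, the components of $L_\alpha$ that are regular fibres correspond to cone points of order $2$ of $S^2(a,b,c)$; since $a = 2$ there is at least one such, and the multiplicity-halving formula $\bar k = k/\gcd(k,2)$ from the proof of Lemma \ref{+-quotient} shows $L(\bar p,\bar q)$ can carry at most two exceptional fibres, consistent with $a=2$. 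I would then derive the contradiction by counting: the three components of $L_\alpha$ are fibres of $L(\bar p, \bar q)$, but a lens space Seifert-fibred over $S^2$ with two exceptional fibres has all its fibres isotopic to either of two "core" curves, and one checks (using that $L$ is a genuine hyperbolic link whose complement is $M$, hence not fibred over $D^2$ in a way producing such degeneracy, together with the explicit location of $K_\alpha$ as the filling core which is unknotted in $V(\bar\alpha)$) that this is incompatible with $L$ having exactly two components meeting a meridian disk of $V$ three times. The cleanest route is probably the one the authors use in \S\ref{involution 1}: the three intersection points of $L$ with the meridian disk of $V$, together with the structure of $M_F$ as a genus two handlebody, pin down $|L|$ and the braid/tangle type well enough that $|L_\alpha| = 3$ with all of $L_\alpha$ fibred over $S^2(2,2\cdot,2\cdot)$ is excluded.

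The main obstacle will be this last contradiction: showing that $p$ even combined with "$L_\alpha$ is three fibres of a lens space Seifert structure and $\bar\tau_\alpha = \mathrm{id}$" is genuinely impossible. The soft part (Lemma \ref{+-quotient} gives everything except $L_\alpha = L$) is routine; the hard part is that one cannot conclude $p$ odd purely from Lemma \ref{+-quotient} in the abstract — one must feed in the specific geometry of $(V, L)$ coming from the once-punctured torus $F$ and the genus two handlebody $M_F$, in particular that $L$ meets a meridian disk of $V$ in exactly three points. I expect the argument to reduce to showing that a two-component link in a solid torus meeting a meridian disk three times cannot, after a single $\bar\beta$-slope filling, become the union of three Seifert fibres of a lens space over $S^2$ with all exceptional fibres of even order — which is where one uses that the regular-fibre components must have even-order "partners" and that $K_\alpha$ (the filling core) is isotopic to a core of the relevant Heegaard solid torus.
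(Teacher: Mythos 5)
Your first step (extracting the first assertion directly from Lemma \ref{+-quotient}) is fine and coincides with the paper's. The argument for $L_\alpha = L$, however, has two genuine gaps and is not the paper's route.

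The first gap is the claim $|L|\ge 2$. This does not follow from ``$L$ meets a meridian disk of $V$ in three points and cannot meet any meridian disk in one point.'' A closed $3$-braid whose underlying permutation is a $3$-cycle is connected, has wrapping number $3$, and satisfies both conditions, yet has $|L|=1$. Indeed the paper itself proves $|L|=1$ in the parallel situation of Lemma \ref{--quotient 2}(3), so $|L|\ge 2$ is certainly not automatic from the once-punctured-torus set-up. Since $p$ even with $|L|=1$ gives $|L_\alpha|=2$, which is compatible with both cases of Lemma \ref{+-quotient}, your component-count dichotomy produces no contradiction in that case. The second gap is that, even granting $|L|\ge 2$ and landing in the case $\bar\tau_\alpha=\mathrm{id}$, $a=2$, $|L_\alpha|=3$, you do not actually close the argument; you explicitly defer to a ``counting'' that you do not carry out and end by conjecturing what the contradiction ought to reduce to.

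The paper's own proof is a one-line structural observation that sidesteps all of this: by Lemma \ref{+-quotient}, $L_\alpha$ is a union of Seifert fibres of $L(\bar p,\bar q)$; if $K_\alpha\subset L_\alpha$, then $V\setminus L = L(\bar p,\bar q)\setminus N(L_\alpha)$ is the exterior of a union of fibres and hence Seifert fibred, contradicting that $L$ is a hyperbolic link in $V$. Therefore $K_\alpha\not\subset L_\alpha$, so $L=L_\alpha$ and $p$ is odd by \ref{comps 1}. This needs no case split on $\bar\tau_\alpha$ and works regardless of whether $|L|$ is $1$, $2$, or $3$. You gesture at the hyperbolicity of $L$ in $V$ near the end, but you never convert it into the actual contradiction; if you revise, drop the component-count scaffolding and argue directly that $K_\alpha$ being a Seifert fibre would force $V\setminus L$ to be Seifert fibred.
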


\begin{proof} Lemma \ref{+-quotient} shows that $L$ is a union of fibres in the induced Seifert structure on $L(\bar p, \bar q)$ and at least one of these fibres is regular. This implies that $K_\alpha \not \subset L_\alpha$ as otherwise $L = L_\alpha \setminus K_\alpha$ would not be a hyperbolic link in $V$. Thus $L = L_\alpha$ so $p$ is odd by \ref{comps 1}.
\end{proof}

\begin{lemma} \label{--quotient 2}
Suppose that assumptions \ref{assumptions 1} hold. Suppose as well that $M(\alpha)$ is not a lens space and that $\tau_\alpha$ reverses the orientations of the Seifert fibres of $M(\alpha)$. Then

\noindent $(1)$ $M(\alpha)$ has base orbifold $S^2(\bar p, \bar p, m)$ where $m \geq 2$ and the Seifert
invariants of the exceptional fibres of order $\bar p$ are the same. Hence if
 $M(\alpha)$ is not a prism manifold, $\Delta(\alpha, \beta) \ne 4$.

\noindent $(2)$ There is an integer $n$ coprime with $m$ such that $L_\alpha$ is isotopic
to the closure $K(m/n)$ of an $m/n$ rational tangle in a genus $1$ Heegaard
solid torus of $M(\alpha) / \tau_\alpha$ as depicted in Figure \ref{bgz4-fig24}.
In particular,
$$|L_\alpha| = \left\{ \begin{array}{ll}
1 & \hbox{ if } n \hbox{ is odd} \\
2 & \hbox{ if } n \hbox{ is even}
\end{array} \right. $$

\noindent $(3)$ $|L| = 1$, $m$ is odd, and $n \equiv p$ (mod $2$).
\end{lemma}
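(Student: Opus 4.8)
The plan is to invoke Lemma \ref{--quotient} directly with $W = M(\alpha)$ and $\tau = \tau_\alpha$, which immediately yields parts $(1)$ and $(2)$ verbatim (including the parenthetical consequence that $\Delta(\alpha,\beta)\neq 4$ unless $M(\alpha)$ is a prism manifold, since $\bar p = 2$ is exactly the prism case and $\bar p = p$ when $p$ is odd, $\bar p = p/2$ when $p$ is even, so $\Delta(\alpha,\beta) = p = 4$ forces $\bar p = 2$). So the real content is part $(3)$: showing $|L| = 1$, $m$ odd, and $n \equiv p \pmod 2$.

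First I would deal with the parity dichotomy between $p$ odd and $p$ even using \ref{comps 1}. If $p$ is odd, then $L_\alpha = L$, so $|L| = |L_\alpha|$; if $p$ is even, then $L_\alpha = L \cup K_\alpha$ with $K_\alpha \neq \emptyset$, so $|L| = |L_\alpha| - 1$. In either case, part $(2)$ tells us $|L_\alpha| \in \{1,2\}$ according to the parity of $n$. The key extra input is that $L$ is a \emph{hyperbolic} link in the solid torus $V$ (established in \S\ref{involution 1}), so in particular $|L| \geq 1$; moreover $L$ cannot be a single fibre of a Seifert structure on a solid torus (that would be a torus knot or core, not hyperbolic). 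This last point is what rules out certain configurations: I would argue that if $p$ is even, then $L_\alpha = L \cup K_\alpha$ is the rational-tangle closure $K(m/n)$ of Figure \ref{bgz4-fig24}, and $K_\alpha$ is one of its components; removing $K_\alpha$ must leave a hyperbolic link, which (inspecting the picture) forces $L$ to be a single component and forces $n$ to be even (so that $|L_\alpha| = 2$), hence $n \equiv 0 \equiv p \pmod 2$. If instead $p$ is odd, then $L = L_\alpha = K(m/n)$ must itself be hyperbolic in $V$; since the one-component closure $K(m/n)$ (for $n$ odd) sits in a Heegaard solid torus in a way compatible with hyperbolicity while the two-component version would make $L$ have a component that is a core or otherwise non-hyperbolic, we get $n$ odd, so $|L| = 1$ and $n \equiv 1 \equiv p \pmod 2$. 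Either way $|L| = 1$ and $n \equiv p \pmod 2$.

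It remains to see that $m$ is odd. Here I would use that $L$ has exactly one component together with the relation $|L_\tau| = 1$ iff $n$ odd from part $(2)$, and then examine how $K(m/n)$ meets a meridian disk of $V$: by construction in \S\ref{involution 1}, $L$ meets some meridional disk of $V$ transversely in exactly three points, and $L$ cannot meet any meridional disk in one point. Computing the (minimal) intersection number of $K(m/n)$ with a meridian disk of the Heegaard solid torus of $V(\bar\alpha) = L(\bar p, \bar q)$ in terms of $m$ and $n$, and matching it with the constraint coming from the three-point intersection downstairs (lifted appropriately through the double branched cover), pins down the parity of $m$; the three intersection points, being odd in number and the branch covering being two-to-one away from $L$, forces $m$ odd.

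The main obstacle I anticipate is the last step — extracting the parity of $m$ from the geometry of the rational-tangle closure $K(m/n)$ and reconciling it with the "three points on a meridian disk" normalization of $L$ in $V$. This requires being careful about which solid torus ($V$ versus the Heegaard solid torus of $L(\bar p,\bar q)$ in Lemma \ref{--quotient}) the meridian disk belongs to and how the filling slope $\bar\alpha$ interacts with the tangle picture; a clean bookkeeping of these identifications, rather than any deep new idea, is what the proof hinges on.
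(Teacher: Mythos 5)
Your handling of parts $(1)$ and $(2)$ matches the paper exactly: both follow immediately from Lemma \ref{--quotient}. For part $(3)$, however, there are genuine gaps, and the route you sketch is not the one that actually closes them.

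When $p$ is even your counting is fine: $|L_\alpha| = |L|+1 \le 2$ forces $|L|=1$, $|L_\alpha|=2$, hence $n$ even, hence $n\equiv p$. But when $p$ is odd you argue that the two-component $K(m/n)$ ``would make $L$ have a component that is a core or otherwise non-hyperbolic,'' and this is not justified --- the two components of $K(m/n)$ need not be cores of $V$, and a priori the link could still be hyperbolic in $V$. The paper instead runs a homological argument: the two components of a two-component $K(m/n)$ are isotopic \emph{to one another} in $L(\bar p,\bar q)$ (this is read off from Figure \ref{bgz4-fig24}), while $L$ meets a meridian disk of $V$ in three points, so the two components represent $\gamma$ and $2\gamma$ (up to sign) in $H_1(V(\bar\alpha))\cong\mathbb Z/\bar p$ where $\gamma$ is the core class; isotopy forces $\gamma=\pm 2\gamma$, hence $\bar p=3$, hence (since $p$ is odd) $p=3$, contradicting $\Delta(\alpha,\beta)\ge 4$. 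Nothing about hyperbolicity of $L$ is needed here.

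The step ``$m$ is odd'' is the one you flag as the obstacle, and indeed your proposed bookkeeping of intersection numbers across the two Heegaard solid tori (the one of Figure \ref{bgz4-fig24} and $V$ itself) is not carried out and would be delicate. The paper sidesteps this by working in $H_1$ of the lens space itself: if $m$ is even then $n$ is odd (coprimality), so $|L_\alpha|=1$, so $L=L_\alpha$ and $p$ is odd; moreover $K(m/n)$ with $m$ even is null-homotopic in $L(\bar p,\bar q)$, whereas $[L]=3\gamma\ne 0$ unless $\bar p \mid 3$, again giving $\bar p=3=p$, a contradiction. This avoids having to compare meridian disks in different solid tori, which is exactly the difficulty you anticipated but did not resolve. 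The final congruence $n\equiv p\ (\mathrm{mod}\ 2)$ then follows formally from $|L|=1$, part $(2)$, and \ref{comps 1}, as you also note.

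In short: parts $(1)$--$(2)$ are fine, and so is the $p$-even half of $(3)$, but the $p$-odd case of $|L|=1$ and the parity of $m$ both require the homological observations above (isotopy of components, null-homotopy of $K(m/n)$ for $m$ even), which your plan replaces with unfounded hyperbolicity claims and an unresolved intersection-number matching.
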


\begin{proof} Parts (1) and (2) follow from Lemma \ref{--quotient}.

In order to prove part (3), suppose that $|L| = 2$. Then part (2) shows that $L = L_\alpha$. In particular, $p$ is odd (\ref{comps 1}). Consideration of the form of $L_\alpha$ (cf. Figure \ref{bgz4-fig24}) shows that its two components are isotopic to one another. But since $L$ is transverse to a meridian disk of $V$ and intersects it in three points, the generator $\gamma$ of $H_1(V(\bar \alpha)) \cong \mathbb Z / \bar p$ carried by the core of $V$ satisfies $\gamma = \pm 2 \gamma$. Hence $\bar p = 3$. But $p$ is odd so $\Delta(\alpha, \beta) = p = \bar p = 3$, contrary to our hypotheses. Thus $|L| = 1$.

Next suppose that $m$ is even. Then $L_\alpha = K(m/n)$  is connected, so $L = L_\alpha$ and $p$ is odd, and $L$ is homotopically trivial in $L(\bar p, \bar q)$. But $L$ intersects a meridian disk of the Heegaard torus $V \subset L(\bar p, \bar q)$ transversely and in three points, so the only way it can be null homotopic is for $3 = \bar p$. Since $p$ is odd, $p = 3$, which contradicts our hypotheses. Thus $m$ is odd.

By (2), $|L_\alpha| \equiv n$ (mod $2$). Since $|L| = 1$ by (3), Identity \ref{comps 1} shows that $|L_\alpha| \equiv p$ (mod $2$).
\end{proof}

\subsection{Constraints on the branch set $L$}

Here we deduce strong constraints on the form of the branch set $L$ in $V$.

\begin{lemma} \label{cyclic-cover}
Suppose that assumptions \ref{assumptions 1} hold and that $\tau_\alpha$ reverses the orientation of the Seifert fibres of $M(\alpha)$. Let $k \geq 1$ be an integer dividing $\bar p$ and consider the $k$-fold cyclic cover $S^2(\frac{\bar p}{k}, \frac{\bar p}{k}, m, m, \ldots , m) \to S^2(\bar p, \bar p, m)$
obtained by the $k$-fold unwrapping of $S^2(\bar p, \bar p, m)$ about the two cone points labeled $\bar p$. Let $\widetilde {M(\alpha)}_k \to M(\alpha)$ be the associated $k$-fold cyclic cover where $\widetilde {M(\alpha)}_k$ is Seifert with base orbifold $S^2(\frac{\bar p}{k}, \frac{\bar p}{k}, m, m, \ldots , m)$ and the inclusion of a regular fibre of $M(\alpha)$ lifts to $\widetilde {M(\alpha)}_k$. Define $\widetilde{M}_k \to M$ to be the cover obtained by restricting $\widetilde {M(\alpha)}_k \to M(\alpha)$ to $M$. Then

\noindent $(1)$ $\partial \widetilde M_k$ is connected and $F$ lifts to $\widetilde M_k$. In particular, $\beta$ lifts to a slope $\widetilde \beta$ on $\partial \widetilde M_k$.

\noindent $(2)$ $\alpha$ lifts to a slope $\widetilde \alpha$ on $\partial \widetilde M_k$ such that $\widetilde {M(\alpha)}_k = \widetilde M_k(\widetilde \alpha)$. Further, $\Delta(\widetilde \alpha, \widetilde \beta) = \frac{p}{k}$.

\noindent $(3)$ $\widetilde \alpha$ is the singular slope of a closed essential surface in $\widetilde M_k$ if $S^2(\frac{\bar p}{k}, \frac{\bar p}{k}, m, m, \ldots , m)$ is hyperbolic with at least four cone points. If this is the case, $p/k \leq 3$.
\end{lemma}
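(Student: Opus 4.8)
The plan is to build the cover $\widetilde M_k$ explicitly, verify that it inherits all the structure needed, and then feed the resulting data into the machinery of \S\ref{background} (in particular Proposition \ref{reduction} and \cite[Corollary 1.6]{BGZ1}).

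First I would set up part (1). The $k$-fold unwrapping about the two order-$\bar p$ cone points of $S^2(\bar p,\bar p,m)$ has associated covering $\widetilde{M(\alpha)}_k \to M(\alpha)$ which is, by construction, $\pi_1$-surjective onto the subgroup generated by $\pi_1(M)$ and the class of a regular fibre (this is exactly the hypothesis that a regular fibre lifts). Restricting to $M \subset M(\alpha)$ gives $\widetilde M_k \to M$. To see $\partial \widetilde M_k$ is connected, note that the covering corresponds to a surjection $\pi_1(M) \to \zed/k$ which factors through $\pi_1(M(\alpha))$; the image of $\pi_1(\partial M)$ in $\zed/k$ is all of $\zed/k$ precisely because $\partial M$ carries a regular fibre of the $\tau_\alpha$-invariant structure (the fibre maps onto $\zed/k$ under the unwrapping). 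Hence $\partial \widetilde M_k$ is a single torus. Since the boundary slope $\beta$ of $F$ is disjoint from the fibre direction in the sense that $F$ is $\pi_1$-injective and the covering is determined by the fibre class, the surface $F$ (which lies in $M$) lifts: its fundamental group lies in the kernel of $\pi_1(M)\to \zed/k$ because $\widehat F$ is horizontal-incompatible, i.e. one checks the boundary class $\beta$ lifts, forcing $F$ to lift. This gives the slope $\widetilde\beta$.

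Next, part (2). The filling torus of $M(\alpha)$ is a neighbourhood of the image of the unwrapped locus, so $\widetilde{M(\alpha)}_k$ is obtained from $\widetilde M_k$ by filling along the lift $\widetilde\alpha$ of $\alpha$, and $\widetilde\alpha$ is a single slope since $\partial\widetilde M_k$ is connected. For the distance computation: on $\partial M$ we have the basis $\{\mu,\beta\}$ with $\alpha = p\mu + q\beta$ and $\Delta(\alpha,\beta)=p$. The cover $\partial\widetilde M_k \to \partial M$ is cyclic of degree $k$; since $\widetilde\beta$ covers $\beta$ with multiplicity... here one must track carefully how $\beta$ and $\alpha$ lift. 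The regular fibre $\phi$ of $M(\alpha)$ meets $\partial M$ in a slope that pairs with $\beta$; the covering $\partial\widetilde M_k\to\partial M$ unwraps in the $\phi$-direction $k$ times. Then $\beta$, being primitive and meeting $\phi$ in a way recorded by the Seifert data, lifts to $\widetilde\beta$ covering $\beta$ homeomorphically, while $\widetilde\alpha$ covers $\alpha$ with degree $k$ on $\partial M$... a short index computation gives $\Delta(\widetilde\alpha,\widetilde\beta) = p/k$ (which of course requires $k \mid p$; since $k\mid\bar p$ and $\bar p \mid p$ this holds). The main obstacle is exactly this lifting bookkeeping — making sure the normalisation of $\widetilde\alpha,\widetilde\beta$ is right so the distance comes out to $p/k$ and not $p$ or $pk$ — and I expect it to require the explicit identification of the fibre slope in the $\{\mu,\beta\}$ coordinates together with the structure of $L_\alpha = K(m/n)$ from Lemma \ref{--quotient 2}.

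Finally, part (3). Suppose $S^2(\tfrac{\bar p}{k},\tfrac{\bar p}{k},m,\dots,m)$ (with $k$ copies of the order-$m$ cone point, so $k+2$ cone points total) is hyperbolic and has at least four cone points. Then $\widetilde{M(\alpha)}_k = \widetilde M_k(\widetilde\alpha)$ is a Seifert fibred space over a hyperbolic base orbifold with $\geq 4$ cone points, hence it contains a horizontal closed essential surface (a branched cover of the base, pulled tight), or more directly, a vertical essential torus/annulus — in fact a Seifert manifold over a base with $\geq 4$ cone points is always toroidal, containing a vertical essential torus $\widehat S$. Its restriction $S$ to $\widetilde M_k$ is a closed essential surface (vertical tori stay essential in the knot-manifold piece because $\widetilde M_k$ is hyperbolic, so $S$ cannot compress). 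Now $\widetilde M_k$ is a hyperbolic knot manifold, $\widetilde M_k(\widetilde\alpha)$ is toroidal Seifert, and $\widetilde M_k(\widetilde\beta) = M(\beta)$-cover is toroidal (it contains the lift of $\widehat F$, an essential torus). By \cite{Wu2} $S$ has a singular slope, and since $S$ compresses in $\widetilde M_k(\widetilde\alpha)$ — it doesn't, $S$ is vertical hence incompressible there; rather, the relevant closed essential surface to use is the one furnished by Proposition \ref{reduction}(a)'s hypothesis applied inside $\widetilde M_k$. Concretely: $\widetilde M_k(\widetilde\alpha)$ Seifert toroidal and $\widetilde M_k(\widetilde\beta)$ toroidal, so by Theorem \ref{reduction2}, $\Delta(\widetilde\alpha,\widetilde\beta)\leq 4$; and to upgrade to $\leq 3$ one invokes that $\widetilde\alpha$ is then a singular slope of a closed essential surface (the vertical torus compresses after a further filling, by \cite{Sh} / Wu), whence \cite[Corollary 1.6]{BGZ1} gives $\Delta(\widetilde\alpha,\widetilde\beta)\leq 3$, i.e. $p/k \leq 3$. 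The hard part here is confirming that the hyperbolic-base Seifert filling really does force $\widetilde\alpha$ to be a singular slope — i.e. that the vertical essential torus in $\widetilde M_k(\widetilde\alpha)$ remains essential in $\widetilde M_k$ and compresses only after appropriate fillings — which follows from the standard fact that an essential torus in a Seifert-over-hyperbolic-base manifold meets any knot-manifold summand in an essential surface, combined with \cite[Theorem 3.2]{BCSZ2} or \cite{Sh} controlling its behaviour under the fillings $\widetilde\alpha,\widetilde\beta$.
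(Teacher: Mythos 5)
Your proposal has the right overall shape — build the cover, verify boundary connectedness and lifting of $F$, compute the distance on the boundary torus, and then invoke a singular-slope theorem — but each of the three verifications has a genuine gap or error, and you never use the one fact that makes the whole construction go through: that $\tau_\alpha$ reverses the fibre orientations.

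For (1), you claim that $\partial\widetilde M_k$ is connected because ``the fibre maps onto $\zed/k$ under the unwrapping.'' This is exactly backwards. The covering is governed by the homomorphism $\varphi\colon H_1(S^2(\bar p,\bar p,m))=\langle x,y : \bar p x=\bar p y=m(x+y)=0\rangle\to\zed/k$ with $\varphi(x)\equiv-\varphi(y)\equiv1$, and the point is that the class of a \emph{regular fibre} of $M(\alpha)$ is \emph{killed} by $H_1(M(\alpha))\to H_1(V(\bar\alpha))$ (a regular fibre in the branch set maps to an interval). Using the reversal hypothesis, $(\bar\tau_\alpha)_*(x+y)=-(x+y)$ and $m(x+y)=0$ with $m$ odd, one finds $x+y\mapsto0$ in $H_1(V(\bar\alpha))$, so $\varphi$ factors through $H_1(V(\bar\alpha))\cong\zed/\bar p$. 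It is $\mu$ — not the fibre — that maps to a generator of $H_1(V)$, which is what makes $\partial\widetilde M_k$ connected; and $F$ lifts because $H_1(F)$ dies in $H_1(V)$ (its quotient under $\tau_F$ is $D^2(2,2,2)$). Your phrase ``$\widehat F$ is horizontal-incompatible'' is not an argument.

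For (2), once one has the factorisation above, the bookkeeping you flag as the ``main obstacle'' is immediate: $\beta$ lies in the kernel of the composite $H_1(\partial M)\to H_1(V)\to\zed/k$ and $\mu$ maps to a generator, so $\{\widetilde\mu,\widetilde\beta\}$ is a basis of $H_1(\partial\widetilde M_k)$ mapping to $\{k\mu,\beta\}$, and $\alpha=p\mu+q\beta$ lifts to $(p/k)\widetilde\mu+q\widetilde\beta$, giving $\Delta(\widetilde\alpha,\widetilde\beta)=p/k$. You never need the explicit $L_\alpha=K(m/n)$ data here.

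For (3), your argument contains a mistake you half-notice yourself: $\widetilde M_k$ is a finite cover of the hyperbolic $M$, hence itself hyperbolic and atoroidal, so the vertical torus in $\widetilde M_k(\widetilde\alpha)$ cannot restrict to a closed \emph{essential torus} in $\widetilde M_k$. The route via Theorem~\ref{reduction2} plus ruling out the exceptional $\Delta=4$ case by comparing base orbifolds would, if pushed through, give $p/k\leq3$, but that still leaves the claim ``$\widetilde\alpha$ is a singular slope of a closed essential surface,'' which is a separate assertion of the lemma. The paper gets both assertions directly from \cite[Theorems 1.5 and 1.7]{BGZ1}; the latter is precisely the statement identifying a Seifert filling slope over a hyperbolic base orbifold with at least four cone points as a singular slope, which your proposal gestures at but does not establish.

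In short: the skeleton is right, but the homological heart of parts (1)–(2), which depends essentially on the fibre-reversing hypothesis, is absent or inverted, and part (3) rests on a false claim about essential tori surviving into the hyperbolic cover.
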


\begin{proof} The cover $S^2(\frac{\bar p}{k}, \frac{\bar p}{k}, m, m, \ldots , m) \to S^2(\bar p, \bar p, m)$ is determined by the homomorphism $\varphi: H_1(S^2(\bar p , \bar p, m)) = \langle x,y : \bar p x = \bar p y = m(x+y) = 0 \rangle \to \mathbb Z/k$ where $\varphi(x) \equiv - \varphi(y) \equiv 1 \hbox{ (mod $k$)}$.

First note that the homomorphism $H_1(M(\alpha)) \to H_1(V(\bar \alpha)) \cong \mathbb Z / \bar p$ kills any class carried by a regular Seifert fibre of $M(\alpha)$ (i.e. there are regular fibres with image an interval). Thus it factors through a homomorphism $\psi: H_1(S^2(\bar p, \bar p, m)) \to H_1(V(\bar \alpha))$. Since $\tau_\alpha$ preserves the fibre of multiplicity $m$ in $M(\alpha)$, but reverses its orientation, $(\bar \tau_\alpha)_* (x + y) = -(x + y)$. Thus $2(x+y)$ is sent to zero in $H_1(V(\bar \alpha))$ while $x$ is sent to a generator. Since $m$ is odd and  $m(x+y) = 0$, $x+y \mapsto 0 \in H_1(V(\bar \alpha))$. It follows that $\varphi$ factors as $H_1(S^2(\bar p, \bar p, m)) \stackrel{\psi}{\longrightarrow} H_1(V(\bar \alpha))  \stackrel{\cong}{\longrightarrow} \mathbb Z / \bar p \to \mathbb Z / k$. Since $H_1(F)$ lies in the kernel of $H_1(M) \to H_1(V)$ while $\mu$ is sent to a generator of $H_1(V)$, we conclude that $\partial \widetilde M_k$ is connected and $F$ lifts to $\widetilde M_k$. This proves (1).

For (2), note that by construction, there is a basis $\{\widetilde \mu, \widetilde \beta\}$ of $H_1(\partial \widetilde M_k)$ where $\widetilde \mu$ is sent to $k \mu$ and $\widetilde \beta$ is sent to $\beta$ in $H_1(\partial M)$. Then $\alpha = p \mu + q \beta$ lifts to $(\frac{p}{k}) \widetilde \mu + q \widetilde \beta$. Clearly $\Delta(\widetilde \alpha, \widetilde \beta) = \frac{p}{k}$.

Part (3) is a consequence of \cite[Theorems 1.5 and 1.7]{BGZ1}.
\end{proof}

\begin{lemma} \label{holds}
Suppose that assumptions \ref{assumptions 1} hold. Then $M$ is not a once-punctured torus bundle. In particular, Theorem \ref{once-punctured} holds when $F$ is a fibre.
\end{lemma}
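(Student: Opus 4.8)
The plan is to argue by contradiction: suppose $M$ is a once-punctured torus bundle with fibre $F$, so $M(\beta)$ is a torus bundle over $S^1$ and hence either Sol, Nil, or Euclidean in geometry. Since $M(\beta)$ is toroidal (it contains the essential torus $\widehat F$ by Assumptions \ref{assumptions 1}), this is consistent, but we will extract a contradiction from the distance hypothesis $\Delta(\alpha,\beta)>3$ together with the fact that $M(\alpha)$ is a small Seifert manifold. The key structural input is that when $F$ is a fibre, the branch set $L$ of the involution $\tau$ is \emph{braided} in the solid torus $V = M/\tau$ (noted in \S\ref{involution 1}); moreover $L$ meets a meridian disk of $V$ transversely in exactly three points. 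So $L$ is a closed $3$-braid in $V$, and the monodromy of the bundle structure on $M$ descends to a description of $(V,L)$ as a once-punctured-disk bundle — i.e. $L$ is the closure of a $3$-braid determined by the image of the bundle monodromy in the mapping class group of the thrice-punctured disk (equivalently a braid-group-like quotient).

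First I would pin down which $3$-braids can occur. The monodromy $\varphi: F \to F$ of the bundle is Anosov (since $M$ is hyperbolic), and $\tau_F$ conjugates $\varphi$ to $\varphi^{\pm 1}$; passing to the quotient orbifold $F/\tau_F = D^2(2,2,2)$, the induced map is a (pseudo-Anosov) orbifold automorphism, and removing the three cone points exhibits $L$ as a fibered link in $V$ whose fibre is the thrice-punctured disk and whose monodromy is this induced map. Then for each slope $\gamma$, $M(\gamma)/\tau_\gamma = (L(r,2s)\text{ or }L(r/2,s), L_\gamma)$ as recorded in \S\ref{involution 1}, and the hypothesis $\Delta(\alpha,\beta) > 3$ forces (via the lemmas in \S\ref{involutions} applied to $\tau_\alpha$ on the small Seifert manifold $M(\alpha)$) $L_\alpha$ to be a very restricted link in a lens space — a union of at most three Seifert fibres, or a rational-tangle closure $K(m/n)$. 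I would then observe that the fibered-link structure of $L$ in $V$ is incompatible with $L_\alpha$ having one of these forms unless the monodromy is periodic or reducible, contradicting pseudo-Anosov-ness — this is really the same phenomenon used in \cite{Ba} and in \cite{BGZ2}.

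The cleanest route, which I would actually pursue, is the following. Because $L\subset V$ is the binding of an open-book / fibered-link structure with pseudo-Anosov monodromy, $V\setminus L$ is hyperbolic and $L$ is not isotopic into $\partial V$, and the $\alpha$-filling $V(\bar\alpha)$ must contain $L_\alpha$ as a fibered link with pseudo-Anosov monodromy on the thrice-punctured disk — so $(M(\alpha)/\tau_\alpha)\setminus L_\alpha$ is hyperbolic. But the lemmas \ref{+-quotient 2} and \ref{--quotient 2} describe $L_\alpha$ either as a union of Seifert fibres of a Seifert structure on the lens space $L(\bar p,\bar q)$ or as $K(m/n)$; in every such case the complement of $L_\alpha$ is Seifert fibred (fibres of the lens space) or is a rational-tangle-complement graph manifold, hence \emph{not} hyperbolic. (When $M(\alpha)$ is itself a lens space, Lemma \ref{cyclic case} gives the same conclusion: cores of Heegaard solid tori and boundaries of Möbius/annular spines all have Seifert-fibred — indeed solid-torus or connected-sum — complements in the lens space.) This contradiction shows $M$ is not a once-punctured torus bundle; the final sentence "Theorem \ref{once-punctured} holds when $F$ is a fibre" then follows because Assumptions \ref{assumptions 1} were set up (via Corollaries \ref{stays incompressible} and \ref{smallseifert} and Proposition \ref{reduction}) precisely to reduce the theorem to this situation, and we have shown it cannot arise.

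The main obstacle I anticipate is making the step "$L_\alpha$ has Seifert-fibred or graph-manifold complement, hence the fibered-link structure forces the monodromy to be non-pseudo-Anosov" fully rigorous: one has to be careful that the open-book monodromy really descends under the branched quotient and survives the Dehn filling, and that a fibered link with hyperbolic complement cannot have a Seifert-fibred or Montesinos-tangle complement after an innocuous filling. An alternative, perhaps more self-contained, is to avoid open-book language entirely and argue homologically: when $F$ is a fibre, $b_1(M)=1$ and $\beta$ is the fibre slope, so $M(\beta)$ is a torus bundle; one then checks directly from the lists in \S\ref{involutions} that no small Seifert $M(\alpha)$ at distance $>3$ can have its $\tau_\alpha$-quotient branch set match the braided $L$ of \S\ref{involution 1} — in particular $L$ being braided with three strands means it is null-homologous in $V$, forcing strong constraints incompatible with $|L_\alpha|$ and $\bar p$ coming from the lens-space quotient. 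I would develop whichever of these two the authors find shorter; the homological version is likely the intended one and sidesteps the analytic subtleties.
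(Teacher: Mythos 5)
Your high-level plan --- use that $L$ is a closed $3$-braid in $V$ when $F$ is a fibre, and play this off against the restricted forms of $L_\alpha$ coming from \S\ref{involutions} --- is the right starting point, but both of the concrete routes you sketch have genuine flaws.

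The main gap is in the fibered-link argument. The fibration of $V\setminus L$ has fibre a thrice-punctured disk, and its fibre slope on $\partial V$ is the \emph{meridian} $\bar\beta$. You are filling along $\bar\alpha$, which has distance $p>3$ from $\bar\beta$; filling a boundary torus of a fibered manifold along a slope other than the fibre slope does not give a fibered manifold and in particular does not transport pseudo-Anosov monodromy. So the inference ``$L$ is a fibered hyperbolic link in $V$ $\Rightarrow$ $L_\alpha$ is a fibered hyperbolic link in $V(\bar\alpha)$, hence the complement of $L_\alpha$ in the lens space is hyperbolic'' simply fails, and with it the contradiction you want. (This is exactly the issue you flagged yourself at the end; unfortunately it is fatal, not a technicality.) Your fallback homological remark is also incorrect as stated: a closed $3$-braid in $V$ represents $\pm 3$ times the core class in $H_1(V)\cong\mathbb{Z}$, not $0$, so $L$ is \emph{not} null-homologous in $V$.

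What the paper actually does is quite different and avoids both problems. Writing $L$ as the closure of a $3$-braid $\sigma$, it passes to the universal cover $S^3\to L(\bar p,\bar q)$ and considers the lift $\widehat L$, which is the closure of $\sigma^{\bar p}a^{-3\bar q}$ (where $a=\sigma_1\sigma_2$ generates the centre of $B_3$ via $a^3$). First one checks $\widehat L$ is not the unknot (using the Birman--Menasco classification of unknotted $3$-braid closures), and uses this together with Lemma \ref{cyclic-cover} to rule out the fibre-reversing case, so Lemma \ref{+-quotient 2} applies and $\widehat L$ is a union of Seifert fibres in $S^3$, hence non-hyperbolic. On the other hand $\widehat L$ is hyperbolic in the preimage solid torus $\widehat V$, being a finite cover of the hyperbolic link $(V,L)$. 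These two facts feed into Futer--Kalfagianni--Purcell: the Schreier normal form of $\sigma^{\bar p}a^{-3\bar q}$ is generic yet the closure in $S^3$ is non-hyperbolic, which forces $\sigma^{p}a^{-6q}$ to be conjugate to $\sigma_1^c\sigma_2^{\pm1}$. The contradiction is then extracted by an explicit computation in $B_3/\langle a^3\rangle\cong\mathbb{Z}/2*\mathbb{Z}/3$, using that $p>3$ would make $\bar\sigma_1^c\bar\sigma_2^{\pm1}$ a proper $p$-th power, which the free-product normal form rules out except for small $c$ that in turn force $L$ to be boundary-parallel or annular in $V$, contradicting hyperbolicity. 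None of this machinery (lifting to $S^3$, FKP, the free-product arithmetic) appears in your proposal, and I don't see how to reach the conclusion without some substitute for it; simply comparing $L$ to the lists in \S\ref{involutions} is not enough, since those lists describe the branch set only \emph{after} filling.
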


\pf We assume that $M$ is a once-punctured torus bundle in order to  obtain a contradiction.

There is a $3$-braid $\sigma$ whose closure in $V$ is $L$. Altering $\sigma$ by conjugation in
$B_3 = \langle \sigma_1, \sigma_2 : \sigma_1 \sigma_2 \sigma_1 = \sigma_2 \sigma_1 \sigma_2 \rangle$
leaves its closure invariant. (Here $\sigma_1, \sigma_2$ are the standard generators of $B_3$.) There is an isomorphism $B_3 \cong \langle a, b : a^3 = b^2 \rangle$ where $a = \sigma_1 \sigma_2$ and $b = \sigma_1 \sigma_2 \sigma_1$. The center  of $B_3$ is generated by $a^3$ with $B_3 / \langle a^3 \rangle \cong \mathbb Z/2 * \mathbb Z / 3$. We will use $\bar \sigma$ to denote the image of a braid $\sigma$ in $B_3 / \langle a^3 \rangle$. Thus $\bar a$ has order $3$ and $\bar b$ has order $2$. In particular,
$$\bar \sigma_1 = \bar a^{-1} \bar b$$
\vspace{-.7cm}
$$\bar \sigma_2 = \bar b \bar a^2$$

The inverse image $\widehat L$ of $L \subset V \subset L(\bar p, \bar q)$ under the universal cover $S^3 \to L(\bar p, \bar q)$ is the closure the braid $\sigma^{\bar p} a^{-3 \bar q}$.

\begin{claim} \label{nontrivial}
{\it $\widehat L$ is not the trivial knot.}
\end{claim}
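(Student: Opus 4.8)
\emph{Proof proposal.} The plan is to argue by contradiction, passing to the double branched cover of $\widehat L$. So suppose $\widehat L$ is the trivial knot, and let $\Sigma$ be the double cover of $S^3$ branched over $\widehat L$; then $\Sigma\cong S^3$. Under the universal cover $\pi\colon S^3\to L(\bar p,\bar q)$ the Heegaard solid torus $V$ lifts to the solid torus $\widehat V=\pi^{-1}(V)$, which is the $\bar p$-fold cyclic cover of $V$ unwrapping its core, while $\widehat U=\pi^{-1}(U_\alpha)$ is a solid torus disjoint from $\widehat L$. Recall that $M$ is the double branched cover of $(V,L)$, carrying the fibration $M\to S^1$ to the fibration of $V$ by meridian disks, with the fibre $F$ lying over a meridian disk $D\subset V$ that meets $L$ in the three cone points. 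Pulling the branched cover $M\to V$ back along $\widehat V\to V$ then identifies the double branched cover $\widetilde M$ of $(\widehat V,\widehat L)$ with the connected $\bar p$-fold cyclic cover of $M$ dual to the fibre $F$. Hence $\widetilde M$ fibres over $S^1$ with fibre $F$, its monodromy acting on $H_1(F)\cong\mathbb Z^2$ as $\phi_*^{\bar p}$, where $\phi_*\in SL_2(\mathbb Z)$ denotes the action of the monodromy of $M$ on $H_1(F)$; and, being a finite cover of the hyperbolic manifold $M$, $\widetilde M$ is hyperbolic, so both $\phi_*$ and $\phi_*^{\bar p}$ are Anosov, i.e.\ of trace $>2$ in absolute value. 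Finally, as $\widehat L$ meets each meridian disk of $\widehat V$ in three points, the induced double cover of $\partial\widehat V$ is connected, so $\partial\widetilde M$ is a single torus and $\Sigma=\widetilde M\cup(\text{double cover of }\widehat U)$ is obtained from $\widetilde M$ by a single Dehn filling.

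Now for the contradiction. Since $\Sigma\cong S^3$, $\widetilde M$ is the exterior of a knot in $S^3$, so $H_1(\widetilde M)\cong\mathbb Z$; but the homology of the mapping torus $\widetilde M\to S^1$ is $H_1(\widetilde M)\cong\mathbb Z\oplus\operatorname{coker}(\phi_*^{\bar p}-I)$, so $\phi_*^{\bar p}-I\in GL_2(\mathbb Z)$. Hence $|2-\operatorname{tr}(\phi_*^{\bar p})|=|\det(\phi_*^{\bar p}-I)|=1$, i.e.\ $\operatorname{tr}(\phi_*^{\bar p})\in\{1,3\}$. The value $1$ is impossible because $\phi_*^{\bar p}$ is Anosov and so $|\operatorname{tr}(\phi_*^{\bar p})|\ge 3$. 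Thus $\operatorname{tr}(\phi_*^{\bar p})=3$. Writing the real eigenvalues of $\phi_*$ as $\lambda^{\pm1}$ with $|\lambda|>1$ and using $\bar p\ge 2$, we get
$$|\operatorname{tr}(\phi_*^{\bar p})|=|\lambda|^{\bar p}+|\lambda|^{-\bar p}\ \ge\ |\lambda|^{2}+|\lambda|^{-2}=\operatorname{tr}(\phi_*)^2-2\ \ge\ 3^2-2=7,$$
a contradiction. (Here $\bar p=p/\gcd(p,2)\ge 2$ because $p=\Delta(\alpha,\beta)>3$.) Therefore $\widehat L$ is not the trivial knot.

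The part I expect to require the most care is the covering-space bookkeeping: that the double branched cover of $(\widehat V,\widehat L)$ is connected and is exactly the cyclic cover of $M$ dual to $F$, so that it inherits the fibration with homological monodromy $\phi_*^{\bar p}$, and that $\partial\widetilde M$ is connected. Two remarks keep this manageable: the parity of $p$ (hence whether $L_\alpha$ carries the extra core $K_\alpha\subset U_\alpha$) is irrelevant, since we only need $\Sigma$ to be \emph{some} Dehn filling of $\widetilde M$, not a cover of $M(\alpha)$; and passing from $\sigma^{\bar p}$ to $\sigma^{\bar p}a^{-3\bar q}$ changes the monodromy of $\widetilde M$ only by a power of the boundary Dehn twist, which acts trivially on $H_1(F)$ and so does not affect the computation above. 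The rest is the elementary $SL_2(\mathbb Z)$ estimate.
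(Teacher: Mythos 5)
Your argument is correct, and it is genuinely different from the paper's. The paper quotes the Birman--Menasco classification of $3$-braids closing to the unknot to conclude that $\sigma^{\bar p}a^{-3\bar q}$ would have to be conjugate in $B_3$ to one of $\sigma_1\sigma_2$, $\sigma_1^{-1}\sigma_2^{-1}$, or $\sigma_1\sigma_2^{-1}$; the first two are ruled out because the complement of $\widehat L$ in $\pi^{-1}(V)$ is hyperbolic, and the third is ruled out by observing that $\bar a^2\bar b\bar a\bar b$ is not a proper power in $B_3/\langle a^3\rangle \cong \mathbb Z/2 * \mathbb Z/3$. You instead lift to the double branched cover, identify it with the $\bar p$-fold cyclic cover $\widetilde M$ of $M$ dual to the fibre, and derive a contradiction from $H_1(\widetilde M)\cong\mathbb Z$ together with the Anosov trace estimate $|\operatorname{tr}(\phi_*^{\bar p})|\geq \operatorname{tr}(\phi_*)^2 - 2\geq 7$ for $\bar p\geq 2$. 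The covering-space bookkeeping you flag is the right thing to worry about and you handle it correctly: the map $M\to V$ respects the fibrations (this is exactly what the paper means by ``$L$ is braided in $V$'' when $F$ is a fibre), so the pullback along $\pi^{-1}(V)\to V$ is the $\bar p$-fold cyclic cover of $M$ dual to $F$; connectedness of $\widetilde M$ and of $\partial\widetilde M$ follows because the meridian of $\pi^{-1}(V)$ has odd linking number $3$ with $\widehat L$; and the central element $a^{-3\bar q}$ only changes the boundary framing, which acts trivially on $H_1(F)$. Your route avoids invoking Birman--Menasco (a fairly heavy input) and is more self-contained given the once-punctured-torus-bundle context of Lemma~\ref{holds}; the trade-off is that it uses the hypothesis $\Delta(\alpha,\beta)>3$ (via $\bar p\geq 2$), which the paper's proof of this particular claim does not, though that hypothesis is of course in force throughout the proof of Lemma~\ref{holds}.
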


\begin{proof}[Proof of Claim \ref{nontrivial}]
If $\widehat L$ is trivial then $\sigma^{\bar p} a^{-3 \bar q}$ is conjugate to $\sigma_1 \sigma_2, \sigma_1^{-1} \sigma_2^{-1}$, or $\sigma_1 \sigma_2^{-1}$ (\cite[Classification Theorem, page 27]{BiMe}). The first two cases can be ruled out since they would imply that the exterior of $\widehat L$ in the inverse image of $V$ in $S^3$ is not hyperbolic. On the other hand, in the third case we have $\bar \sigma^{\bar p} = \bar \sigma_1 \bar \sigma_2^{-1} = \bar a^2 \bar b \bar a \bar b \in B_3 / \langle a^3 \rangle \cong \mathbb Z/2 * \mathbb Z / 3$. But this is impossible since $\bar a^2 \bar b \bar a \bar b$ is not a proper power.
\end{proof}

\begin{claim} \label{+}
{\it $\tau_\alpha$ preserves the orientation of the Seifert fibres of $M(\alpha)$. In particular, $\widehat L$ is a union of fibres in some Seifert structure on $S^3$ and $p$ is odd.}
\end{claim}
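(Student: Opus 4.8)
The plan is to rule out the possibility that $\tau_\alpha$ reverses the orientations of the Seifert fibres of $M(\alpha)$, for then Lemma \ref{invariant} together with the dichotomy used in Lemmas \ref{+-quotient 2} and \ref{--quotient 2} forces $\tau_\alpha$ to preserve fibre orientations, and the stated conclusions then follow: by Lemma \ref{+-quotient 2} (once we know $M(\alpha)$ is not a lens space; the lens space case is handled separately via Lemma \ref{cyclic case} and will be dealt with in the relevant section) $\widehat L$ is a union of Seifert fibres of $S^3$ — more precisely $L_\alpha = L$ lifts, under $S^3 \to L(\bar p,\bar q)$, to a union of fibres in the pulled-back Seifert structure on $S^3$ — and $p = \Delta(\alpha,\beta)$ is odd. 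So the whole claim reduces to excluding the fibre-reversing case.

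To do that I would invoke the cyclic covering machinery of Lemma \ref{cyclic-cover}. Assume for contradiction that $\tau_\alpha$ reverses fibre orientations. Then by Lemma \ref{--quotient 2}(1), $M(\alpha)$ has base orbifold $S^2(\bar p,\bar p, m)$ with $m \geq 2$ and $m$ odd, and (since we are in the once-punctured torus \emph{bundle} situation and $M$ is hyperbolic, hence $M(\alpha)$ is not a lens space) $\bar p \geq 2$. Now apply Lemma \ref{cyclic-cover} with $k = \bar p$: we obtain a cover $\widetilde M_k \to M$ with connected boundary, to which $F$ lifts, so $\widetilde M_k$ is again a once-punctured torus bundle (a finite cyclic cover of a once-punctured torus bundle, unwrapped in the fibre direction, is one); the filling $\widetilde M_k(\widetilde \alpha) = \widetilde{M(\alpha)}_k$ has base orbifold $S^2(1,1,m,m,\ldots,m)$ with $\bar p$ copies of $m$, and $\Delta(\widetilde\alpha, \widetilde\beta) = p/\bar p = \gcd(p,2)$, which is $1$ or $2$. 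On the other hand the assumptions \ref{assumptions 1} insist $\Delta(\alpha,\beta) > 3$, so I must instead take $k$ a proper divisor (or $k = \bar p$) chosen so that the base orbifold of the cover is hyperbolic with at least four cone points, and then Lemma \ref{cyclic-cover}(3) gives $p/k \leq 3$; combining this with the structure of once-punctured torus bundles and the explicit description of $L$ as a $3$-braid closure should pin down $p$ to a small value incompatible with $p > 3$, or else force $\widehat L$ to be trivial, contradicting Claim \ref{nontrivial}.

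The cleanest route is probably the second one: since $M$ is a once-punctured torus bundle, $L$ is the closure of a $3$-braid $\sigma$ in the solid torus $V$ (indeed $L$ is \emph{braided} in $V$, as noted in \S\ref{involution 1}), and $\widehat L \subset S^3$ is the closure of $\sigma^{\bar p} a^{-3\bar q}$. If $\tau_\alpha$ reverses fibre orientations then $M(\alpha)/\tau_\alpha$ is a lens space $L(\bar p,\bar q)$ whose genus-$1$ Heegaard solid torus $V$ contains $L_\alpha$ as a rational-tangle closure $K(m/n)$ (Lemma \ref{--quotient 2}(2)), with $|L| = 1$, $m$ odd, $n \equiv p \pmod 2$. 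In particular $\widehat L$, being the preimage of a rational-tangle closure in a Heegaard solid torus, is a torus knot or a two-bridge-type knot in $S^3$ whose braid word is conjugate into a very restricted form; matching this against the fact that $\widehat L$ is the closure of $\sigma^{\bar p} a^{-3\bar q}$ with $\sigma$ a $3$-braid, and using that $\widehat L$ is nontrivial (Claim \ref{nontrivial}) and that its exterior in the preimage of $V$ must be hyperbolic (since $L \subset V$ is a hyperbolic link), should produce a contradiction with $p = \Delta(\alpha,\beta) > 3$.

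I expect the main obstacle to be the bookkeeping in the fibre-reversing case: reconciling the three rather different constraints — that $\widehat L = \widehat{\sigma^{\bar p} a^{-3\bar q}}$ for a $3$-braid $\sigma$, that $L_\alpha = K(m/n)$ lives as a rational tangle closure in a Heegaard solid torus of $L(\bar p,\bar q)$, and that $L \subset V$ has hyperbolic exterior — while keeping careful track of the parity conditions ($\bar p = p/\gcd(p,2)$, $m$ odd, $n \equiv p$) and the inequality $\Delta(\alpha,\beta) > 3$. The delicate point is likely showing that the rational-tangle closure form of $\widehat L$ in $S^3$ is forced to be a torus knot (hence $3$-braid representatives are conjugate to $\sigma_1\sigma_2$, $\sigma_1^{-1}\sigma_2^{-1}$, or similar low-complexity words), which then collides with hyperbolicity of the exterior of $L$ in $V$ exactly as in the proof of Claim \ref{nontrivial}. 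Once the fibre-reversing case is eliminated, the preservation statement and the oddness of $p$ are immediate from Lemma \ref{+-quotient 2}, completing the proof of Claim \ref{+}.
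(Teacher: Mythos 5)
Your overall strategy is right, and you correctly identify the two tools the paper uses (the cyclic covering Lemma~\ref{cyclic-cover} and Claim~\ref{nontrivial}), but the logical chain is not assembled and the crucial step is missing. The key fact you omit is that a once-punctured torus bundle contains \emph{no closed essential surface at all} (\cite{CJR}, \cite{FH}). The paper applies Lemma~\ref{cyclic-cover}(3) with $k = \bar p$ precisely to get an outright contradiction when $\bar p \ge 4$: the orbifold $S^2(m,\ldots,m)$ with $\bar p$ copies of $m\ge 3$ is then hyperbolic with $\ge 4$ cone points, so $\widetilde M_{\bar p}$ would contain a closed essential surface, which is impossible since $\widetilde M_{\bar p}$ is again a once-punctured torus bundle. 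Your alternative use of the numerical bound $p/k\le 3$ from the same lemma goes nowhere: with $k=\bar p$ it reads $p/\bar p = \gcd(p,2)\le 3$, which is vacuous, and for proper divisors $k$ of $\bar p$ you would still need $p/k>3$ \emph{and} a hyperbolic orbifold with $\ge 4$ cone points, a combination you never exhibit.

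Once $\bar p \ge 4$ is ruled out, the remaining argument you would need is the parity bookkeeping you gesture at but do not carry through: $\bar p\in\{2,3\}$ and $p>3$ with $\bar p = p/\gcd(p,2)$ force $p\in\{4,6\}$, hence $p$ is \emph{even}, so by identity~\ref{comps 1} together with Lemma~\ref{--quotient 2}(2),(3) one has $|L| = 1$, $|L_\alpha|=2$, $n$ even, and $m$ odd; each component of $L_\alpha = K(m/n)$ is then a core of a Heegaard solid torus (see Figure~\ref{bgz4-fig24}), so $L$ itself is such a core and $\widehat L$ is a trivial knot in $S^3$, contradicting Claim~\ref{nontrivial}. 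Your proposed alternative via the braid word $\sigma^{\bar p}a^{-3\bar q}$ and \textquotedblleft torus knot or two-bridge-type knot\textquotedblright{} constraints is left at the level of an expectation (\textquotedblleft should produce a contradiction\textquotedblright) and does not engage the parity reduction that makes the triviality of $\widehat L$ immediate; that braid analysis is the tool the paper reserves for Claim~\ref{2}, the fibre-\emph{preserving} case, where the structure of $L$ really is constrained by $\widehat L$ being a Seifert link. As written, your proof does not close the fibre-reversing case.
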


\begin{proof}[Proof of Claim \ref{+}] Suppose otherwise and consider the $\bar p$-fold cyclic cover $\widetilde M_{\bar p} \to M$ constructed in Lemma \ref{cyclic-cover}. The base orbifold $S^2(m, m, \ldots , m)$ of $\widetilde{M(\alpha)}$ has $\bar p$ cone points, each of order $m \geq 3$ by Lemma \ref{--quotient 2}(3). If $\bar p \geq 4$, Lemma \ref{cyclic-cover} (3) implies that $\widetilde M_{\bar p}$ contains a closed essential surface, contrary to \cite{CJR} or \cite{FH}. Hence $\bar p$ is $2$ or $3$ and therefore as $p > 3$, $p$ is $4$ or $6$. Identity \ref{comps 1} then combines with parts (2) and (3) of Lemma \ref{--quotient 2} to show that $|L_\alpha| = 2$ and $m$ is odd. It follows that each component of $L_\alpha$ is isotopic to the core of a genus one Heegaard solid torus in $L(\bar p, \bar q)$ (cf. Figure \ref{bgz4-fig24}). In particular this is true of $L = L_\alpha \setminus K_\alpha$. It follows that $\widehat L$ is a trivial knot, contrary to the conclusion of Claim \ref{nontrivial}. Thus $\tau_\alpha$ preserves the orientation of the Seifert fibres of $M(\alpha)$. The remaining conclusions are a consequence of Lemma \ref{+-quotient 2}.
\end{proof}

Claim \ref{+} implies that $\bar p = p$ and $\bar q = 2q$.

Since $L$ is a hyperbolic link in $V$, $\widehat L$ is a hyperbolic link in the inverse image of $V$ in $S^3$. Thus the Schreier normal form for  $\sigma^{p} a^{-6 q}$ is generic (cf. \cite[Theorem 5.2]{FKP}). On the other hand, by Claim \ref{+}, $\widehat L$ is not a hyperbolic link in $S^3$ so \cite[Theorem 5.5]{FKP} implies that $\sigma^{p} a^{-6 q}$ is conjugate in $B_3$ to a braid of the form $\sigma_1^c \sigma_2^d$ where $c, d \in \mathbb Z \setminus \{0\}$. We must have $\min\{|c|, |d|\} = 1$ as otherwise $\widehat L$ would be a connected sum of non-trivial torus links, contrary to the conclusion of Claim \ref{+}. Thus $\sigma^{p} a^{-6 q}$ is conjugate to $\sigma_1^c \sigma_2^\epsilon$ for some $\epsilon \in \{\pm 1\}$ and non-zero $c$. The following claim completes the proof of Lemma \ref{holds}.

\begin{claim} \label{2}
{\it If $p > 3$, $\sigma^{p} a^{-6 q}$ is not conjugate to $\sigma_1^c \sigma_2^\epsilon$ for any $\epsilon \in \{\pm 1\}$. }
\end{claim}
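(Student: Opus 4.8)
\textbf{Proof proposal for Claim \ref{2}.}
The plan is to argue by contradiction: suppose $p > 3$ and $\sigma^{p} a^{-6q}$ is conjugate in $B_3$ to $\sigma_1^c \sigma_2^\epsilon$ with $\epsilon \in \{\pm 1\}$ and $c \ne 0$. I would work in the quotient $B_3/\langle a^3 \rangle \cong \mathbb Z/2 * \mathbb Z/3 = \langle \bar b \rangle * \langle \bar a \rangle$, where conjugacy is controlled by cyclic reduction of words in the free product. The left-hand side becomes $\bar \sigma^{p}$ (the central element $a^{-6q}$ dies, since $a^3$ is central and $(a^3)^2 \in \langle a^3\rangle$; more precisely $a^{-6q} = (a^3)^{-2q} \in \langle a^3 \rangle$), so I must show $\bar\sigma^{p}$ is \emph{not} conjugate to $\overline{\sigma_1^c \sigma_2^\epsilon}$ in $\mathbb Z/2 * \mathbb Z/3$.

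First I would compute the relevant images: $\bar\sigma_1 = \bar a^{-1}\bar b$ and $\bar\sigma_2 = \bar b \bar a^2 = \bar b \bar a^{-1}$, both of which are cyclically reduced words of syllable length $2$ and hence of infinite order in the free product. A key subcase is $\epsilon = 1$: then $\overline{\sigma_1^c \sigma_2} = (\bar a^{-1}\bar b)^c (\bar b \bar a^{-1})$. When $c \ge 1$ this telescopes the middle $\bar b \bar b = 1$, giving a cyclically reduced word whose syllable length is $2c$ (of the alternating form $\bar a^{-1}\bar b \cdots \bar a^{-1}\bar b \bar a^{-1} \cdot \bar a^{-1} = \cdots \bar a^{-2}\cdots$, so one must track the $\bar a$-exponents mod $3$ carefully); when $c \le -1$ one handles it symmetrically. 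On the other side, $\bar\sigma = \bar\sigma_1\bar\sigma_2 \cdots$ depends on the actual braid word $\sigma$ realizing $L$; but the crucial structural fact I would extract — and the reason the argument works — is that $\sigma^p$, being a $p$-th power, has image $\bar\sigma^p$ whose cyclically reduced form is a $p$-th power \emph{up to conjugacy} in $\mathbb Z/2 * \mathbb Z/3$, i.e. $\bar\sigma^p$ is conjugate to $w^p$ where $w$ is the cyclic reduction of $\bar\sigma$. Since $p > 3 \ge 3$ and $\bar\sigma$ has infinite order (its closure $\widehat L$ being a hyperbolic link, so $L$ is not a trivial braid), $w^p$ is a proper $p$-th power with $p \ge 4$.

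So the contradiction reduces to: a proper $p$-th power ($p \ge 4$) in $\mathbb Z/2 * \mathbb Z/3$ cannot be conjugate to $\overline{\sigma_1^c \sigma_2^{\pm 1}}$. For this I would invoke the standard fact that in a free product, two cyclically reduced elements are conjugate iff one is a cyclic permutation of the other, and that $g$ is a proper $p$-th power iff its cyclically reduced form is a $p$-fold cyclic repetition of a shorter cyclically reduced block. Thus I need to check that the cyclic reduction of $\overline{\sigma_1^c \sigma_2^{\pm 1}}$ is \emph{not} a $p$-fold repetition for any $p \ge 4$: its syllable-length is $2|c|$ or $2|c| \pm O(1)$, and its syllable pattern (an alternating string of $\bar a$-powers and $\bar b$'s, with all but at most one $\bar a$-syllable equal to $\bar a^{\pm 1}$ and at most one "defect" syllable $\bar a^{\pm 2}$ arising from the junction $\sigma_1^c \sigma_2$) breaks any period $\ge 4$ — a period-$p$ word with $p \ge 4$ repeats its syllables with period $\le (\text{length})/4 < (\text{length})/2$, forcing at least two copies of the anomalous junction syllable, which does not occur. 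The main obstacle, and where I would spend the most care, is the bookkeeping of $\bar a$-exponents modulo $3$ across the junction and the endpoints when forming the cyclic reduction of $\overline{\sigma_1^c\sigma_2^{\pm 1}}$ (and symmetrically the case $\epsilon=-1$, $\overline{\sigma_1^c \sigma_2^{-1}} = (\bar a^{-1}\bar b)^c (\bar a \bar b)$), together with confirming that $\bar\sigma^p$ genuinely has the form of a proper $p$-th power rather than collapsing — but here Claim \ref{nontrivial} and Claim \ref{+} already guarantee $\widehat L$ is non-trivial and non-hyperbolic, pinning down enough about $\bar\sigma$ to push the argument through.
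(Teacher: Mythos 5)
Your proposal works in the same quotient $B_3/\langle a^3\rangle \cong \mathbb Z/2 * \mathbb Z/3$ as the paper and shares the overall strategy of analyzing when $\overline{\sigma_1^c\sigma_2^\epsilon}$ can be a $p$-th power, but there is a genuine gap exactly where the paper does the real work.

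The issue is your assertion that ``$\bar\sigma$ has infinite order (its closure $\widehat L$ being a hyperbolic link, so $L$ is not a trivial braid).'' Non-triviality of $L$ does not imply $\bar\sigma$ has infinite order: the braids $\sigma = a^{3k\pm 1}$ and $\sigma = b^{2n+1}$ are non-trivial yet project to torsion elements in $\mathbb Z/2 * \mathbb Z/3$. And this is not a side case your syllable-periodicity argument can quietly absorb. If you carry out the reduction you sketch, you find that for $c = 1$, $2$, $3$ the word $\overline{\sigma_1^c\sigma_2}$ is conjugate to $\bar a$, $\bar b$, $\bar a^{-1}$ respectively --- torsion elements, whose cyclic reduction has syllable length $1$, so they \emph{are} $p$-th powers for every $p$ coprime to the relevant order. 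Your ``at most one defect syllable breaks period $\ge 4$'' heuristic is vacuous there, and in fact the algebra alone cannot produce a contradiction: the algebraic conclusion in those cases is simply that $\bar\sigma$ is itself conjugate to $\bar a^{\pm 1}$ or $\bar b$, hence that $\sigma$ is (up to conjugacy) $a^{3k\pm 1}$ or $b^{2n+1}$. Ruling these out requires a return to topology: the paper shows that $\sigma = a^{3k\pm 1}$ makes $L$ boundary-parallel in $V$, and $\sigma = b^{2n+1}$ makes $L$ a two-component link containing a core of $V$ with the other component annular in its complement --- both contradicting hyperbolicity of $V \setminus L$. Those two arguments are precisely what your parenthetical ``so $L$ is not a trivial braid'' elides. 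For the remaining values $c \le 0$ or $c > 3$, where the cyclic reduction has syllable length $\ge 2$ with a single anomalous $\bar a$-exponent, your periodicity argument is essentially the same normal-form observation the paper invokes, and that part is fine. So the structure of your proposal is sound, but the cases $c \in \{1,2,3\}$ need the topological input explicitly, not an unproved claim of infinite order.
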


\begin{proof}[Proof of Claim \ref{2}]
Suppose that $\sigma^{p} a^{-6  q}$ is conjugate to $\sigma_1^c \sigma_2^\epsilon$ for some $\epsilon \in \{\pm 1\}$. Projecting into $B_3 / \langle a^3 \rangle$ shows that $\bar \sigma_1^c \bar \sigma_2^\epsilon$ is a $p^{th}$-power in that group. The latter condition is invariant under conjugation and taking inverse, so without loss of generality we can suppose that $\epsilon = 1$. Now
$$\bar \sigma_1^c \bar \sigma_2 = (\bar a^{-1} \bar b)^c (\bar b \bar a^{-1}) = \left\{
\begin{array}{ll}
(\bar b \bar a)^{|c|} (\bar b \bar a^{-1}) & \hbox{if } c \leq 0 \\
\bar a & \hbox{if } c = 1 \\
\bar a^{-1} \bar b \bar a  & \hbox{if } c = 2 \\
(\bar a^{-1} \bar b) \bar a^{-1} (\bar a^{-1} \bar b)^{-1}   & \hbox{if } c = 3 \\
 (\bar a^{-1} \bar b \bar a)(\bar a \bar b) (\bar a^{-1} \bar b)^{c-4} (\bar a^{-1} \bar b \bar a)^{-1}  & \hbox{if } c > 3
  \end{array} \right. $$
Consideration of the normal form for elements of $\mathbb Z/2 * \mathbb Z / 3$ shows that the only values of $c$ which give proper powers in $B_3 / \langle a^3 \rangle$ are $c = 1, 2$, or $3$.

Say $c = 1$ or $3$. Then up to conjugation, $\bar \sigma^p = \bar a^{\pm 1}$ and therefore $\bar \sigma = \bar a ^{\pm 1}$. Hence $\sigma = a^{3k \pm 1}$ for some integer $k$. But then it is easy to see that $L$ is boundary-parallel in $V$, contrary to the fact that $V \setminus L$ is hyperbolic.

Next suppose that $c = 2$. Then $\bar \sigma^p = \bar b$ up to conjugation and therefore the same is true of $\bar \sigma$. As $a^3 = b^2$, $\sigma = b^{2n + 1}$ for some integer $n$. Then $L \subset \hbox{int}(V)$ has two components. One is a core curve $K_0$ of $V$ while the other is isotopic in $V \setminus K_0$ into $\partial V$. It follows that there is an essential annulus properly embedded in the exterior of $L$ in $\hbox{int}(V)$. But this contradicts the fact that $L$ is a hyperbolic link in $V$.
\end{proof}
\vspace{-.5cm} \hfill \qed (of Lemma \ref{holds})

Recall that $t_1^+$ is the number of tight components of $\breve{\Phi}_1^+$ (cf. \cite[\S 6]{BGZ2}).

\begin{lemma} \label{secondreduction}
Suppose that assumptions \ref{assumptions 1} hold. Then $t_1^+ = 0$. In particular, $M(\beta)_{\widehat F}$ is Seifert with base
orbifold of the form $A(a)$ where $A$ is an annulus and $a \geq 2$.
\end{lemma}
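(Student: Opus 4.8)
The plan is to assume $t_1^+ > 0$, derive a contradiction, and then read off the structural statement from Remark \ref{t1+ > 0} (equivalently \cite[Lemma 7.9]{BGZ2}).

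First I would extract numerical consequences from Proposition \ref{main1}. Under assumptions \ref{assumptions 1} we have $\Delta(\alpha,\beta) > 3$, so the ``very small'' alternative of Proposition \ref{main1} (which would force $\Delta(\alpha,\beta) \leq 3$ when $t_1^+ > 0$) cannot occur; hence $M(\alpha)$ is not very small, and the remaining alternative gives $\Delta(\alpha,\beta) \leq 4$. Therefore $\Delta(\alpha,\beta) = 4$, so $p = \Delta(\alpha,\beta) = 4$ by \ref{p 1} and consequently $\bar p = 2$.

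Next I would use that $M(\alpha)$ is not very small to restrict its Seifert geometry. If $M(\alpha)$ were a lens space, or more generally if its base orbifold $S^2(a,b,c)$ were spherical or Euclidean, then $\pi_1(M(\alpha))$ would be a central extension of the finite-or-virtually-abelian orbifold fundamental group of $S^2(a,b,c)$ and hence would contain no non-abelian free subgroup, making $M(\alpha)$ very small. So $M(\alpha)$ is not a lens space and $S^2(a,b,c)$ is hyperbolic, which means Lemmas \ref{+-quotient 2} and \ref{--quotient 2} apply. I would then split into the two cases according to whether $\tau_\alpha$ preserves or reverses the orientation of the Seifert fibres of $M(\alpha)$. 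If $\tau_\alpha$ preserves fibre orientations, Lemma \ref{+-quotient 2} gives that $p = \Delta(\alpha,\beta)$ is odd, contradicting $p = 4$. If $\tau_\alpha$ reverses fibre orientations, then since $\Delta(\alpha,\beta) = 4$, Lemma \ref{--quotient 2}(1) forces $M(\alpha)$ to be a prism manifold, i.e. to have base orbifold $S^2(2,2,m)$ — but that base orbifold is spherical, not hyperbolic, contradicting the previous sentence. In both cases we reach a contradiction, so $t_1^+ = 0$.

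Finally, with $t_1^+ = 0$ established, Remark \ref{t1+ > 0} (i.e. \cite[Lemma 7.9]{BGZ2}) says that $M(\beta)_{\widehat F}$ is Seifert fibred with base orbifold an annulus carrying exactly one cone point, which is precisely an orbifold of the form $A(a)$ with $A$ an annulus and $a \geq 2$ (a genuine cone point has order at least $2$). I do not expect a serious obstacle: the only delicate points are the routine observation that a non-hyperbolic base orbifold forces $M(\alpha)$ to be very small and the bookkeeping $p = \Delta(\alpha,\beta) = 4$, $\bar p = 2$; the substantive inputs — Proposition \ref{main1} and the involution Lemmas \ref{+-quotient 2} and \ref{--quotient 2} — are already in hand.
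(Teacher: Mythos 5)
Your argument follows the same route as the paper's: use Proposition \ref{main1} to force $\Delta(\alpha,\beta)=4$ and $M(\alpha)$ not very small if $t_1^+>0$, then play Lemma \ref{+-quotient 2} against Lemma \ref{--quotient 2}(1) to get a contradiction, and finish by quoting Remark \ref{t1+ > 0}. The extra paragraph in which you observe that ``not very small'' forces the base orbifold $S^2(a,b,c)$ to be hyperbolic (so $M(\alpha)$ is not a lens space, and a prism manifold is excluded) is just a slightly more explicit phrasing of the paper's one-line remark that a prism manifold is very small; same content.

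There is, however, one gap: Proposition \ref{main1} has as a hypothesis that the once-punctured torus $F$ is \emph{not a fibre} in $M$, and Assumptions \ref{assumptions 1} do not by themselves guarantee this. Before you can read off $\Delta(\alpha,\beta)\leq 3$ or $\leq 4$ from that proposition you must first rule out the fibred case. The paper does this by first invoking Lemma \ref{holds}, which shows that under Assumptions \ref{assumptions 1} the manifold $M$ is not a once-punctured torus bundle, hence $F$ is not a fibre. Your write-up jumps straight into ``extract numerical consequences from Proposition \ref{main1}'' without this preliminary step; you should add the appeal to Lemma \ref{holds} (or some equivalent argument) before applying Proposition \ref{main1}.
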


\begin{proof} Lemma \ref{holds} implies that $F$ is not a fibre and so Proposition \ref{main1} and Remark \ref{t1+ > 0} show that the lemma holds as long as either $M(\alpha)$ is very small or $\Delta(\alpha, \beta) > 4$. Assume then that $M(\alpha)$ is not very small and that $\Delta(\alpha, \beta) = 4$. The latter equality combines with Lemma \ref{+-quotient 2} to show that $\tau_\alpha$ reverses the orientations of the fibres of $M(\alpha)$. But then Lemma \ref{--quotient 2}(1) implies that $M(\alpha)$ is a prism manifold, contradicting our assumption that $M(\alpha)$ is not very small. Thus the lemma holds.
\end{proof}

\begin{lemma} \label{types}
Suppose that assumptions \ref{assumptions 1} hold. Then there are coprime integers
$a \geq 2$ and $b$ as well as a $3$-braid $\sigma$ such that $L$ is isotopic
to the link depicted in Figure \ref{fig3}.
\end{lemma}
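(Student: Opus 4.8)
The goal is to promote the crude structural information we already have — $L$ is a hyperbolic link in the solid torus $V$ meeting a meridian disk transversely in three points, and (from Lemma \ref{secondreduction}) $M(\beta)_{\widehat F}$ is Seifert over an annulus with one cone point — into the explicit normal form of Figure \ref{fig3}. The plan is to work $\tau$-equivariantly on the handlebody decomposition $M = N \cup M_F$ of \S\ref{involution 1}. The key point is that the quotient of $N \cong F \times I$ by $\tau_N$ is a solid torus $V_N$ in which the branch set is the standard $3$-strand ``trivial tangle'' coming from the three fixed points of $\tau_F$ on $F$ (Figure \ref{bgz4-fig0}); dually, the quotient of the genus $2$ handlebody $M_F$ by the hyperelliptic involution $\tau_{M_F}$ is a $3$-ball $B$ containing a trivial tangle of three strands, glued to $V_N$ along a $4$-punctured sphere (the quotient of $\partial M_F$). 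So $V = M/\tau$ is built from $V_N$ by attaching a $2$-handle along a curve $c$ in $\partial V_N$ minus the four puncture points, and $L$ is obtained from the standard tangle in $V_N$ by attaching the trivial tangle in the $2$-handle. Hence the entire combinatorial ambiguity in $L$ is the isotopy class of the attaching curve $c$ in the $4$-punctured sphere.

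First I would set up this equivariant picture precisely, identifying $\partial M_F/\tau_{M_F}$ with a $4$-punctured sphere $P$ and recording that isotopy classes of attaching curves for the $2$-handle correspond to ``slopes'' on $P$, i.e. elements of (essentially) $\mathbb{Q}\cup\{\infty\}$ via the standard mapping-class-group action. Next I would use the constraint coming from Lemma \ref{secondreduction}: cutting $M$ along $F$ and then capping off with the $\beta$-filling solid torus, $M(\beta)_{\widehat F}$ is Seifert over $A(a)$; passing to the quotient, $V(\bar\beta)/\tau_\beta$ together with its branch set must be the quotient of that Seifert piece under the induced involution, which forces the attaching curve $c$ to meet the relevant sub-arcs of $P$ in a very restricted way. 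Concretely this should cut the $\mathbb{Q}\cup\{\infty\}$ worth of possibilities down to the one-parameter family parametrised by a rational number, which is exactly the ``$b$'' in the statement, with the coprimality/$a\ge 2$ conditions recording that the Seifert cone point has order $a\ge 2$ and that the resulting tangle is genuinely knotted in $V$ (hyperbolicity of $L$ rules out the degenerate slopes). The leftover $3$-braid $\sigma$ in the statement is then simply the braiding of the three strands of the standard tangle in $V_N$ that we have not yet pinned down — i.e. an element of $B_3$ acting on the three $F\times I$ strands — and the picture of Figure \ref{fig3} is precisely ``do $\sigma$, then close up through the $b$-twisted $2$-handle.''

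The main obstacle will be the middle step: translating the Seifert-over-$A(a)$ conclusion of Lemma \ref{secondreduction} into a clean restriction on the $2$-handle attaching slope. This requires understanding $\tau$ restricted to $M(\beta)_{\widehat F}$ and its quotient — i.e. how the involution interacts with the Seifert fibering over the annulus with one cone point — and then reading off which attaching curves in the $4$-punctured sphere are compatible with that quotient being (a branched cover of) such a Seifert piece. I expect to handle this by a Montesinos-style analysis: the quotient of a Seifert fibered space over $A(a)$ by a fiber-preserving involution is a very short list of orbifolds with branch data, and only the ones giving a solid torus (as $V$ must be) survive, each of which corresponds to a rational-tangle filling. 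Once that list is in hand, recognizing that each case is exactly the link of Figure \ref{fig3} for suitable coprime $a\ge 2$ and $b$ — after absorbing the remaining freedom into the $3$-braid $\sigma$ — is then a matter of drawing the pictures carefully, with hyperbolicity of $L$ (and $|L\cap D|=3$, so $L$ is not a core or boundary-parallel) used only at the end to discard the finitely many degenerate slopes.
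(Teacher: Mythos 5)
Your overall strategy — pass to the quotient by $\tau$, locate the pieces corresponding to $N$ and $M_F$, and use Lemma~\ref{secondreduction} to run a Montesinos-type analysis of the induced involution on the Seifert-fibred piece $M(\beta)_{\widehat F}$ — is essentially the approach the paper takes. The paper works directly with $M(\beta)_{\widehat F}/\widehat\tau \cong S^2\times I$: it shows the induced involution on $A(a)$ must be a reflection, so the branch set in $S^2\times I$ is a $2$-braid together with an $\frac{a}{b}$-rational tangle separated by a vertical annulus (Figure~\ref{fig1}), and then deletes a neighbourhood of $K_\beta$ to obtain $L\cap M_F/\tau$ (Figure~\ref{fig2}) and hence Figure~\ref{fig3}.

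Two problems with the proposal, one minor and one substantive. The minor one: your description of the quotient pieces is off. Since $F/\tau_F = D^2(2,2,2)$, the quotient $N/\tau_N$ is a $3$-ball, not a solid torus; and $\partial M_F/\tau_{M_F}$ is a $6$-punctured sphere, not a $4$-punctured one. Consequently $V$ is two $3$-balls with trivial $3$-strand tangles glued along a pair of $D^2(2,2,2)$ disks (equivalently, joined by a $1$-handle), not $V_N$ plus a $2$-handle along a curve. This can be repaired, but as written the framework you plan to use does not match the actual topology.

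The substantive gap is the step you do not mention: once the Montesinos analysis gives the $2$-braid $\cup$ $\frac{a}{b}$-tangle picture in $S^2\times I$, the branch set has four arcs, one of which is $K_\beta\cap M(\beta)_{\widehat F}$, and you must determine whether that arc is a strand of the $2$-braid or a strand of the rational tangle. This matters: the two alternatives give genuinely different pictures for $L$ after $K_\beta$ is removed, and only one of them is Figure~\ref{fig3}. The paper settles this with a non-trivial structural input — the explicit decomposition of $M_F = X^+$ as a solid torus attached along a pair of winding-number-$a$ annuli to $A_*\times I$, taken from the final paragraph of the proof of \cite[Lemma~7.9]{BGZ2} — to conclude that $(\partial M)_{\partial F}$ is disjoint from the solid torus whose quotient carries the rational tangle, hence $K_\beta$ lies in the $2$-braid. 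Your plan has no mechanism for this identification; ``absorbing the remaining freedom into the $3$-braid $\sigma$'' does not supply it, because the two placements of $K_\beta$ are not related by a change of $\sigma$. Without this step the argument is incomplete.
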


\begin{figure}[!ht]
\centerline{\includegraphics{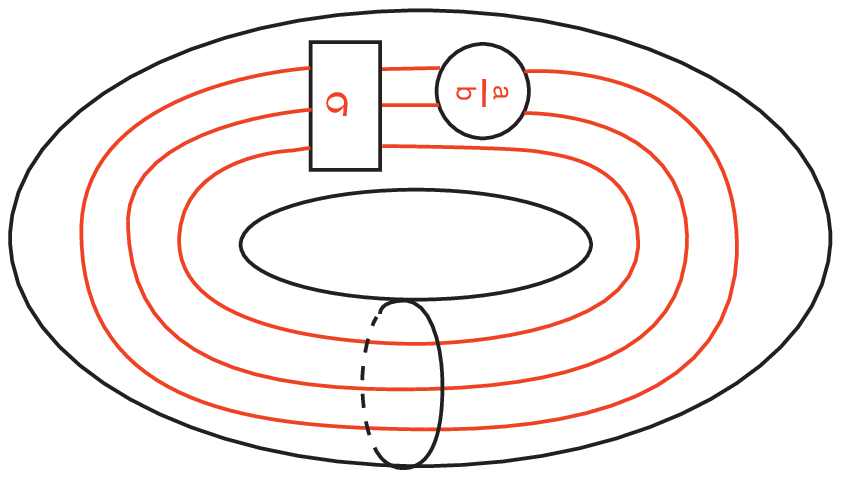}} \caption{ }\label{fig3}
\end{figure}

\begin{proof} By Lemma \ref{secondreduction}, $M(\beta)_{\widehat F}$ is Seifert with base
orbifold of the form $A(a)$ where $A$ is an annulus and $a \geq 2$.
Consider the involution $\widehat \tau: M(\beta)_{\widehat F} \to
M(\beta)_{\widehat F}$ induced by $\tau_\beta$.
Note that $M(\beta)_{\widehat F}/ \widehat \tau = V(\bar \beta)_{\widehat F/
\widehat\tau} \cong (S^2 \times S^1)_{S^2 \times \{x\}} \cong S^2 \times I$. Now
$M(\beta)_{\widehat F}$ has a unique Seifert structure which we can suppose is
$\widehat \tau$-invariant. Let $\overline{\widehat \tau}$ be the induced
involution on $A(a)$. Note $\overline{\widehat \tau}$ cannot preserve
orientation as otherwise $M(\beta)_{\widehat F}/ \widehat \tau \cong S^2 \times
I$ would admit a Seifert structure. Thus it reverses orientation and since it
fixes the cone point and leaves each boundary component invariant, it must be
reflection along a pair of disjoint properly embedded arcs, each of which runs
from one boundary component to the other. The quotient $A(a) /
\overline{\widehat \tau}$ is a disk whose boundary contains two disjoint,
compact arcs, each a reflector arc, one of which contains the $\mathbb Z/ a$ cone point.
It follows that the branch set in $M(\beta)_{\widehat F}/ \widehat \tau \cong
S^2 \times I$ consists of a $2$-braid and an $\frac{a}{b}$-rational tangle running from
one end to the other which are separated by a properly embedded vertical annulus. See
Figure \ref{fig1}.

\begin{figure}[!ht]
\centerline{\includegraphics{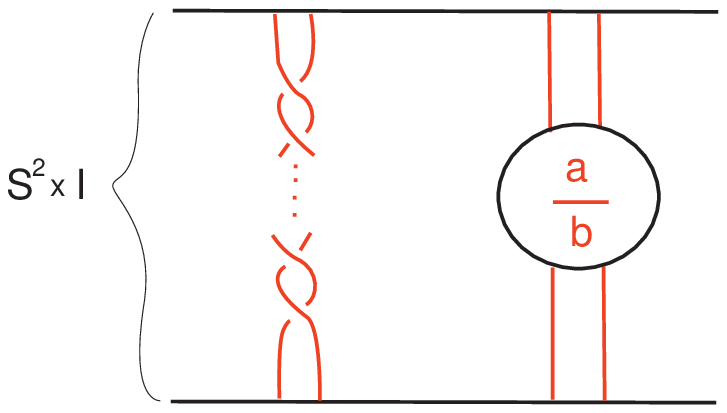}} \caption{ }\label{fig1}
\end{figure}

We claim that $K_\beta \cap M(\beta)_{\widehat F}$ is a component of the 2-braid. To see this, first note that by Lemma \ref{secondreduction}, $\breve{\Phi}_1^+$ has no tight components. Next we refer the reader to the final paragraph of the proof of \cite[Lemma 7.9]{BGZ2}. It is shown there that $M_{F} = X^+$ is obtained by attaching a solid torus $V$ to the product of an interval $I$ and a once-punctured annulus $A_*$ where $V \cap (A_* \times I)$  is a pair of annuli which have winding number $a$ in $V$ and components of $\partial A_* \times I$ in  $A_* \times I$. This decomposition is invariant under the restriction of $\hat \tau$ to $M_F$ and it is easy to see that the quotient of $V$ contains the $\frac{a}{b}$-rational tangle. Since $(\partial M)_{\partial F} \subset A_* \times I$ is disjoint from $V$, it follows that $K_\beta \cap M(\beta)_{\widehat F}$ is a component of the 2-braid. Thus $L \cap M_F/ \tau$ is
as depicted in Figure \ref{fig2}, where $\d$ is a $3$-braid. It follows that there is a $3$-braid $\sigma$ such that $L$ is as depicted
in Figure \ref{fig3}.
\end{proof}

\begin{figure}[!ht]
\centerline{\includegraphics{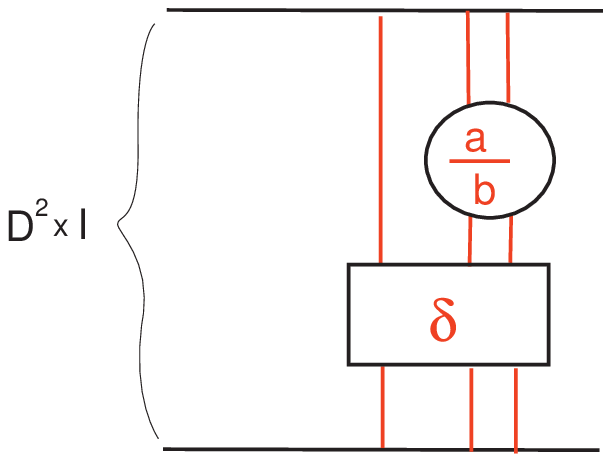}} \caption{ }\label{fig2}
\end{figure}

\subsection{The lens space case} \label{lens space case}

The methods of this paper can be used to give a new proof of Ken Baker's theorem: {\it if $M$ contains a once-punctured essential genus $1$ surface of boundary slope $\beta$ and $M(\alpha)$ is a lens space, then $\Delta(\alpha, \beta) \leq 3$} \cite{Ba}. We begin the proof here and complete it in \S \ref{lens space delta = 5}.

\begin{lemma} \label{once-punctured cyclic case}
Suppose that assumptions \ref{assumptions 1} hold. If $\pi_1(M(\alpha))$ is cyclic, then $p = 5$, $F$ is not a fibre, and $L_\alpha$ is either the core of a solid torus of a genus one Heegaard splitting of $L(5, 2q)$ or the boundary of a M\"{o}bius band spine of a Heegaard solid torus of $L(5, 2q)$.
\end{lemma}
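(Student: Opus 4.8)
The plan is to combine the constraints already extracted on the branch set with the Hodgson–Rubinstein classification of involutions on lens spaces. Since $\pi_1(M(\alpha))$ is assumed cyclic, $M(\alpha)$ is a lens space $L(p', q')$ and Lemma \ref{cyclic case} applies directly: $L_\alpha$ is either the core of a Heegaard solid torus, a M\"{o}bius band boundary, the union of two Heegaard cores, or an annular spine boundary in $L(\bar p, \bar q)$, according to the parity of $p'$. First I would use Lemma \ref{holds} to record that if $F$ is a fibre then $M$ is a once-punctured torus bundle, which is excluded, so in fact $F$ is not a fibre under assumptions \ref{assumptions 1}; this gives the ``$F$ is not a fibre'' clause and also means $L$ is not required to be braided, only to meet a meridional disk of $V$ in exactly three points.

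Next I would feed the combinatorial description of $L$ from Lemma \ref{types} (Figure \ref{fig3}) into the four possibilities of Lemma \ref{cyclic case}. The key point is a homological one: since $L$ is transverse to a meridian disk of the Heegaard solid torus $V \subset L(\bar p,\bar q)$ and hits it in three points, the class of (each component of) $L$ in $H_1(L(\bar p, \bar q)) \cong \mathbb Z/\bar p$ is constrained. If $L_\alpha = L$ is a Heegaard core, then $L$ carries a generator of $\mathbb Z/\bar p$ and also is homologous to $3$ times the core class of $V$, forcing a relation on $\bar p$; if $L_\alpha$ is the boundary of a M\"{o}bius band spine, it carries twice a generator, giving a different relation; the two-component cases ($p$ even, so $|L_\alpha| = |L|+1$) add the component $K_\alpha$, whose class must also be accounted for. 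Cross-referencing with the already-established bounds $\Delta(\alpha,\beta) = p > 3$ and with Proposition \ref{main1} (which caps $p \leq 6$ when $M(\alpha)$ is very small, hence here), one is left only with $p = 4, 5, 6$, and I would eliminate $p = 4$ and $p = 6$ by the parity/homology bookkeeping above — in the even cases the form of $L$ from Figure \ref{fig3} together with the fact that $V \setminus L$ is hyperbolic contradicts $L_\alpha \setminus K_\alpha$ being a Heegaard core or an annular spine boundary component. This leaves $p = 5$, with $\bar p = 5$, $\bar q = 2q$, and $L_\alpha = L$ either a Heegaard core or a M\"{o}bius-band-spine boundary in $L(5, 2q)$, exactly as claimed.

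The main obstacle I expect is the $p = 4$ (even) case: there $|L_\alpha| = |L| + 1 = 2$, both components of $L_\alpha$ must be of one of the two ``even'' types of Lemma \ref{cyclic case}(2), and one must rule out that the three-strand link of Figure \ref{fig3}, after adding the core $K_\alpha$ of the filling solid torus, becomes the union of the two Heegaard cores or the boundary of an annular spine of $L(2, q') = \mathbb{R}P^3$ or $S^3$. The cleanest route is probably to note that in both ``even'' pictures the two components of $L_\alpha$ are isotopic and each isotopic to a core of a genus-one Heegaard solid torus, so $L = L_\alpha \setminus K_\alpha$ would be isotopic into $\partial V$ or would be a core of $V$ — in either case $V \setminus L$ is not hyperbolic, the desired contradiction; but making the isotopy argument precise against Figure \ref{fig3} requires care, and this is where I would spend the most effort. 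For $p = 6$ one has $\bar p = 3$, and the homological relation ``core class $= \pm 3 \times$(core of $V$)$ = 0$'' forces $L$ null-homologous, which again contradicts the three-point intersection with the meridian disk unless $\bar p \mid 3$ in a way incompatible with $p = 6$; this case should go through quickly once the framework of the $p=4$ argument is in place.
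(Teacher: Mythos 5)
Your plan shares the same skeleton as the paper's proof: Lemma~\ref{holds} gives that $F$ is not a fibre, Proposition~\ref{main1} then caps $p\leq 6$ (a lens space is very small), and the Hodgson--Rubinstein classification (Lemma~\ref{cyclic case}) together with the hyperbolicity of the exterior of $L$ in $V$ is what eliminates the even values of $p$. But the homological and isotopy bookkeeping that you layer on top of this is both unnecessary and shaky, and it obscures the short argument that actually closes the case.

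The clean observation the paper uses is this: every possibility in Lemma~\ref{cyclic case} (core, M\"{o}bius-band boundary, two cores, annular-spine boundary) realises $L_\alpha$ as a union of fibres of some Seifert fibring of $L(\bar p,\bar q)$, so the exterior of $L_\alpha$ in $L(\bar p,\bar q)$ is Seifert fibred. If $K_\alpha\subset L_\alpha$ (equivalently, by~\ref{fix 1}, $p$ even), then that exterior \emph{is} the exterior of $L$ in $V$, which is hyperbolic --- contradiction. This kills $p=4$ and $p=6$ in one sentence, with no need to isotope the picture from Figure~\ref{fig3} against the two even models, and no need to match homology classes. Your instinct that the even cases are the crux is right, but the route you sketch (``each component isotopic to a core, so $L$ would be isotopic into $\partial V$'') is both harder to make precise than you expect (in case 2(a) the two components are generally not isotopic to each other, and the isotopy of $L_\alpha$ to the Heegaard cores happens in the whole lens space, not in $V$) and strictly more work than observing that the \emph{complement} of $L_\alpha$ is Seifert.

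Two concrete errors to flag in your write-up. First, the asserted homological relation for the core case (``$L$ carries a generator and is homologous to $3$ times the core class, forcing a relation on $\bar p$'') would, if taken literally, also obstruct $p=5$ --- which is the surviving case --- so it cannot be the right way to sieve out even $p$. The geometric intersection number $3$ with the meridian disk does not pin down the algebraic class, and in any event this sieve is not needed. Second, the $p=6$ paragraph is internally inconsistent: being null-homologous in $H_1(L(3,\bar q))$ is perfectly compatible with meeting the meridian disk three times geometrically, so the ``contradiction'' you invoke there is not one. Drop the homology entirely, run the Seifert-versus-hyperbolic argument to force $p$ odd, conclude $p=5$, $L=L_\alpha$, and read off the two possible forms from Lemma~\ref{cyclic case}(1).
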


\begin{proof} We know that $F$ is not a fibre (Lemma \ref{holds}), so $p = \Delta(\alpha, \beta) \leq 6$ by Proposition \ref{main1}.
As $\Delta(\alpha, \beta) = p \geq 4$, $M(\alpha) / \tau_\alpha \cong L(\bar p, \bar q)$ is not $S^3$.
Hence by Lemma \ref{cyclic case}, $L_\alpha$ is a union of Seifert fibres of some Seifert fibring of $L(\bar p, \bar q)$. Since $L$ is hyperbolic in $V$, $K_\alpha$ cannot be contained in $L_\alpha$. Thus $p$ is odd by \ref{fix 1}, so $p = \bar p = 5$, $\bar q = 2q$, and $L = L_\alpha$.
Lemma \ref{cyclic case}(1) then shows that $L_\alpha$ is either the core of a solid torus of a genus one Heegaard splitting of $L(5, 2q)$ or the boundary of a M\"{o}bius band spine of a Heegaard solid torus of $L(5, 2q)$.
\end{proof}

\begin{rem}
{\rm We can complete the proof of Baker's result mentioned above at this point by invoking a theorem of Sangyop Lee \cite{L3} which states that the distance between a toroidal filling slope and a lens space filling slope is at most $4$. Nevertheless, we give an independent proof that $\Delta(\alpha, \beta) \ne 5$ (and so $\Delta(\alpha, \beta) \leq 3$) in \S \ref{lens space delta = 5} below.}
\end{rem}

\subsection{Reduction of the proof of Theorem \ref{once-punctured}}

In this section we reduce the proof of Theorem \ref{once-punctured} to several problems concerning links. These will be solved in the subsequent sections of the paper. We begin with a slight sharpening of our upper bound for $\Delta(\alpha, \beta)$.

\begin{lemma} \label{at most 7}
If assumptions \ref{assumptions 1} hold, then $\Delta(\alpha, \beta) < 8$.
\end{lemma}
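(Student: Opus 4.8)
The plan is to upgrade the bound $\Delta(\alpha,\beta)\leq 8$ coming from Proposition \ref{main1} to the strict inequality $\Delta(\alpha,\beta) < 8$ by ruling out the single remaining possibility $p = \Delta(\alpha,\beta) = 8$. Since $M(\alpha)$ is atoroidal and irreducible, it is small Seifert, and Proposition \ref{main1} already gives $\Delta(\alpha,\beta)\leq 6$ whenever $M(\alpha)$ is very small; so I may assume $M(\alpha)$ is not very small and $p = 8$. The first step is to determine whether $\tau_\alpha$ preserves or reverses the orientations of the Seifert fibres of $M(\alpha)$. If it preserves them, Lemma \ref{+-quotient 2} forces $p$ to be odd, contradicting $p = 8$. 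Hence $\tau_\alpha$ reverses the fibre orientations, and we are in the setting of Lemma \ref{--quotient 2}: $M(\alpha)$ has base orbifold $S^2(\bar p, \bar p, m)$ with $m\geq 2$, and $\bar p = p/\gcd(p,2) = 4$ since $p = 8$ is even. In particular $M(\alpha)$ is not a prism manifold (as $\bar p = 4 \neq 2$), and Lemma \ref{--quotient 2}(3) tells us $|L| = 1$, $m$ is odd (so $m\geq 3$), and $n\equiv p\equiv 0$ (mod $2$), i.e. $n$ is even.

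The second step is to invoke the cyclic cover machinery of Lemma \ref{cyclic-cover} with $k = \bar p = 4$: unwrapping $S^2(\bar p,\bar p,m) = S^2(4,4,m)$ four times about the two cone points of order $4$ produces the base orbifold $S^2(m,m,m,m)$ of $\widetilde{M(\alpha)}_4$, which has four cone points each of order $m\geq 3$ and is therefore hyperbolic. By Lemma \ref{cyclic-cover}(3), $\widetilde\alpha$ is then the singular slope of a closed essential surface in $\widetilde M_4$ and $p/k = 8/4 = 2 \leq 3$. So far this is consistent; the point of the covering is different. Actually the cleaner route is to note that $\widetilde M_4$ contains a closed essential surface while $\widetilde{M(\alpha)}_4$ is small Seifert, and then play the same kind of argument as in Claim \ref{+} of Lemma \ref{holds}: lift $F$ (it lifts by Lemma \ref{cyclic-cover}(1)) and observe that $\widetilde M_4$ still contains a once-punctured essential torus of the lifted boundary slope $\widetilde\beta$. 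Since $\Delta(\widetilde\alpha,\widetilde\beta) = 2$, this is not immediately contradictory, so the real leverage must come from the explicit form of the branch set.

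The third and decisive step is to combine the constraint $|L| = 1$ from Lemma \ref{--quotient 2}(3) with the structural description of $L$ in Lemma \ref{types}: $L$ is the link of Figure \ref{fig3}, built from a coprime pair $a\geq 2$, $b$, and a $3$-braid $\sigma$, and in particular $L$ always has at least the component coming from the $\frac{a}{b}$-rational tangle part together with the closure of $\sigma$. One checks from Figure \ref{fig3} that the number of components of $L$ is at least two unless the $3$-braid $\sigma$ degenerates in a way that forces $L$ to be non-hyperbolic in $V$, contradicting that $V\setminus L$ is hyperbolic. Concretely: $|L| = 1$ forces $\sigma$'s closure to link up with the $\frac{a}{b}$-tangle strand into a single component, which pins down $\sigma$'s permutation and hence, after the braid-group analysis used in Claim \ref{2} of Lemma \ref{holds} (projecting to $B_3/\langle a^3\rangle\cong \mathbb Z/2 * \mathbb Z/3$ and using that $\widehat L$ must be a hyperbolic link in $S^3$ by \cite{FKP}, yet $\tau_\alpha$ reverses fibre orientations so $\widehat L$ is a union of Seifert fibres of $S^3$ and hence not hyperbolic), produces the contradiction. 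I expect the main obstacle to be the bookkeeping in the last step: matching the "$|L|=1$, $n$ even, $m$ odd" data against Figure \ref{fig3} and carrying the $B_3/\langle a^3\rangle$ normal-form argument through to eliminate $p=8$, in exactly the style of Claims \ref{+} and \ref{2} but now with the prism-manifold escape hatch closed off by $\bar p = 4$.
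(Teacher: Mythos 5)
You correctly carry out the first two reductions exactly as the paper does: ruling out the fibre-preserving case via Lemma \ref{+-quotient 2} (since $p=8$ is even), deducing from Lemma \ref{--quotient 2} that the base orbifold is $S^2(4,4,m)$ with $m\geq 3$ odd, and then reaching for Lemma \ref{cyclic-cover}. But you take $k=\bar p=4$, producing base orbifold $S^2(m,m,m,m)$ and $p/k=2\leq 3$, and, seeing no contradiction, you abandon this route in favour of a speculative branch-set argument. The missing idea is simply to take $k=2$: then $\widetilde{M(\alpha)}_2$ has base orbifold $S^2(2,2,m,m)$, which for $m\geq 3$ is hyperbolic with four cone points, so Lemma \ref{cyclic-cover}(3) forces $p/k=8/2=4\leq 3$, a contradiction. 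That one-line computation is the whole of the paper's proof once the reduction to the fibre-reversing case is made.

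Your third step does not patch the gap. You propose to run the $B_3/\langle a^3\rangle$ normal-form analysis of Claims \ref{+} and \ref{2}, but those claims are internal to Lemma \ref{holds}, whose entire setup is that $F$ \emph{is} a fibre, so that $L$ is a closed $3$-braid and its lift $\widehat L$ to $S^3$ is the closure of a braid $\sigma^{\bar p}a^{-3\bar q}$. Under assumptions \ref{assumptions 1}, Lemma \ref{holds} itself shows $F$ is \emph{not} a fibre, and Lemma \ref{types} then only gives the form of Figure \ref{fig3}, which contains an $\frac{a}{b}$-rational tangle and is not a braid closure. So the hyperbolicity criteria of \cite{FKP} for braid closures, and the $\mathbb Z/2 * \mathbb Z/3$ normal-form bookkeeping, are not available. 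The sections of the paper that do grapple with the Figure \ref{fig3} form (\S\ref{dist7}, \S\ref{delta = 5}) use tangle double-branched-cover arguments, not braid groups, and are considerably heavier than what is needed here. In short: the intended proof is the cover argument with $k=2$; you had the right lemma in hand but the wrong value of $k$, and your fallback rests on machinery that does not apply once $F$ fails to be a fibre.
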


\begin{proof}  By Lemma \ref{holds}, $F$ is not a fibre in $M$. Hence $\Delta(\alpha, \beta) \leq 8$ by Proposition \ref{main1} (or \cite{LM}). Suppose that $\Delta(\alpha, \beta) = 8$. Then $M(\alpha)$ is not very small by Proposition \ref{main1}. Further, Proposition \ref{reduction} implies that $M_F$ is a genus two handlebody, so we can construct an involution $\tau$ as above. Then Lemma \ref{+-quotient 2} implies that $\tau_\alpha$ reverses the orientations of the Seifert fibres of $M(\alpha)$. Parts (1) and (3) of Lemma \ref{--quotient 2} imply that $M(\alpha)$ has a Seifert structure with base orbifold $S^2(4,4, m)$ where $m \geq 3$ is odd. Let $\widetilde M_2 \to M$ be the $2$-fold cover constructed in Lemma \ref{cyclic-cover}. By part (2) of that lemma, $\widetilde M_2 (\widetilde \alpha)$ is Seifert with base orbifold $S^2(4, 4, m, m)$. But then Lemma \ref{cyclic-cover} (3) implies $4 = \frac{8}{2} \leq 3$, which is false. Thus $\Delta(\alpha, \beta) \ne 8$.
\end{proof}

\begin{lemma} \label{delta = 4 case}
Suppose that assumptions \ref{assumptions 1} hold and that $\Delta(\alpha, \beta) = 4$. Then $M(\alpha)$ is a prism manifold.
\end{lemma}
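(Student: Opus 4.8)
The plan is to combine the lens‑space case analysis with the dichotomy for the behaviour of $\tau_\alpha$ on the Seifert fibres of $M(\alpha)$. First I would dispose of the possibility that $\pi_1(M(\alpha))$ is cyclic: by Lemma~\ref{once-punctured cyclic case}, if $\pi_1(M(\alpha))$ is cyclic then $p = \Delta(\alpha,\beta) = 5$, contradicting the hypothesis $\Delta(\alpha,\beta) = 4$. Hence $M(\alpha)$ is not a lens space, and in particular it has a well‑defined base orbifold $S^2(a,b,c)$ with $2 \le a \le b \le c$, so Lemmas~\ref{+-quotient 2} and~\ref{--quotient 2} apply.

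Next I would invoke the fundamental dichotomy: either $\tau_\alpha$ preserves the orientations of the Seifert fibres of $M(\alpha)$, or it reverses them. The key point is to rule out the orientation‑preserving case. If $\tau_\alpha$ preserves fibre orientations, Lemma~\ref{+-quotient 2} gives that $L_\alpha = L$, hence $p$ is odd; but $p = \Delta(\alpha,\beta) = 4$ is even, a contradiction. Therefore $\tau_\alpha$ reverses the orientations of the Seifert fibres of $M(\alpha)$, and Lemma~\ref{--quotient 2}(1) directly yields that $M(\alpha)$ has base orbifold $S^2(\bar p, \bar p, m)$ with $m \ge 2$ and equal Seifert invariants on the two exceptional fibres of order $\bar p$. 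Since $p = 4$ is even, $\bar p = p/\gcd(p,2) = 2$, so the base orbifold is $S^2(2,2,m)$, which is exactly the condition that $M(\alpha)$ be a prism manifold.

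The only subtlety I anticipate is bookkeeping around the parity of $p$ and the relation $\bar p = p/\gcd(p,2)$: one must be careful that $\Delta(\alpha,\beta)=4$ forces $\bar p = 2$ rather than $\bar p = 4$, and that $m \ge 2$ is genuinely the prism‑manifold condition (as opposed to degenerating, e.g., $m = 1$, which is excluded by $m \ge 2$ in Lemma~\ref{--quotient 2}(1)). A second point worth checking is that $M(\alpha)$, being an irreducible atoroidal small Seifert manifold with base orbifold $S^2(2,2,m)$, is indeed a genuine prism manifold in the sense used in the introduction (so $m \ge 2$, not just $S^2(2,2,n)$ with $n \ge 2$), which follows immediately from $m \ge 2$. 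No delicate argument is needed beyond invoking the already‑established structural lemmas; the proof is essentially a parity observation feeding into Lemma~\ref{--quotient 2}(1).
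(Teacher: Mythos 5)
Your proof is correct and takes essentially the same route as the paper: rule out the lens space case via Lemma \ref{once-punctured cyclic case} (parity: $p=4 \ne 5$), rule out the fibre-orientation-preserving case via Lemma \ref{+-quotient 2} (parity again: $p$ would be odd), and then apply Lemma \ref{--quotient 2}(1) with $\bar p = 2$. The bookkeeping you flag — $\bar p = p/\gcd(p,2) = 2$ and $m \ge 2$ being exactly the prism condition — is handled correctly.
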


\begin{proof} Since $\Delta(\alpha, \beta)$ is even, $M(\alpha)$ is not a lens space (Lemma \ref{once-punctured cyclic case}) and so Lemma \ref{+-quotient 2} implies that $\tau_\alpha$  reverses the orientations of the fibres of $M(\alpha)$. Lemma \ref{--quotient 2}(1) now implies that $M(\alpha)$ is a prism manifold.
\end{proof}

Given the last two lemmas, to complete the proof of Theorem \ref{once-punctured} under  assumptions \ref{assumptions 1}, we must consider the possibility that $\Delta(\alpha, \beta) \in \{5,6,7\}$ besides the case when $\D(\alpha,\beta)=4$
and $M(\a)$ is a prism manifold. We do this by comparing the constraints  obtained above on the branch sets $L$ and $L_\alpha$:
\begin{itemize}

\item $L$ lies in $V$ as depicted in Figure \ref{fig3} (Lemma \ref{types});

\vspace{.3cm} \item when $M(\alpha)$ is not a lens space and $\tau_\alpha$ preserves the orientation of the Seifert fibres of $M(\alpha)$, then $\Delta(\alpha, \beta)$ is odd and $L_\alpha$ is the union of at most three fibres of some Seifert structure on $L(\bar p, \bar q)$ (Lemma \ref{+-quotient 2});

\vspace{.3cm} \item when $M(\alpha)$ is not a lens space and $\tau_\alpha$ reverses the orientation of the Seifert fibres of $M(\alpha)$, then  $L_\alpha$ lies in some Heegaard solid torus of $L(\bar p, \bar q)$ as depicted in Figure \ref{bgz4-fig24} (Lemma \ref{--quotient});

\vspace{.3cm} \item when $M(\alpha)$ is a lens space, then $\Delta(\alpha, \beta) = 5$ and $L_\alpha$ is either the core of a Heegaard solid torus of $L(5, 2q)$ or the boundary of a M\"{o}bius band spine of a Heegaard solid torus of $L(5, 2q)$
 (Lemma \ref{once-punctured cyclic case}).

\end{itemize}
The proof of Theorem \ref{once-punctured} therefore reduces to proving the following claims.
\begin{enumerate}

\item  If $\tau_\alpha$ preserves the orientation of the Seifert fibres and $M(\alpha)$ is not a lens space, then $\Delta(\alpha, \beta) = 5$ and $(M; \alpha, \beta)$ is homeomorphic to $(Wh(-3/2); -5, 0)$.

\vspace{.3cm} \item The links contained in the universal cover $S^3$ of $L(7, \bar q)$ which are depicted in Figure \ref{7ab} and Figure \ref{7mn} are not equivalent when $\Delta(\alpha, \beta) = 7$, $|L| = 1$, $m$ is odd, and $n \equiv 1$ (mod 2).

\vspace{.3cm} \item the link depicted in Figure \ref{fig3} considered as lying in a Heegaard solid torus in $L(5, 2q)$ is not isotopic to either the core of a Heegaard solid torus or the boundary of a M\"{o}bius band spine of a Heegaard solid torus.

\vspace{.3cm} \item The links contained in a Heegaard solid torus in $L(3, \bar q)$ depicted in Figure \ref{bgz4-fig24} and Figure \ref{fig3} are not equivalent.

\vspace{.3cm} \item The links contained in the universal cover $S^3$ of $L(5, \bar q)$ which are depicted in Figure \ref{bgz4-5ab} and Figure  \ref{bgz4-5mn} are not equivalent in the universal cover $S^3$ of $L(5, \bar q)$ when $\Delta(\alpha, \beta) = 5$, $|L| = 1$, $m$ is odd, and $n \equiv 1$ (mod 2).

\vspace{.3cm} \item  $\D(\alpha,\beta)=4$ and $M(\a)$ is a prism manifold if and only if
$(M; \alpha,\beta)\cong (Wh(\frac{-2n\pm1}{n});-4,0)$ for some integer $n$ with $|n|>1$.

\end{enumerate}
These will be proved in \S \ref{m=1 Seifert}, \S \ref{dist7}, \S \ref{lens space delta = 5}, \S \ref{sec6.3},  \S \ref{delta = 5} and \S \ref{prism-section} respectively.

\section{The case that $\tau_\alpha$ preserves the orientation of the Seifert fibres, $M(\alpha)$ is not a lens space, and $\Delta(\alpha, \beta) \in \{5, 7\}$.} \label{m=1 Seifert}

In this section we suppose that assumptions \ref{assumptions 1} hold and show that if $\tau_\alpha$ preserves the orientation of the Seifert fibres, $M(\alpha)$ is not a lens space, and $\Delta(\alpha, \beta) \in \{5, 7\}$, then $\Delta(\alpha, \beta) = 5$ and $(M; \alpha, \beta)$ is homeomorphic to $(Wh(-3/2); -5, 0)$.

By hypothesis, $M(\alpha)$ is small Seifert with exactly three singular fibres. It is not a prism manifold
by \cite{L2} and so has a unique Seifert structure. Recall that $M(\alpha)/\tau_\alpha = V(\bar \alpha)$ is the lens space
 $L(\bar p, \bar q) = L(p, 2q)$ and the branch set of $\tau_\alpha$ in $L(p, 2q)$ is a link denoted by  $L_\alpha$. As $p$ is odd,  $L_\alpha=L$ (cf. \ref{comps 1}).

Suppose that $L_\alpha$ is a Seifert link with respect to the  induced Seifert
fibration on $L(p, 2q) = M(\alpha)/\tau_\alpha$. We need to show that
$p=5$ and $(M; \alpha, \beta)$ is homeomorphic to $(Wh(-3/2); -5, 0)$.

By Lemma \ref{+-quotient 2}, at least one component of $L$  is a regular fibre of $L(p, 2q)$.
Let $K$ be such a component and denote by $X$ the exterior of $L$ in $L(p, 2q)$.
Then $X$ has the induced Seifert fibration with $|\partial X|=|L|$ boundary components, each a torus.
Let $T_K$ be the component of $\partial X$ corresponding to the knot  $K$.

\begin{lemma}\label{unique annulus for K}
There is an essential separating  vertical annulus $(A,\partial A)\subset (X, T_K)$
which cuts $X$ into two components $X_1$ and $X_2$ such that  each $X_i$  is
either a torus cross interval or a fibred solid torus
whose core is a singular fibre of $X$ of order larger than $2$.
\end{lemma}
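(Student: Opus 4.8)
The plan is to pass to the base orbifold of $X$ and run a short enumeration, with the hyperbolicity of $V\setminus N(L)$ and the normal form of $L$ from Lemma \ref{types} as the two essential inputs.

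Since $L=L_\alpha$ is a Seifert link for the induced fibration on $L(p,2q)=M(\alpha)/\tau_\alpha$, whose base orbifold is $S^2$ with at most two cone points, and since $L$ is a union of at most three fibres, at least one of which, namely $K$, is regular (Lemma \ref{+-quotient 2}), the base orbifold $B$ of $X$ with its induced fibration is planar: its underlying surface is $S^2$ with $|L|$ open disks removed, it carries at most two cone points, and the boundary circle $c_K$ dual to $T_K$ is not attached to a cone point. I first record the standard dictionary: an essential separating vertical annulus $(A,\partial A)\subset(X,T_K)$ is the preimage of an essential separating arc $\widehat A\subset B$ with $\partial\widehat A\subset c_K$, and cutting $X$ along $A$ produces the Seifert pieces over the two components of $B\setminus\widehat A$; such a piece is a copy of $T^2\times I$ exactly when its base is an annulus with no cone point, and a fibred solid torus with core a singular fibre of order $d$ exactly when its base is a disk with a single cone point of order $d$. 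Because an essential separating arc of a planar orbifold with both endpoints on a single boundary circle must cut off on one side a disk with exactly one cone point and on the other side either such a disk or an annulus with no cone point, a direct check shows the lemma is equivalent to the assertion that $B$ is either a disk with two cone points, each of order $>2$, or an annulus with one cone point of order $>2$.

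It therefore remains to pin down the shape of $B$. Write $c\le 2$ for its number of cone points, so $1\le|L|$ and $|L|+c\le 3$. The key point is that $Y:=V\setminus N(L)$ is hyperbolic with $|L|+1\ge 2$ boundary tori and $X=Y\cup_{\partial V}U_\alpha$ with $U_\alpha$ a solid torus. Using that $\partial V$ is incompressible in $Y$ and that $Y$ is atoroidal and anannular, I will argue that $\partial X$ is incompressible in $X$, so $X$ is neither $D^2\times S^1$ nor $T^2\times I$; and that any essential vertical torus or annulus in $X$ can be isotoped off $U_\alpha$ — in which case it becomes an essential torus or annulus in $Y$, which is impossible — unless, when combined with the picture of $L$ inside $V$ from Lemma \ref{types} (Figure \ref{fig3}), it exhibits $L$ as a torus link, a satellite, or a split link in $V$, again contradicting hyperbolicity of $Y$. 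Ruling out the vertical tori and annuli forced by any extra feature of $B$ then eliminates every case with $|L|+c\ne 3$, as well as $(|L|,c)=(3,0)$, leaving $(|L|,c)\in\{(1,2),(2,1)\}$. To finish I check that each cone point of $B$ has order $>2$: such a cone point is a singular fibre of $L(p,2q)$ not lying in $L$, hence, since $\tau_\alpha$ preserves fibre orientations, the image of a singular fibre of $M(\alpha)$ of order $k$ with matching order $k/\gcd(k,2)$, so it has order $\le 2$ only if it comes from an order-$4$ exceptional fibre of $M(\alpha)$; but then the corresponding fibred-solid-torus piece, with core of order $2$, pulled back through $M\to V$ and, where needed, to the cyclic covers of Lemma \ref{cyclic-cover}, once more contradicts the hyperbolicity of $Y$ and the form of $L$. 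This establishes the two admissible shapes of $B$, and the required annulus $A$ is the vertical annulus lying over the separating arc that they provide.

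The main obstacle is the elimination of the ``extra feature'' cases and of order-$2$ exceptional fibres: each reduces to showing that a particular splitting of $X$ cannot be isotoped to be disjoint from $U_\alpha$ without forcing $L$ into a non-hyperbolic position in $V$, and this requires a careful analysis of how $U_\alpha$ meets the relevant vertical surfaces rather than a purely formal argument.
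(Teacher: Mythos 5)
Your overall framework — pass to the base orbifold $B$ of $X$, translate the sought annulus into a separating arc with both endpoints on $c_K$, and enumerate the possible shapes of $B$ — is the right geometric picture. But the actual argument has gaps in three places, and the route you take diverges substantially from the paper's, which is shorter and does not use hyperbolicity of $Y$ at all.

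First, the claimed equivalence is incorrect. You assert that the lemma is equivalent to $B$ being a disk with two cone points of order $>2$ or an annulus with one cone point of order $>2$, and then devote most of the proposal to ruling out every other case, including $(|L|,c)=(3,0)$. In fact the lemma is also satisfied when $B$ is a thrice-punctured sphere with no cone points: an arc on $c_K$ separating the other two boundary circles cuts off two annular bases, giving $X_1\cong X_2\cong T^2\times I$. The paper's proof explicitly allows this case (it concludes only that $|\partial X|+(\text{number of singular fibres of }X)=3$ and lets all three shapes through), and it genuinely arises when all three cone orders $a,b,c$ of $M(\alpha)$ are even. Attempting to exclude $(3,0)$ is both unnecessary and, absent extra input, impossible.

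Second, the inequality $|L|+c\le 3$ is asserted as a consequence of $|L|\le 3$ and $c\le 2$, which it is not (formally those give only $|L|+c\le 5$). The paper instead gets the \emph{equality} $|L|+c=3$ directly from the case analysis in the proof of Lemma \ref{+-quotient}: one tracks where each of the three exceptional fibres $\sigma_1,\sigma_2,\sigma_3$ of $M(\alpha)$ goes under $\tau_\alpha$ (into $\operatorname{Fix}(\tau_\alpha)$, hence into $L$, or not, hence to a cone point of $B$), and in both the $\bar\tau_\alpha=\operatorname{id}$ case and the $\bar\tau_\alpha\neq\operatorname{id}$ case the total count is exactly $3$. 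That is the whole proof; no hyperbolicity of $Y=V\setminus N(L)$, no essential torus/annulus elimination in $X$, and no use of the normal form of $L$ from Lemma \ref{types} is needed. Your plan to eliminate ``every case with $|L|+c\ne 3$'' by showing vertical tori or annuli in $X$ cannot be pushed off $U_\alpha$ is an entirely different (and much harder) route, and you concede in the last paragraph that it is not actually carried out.

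Third, the argument that cone points of $B$ have order $>2$ is aimed at the wrong fibres. You write that such a cone point is ``the image of a singular fibre of $M(\alpha)$ of order $k$ with matching order $k/\gcd(k,2)$,'' and then worry about $k=4$ yielding order $2$. But a fibre of \emph{even} order lies in $\operatorname{Fix}(\tau_\alpha)$, hence its image is a component of $L$ and is removed from $L(p,2q)$ to form $X$; it is not a cone point of $B$. The cone points of $B$ come only from the odd-order fibres in the identity case (order $b$ or $c$, odd and $\ge 3$), or from the swapped pair (order $a=b>2$ since $M(\alpha)$ is not a prism manifold) and the non-fixed $\sigma_3$ (order $2c\ge 4$) in the non-identity case, so they automatically have order $>2$ with no further argument — and in particular there is no order-$4$ subcase to rule out.

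To summarize: your dictionary between vertical annuli and arcs in $B$ is fine, but the enumeration of base orbifolds, the bound $|L|+c\le 3$, the elimination of the pair-of-pants case, and the order-$>2$ verification all contain errors or unproven steps. The paper's proof sidesteps every one of these by reading off $|L|+c=3$ and the cone-point orders directly from Lemma \ref{+-quotient}'s description of $\operatorname{Fix}(\tau_\alpha)$.
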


\begin{proof} The lemma follows from Lemma \ref{+-quotient} and its  proof.
Let $\bar \tau_\alpha$ be the induced map on the orbifold $S^2(a,b,c)$ of $M(\alpha)$
where each of $a, b, c$ is  $\geq 2$.
Then $\bar \tau_\alpha$ is either the identity or an involution with two
fixed points. Let $\sigma_1,\sigma_2,\sigma_3$ denote the singular fibres of $M(\alpha)$ and let their orders be
$a,b,c$ respectively.

First assume that $\bar \tau_\alpha$ is the identity map. Then Lemma \ref{+-quotient}(a) implies that at least one of $a,b,c$, say $a$, is $2$ and the fixed point set of $\tau_\alpha$ in $M(\alpha)$ is the union of those $\sigma_i$ with even orders.
In particular $\sigma_1$ belongs to the fixed point set of $\tau_\alpha$
and its image in $L(p, 2q)$  is a regular fibre.
Note that if $\sigma_2$, respectively $\sigma_3$, does not belong to the fixed point
set of $\tau_\alpha$, then $b$, respectively $c$, is odd, and the image of $\sigma_2$,
respectively $\sigma_3$, in $L(p, 2q)$
is a fibre of $L(p, 2q)$ of order  $b$, respectively $c$.
Hence the sum of $|\partial X|=|L|$ and the number of the singular fibres of $X$
equals $3$. Since the surface underlying the base orbifold of $X$ is planar, the lemma follows in this case.

Next  assume that $\bar \tau_\alpha$ is an involution.
Then two of the singular fibres of $M(\alpha)$, say $\sigma_1$ and $\sigma_2$,  have the same order $a = b$. Both are mapped to a common  singular fibre in $L(p, 2q)$  of  order $a$.
Since $M(\alpha)$ is not a prism manifold, $a = b > 2$.

By Lemma \ref{+-quotient}(b), the fixed point set of $\tau_\alpha$ in $M(\alpha)$ consists of a regular fibre
and possibly the remaining singular fibre $\sigma_3$.
If $\sigma_3$ does not belong to $\hbox{Fix}(\tau_\alpha)$, then its image in $L(p, 2q)$  is a singular fibre  of order
 $2c \geq 4$ and therefore the sum $|\partial X|=|L|$ and the number of the singular fibres of $X$
again equals $3$. As in the previous case, the lemma follows from this.
\end{proof}

Recall that $K_\alpha$ is the core circle of the filling solid torus in $V(\bar \alpha) = L(p, 2q)$.
The exterior $Y$ of $K_\alpha$ in $X$ is also the exterior of $L$  in $V$ and so is hyperbolic.
Let $T_V = \partial V \subset \partial Y$.

The solid torus $V$ has a meridian disk $D$ which intersects $L$ in three points
such that $P=D\cap Y$ is an essential thrice-punctured disk in $Y$.
Let $d_V = \partial P \cap T_V$ and let $c_1,c_2,c_3$
 be the three components of $\partial P$ contained in $\partial Y\setminus T_V$.
Note  $d_V$ has the slope $\bar \b$ in $T_V$, and each $c_i$ is a meridian curve
of some component of $L$.

Among all annuli satisfying the conditions of Lemma \ref{unique annulus for K},
we choose one, denoted  $A$, which intersects $T_V$ in the minimal number of components.
Since $Y$ is hyperbolic, $A\cap T_V$ is non-empty.
The surface $Q=A\cap Y$ is essential in $Y$.
Since  $A$ is separating in $X$, $\partial Q\cap T_V$ consists of an even number, say $n$,
of simple  essential loops in $T_V$ of slope $\bar \alpha$.
Let $a_1,a_2$ be the two components of $\partial Q$ in $T_K$,  and let $b_1,...,b_{n}$ be
the components of $\partial Q$ in $T_V$ numbered so that they occur successively around $d_V$.
Each $a_i$ is a Seifert fibre of $X$, and each $b_j$ has slope $\bar \alpha$ on $T_V$. If $c_j$ is a meridian curve of $K$, then the distance between $c_j$ and $a_i$  is $1$ since $K$ is a regular fibre of $L(p,2q)$.

Now define the labeled intersection graphs $\Gamma_P$ and $\Gamma_Q$  as usual.
We may consider $d_V$, $c_1,c_2,c_3$, $a_1,a_2$, $b_1,...,b_{n}$ as
the boundaries of the fat vertices of these graphs.
Each $b_i$, $i=1,...,n$,  has valency $p = \Delta(\bar \alpha,\bar \beta)=\Delta(\alpha,\beta)$, and  the valency of $d_V$
is $np$.
Note that the valency of $a_1$ is equal to the valency of $a_2$ and is equal to
the number of $c_i$'s which are meridians of $K$. Further, the valency of $c_i$
is either $2$ or $0$ depending on whether $c_i$ is a meridian curve of $K$ or not.

We call the edges in $\Gamma_Q$ connecting some $b_i$ to some $b_j$
{\it B-edges}, and call the edges in $\Gamma_P$ connecting  $d_V$ to itself  {\it $D$-edges}.
Similarly we define $A$-edges, $C$-edges, $AB$-edges, and $CD$-edges.
Note that an arc in $P\cap Q$ is a $B$-edge in $\Gamma_Q$ if and only if it is a $D$-edge in $\Gamma_P$, is an $A$-edge in $\Gamma_Q$ if and only if it is an $C$-edge in $\Gamma_P$,
and is an $AB$-edge in $\Gamma_Q$ if and only if it is a $CD$-edge in $\Gamma_P$.

Every $D$-edge is positive, so by the parity rule, every $B$-edge is negative.
By construction, no $D$-edge in $\Gamma_P$ is boundary parallel
in $P$. Thus there are at most three different $D$-edges in the reduced graph $\overline{\G}_P$ (cf. Figure \ref{d-edges}).

\begin{figure}[!ht]
\centerline{\includegraphics{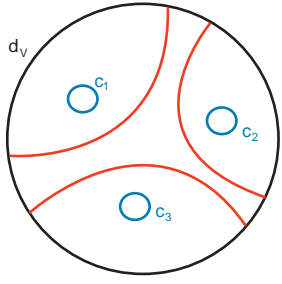}} \caption{The maximal possible $D$-edges in
$\overline{\G}_P$ }\label{d-edges}
\end{figure}

\begin{lemma}\label{S cycle with D-edges} There can be no
$S$-cycle in $\Gamma_P$ consisting of $D$-edges.
\end{lemma}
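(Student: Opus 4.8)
The plan is to argue by contradiction: suppose $\Gamma_P$ contains an $S$-cycle $\{e_1,e_2\}$ consisting of $D$-edges. Recall that an $S$-cycle is a pair of parallel negative edges (here, parallel $D$-edges in $\overline{\G}_P$) whose endpoints on the vertex $d_V$ are a pair of adjacent occurrences of successive labels; concretely, the two edges cut off a bigon face of $\Gamma_P$ and their four endpoints on $d_V$ are labelled $x,x+1,x+1,x$ for some label $x$ (mod $p$). The classical consequence of an $S$-cycle, going back to the Gordon--Litherland--Wu machinery, is that it forces a strong restriction on the filling side: reading the bigon face of the $S$-cycle in $M(\beta)_{\widehat F}$ (equivalently, in the quotient picture, in $V(\bar\beta)$ along the $2$-braid/rational-tangle description of Lemma \ref{types}) produces an essential annulus or a $\partial$-reducing disk, forcing $M(\beta)_{\widehat F}$, and hence $M(\beta)$, to have a cabling-type structure incompatible with the fact that $\widehat F$ is an \emph{essential} torus in $M(\beta)$.

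More precisely, here is the route I would take. First I would use the three-edge bound on $\overline{\G}_P$ already established (Figure \ref{d-edges}) together with the fact that every $D$-edge is positive and $d_V$ has valency $np$ with $n$ even, to see that an $S$-cycle of $D$-edges, if it exists, lies in one of the at most three parallelism classes of $D$-edges and its bigon face lives in the disk $P$. Next I would push the $S$-cycle bigon into $M(\beta)_{\widehat F}$: the two $D$-edges correspond to two arcs of $\widehat F \cap \widehat D$ running along the same $1$-handle of $M(\beta)_{\widehat F}$ with opposite orientations (this is the content of the labels being $x,x+1,x+1,x$), and the bigon then caps off to show that the core of that $1$-handle — i.e., the core $K_\beta$ of the filling solid torus, which by Lemma \ref{types} is a strand of the $2$-braid — is isotopic in $M(\beta)_{\widehat F}$ into $\widehat F$ along an annulus. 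Since by Lemma \ref{secondreduction} $M(\beta)_{\widehat F}$ is Seifert with base orbifold $A(a)$, $a\ge 2$, this isotopy would force the rational-tangle/$2$-braid pair to degenerate, contradicting either the hyperbolicity of $L$ in $V$ (the exterior of $L$ in $V$ is hyperbolic, so it contains no essential annulus with a boundary component of slope $\bar\beta$ bounding in a meridian disk side) or the essentiality of $\widehat F$ in $M(\beta)$.

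The main obstacle I anticipate is the bookkeeping translating the combinatorial $S$-cycle datum into a concrete essential annulus in $M(\beta)_{\widehat F}$ with the correct boundary slope — one has to be careful about which $1$-handle the $D$-edges run over and to use the explicit identification from the proof of \cite[Lemma 7.9]{BGZ2} (the decomposition $X^+ = (A_* \times I)\cup V$) so that the annulus produced is vertical in the Seifert structure on $M(\beta)_{\widehat F}$. An alternative, possibly cleaner, finish: the $S$-cycle of $D$-edges in $\Gamma_P$ together with the parity rule produces, on the $Q$-side, a corresponding configuration of $B$-edges and hence an essential annulus or M\"obius band in the hyperbolic manifold $Y$ with boundary slope $\bar\alpha$, or it produces a reducing sphere in $M(\alpha)/\tau_\alpha = L(p,2q)$ meeting $V$ in a disk, neither of which is possible since $M(\alpha)$ is an irreducible atoroidal small Seifert manifold and $Y$ is hyperbolic. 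I would present whichever of these two contradictions requires the least additional setup, and I expect only one of them is actually needed here.
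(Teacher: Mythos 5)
Your proposal does not contain the correct key idea, and both of the alternative finishes you sketch would miss the mark.

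First, a setup confusion: the graphs $\Gamma_P$ and $\Gamma_Q$ in this section are \emph{not} intersection graphs of the once-punctured torus $F$ with another surface in $M$ — they come from the thrice-punctured disk $P = D \cap Y$ and the surface $Q = A \cap Y$, both living in $Y$, the exterior of $L$ in the solid torus $V = M/\tau$. So there is no $1$-handle of $M(\beta)_{\widehat F}$ over which the two $D$-edges "run," and there is no way to "push the $S$-cycle bigon into $M(\beta)_{\widehat F}$" — the bigon face $E$ sits inside $Y$, on one side of the annulus $A$, i.e.\ in one of the pieces $X_1$ or $X_2$ of $X$ (the exterior of $L$ in the lens space $L(p,2q) = M(\alpha)/\tau_\alpha$). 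Also, $D$-edges are \emph{positive}, not negative (the parity rule then makes the $B$-edges negative); this is stated explicitly just before the lemma.

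The actual argument is local to $X_1$ and uses the description of $X_1$ given by Lemma \ref{unique annulus for K} (each side of $A$ is either a torus cross interval or a fibred solid torus whose singular fibre has order $>2$). From the $S$-cycle one takes the bigon $E$, the piece $H$ of the filling solid torus lying in $X_1$ between $\hat b_j$ and $\hat b_{j+1}$, and the annulus $A$ itself. A preliminary check rules out the configuration sitting in a disk region of $A$ (that would produce a punctured projective space in $L(p,2q)$, impossible since $p$ is odd). Then $U := N(E \cup H \cup A)$ is a solid torus in $X_1$ whose frontier $A'$ is an annulus with $\partial A'$ parallel to $\partial A$ on $T_K$ but meeting $T_V$ in $n-2$ components; by minimality of $|A \cap T_V|$, $A'$ is inessential, so $X_1$ cannot be a torus cross interval and must be a fibred solid torus. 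But $A'$ has winding number $2$ in $U$, forcing the singular fibre of $X_1$ to have order $2$, contradicting Lemma \ref{unique annulus for K}. Your second alternative ("essential annulus or M\"obius band in $Y$," or "reducing sphere in $L(p,2q)$") is at least pointed at the right side of the picture, but an $S$-cycle of $D$-edges does not directly hand you either of those objects; the contradiction really does go through the fibred structure of $X_1$ and the winding-number-$2$ annulus $A'$, which your sketch does not produce.
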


\begin{proof} Suppose otherwise that
 $\{e_1, e_2\}$ is an $S$-cycle in $\Gamma_P$ consisting of $D$-edges with
label pair $\{j,j+1\}$.
We may assume that  the bigon face $E$ between $e_1$ and $e_2$
lies on the $X_1$-side of $A$.

Let $H$ be the portion of the filling solid torus of $L(p, 2q)$ lying in $X_1$ which contains
$\hat b_j$ and $\hat b_{j+1}$.
In $\Gamma_Q$, $e_1\cup b_j \cup e_2\cup b_{j+1}$ cannot be contained in a disk region $D_*$ of $A$ as otherwise a regular neighborhood of $D_*\cup E\cup H$ in $X_1$ would be a punctured  projective space.
Thus $e_1\cup b_j \cup e_2\cup b_{j+1}$ contains a core circle of $A$ (cf. Figure \ref{S-cycle}).

\begin{figure}[!ht]
\centerline{\includegraphics{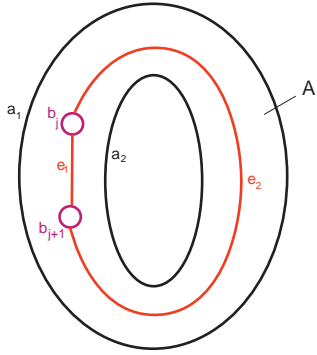}} \caption{The corresponding
cycle $\{e_1, e_2\}$ in
$\G_Q$}\label{S-cycle}
\end{figure}

Let $U$ be a regular neighborhood of $E\cup H\cup A$ in $X_1$. Then $U$ is a solid torus and the frontier
of $U$ in $X_1$ is an annulus $(A', \partial A')\subset (X, T_K)$ for which $\partial A'$ is parallel to $\partial A$ in $T_K$ and which intersects $T_V$ in $n - 2$ components. By construction, $A'$ is inessential in $X_1$ and therefore
$X_1$ cannot be a torus cross interval.
It follows that $X_1$ is a fibred solid torus of $X$.
Since $A'$ has winding number $2$ in the solid torus $U$, the singular fibre of $X_1$
has order $2$, contrary to Lemma \ref{unique annulus for K}. Thus the lemma holds.
\end{proof}

Note that $\G_P$ has at most six $CD$-edges and thus
 $\G_P$ has at least $(np-6)/2$ $D$-edges, so there is a family of at least $(np-6)/6$
mutually parallel $D$-edges.
By Lemma \ref{S cycle with D-edges} we have $(np-6)/6\leq n/2$. Hence $n\leq 6/(p-3)$
and therefore $p=5$ and $n=2$. If $\G_P$ has a $C$-edge, it would have only one family of parallel
$D$-edges, and this family would have at least three edges, contrary to the fact that no two $D$-edges can be parallel in
$\Gamma_P$ by Lemma \ref{S cycle with D-edges}.
Also, $\G_P$ has at least four $CD$-edges as otherwise there would be four $D$-edges
two of which would form an $S$-cycle.
Thus $\G_P$ has either six or four $CD$-edges.

We first consider the case when there are exactly four $CD$-edges.
In this case we have three $D$-edges in $\G_P$, no two of which can be parallel.
Hence $\G_P$ may be assumed
 to be as illustrated in Figure \ref{db not parallel}, i.e. $c_1$ and $c_2$ are contained in $T_K$ and $c_3$ is contained $\partial X\setminus T_K$.
 Thus $|L|=|\partial X|=2$ and we may assume that $X_1$ is a solid torus and $X_2$ is a
 torus cross interval.
In particular $c_3$ is contained in $X_2$.

\begin{figure}[!ht]
\centerline{\includegraphics{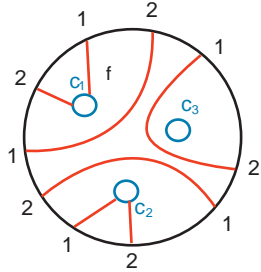}} \caption{
$\G_P$ when $\Delta(\alpha, \beta)=5$, $n=2$ and $4$ $CD$-edges.}\label{db not parallel}
\end{figure}

Consider the face $f$ given in Figure \ref{db not parallel}.
From the figure we see that  $f$ and $c_3$ are on the same side of $A$ (since $A$ is separating
 in $X$) and thus
$f$ is contained in $X_2$.
Let $T_*$ be the component of $\partial X_2$ containing $A$ and $H$ that part of filling solid torus of $L(p, 2q)$ contained
in $X_2$. We use $\partial_0 H$ to denote $\partial H \cap T_V$. It is evident that the boundary $\partial f$ of $f$
is contained in $T_*\cup \partial_0  H$.
Also note that $\partial f\cap T_*$ cannot be contained in
a disk in $T_*$ as otherwise $X_2$ would contain a projective space as a summand.
Thus $\partial f\cap T_*$ is contained in an annulus $A_*$ of $T_*$.
A regular neighborhood $W$ of $H \cup f\cup T_*$ in $X_2$ is a Seifert fibred space whose base orbifold is an annulus with a cone point of order $2$. Since $X_2$ is a torus cross interval, the frontier of $W$ in $X_2$ is an incompressible torus in $X_2$.
But this torus cannot be parallel to $T_*$ in $X_2$, contradicting the fact that
 $X_2$ is a torus cross interval. Thus the case when there are exactly four $CD$-edges does not arise.

We now know that $\G_P$ must have  six $CD$-edges.
Hence  there are exactly two $D$-edges in $\G_P$ and they are not parallel. It follows that $\G_P$ is as illustrated in
Figure \ref{no-c-edges} (1) or (2). (Without
loss of generality, we may assume that the labels around $d_V$ are as shown in these figures and that the vertices $c_1$, $c_2$ and $c_3$
are numbered  as given there.)
Therefore  $L=K$ and both $X_1$ and $X_2$ are solid tori.

We are going to show that  part (1) of Figure \ref{no-c-edges} cannot arise and that in case of part (2) of Figure \ref{no-c-edges}
 the dual graph $\G_Q$ may be assumed to be as shown in part (6) of
 Figure \ref{dual-of-f}.

 \begin{figure}[!ht]
\centerline{\includegraphics{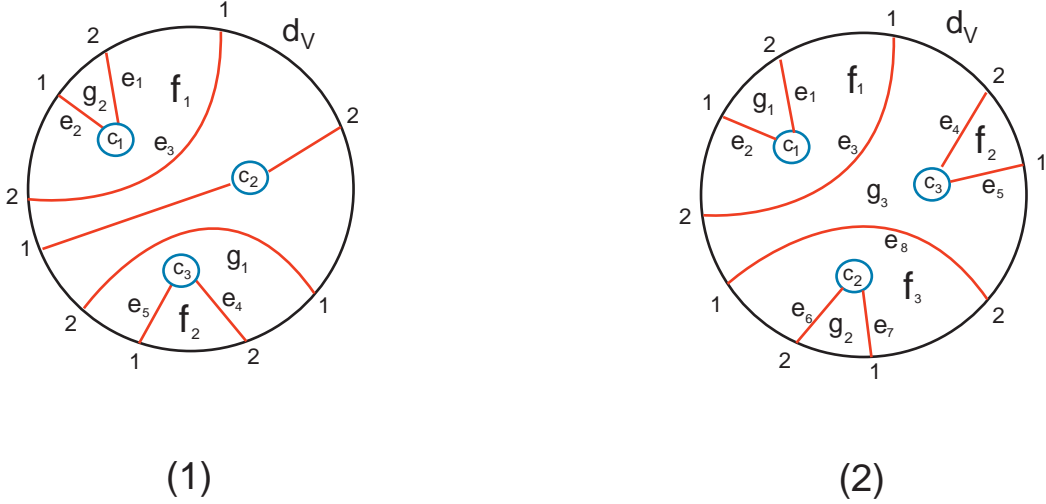}} \caption{$\G_P$ when  $p=5$, $n=2$ with $6$ $CD$-edges}
\label{no-c-edges}
\end{figure}

\begin{lemma}\label{part (1) impossible} The graph $\G_P$ cannot be as shown in part (1) of
Figure \ref{no-c-edges}.
\end{lemma}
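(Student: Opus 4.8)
The plan is to derive a contradiction from the combinatorial structure of $\G_P$ in part (1) of Figure \ref{no-c-edges} by passing to the dual graph $\G_Q$ on the annulus $A$ and analyzing the faces it bounds, exactly as was done for the case of four $CD$-edges. Since $p = 5$ and $n = 2$, the graph $\G_Q$ has two $B$-vertices $b_1, b_2$ (each of valency $5$), two $A$-vertices $a_1, a_2$, and three $C$-vertices $c_1, c_2, c_3$; all $B$-edges are negative by the parity rule, and $A$-edges are $C$-edges in $\G_P$ while $AB$-edges are $CD$-edges. First I would read off from the labelling around $d_V$ in part (1) of Figure \ref{no-c-edges} how the $5$ edge-endpoints at $b_1$ and at $b_2$ are paired up: the six $CD$-edges account for how the $c_i$'s attach, leaving the $D$-edges (equivalently $B$-edges of $\G_Q$) to be drawn on $A$. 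Because $X_1$ and $X_2$ are both solid tori and $A$ is vertical and separating, the $B$-edges and the portion $H$ of the filling solid torus in each $X_i$ must be embedded in the annulus $A$ in a constrained way, and I would identify where the resulting loops $e\cup b_j\cup e'\cup b_{j'}$ either bound disks in $A$ or run around its core.

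The key step is to locate a face $f$ of $\G_P$ — read off from Figure \ref{no-c-edges}(1) as in the four-$CD$-edge case — and to track which side of $A$ it lies on and what its boundary looks like in $T_* \cup \partial_0 H$. Because both $X_1$ and $X_2$ are solid tori (so $L = K$ is a single regular fibre), the analysis is slightly different from the four-edge case: rather than using the "torus $\times$ interval" property of $X_2$, I expect to use that a regular neighbourhood $W$ of $H \cup f \cup A$ (or of $H \cup f \cup T_*$) inside the solid torus $X_i$ must be a Seifert fibred piece whose frontier annulus or torus is incompressible, and that this forces the singular fibre of $X_i$ to have order $2$, contradicting Lemma \ref{unique annulus for K}. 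Alternatively — and this may be the cleaner route — one of the bigon or trigon faces of $\G_P$ in part (1), combined with the $I$-bundle/solid-torus structure, will produce a lens-space or punctured-$\mathbb{RP}^2$ summand inside $M(\alpha)/\tau_\alpha = L(5,2q)$ or inside one of the $X_i$, which is absurd since $X_i$ is a solid torus. I would also keep Lemma \ref{S cycle with D-edges} in reserve: if the specific edge configuration in Figure \ref{no-c-edges}(1) secretly contains an $S$-cycle of $D$-edges, that lemma alone finishes it.

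The main obstacle I anticipate is purely book-keeping: correctly transferring the labelled picture in Figure \ref{no-c-edges}(1) to the annulus $A$ so that the cyclic order of edge-endpoints around each $B$-vertex is faithful, and then reliably identifying which faces of $\G_P$ survive and on which side of the separating annulus $A$ each one sits. Once the face $f$ and its position relative to $A$ and $H$ are pinned down, the topological contradiction (a forbidden order-$2$ singular fibre, or a $\mathbb{RP}^2$/lens-space summand in a solid torus) should follow by the same regular-neighbourhood argument used above for the four-$CD$-edge case, with only minor modifications. The remaining assertion — that in case (2) the dual graph may be taken to be part (6) of Figure \ref{dual-of-f} — will be handled in a companion lemma; here I only need the impossibility of (1).
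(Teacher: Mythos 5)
Your plan — locate the faces of $\G_P$, determine which side of the separating annulus $A$ each lies on, and take a regular neighbourhood of $H\cup f\cup A_*$ inside the solid torus $X_i$ — matches the paper's proof up to the point where you must name the contradiction, and there it goes wrong. The regular neighbourhood $U$ of $H\cup f_1\cup A_*$ is indeed a solid torus whose frontier annulus has winding number $2$ in $U$, but $U$ is not all of $X_1$; what this step yields is a \emph{presentation} $\pi_1(X_1)=\langle x,t : x^2t=1\rangle$, with $x$ a cocore of $\partial_0 H$ and $t$ a cocore of $\partial X_1\setminus A$, not an order-$2$ singular fibre. To compute the order of the actual singular fibre you then need the \emph{second} face $f_2$ to express the class $s$ of the core of $A$ (a regular fibre of $X$) in terms of $x$: after ruling out one labelling of $e_4$ at $c_3$ (which would force $s=x$, an order-$1$ singular fibre), the face $f_2$ gives $xt^{-1}s=1$, hence $s=x^{-3}$, so the singular fibre of $X_1$ has order $3$, not $2$.

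Running the same two-face argument on $g_1,g_2$ gives a singular fibre of order $3$ in $X_2$ as well, and the contradiction is arithmetic: a Seifert structure on $L(\bar p,\bar q)=L(5,2q)$ with two singular fibres of order $3$ would force $3\mid 5$. None of your three candidate endgames actually arises: there is no order-$2$ singular fibre, no lens-space or punctured projective-plane summand inside a solid torus, and the $D$-edges in Figure \ref{no-c-edges}(1) do not form an $S$-cycle (that was precisely why this configuration survived Lemma \ref{S cycle with D-edges}). The two-face $\pi_1$ computation together with the divisibility finish is the substantive new idea of this lemma, not a minor modification of the four-$CD$-edge case.
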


\begin{proof}
Suppose otherwise that  $\G_P$ is given by part (1) of Figure \ref{no-c-edges}.
Since $A$ is a separating annulus, the faces $f_1, f_2$  of $\G_P$ lie
on the same side of $A$, say in $X_1$, and the faces $g_1, g_2$ lie in
$X_2$.

Let $H$ be the part of the filling solid torus of $L(p, 2q)$ contained
in $X_1$ and set $\partial_0 H = \partial H \cap T_V$.
The boundary edges of $f_1$ consist of two $CD$-edges $e_1$, $e_2$ and one $D$-edge $e_3$.
Without loss of generality, we may assume that the label of the edge  $e_1$ at the vertex $c_1$ is $2$.
In $\G_Q$, the boundary edges of $f_1$ may be assumed to be as illustrated
in part (1) of Figure \ref{dual-of-f}.
Note that the boundary $\partial f_1$ of $f_1$, including the corners,
lies in $\partial X_1 \cup  \partial_0  H$.
Further, $\partial f_1\cap \partial X_1$ is contained in an annulus $A_*$ of $\partial X_1$ whose slope has
distance $1$ from that of $\partial A$. Note as well that $\partial f_1\cap (\partial X_1\setminus A)$ is an essential arc in the annulus
$(\partial X_1 \setminus A)$.
A regular neighborhood $U$ of $H \cup f_1 \cup A_*$ in $X_1$ is a solid torus whose frontier in $X_1$ is an annulus $A_\#$
of winding number $2$ in $U$.
Thus $A_\#$ must be parallel to $\partial X_1\setminus A_*$ through  $X_1\setminus U$.
It follows that the fundamental group of $X_1$ is carried by $U$ and thus
has presentation
$$<x,t; x^2t=1>$$
where we take a fat base point in $A$ containing $b_1\cup b_2\cup (\partial f_1 \cap A)\cup (\mbox{all $AB$-edges})$,
$x$ is a based loop formed by a cocore arc of $\partial_0 H$, and $t$ is a based loop formed by a cocore arc of $\partial X_1\setminus A$.

Now consider the face $f_2$.
We claim that the label of the edge $e_4$ at the vertex  $c_3$ cannot be $2$.
Otherwise in $\G_Q$, the boundary edges of $f_2$, $e_4$ and $e_5$ would be as depicted in part (2) or part (3) of Figure \ref{dual-of-f}. In either case, the face  $f_2$ would add the relation
 $xts=1$ to the presentation for $\pi_1(X_1)$ above, where $s$ is the element represented by a core circle of the annulus $A$. Thus the fundamental group of the solid torus $X_1$ would be generated by $s=x$.
 But $s$ can be considered as a regular fibre of
 $X$. So the singular fiber of $X_1$ would have order one, which contradicts Lemma \ref{unique annulus for K}.

 Thus the label of $e_4$ at $c_3$ is $1$. It follows that in $\G_Q$, the edges $e_4$ and $e_5$
 are as shown in part (4) of Figure \ref{dual-of-f}, and the face $f_2$ adds the relation
 $xt^{-1}s=1$ to the presentation for $\pi_1(X_1)$, where $s$ is the element represented by a core circle of the annulus $A$.
Therefore $s=x^{-3}$.
Since $s$ can be considered as a regular fibre of
 $X$ and   $x$ can be considered as  a core circle of the solid torus $X_1$, the singular fibre in $X_1$ has order $3$.

With the same argument, we see that the existence of  the faces $g_1$ and $g_2$ in part (1) of Figure \ref{no-c-edges}
implies that the singular fiber in $X_2$ has order $3$.
Hence the two singular fibers of  $X$ both have order $3$ which implies that the order of the lens space $L(p, 2q)$ is divisible by $3$.
But the lens space has order $p = \Delta(\alpha, \beta) = 5$, yielding a contradiction.
So part (1) of Figure \ref{no-c-edges} cannot arise.
\hfill \end{proof}

 \begin{figure}[!ht]
\centerline{\includegraphics{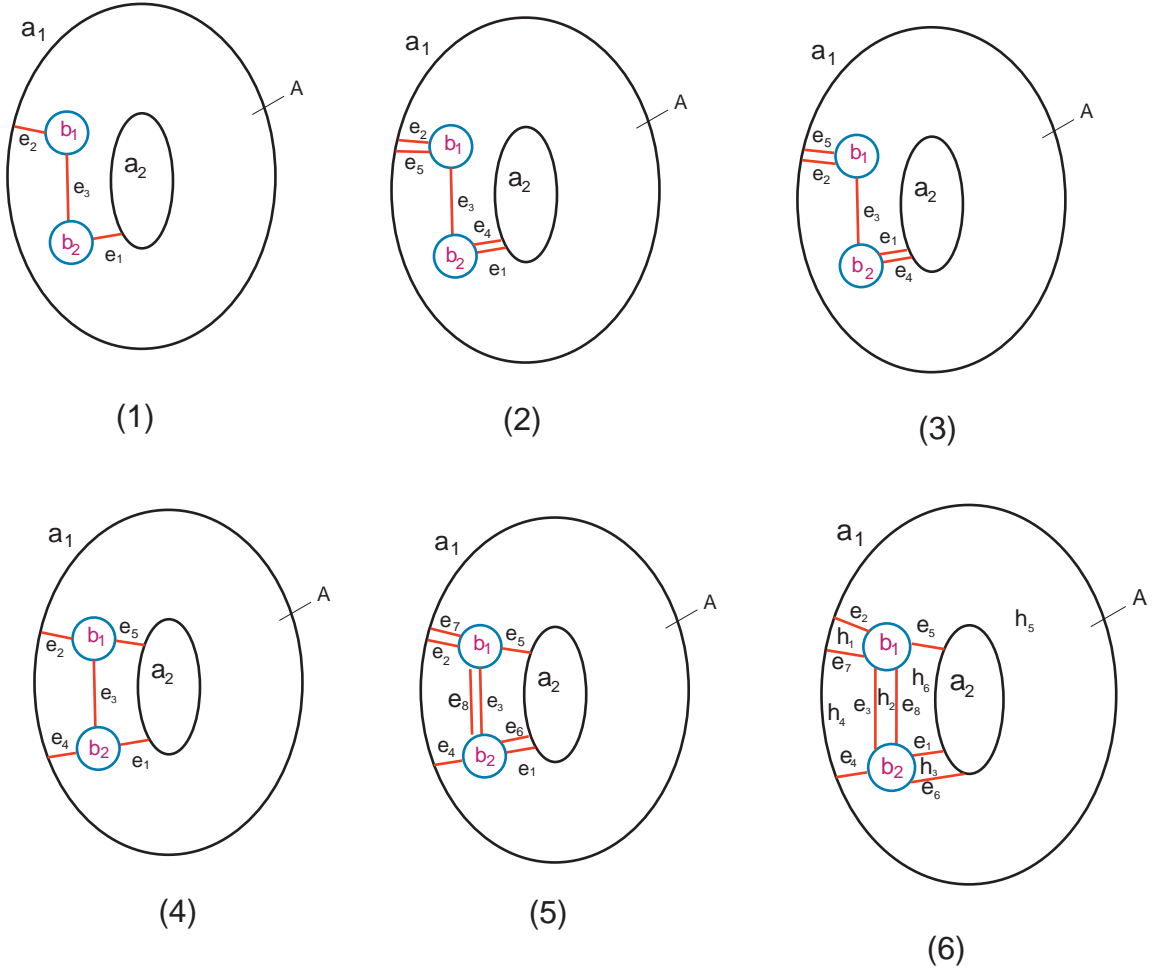}} \caption{About the graph $\G_Q$
}\label{dual-of-f}
\end{figure}

So $\G_P$ must be as shown in part (2) of Figure \ref{no-c-edges}.
Note that the faces $f_1,f_2,f_3$ lie on the same side of $A$, say $X_1$, and the faces $g_1,g_2,g_3$ in $X_2$.
Arguing similarly as the proof of Lemma \ref{part (1) impossible},  we see that in the dual graph $\G_Q$ the edges
$e_1, e_2, e_3, e_4$ and $e_5$
 may be  assumed to be as shown in part (4) of Figure \ref{dual-of-f}.

We now consider the face $g_3$. Note that $\partial g_3$ must be  contained in an annulus $A'$ of $\partial X_2$
whose slope has distance $1$ from that of $\partial A$
and that $\partial g_3 \cap (\partial X_2\setminus A)$ is an essential arc in the annulus
$(\partial X_2\setminus A)$.  Thus $e_8$ is
parallel to $e_3$ in $\G_Q$. By combining this with the argument given in Lemma \ref{part (1) impossible}
we see that the graph $\G_Q$ must be as depicted in part (5) or part (6)
of Figure \ref{dual-of-f}.

\begin{lemma} \label{9(5) impossible}
Figure \ref{dual-of-f}(5) is impossible.
\end{lemma}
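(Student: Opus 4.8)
The plan is to adapt the argument of Lemma~\ref{part (1) impossible}: one translates the combinatorics of $\G_Q$ in Figure~\ref{dual-of-f}(5) into relations in $\pi_1(X_1)$ and $\pi_1(X_2)$, uses the fact that $X_1$ and $X_2$ are solid tori to read off the orders of their singular fibres, and then derives an arithmetic contradiction from $p = \Delta(\alpha,\beta) = 5$.

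First I would record the set-up inherited from the discussion preceding this lemma: $\G_P$ is as in Figure~\ref{no-c-edges}(2), so $L = K$ and $X_1, X_2$ are fibred solid tori whose cores are singular fibres of $X$ of order larger than $2$ (Lemma~\ref{unique annulus for K}); the faces $f_1, f_2, f_3$ lie on one side of $A$, say in $X_1$, and $g_1, g_2, g_3$ in $X_2$; and the edges $e_1, \ldots, e_5$ of $\G_Q$ are as in Figure~\ref{dual-of-f}(4), with $e_8$ parallel to $e_3$. As in the proof of Lemma~\ref{part (1) impossible}, I would work with a fat base point contained in $A$ and the three based loops $x$ (a cocore arc of $\partial_0 H$, where $H$ is the part of the filling solid torus of $L(5,2q)$ lying in the relevant $X_i$), $t$ (a cocore arc of $\partial X_i \setminus A$), and $s$ (a core circle of $A$, which represents a regular Seifert fibre of $X$).

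Next I would read the face relations off Figure~\ref{dual-of-f}(5). The face $f_1$ gives $\pi_1(X_1) = \langle x, t \mid x^2 t = 1 \rangle$ exactly as before, and each of the other faces contributes one relation of the form $x t^{\epsilon} s = 1$ with $\epsilon \in \{1, -1\}$. A relation $x t s = 1$ would force $s = x$, hence a singular fibre of order $1$ in the corresponding $X_i$, which is excluded by Lemma~\ref{unique annulus for K}; so the labels are forced, and on the $X_1$ side one obtains $s = x^{-3}$, i.e.\ the singular fibre of $X_1$ has order $3$. The crux is that in configuration~(5), as opposed to~(6), the forced positions of the remaining edges (in particular the copy $e_8$ of $e_3$ produced by $g_3$, together with $e_6$ and $e_7$) make the words read around $g_1, g_2, g_3$ likewise yield $s = x^{-3}$ on the $X_2$ side, so the singular fibre of $X_2$ also has order $3$. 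But then $L(5,2q) = X \cup N(K)$ is Seifert fibred over $S^2$ with two exceptional fibres of order $3$, whence $|H_1(L(5,2q))|$ is divisible by $3$, contradicting $|H_1(L(5,2q))| = 5$. (If instead the bookkeeping produces $s = x^{\pm 1}$ on one side, one gets the even cleaner contradiction of a singular fibre of order $1$.)

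The main obstacle is precisely this edge bookkeeping that distinguishes~(5) from~(6): one must verify that the single combinatorial difference between the two graphs propagates, through the word read around each face, to the exponent with which $x$ generates $\pi_1(X_2)$ relative to the regular-fibre class $s$, while checking along the way that no face of $\G_P$ hides an $S$-cycle already excluded by Lemma~\ref{S cycle with D-edges} and that the auxiliary annuli (the frontiers in $X_i$ of regular neighbourhoods of $H \cup f \cup A_*$) behave as in the proof of Lemma~\ref{part (1) impossible}. Once the order-$3$ (or order-$1$) conclusion is pinned down, the contradiction with $p = 5$ is immediate, and one is left with $\G_Q$ as in Figure~\ref{dual-of-f}(6).
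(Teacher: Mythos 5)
Your plan cannot succeed, because the group-theoretic bookkeeping you propose is exactly what the paper has \emph{already} exhausted in the paragraphs immediately preceding the lemma. Starting from $\G_P$ as in Figure~\ref{no-c-edges}(2), the paper runs the very $\pi_1$-computation from Lemma~\ref{part (1) impossible} to pin down $e_1,\ldots,e_5$ as in Figure~\ref{dual-of-f}(4), then applies the $g_3$-face analysis to force $e_8$ parallel to $e_3$, and concludes that $\G_Q$ must be \emph{either} (5) \emph{or} (6). In other words, both configurations satisfy every constraint that the solid-torus/Seifert-fibre-order argument can extract: the $X_1$- and $X_2$-side face relations and the ``order $>2$'' restriction from Lemma~\ref{unique annulus for K} are identical in the two pictures, so reading the faces $g_1,g_2,g_3$ cannot yield ``$s=x^{-3}$ on the $X_2$ side'' in (5) but not in (6). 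If it did, the paper would never have presented (5) as a surviving case, and the prism-case obstruction ($5$ not divisible by $3$) would already have eliminated it. The crux you identify --- ``the forced positions of the remaining edges distinguish (5) from (6) at the level of the word in $\pi_1(X_2)$'' --- is precisely the step that fails.

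What actually separates (5) from (6) is a constraint the face relations do not see: the compatibility of the two labeled graphs $\G_P$ and $\G_Q$ as intersection patterns on the same torus $T_V$. The five points of $b_1\cap d_V$ appear around $d_V$ in some cyclic order $p_0,\ldots,p_4$, and around $b_1$ they must appear as $p_0,p_d,p_{2d},p_{3d},p_{4d}$ for a single $d$ coprime to $\Delta=5$. Reading the edge labels at those points from Figure~\ref{no-c-edges}(2) gives the cyclic order $28753$ around $d_V$; Figure~\ref{dual-of-f}(5) forces the cyclic order $82753$ around $b_1$, which is not of the form $p_0,p_d,\ldots$ for any $d$; Figure~\ref{dual-of-f}(6) gives $27385$, which is the $d=2$ pattern (Remark~\ref{9(6)}). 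That torus-side combinatorial incompatibility, not a fibre-order count, is the missing ingredient in your argument.
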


\begin{proof}
In Figure \ref{no-c-edges}(2), let $p_0, p_1, p_2, p_3, p_4$ be the points labeled $1$ on $d_V$, in cyclic order around $d_V$. These are points of intersection of $b_1$ with $d_V$ on the torus $T_V$. It follows that the corresponding points appear around $b_1$ in the order $p_0, p_d, p_{2d}, p_{3d}, p_{4d}$, for some $d$ coprime to $\Delta = \Delta(\alpha, \beta) = 5$. The point $p_i$ is the endpoint of an edge $e_{j(i)}$. Then, denoting $p_i$ by the label $j(i)$ of the corresponding edge, the cyclic order of the $p_i$'s around $d_V$ in Figure \ref{no-c-edges}(2) is 28753. In the graph $\Gamma_Q$ in Figure \ref{dual-of-f}(5), the order of the corresponding points is $82753$. Since these cyclic orderings are not related in the manner described above, $\Gamma_Q$ cannot be as illustrated in Figure \ref{dual-of-f}(5).
\end{proof}

\begin{rem} \label{9(6)}
{\rm In Figure \ref{dual-of-f}(6) the order is $27385$, which is of the required form, with $d = 2$. }
\end{rem}

So far we have shown that $p=\D(\alpha,\beta)=5$ and the graphs $\G_P$ and $\G_Q$ must be as shown in part (2) of Figure \ref{no-c-edges}
and part (6) of Figure \ref{dual-of-f} respectively.
In the rest of this section we are going to show that these conditions determine the
triple $(M, \alpha,\beta)$ uniquely up to homomorphism, and thus it must be the triple
$(Wh(-3/2); -5, 0)$.

The surface $Q$ separates $Y$ into $Y_1$ and $Y_2$, say, where $Y_i \subset X_i$, $i = 1, 2$. Let $N$ be a regular neighbourhood of $T_V \cup T_K \cup P \cup Q$ in $Y$, and let $\partial_0 N = \partial N \setminus (T_V \cup T_K)$. Then $\partial_0N = \partial_1 N \cup \partial_2 N_2$ where $\partial_i N \subset Y_i$, $i = 1, 2$.

\begin{lemma}
For $i  = 1$ and $2$, $\partial_i N$ has two components, each a $2$-sphere.
\end{lemma}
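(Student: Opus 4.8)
The plan is to analyse $N$ as a regular neighbourhood of the union of the two tori $T_V$, $T_K$ and the two planar surfaces $P$, $Q$, using the combinatorics of the graphs $\G_P$ (Figure \ref{no-c-edges}(2)) and $\G_Q$ (Figure \ref{dual-of-f}(6)) that have just been pinned down. Recall that $P$ is a thrice-punctured disk with $\partial P = d_V \cup c_1 \cup c_2 \cup c_3$, where $d_V \subset T_V$ and $c_1,c_2,c_3 \subset T_K$ (since $L = K$ and all six $CD$-edges are present), and that $Q$ has $\partial Q = a_1 \cup a_2 \cup b_1 \cup b_2$ with $a_1,a_2 \subset T_K$ and $b_1,b_2 \subset T_V$. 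Thus $P\cap Q$ consists exactly of the edges of the two graphs. First I would observe that $\partial_0 N$ is obtained from the ``doubled'' surface $\partial(\text{nbhd})$ by the standard description: near each fat vertex $v$ of $\G_P$ (resp. $\G_Q$) the curve $\partial_0 N$ runs along $\partial v$, and near each edge it is cut and rerouted; concretely, $\partial_0 N$ is a closed surface whose Euler characteristic can be computed from the cell structure given by the two graphs on the two tori. The key point is that $\partial_0 N$ inherits a cell decomposition in which the $0$-cells come from the edge-endpoints, the $1$-cells from the edge-bands and the vertex-arcs, and the $2$-cells from the faces of $\G_P$ and $\G_Q$ together with the vertex disks; an Euler characteristic count then forces each component to be a sphere.

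In more detail, the concrete computation I would carry out is: each of $\G_P$ and $\G_Q$ has (after reducing) a definite number of vertices, edges and faces — from Figure \ref{no-c-edges}(2), $\G_P$ has vertices $d_V, c_1, c_2, c_3$, two $D$-edges, six $CD$-edges, and the faces $f_1,f_2,f_3,g_1,g_2,g_3$ (plus outer regions which, being on tori, are disks once the complementary data is accounted for); likewise $\G_Q$ from Figure \ref{dual-of-f}(6) has vertices $a_1,a_2,b_1,b_2$ with the correspondingly dual edge set. Since each arc of $P\cap Q$ is a $B$-edge of $\G_Q$ iff a $D$-edge of $\G_P$, an $A$-edge iff a $C$-edge, and an $AB$-edge iff a $CD$-edge, the edge sets match up and the Euler-characteristic bookkeeping is consistent. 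I would then note that $\partial_i N$ lies entirely inside $X_i$, which is a solid torus by the analysis preceding the lemma, and that $\partial_i N$ is built from the faces of $\G_P$ and $\G_Q$ that lie in $X_i$ — namely $f_1,f_2,f_3$ for $i=1$ and $g_1,g_2,g_3$ for $i=2$ — glued to the relevant vertex disks along the relevant edge bands. Counting: three faces, three ``active'' vertices (one being $d_V$ or $a_1$, etc.), and the matching edges, gives $\chi(\partial_i N) = 2\cdot(\text{number of components})$ with the number of components equal to $2$, whence every component is a $2$-sphere. The orientability of everything (all surfaces and the ambient manifold are orientable) rules out projective planes.

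The main obstacle I expect is the careful bookkeeping of how many components $\partial_i N$ has and that each really is a sphere rather than a torus: one must be sure that no face of $\G_P$ or $\G_Q$ lying in $X_i$ has been missed and that the gluing pattern dictated by the graphs does not produce a higher-genus surface. I would control this by two independent checks. First, a direct Euler-characteristic count as above, using that $\partial_i N$ is the boundary of a regular neighbourhood of a $2$-complex (union of subsurfaces of the tori and the faces in $X_i$) whose own Euler characteristic is easy to read off. Second, a topological sanity check: since $X_i$ is a solid torus and $Y_i \subset X_i$ is hyperbolic (as $Y$ is hyperbolic and the pieces are essential), a compressible component of $\partial_i N$ must bound a ball on the side away from $Y_i$, while an incompressible torus component would contradict hyperbolicity of $Y_i$ or atoroidality; combined with the count this forces exactly two spherical components on each side. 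I would present the Euler-characteristic computation as the formal argument and relegate the topological remark to a confirming sentence.
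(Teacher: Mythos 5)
Your plan — compute the Euler characteristic of $\partial_0 N$ and conclude the components are spheres — is in fact the alternative argument the paper offers in the remark following this lemma, rather than the paper's primary proof, which explicitly identifies the $2$-cells of $\partial_i N$ from the combinatorics of Figures \ref{no-c-edges}(2), \ref{dual-of-f}(6), \ref{bgz4-torus-Tv} and \ref{bgz4-torus-Tk} and assembles the identification patterns (Figures \ref{bgz4-sphere1}--\ref{bgz4-sphere2'}) to see directly that $\Sigma_1, \Sigma_1', \Sigma_2, \Sigma_2'$ are spheres. So the approach is legitimate, but your execution has two real gaps.

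First, the Euler-characteristic bookkeeping is not actually carried out, and the count you sketch is incorrect. The relevant computation (which the paper records) is $\chi(P\cup Q) = \chi(P)+\chi(Q)-\chi(P\cap Q) = -2-2-8 = -12$, then $\chi((P\cup Q)\cap T_V)=-10$, $\chi((P\cup Q)\cap T_K)=-6$, so $\chi(N)=\chi\bigl((P\cup Q)\cup (T_V\cup T_K)\bigr)=-12+0-(-16)=4$, giving $\chi(\partial N)=8$ and hence $\chi(\partial_0 N)=8$. Your description \emph{``three faces, three active vertices, and the matching edges''} does not capture this; one must account for the rectangles into which $d_V,b_1,b_2$ cut $T_V$ and $a_1,a_2,c_1,c_2,c_3$ cut $T_K$, the two-sided copies $f_i^{\pm}$, $g_i^{\pm}$ of each face, and the faces $h_k$ of $\G_Q$ (each appearing once on each of $\partial_1 N$ and $\partial_2 N$). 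In short, asserting $\chi(\partial_i N)=2\cdot(\text{number of components})$ already presupposes that every component is a sphere, which is the thing to be proved.

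Second, and more seriously, $\chi(\partial_0 N)=8$ by itself does not pin down the topology: one must also verify that each of $\partial_1 N$ and $\partial_2 N$ has at most two components. Only then does $\chi(\partial_1 N)+\chi(\partial_2 N)=8$ force each side to consist of exactly two spheres. Your proposal does not supply this bound. The ``topological sanity check'' offered in its place does not do the job: $Y_i$ is not hyperbolic merely because $Y$ is (it is a piece of $Y$ cut along $Q$, not a Dehn filling or cover), and there is no immediate reason a component of $\partial_i N$ could not be a higher-genus surface compressing into the complementary region of $X_i$. In the paper, the ``at most two components on each side'' check is precisely what must be read off from the cell pattern, and this is where the real content of the proof lives. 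As written, your argument would not compile into a proof without filling in that step.
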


\begin{proof}
By Remark \ref{9(6)}, the curves $d_V, b_1, b_2$ on the torus $T_V$ are as shown in Figure \ref{bgz4-torus-Tv}.
They decompose $T_V$ into rectangles $R_1, \ldots, R_5, S_1, \ldots , S_5$, where the $R_i$'s lie in $Y_1$ and the $S_i$'s in $Y_2$. In Figure \ref{bgz4-torus-Tv} a point of intersection of $b_1 \cup b_2$ with $d_V$ is labeled with the edge of which it is an endpoint. Similarly, the curves $a_1, a_2, c_1, c_2, c_3$ decompose the torus $T_K$ into rectangles $T_1, T_2, T_3, U_1, U_2, U_3$, where the $T_j$'s lie in $Y_1$ and the $U_j$'s in $Y_2$. See Figure \ref{bgz4-torus-Tk}.

\begin{figure}[!ht]
\centerline{\includegraphics{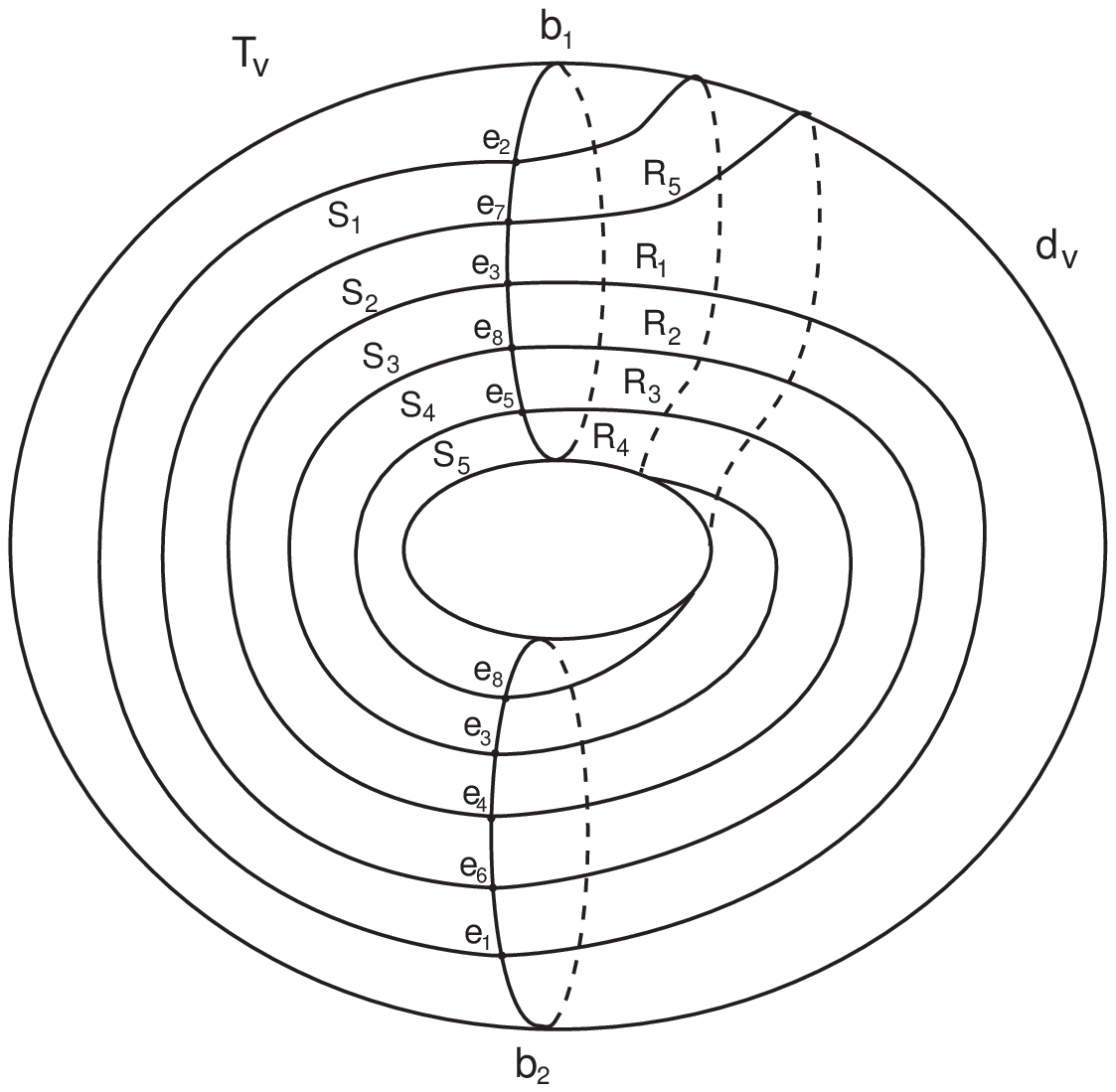}}\caption{} \label{bgz4-torus-Tv}
\end{figure}

\begin{figure}[!ht]
\centerline{\includegraphics{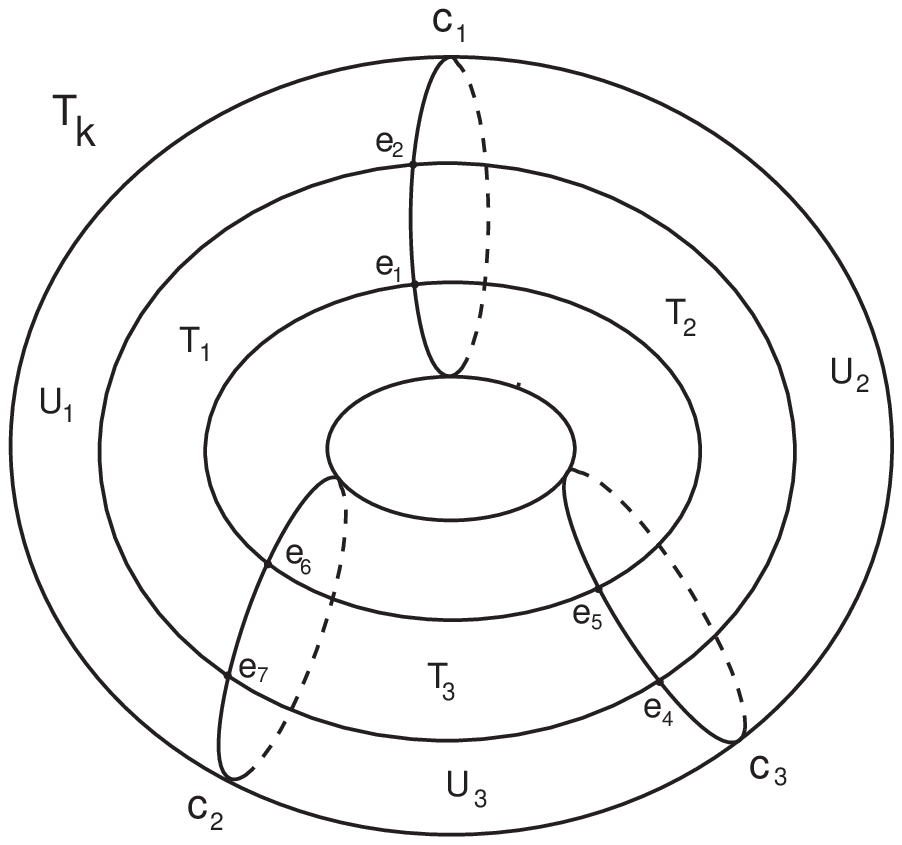}} \caption{} \label{bgz4-torus-Tk}
\end{figure}

The faces of the graph $\Gamma_P$ are $f_1, f_2, f_3, g_1, g_2, g_3$, where the $f_i$'s lie in $Y_1$ and the $g_i$'s lie in $Y_2$; see Figure \ref{no-c-edges} (2). Let the faces of $\Gamma_Q$ be $h_1, \ldots , h_6$, as shown in Figure \ref{dual-of-f}(6).

The regular neighbourhood $N$ is the union of product neighbourhoods $T_V \times [0,1], T_K \times [0,1], P \times [-1, 1]$ and $Q \times [-1, 1]$, in the obvious way, where $T_V = T_V \times \{0\}, T_K = T_K \times \{0\}, P = P \times \{0\}$, and $Q = Q \times \{0\}$.
Corresponding to $R_i$ is a $2$-cell contained in $(T_V \times \{1\}) \cap \partial_0 N$, which we continue to denote by $R_i$; similarly for $S_i, T_j$ and $U_j$. A face $f_i$ of $\Gamma_P$ gives rise to two $2$-cells $f_i^+ \subset (P \times \{1\}) \cap \partial_0 N$ and $f_i^- \subset (P \times \{-1\}) \cap \partial_0 N$, and similarly for the $g_i$'s and the faces $h_k$ of $\Gamma_Q$. Since $h_k^+ \hbox{ (say) } \subset \partial_1 N$ and $h_k^- \subset \partial_2 N$, there will be no confusion in denoting $h_k^{\pm}$ by $h_k$.

By carefully examining the identifications between these various $2$-cells one sees that $\partial_1 N$ has two components $\Sigma_1$ and $\Sigma_1'$, and $\partial_2 N$ has two components $\Sigma_2$ and $\Sigma_2'$, composed of the following $2$-cells:
\begin{description}

\vspace{-.2cm} \item[$\Sigma_1$] $f_1^+, f_3^-, h_1, h_2, h_3, R_2, R_5, T_1$

\vspace{.2cm} \item[$\Sigma_1'$] $f_1^-, f_2^+, f_2^-, f_3^+, h_4, h_5, h_6, R_1, R_3, R_4, T_2, T_3$

\vspace{.2cm} \item[$\Sigma_2$] $g_1^+, g_2^-, h_1, h_3, S_1, U_1$

\vspace{.2cm} \item[$\Sigma_2'$] $g_1^-, g_2^+, g_3^+, g_3^-, h_2, h_4, h_5, h_6, S_5, S_2, S_3, S_4, U_2, U_3$

\end{description}

The precise patterns of identification are shown in Figures \ref{bgz4-sphere1}, \ref{bgz4-sphere1'}, \ref{bgz4-sphere2} and \ref{bgz4-sphere2'}, respectively. In particular, $\Sigma_1, \Sigma_1', \Sigma_2, \Sigma_2'$ are $2$-spheres.
\end{proof}

\begin{figure}[!ht]
\centerline{\includegraphics{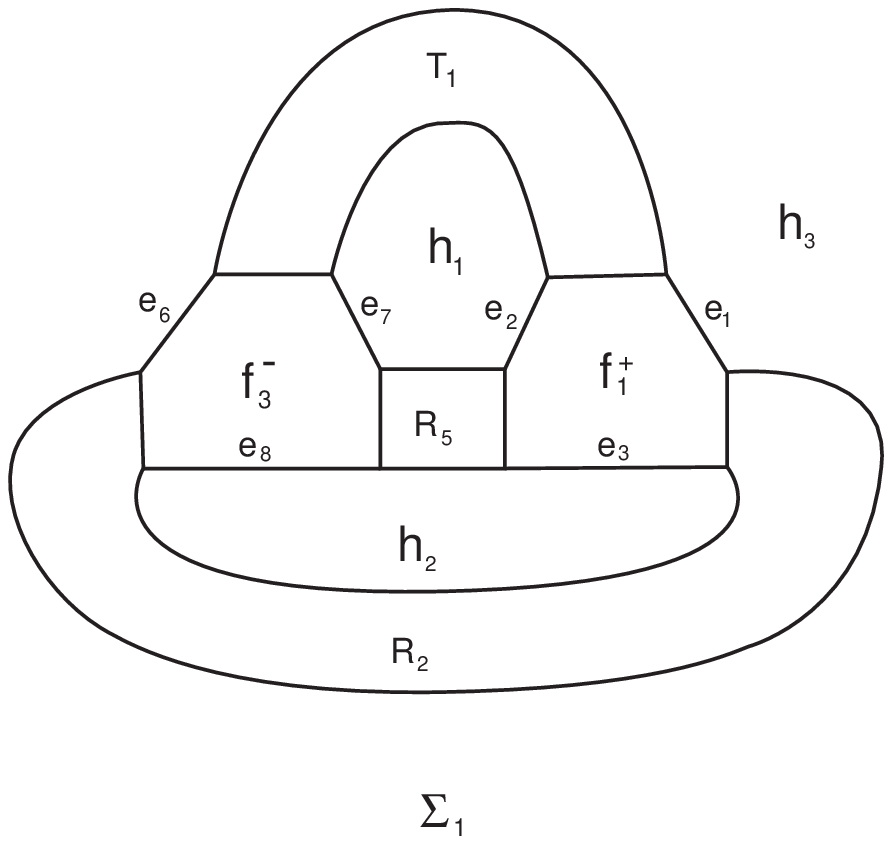}}  \caption{} \label{bgz4-sphere1}
\end{figure}

\begin{figure}[!ht]
\centerline{\includegraphics{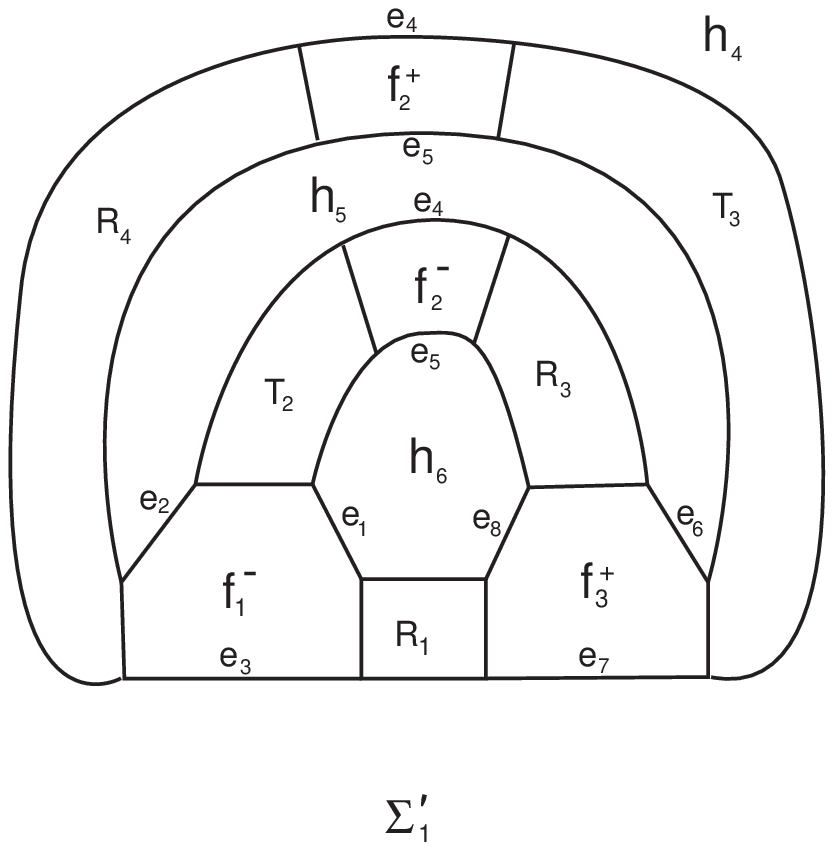}}\caption{} \label{bgz4-sphere1'}
\end{figure}

\begin{figure}[!ht]
\centerline{\includegraphics{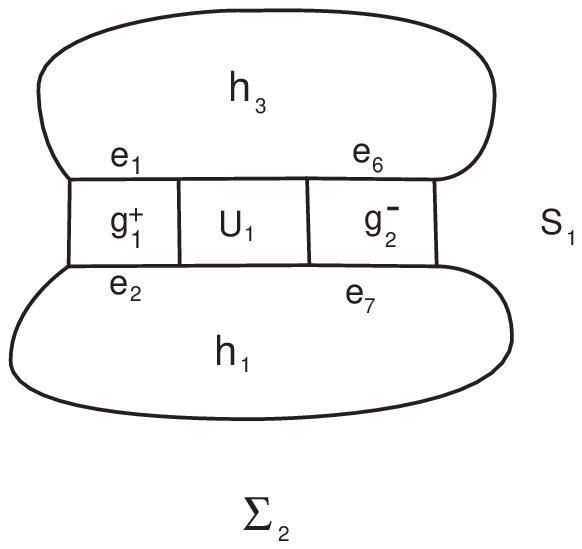}} \caption{} \label{bgz4-sphere2}
\end{figure}

\begin{figure}[!ht]
\centerline{\includegraphics{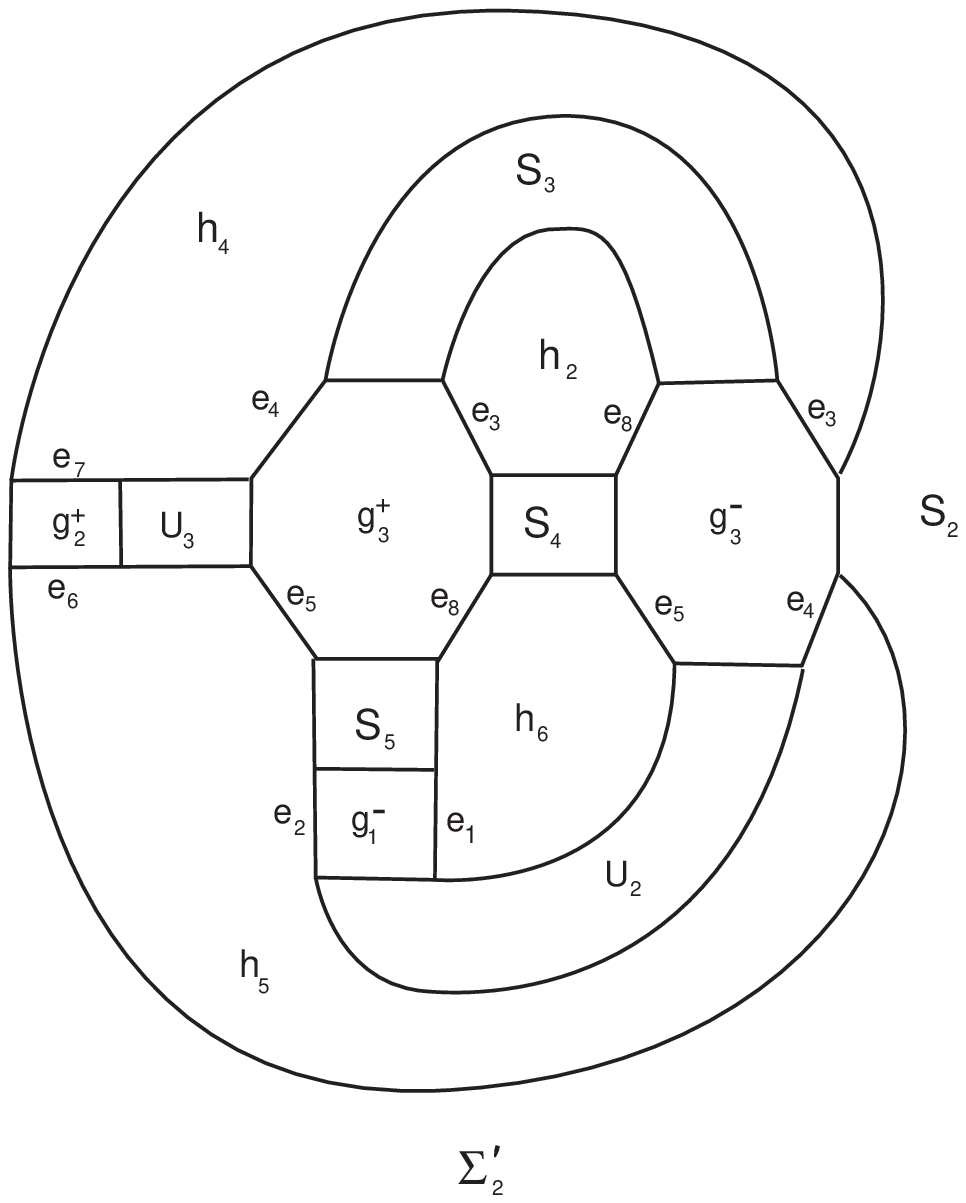}}\caption{}  \label{bgz4-sphere2'}
\end{figure}

\begin{rem}
{\rm One can see that $\Sigma_1, \Sigma_1', \Sigma_2, \Sigma_2'$ are $2$-spheres without completely determining the identification patterns of their constituent $2$-cells, by means of the following Euler characteristic computation.

First note that
$$\chi(P \cup Q) = \chi(P) + \chi(Q) - \chi(P \cap Q) = (-2) + (-2) -8 = -12$$
Also, $(P \cup Q) \cap T_V$ consists of three circles, meeting in a total number of $10$ points. So $\chi((P \cup Q) \cap T_V) = -10$. Similarly $\chi((P \cup Q) \cap T_K) = -6$. Therefore
$$\chi(N) = \chi((P \cup Q) \cup (T_V \cup T_K)) = (-12) + 0 -((-10) + (-6)) = 4$$
Hence $\chi(\partial N) = 8$.

Now one can easily check that each of $\partial_1 N$ and $\partial_2 N$ has at most two components. Hence each must have exactly two components, both $2$-spheres. }
\end{rem}

\begin{proof}[Proof that $(M; \alpha, \beta)$ is homeomorphic to $(Wh(-3/2); -5, 0)$]
Since $Y$ is irreducible the components of $\partial_0 N$ bound $3$-balls in $Y$. Hence the triple $(Y; P, Q)$ is uniquely determined up to homeomorphism, by Figures \ref{no-c-edges} (2) and \ref{dual-of-f}(6). Since the curves $c_j$ are meridians of $L$, the pair $(V, L)$, together with the slopes $\bar \alpha, \bar \beta$, is uniquely determined. Passing to the double branched cover, we have that $(M; \alpha, \beta)$ is uniquely determined.

In \cite[Table A3]{MP} it is shown that $-5$-filling on the hyperbolic manifold $Wh(-3/2)$ is Seifert fibred with base orbifold $S^2(2,3,3)$, while $0$-filling gives a manifold containing a non-separating torus. In fact, it is easy to see that $Wh(-3/2)$ contains an essential once-punctured torus with boundary slope $0$. Hence $(M; \alpha, \beta) \cong (Wh(-3/2); -5, 0)$.
\end{proof}

\section{The case $\Delta(\alpha, \beta) = 7$ and the involution $\tau_\alpha$ reverses the orientations of the Seifert fibres of $M(\alpha)$}\label{dist7}

In this section we suppose that assumptions \ref{assumptions 1} hold and show that it is impossible for $\Delta(\alpha, \beta)$ to be $7$ and for $\tau_\alpha$ to reverse the orientations of the Seifert fibres of $M(\alpha)$. We assume otherwise in order to obtain a contradiction.

A {\em tangle\/} will be a pair $\T = (R,t)$, where $R$ is $S^3$ minus
the interiors of a disjoint union of 3-balls, and $t$ is a properly
embedded 1-manifold.
Let $\wT = (X,\tilt\,)$ be the double branched cover of $\T$.
In our examples each boundary component $S$ of $R$ will meet $t$ in either
4 or 6 points, and hence the corresponding boundary component $\widetilde S$ of $X$
is either a torus or a surface of genus~2, respectively.

An {\em essential disk\/} in $\T$ is a properly embedded disk $D$ in $R$
such that either
\begin{itemize}
\item[(i)] {\em $D\cap t =\emptyset$ and $\partial D$ does not bound a disk in
$\partial R \setminus t$}; or
\item[(ii)] {\em $D$ meets $t$ transversely in a single point and
$\partial D$ does not bound a disk in $\partial R$ containing a single
point of $t$}.
\end{itemize}
It follows from the $\zed /2$-equivariant Disk Theorem (\cite{GLi}, \cite{KT},
\cite{YM}) that $X$ contains an essential disk $\wD$, i.e., a properly embedded
disk such that $\partial \wD$ is essential in $\partial X$, if and only if $\T$
contains an essential disk $D$.

If $S$ is a boundary component of $R$ such that $|S\cap t| = 4$, a
{\em marking\/} of $S$ is a specific identification of $(S,S\cap t)$ with
$(S^2 ,\{NE,NW,SW,SE\})$.
We can then attach a rational tangle $\R (\g)$ to $\T$ along $S$ with
respect to this marking, where $\g\in \que \cup \{\sfrac10\}$.

By Lemma \ref{--quotient} (1), $M(\alpha)=M(7/q)$ has base orbifold $S^2(7, 7,
m)$ for some odd integer $m\geq 3$. As in Lemma \ref{cyclic-cover}, let $\widetilde
M_7$ be the $7$-fold cyclic cover of $M$; then $\partial \widetilde M_7$ is a single
torus, and both $\alpha$ and $\beta$ lift to slopes $\tilde \a$ and $\tilde \b$ in $\p
\widetilde M_7$, i.e. $\widetilde M_7(\tilde \alpha)$ is a $7$-fold cyclic cover of
$M(\alpha)$ and $\widetilde M_7(\tilde \beta)$ is a $7$-fold cyclic cover of $M(\beta)$.
Furthermore the involution $\t$ on $M$ lifts to an involution $\tilde \t$ on
$\widetilde M_7$  and  $\tilde V=\widetilde M_7/ \tilde \t$ is a $7$-fold cyclic
cover of $M/\t=V$. So $\tilde V$ is a solid torus. The involution $\tilde\t$
extends to an involution $\tilde \tau_{\tilde \a}$ on $\widetilde M_7(\tilde \alpha)$
such that $\widetilde M_7(\tilde \alpha)/\tilde \tau_{\tilde \a}=S^3$ is the $7$-fold
cyclic cover of the lens space $M(\alpha)/ \tau_{\a}=L(7, 2q)$. Let $L_7$ be the
inverse image of $L$ in $S^3$.
  Then by Lemma \ref{types},  $L_7$ is as shown
  in Figure \ref{7ab} where the box with an integer $r$ in it stands for $r$
 full horizontal  twists, and
  by Lemma \ref{--quotient} (2),
  $L_7$ is also as
 shown   in Figure \ref{7mn}, where the box with an integer $r'$ in it
 stands for $r'$ full horizontal twists. Since $p = 7$, $n$ is odd by Lemma \ref{--quotient 2}(3). Hence from
Figure \ref{7mn} we see that $L_7$ is a single knot.
So to get a contradiction, we just need to show that the two knots $K$ and $K'$ shown in
Figures \ref{7ab} and \ref{7mn}  respectively are inequivalent.

\begin{figure}[!ht]
\centerline{\includegraphics{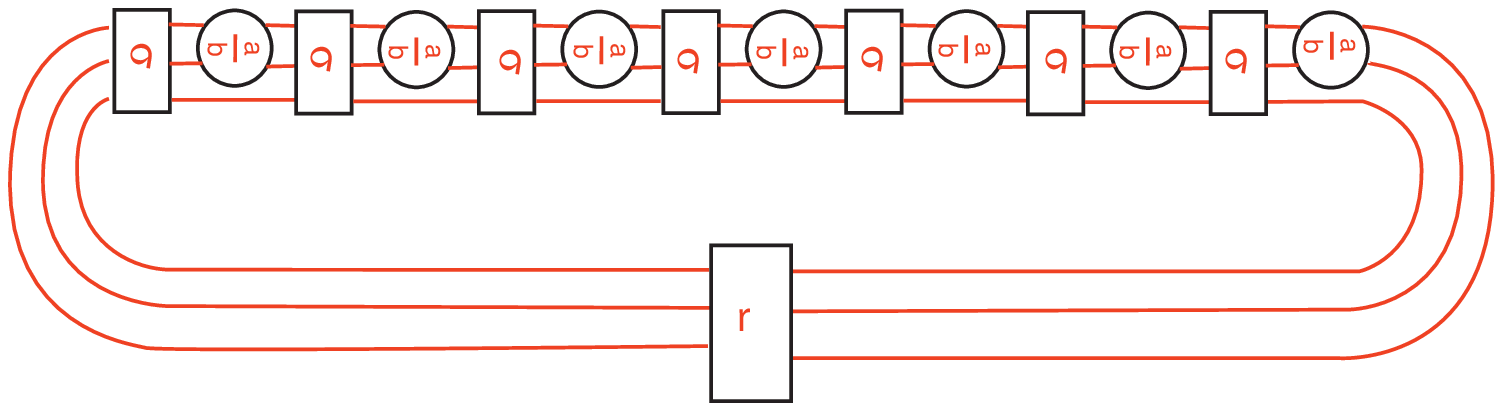}} \caption{ }\label{7ab}
\end{figure}

\begin{figure}[!ht]
\centerline{\includegraphics{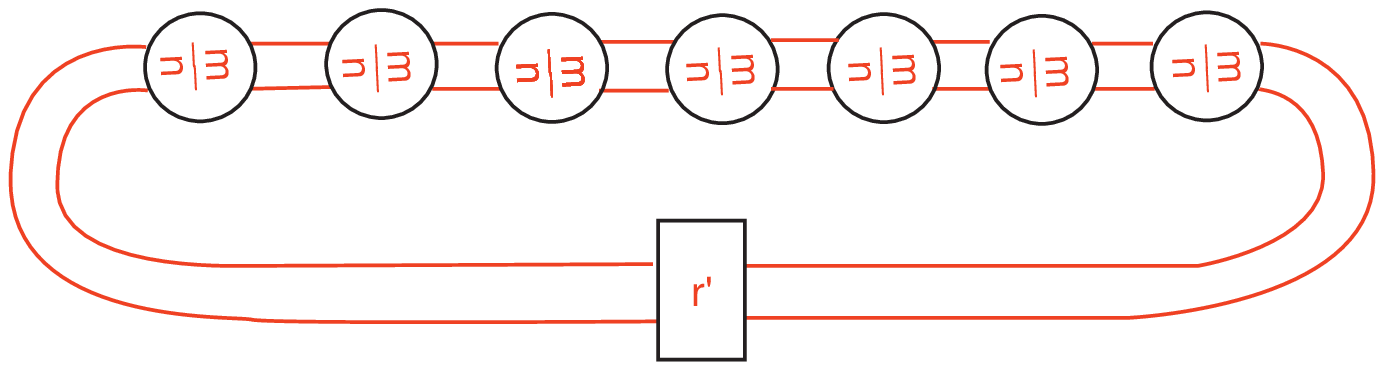}} \caption{ }\label{7mn}
\end{figure}

\begin{thm}\label{thm1}
The knots $K$ and $K'$ are inequivalent.
\end{thm}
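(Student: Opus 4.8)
The plan is to distinguish $K$ from $K'$ by an honest knot invariant rather than by an invariant of the double branched cover. Both knots are the lift $L_7$ of the hyperbolic link $L\subset V$ to the universal cover $S^3$ of $L(7,2q)$, so they carry the same $\mathbb{Z}/7$ symmetry and, more to the point, both have double branched cover equal to $\widetilde M_7(\tilde\alpha)$ — a Seifert fibred space over $S^2$ with seven cone points of order $m\ge 3$ (Lemma \ref{cyclic-cover}, Lemma \ref{--quotient 2}). Since that base orbifold is hyperbolic, $\widetilde M_7(\tilde\alpha)$ is not a lens space, so neither $K$ nor $K'$ is $2$-bridge; but more importantly the determinant, $|\Delta(-1)|$, and every other invariant of the $2$-fold branched cover coincide for $K$ and $K'$. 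A genuinely finer invariant is therefore needed, and the periodicity (which only restricts the possible polynomials via Murasugi's condition) does not by itself separate the two families.

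First I would read off from Figures \ref{7ab} and \ref{7mn} concrete presentations of $K$ and $K'$. Each meets a meridian disk of the unknotted solid torus $\widetilde V=\widetilde M_7/\tilde\tau$ (one side of a genus one Heegaard splitting of $S^3$) in exactly three points — this follows from $|L\cap D|=3$ together with the $\mathbb{Z}/7$ symmetry — so cutting along that disk exhibits $K$, respectively $K'$, as the closure of an explicit braid $w=w(a,b,q,r)\in B_3$ assembled from the $3$-braid $\sigma$ of Figure \ref{fig3}, the $a/b$ rational tangle and the $r$ full twists, respectively an explicit braid $w'=w'(m,n,r')\in B_3$ assembled from the tangle data of Figure \ref{bgz4-fig24} and the $r'$ full twists. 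Whether one then computes the Alexander polynomial from the reduced Burau representation $\rho:B_3\to GL_2(\mathbb{Z}[t^{\pm1}])$ via $\Delta_{\widehat w}(t)\doteq\det(\rho(w)-I)/(1+t+t^2)$ or by resolving the twist boxes with the Conway skein relation, the twist parameters enter linearly, yielding
$$\Delta_K(t)\doteq r\,P(t)+Q(t),\qquad \Delta_{K'}(t)\doteq r'\,P'(t)+Q'(t)$$
for explicit $P,Q,P',Q'\in\mathbb{Z}[t^{\pm1}]$ depending on the remaining parameters; in particular the breadth of $\Delta_K$ grows linearly in $|r|$ and that of $\Delta_{K'}$ grows linearly in $|r'|$.

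The main obstacle, and the real content of the theorem, is then the algebraic problem of showing that the families $\{\Delta_K\}$ and $\{\Delta_{K'}\}$ are disjoint. I would do this in two stages. First, matching breadths forces a linear relation between $r$ and $r'$ and confines any possible coincidence to finitely many small parameter values. Second, comparing a short list of extreme coefficients of $\Delta_K$ and $\Delta_{K'}$ — which are themselves affine functions of $a,b,q$ and of $m,n$ — produces an over-determined system whose only solutions violate the standing constraints ($a\ge2$, $\gcd(a,b)=1$, $\gcd(m,n)=1$, $m\ge3$ odd, $n$ odd), typically through a parity obstruction or a coefficient that would have to be simultaneously zero and nonzero.

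Should the Alexander polynomial fail to separate the two families for a handful of sporadic small parameters, I would bring in a stronger invariant on those cases: either the Jones polynomial of the $3$-braid closures, again computable from the Burau (equivalently Temperley–Lieb) trace and sensitive to chirality that $\Delta$ ignores, or the signature $\sigma$, which varies monotonically as a full twist is added to a coherently oriented pair of strands and can be read directly off the diagrams. A finite, explicit comparison then completes the proof that $K$ and $K'$ are inequivalent.
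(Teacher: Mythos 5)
There is a genuine gap, and it originates in a conceptual error at the very start.

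You assert that ``both have double branched cover equal to $\widetilde M_7(\tilde\alpha)$'' and conclude that ``the determinant, $|\Delta(-1)|$, and every other invariant of the $2$-fold branched cover coincide for $K$ and $K'$; a genuinely finer invariant is therefore needed.'' This is precisely backwards, and it leads you to abandon the strategy that actually works. The knots $K$ and $K'$ of Figures \ref{7ab} and \ref{7mn} are two independently parameterized \emph{families} of knots. It is only \emph{under the hypothesis being refuted} (that $\Delta(\alpha,\beta)=7$ with $\tau_\alpha$ fibre-reversing) that $L_7$ admits both descriptions simultaneously, which would force $K=K'$ and hence $W\cong W'$. But when $W$ and $W'$ are computed directly from the respective tangle data of Figures \ref{7ab} and \ref{7mn}, they need not agree, and in fact they \emph{never} do: $W'$ is always Seifert fibred over $S^2(m,\dots,m)$ with seven cone points, while the paper's tangle-decomposition analysis (Propositions \ref{prop2} and \ref{prop7}) shows that $W$ always falls into one of three structural cases (a separating genus-$2$ incompressible surface, four special vertical tori, or Heegaard genus at most $3$), each of which is incompatible with a closed Seifert manifold over $S^2(m,\dots,m)$ (using $H_1(W';\zed/2)=0$, the count of exceptional fibres, and the Moriah--Schultens classification of Heegaard splittings of Seifert spaces). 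The double branched cover is not a useless invariant here; it is the decisive one.

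Having ruled out the winning move, the remainder of your proposal is a plan rather than a proof. The Alexander-polynomial comparison is described only schematically: the claim that ``matching breadths forces a linear relation'' and that ``a short list of extreme coefficients produces an over-determined system whose only solutions violate the standing constraints, \emph{typically} through a parity obstruction'' is never verified, and you explicitly concede the approach might leave ``a handful of sporadic small parameters'' to be handled by the Jones polynomial or signature without exhibiting those computations either. As written this does not establish the theorem. You also implicitly assume that $K$ and $K'$ are closures of honest $3$-braids read off from a meridian disk of $\widetilde V$; the $\zed/7$-periodic structure lifted from Figures \ref{fig3} and \ref{bgz4-fig24} makes that reduction nontrivial and it is not justified. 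To repair the argument you should return to the comparison of double branched covers — this is both more robust (no case-by-case coefficient analysis) and is what the geometry of the situation is handing you: $W'$ comes with a canonical Seifert structure by Lemma \ref{--quotient}, while Lemma \ref{types} hands you a tangle decomposition of $W$ whose pieces you can analyze via equivariant disk/annulus arguments exactly as in Lemmas \ref{lem3'}--\ref{lem6} and Propositions \ref{prop2}, \ref{prop7}.
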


Let $W, W'$ be the double cover of $S^3$ branched over  $K,K'$, respectively.
We shall show that $W$ and $W'$ are not homeomorphic. Note that $W'$ is a
Seifert fibred  manifold with  base orbifold $S^2 (m,m,m,m,m,m,m)$. We will examine
$W$ and show that it cannot be such Seifert manifold.

\begin{figure}[!ht]
\centerline{\includegraphics{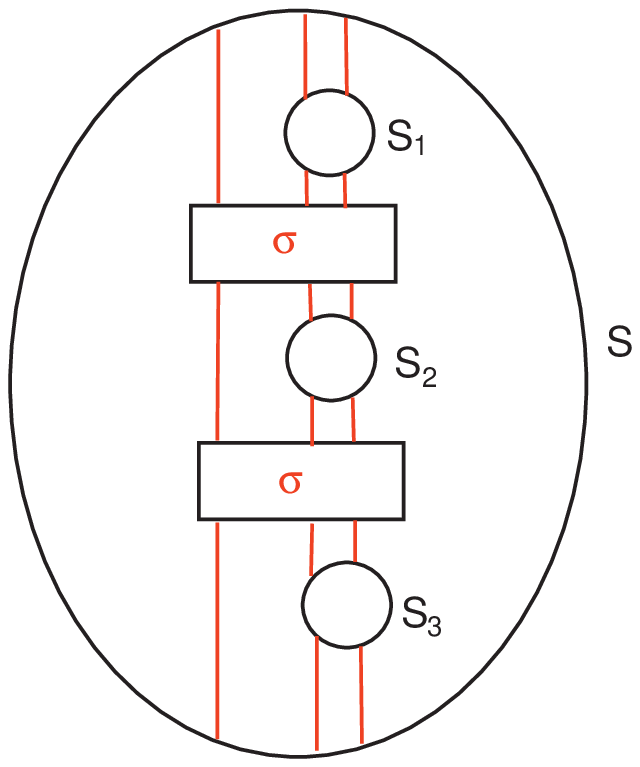}} \caption{ }\label{bgz4-fig4}
\end{figure}

Let $\T = (R,t)$ be the tangle shown in Figure \ref{bgz4-fig4}. Let the boundary
components of $R$ be $S,S_1,S_2,S_3$ as shown. Note that $|t\cap S| =6$ and
$|t\cap S_i| =4$, $i=1,2,3$. Let $X$ be the double branched cover of $\T$. Then
$\partial X =G\ \raise.3ex\hbox{$\scriptstyle\coprod$}\ \coprod_{i=1}^3 T_i$,
where $G$ is the double branched cover of $(S,S\cap t)$ and $T_i$ the double
branched cover of $(S_i,S_i\cap t)$, $i=1,2,3$; thus $G$ has genus~2 and the
$T_i$ are tori.

\begin{rem} \label{rem1}
{\rm The permutation induced by $\s$ takes 1 to 2 or 3, since $K$ is connected.}
\end{rem}

\begin{prop}\label{prop2}
$X(\sfrac{a}b, \sfrac{a}b, \sfrac{a}b)$ is either \\
$(1)$ boundary-irreducible; or \\
$(2)$ the boundary connected sum of two copies of a Seifert fibred manifold with
base orbifold $D^2 (a,d)$, $d>1$; or \\
$(3)$ a handlebody of genus $2$.
\end{prop}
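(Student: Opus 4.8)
The plan is to analyze an essential disk in the tangle $\T(\sfrac{a}{b},\sfrac{a}{b},\sfrac{a}{b})$ obtained by capping off the three four-punctured sphere boundary components $S_1,S_2,S_3$ with rational tangles $\R(\sfrac{a}{b})$, and then invoke the $\zed/2$-equivariant Disk Theorem to transfer the conclusion to the double branched cover $X(\sfrac{a}{b},\sfrac{a}{b},\sfrac{a}{b})$. First I would observe that if $X(\sfrac{a}{b},\sfrac{a}{b},\sfrac{a}{b})$ is boundary-reducible, then its genus-two boundary component $G$ compresses, so the corresponding tangle $\T(\sfrac{a}{b},\sfrac{a}{b},\sfrac{a}{b})$ contains an essential disk $D$ (of type (i) or (ii)) with $\partial D\subset S$. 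After an isotopy to put $D$ in general position with respect to the three disks separating off the rational tangles $\R(\sfrac{a}{b})$ and with respect to the obvious sphere decomposition of the ambient $S^3$, I would do a standard innermost-disk/outermost-arc argument on the intersection of $D$ with those separating spheres to understand how $D$ sits relative to the rational tangles.

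The key steps, in order, are: (1) set up the sphere decomposition of $S^3$ induced by Figure \ref{bgz4-fig4}, identifying the ``trivial tangle'' regions around the $S_i$ and the core region $R$; (2) use the $\zed/2$-equivariant Disk Theorem (\cite{GLi}, \cite{KT}, \cite{YM}) to replace ``$\partial$-reducible'' by ``contains an essential disk''; (3) analyze an essential disk $D$ in $\T(\sfrac{a}{b},\sfrac{a}{b},\sfrac{a}{b})$: after minimizing $|D\cap (\text{separating spheres})|$, argue that each such sphere either misses $D$ or meets it in arcs/circles that can be handled; (4) conclude that $D$ either (a) can be isotoped to be disjoint from all three $\R(\sfrac{a}{b})$, in which case it is visibly an essential disk in the original tangle $\T$ and pushing it across gives case (3), a genus-two handlebody; or (b) $D$ separates the three caps into a $1+2$ partition and cuts the manifold into two pieces each of which is the double branched cover of a tangle built from a single $\R(\sfrac{a}{b})$ capped appropriately — such a piece is a Montesinos-type tangle whose double branched cover is Seifert fibred over $D^2(a,d)$ with $d>1$ (the ``$d>1$'' coming from the fact that $\gcd(a,b)=1$ and the internal crossings of the tangle $\T$ force a nontrivial second cone point), giving case (2). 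When $D$ is disjoint from all the caps, pushing into $\T$ and using that cutting a handlebody along an essential disk leaves a handlebody (or two), together with a genus/Euler-characteristic count, forces genus exactly two, which is case (3). The remaining possibility, that $D$ is parallel into $\partial R$ and therefore inessential, is excluded by hypothesis, so these are the only cases.

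The main obstacle I anticipate is step (3)–(4): ruling out the intermediate configurations where $D$ meets one or two of the rational tangles $\R(\sfrac{a}{b})$ in a nontrivial way, and in particular checking that when $D$ does separate the caps $1+2$ the two resulting pieces really are the claimed Seifert fibred manifolds over $D^2(a,d)$ with $d\ge 2$ rather than over a disk with one cone point or a solid torus (which would be spurious extra cases). This requires a careful case analysis of how $\partial D$ meets the four marked points on each $S_i$, using the specific crossings visible in Figure \ref{bgz4-fig4} to pin down the second cone-point order $d$ and to verify $d>1$; an Euler characteristic bookkeeping argument as in the Remark following the previous lemma should make the enumeration of possible $D$'s finite and manageable, after which each surviving case is identified by recognizing the corresponding Montesinos tangle.
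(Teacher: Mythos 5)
Your approach -- directly analyzing an essential disk in $\T(\sfrac{a}b,\sfrac{a}b,\sfrac{a}b)$ by innermost-disk/outermost-arc reduction against the three separating spheres -- is genuinely different from the paper's and has a gap you half-notice but underestimate. For $a>1$ the rational tangle $\R(\sfrac{a}b)$ is \emph{not} a trivial tangle, so an outermost subdisk of $D$ cut off by an arc of $D\cap S_i$ cannot in general be pushed across the cap to reduce $|D\cap S_i|$: the cap itself may contain essential disks-with-one-puncture, and one needs a real argument about how $D$ meets the nontrivial two-bridge structure inside each cap. The paper avoids this normalization problem entirely. It proves $G$ is incompressible in $X(\sfrac{a}b,*,\sfrac{a}b)$ (Lemma \ref{lem5'}), exhibits an \emph{obvious} essential disk in the $\sfrac01$-filling $\T(\sfrac{a}b,\sfrac01,\sfrac{a}b)$ (Figure \ref{bgz4-fig7}), and then invokes Wu's singular-slope theorem \cite{Wu2} together with $\Delta(\sfrac{a}b,\sfrac01)=a>1$: either $G$ stays incompressible in $X(\sfrac{a}b,\sfrac{a}b,\sfrac{a}b)$ (conclusion (1)), or there is an essential annulus on $T_2$ along which Dehn twisting gives a homeomorphism $X(\sfrac{a}b,\sfrac{a}b,\sfrac{a}b)\cong X(\sfrac{a}b,\sfrac01,\sfrac{a}b)$, reducing everything to the visible $\sfrac01$-filling where the disk sits by inspection. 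This Wu--plus--Dehn-twist step is the key idea your plan is missing, and without it you are left proving a hard normal-position statement about rational tangles that the paper never needs.

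Your case enumeration is also off. Case (a), a compressing disk $D$ disjoint from all three caps, is vacuous: such a $D$ would be an essential disk in $\T$ itself with $\partial D\subset S$, which is precisely what Lemma \ref{lem3'} rules out using the connectivity pattern forced by Remark \ref{rem1}. So conclusion (3) cannot arise that way. In the paper, \emph{both} conclusions (2) and (3) arise from the $1{+}2$ configuration: the disk in $\T(\sfrac{a}b,\sfrac01,\sfrac{a}b)$ splits off two copies of a tangle whose double branched cover $Y$ is cut by a lifted annulus $A$ into solid tori $U,U'$ with $A$ of winding number $a$ in $U$ and winding number $d\ge1$ (non-meridional, again by Remark \ref{rem1}) in $U'$; when $d>1$ you get conclusion (2), and when $d=1$ you get a solid torus and hence conclusion (3). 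Your case (b) should therefore allow both outcomes rather than deducing (2) outright, and the $d>1$ claim is not automatic from ``crossings in Figure \ref{bgz4-fig4}'' -- it is simply one of the two cases, both of which are listed in the proposition.
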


We prove Proposition~\ref{prop2} by successively filling along $T_1$, $T_3$
and $T_2$.

\begin{lemma}\label{lem3'}
$G$ is incompressible in $X$.
\end{lemma}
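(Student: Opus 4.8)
The plan is to translate the statement into a primeness property of the tangle $\T=(R,t)$ of Figure~\ref{bgz4-fig4} and to verify that property by inspecting the diagram. Recall that among the four $2$-sphere boundary components of $R$, the distinguished one $S$ meets $t$ in $6$ points while $S_1,S_2,S_3$ meet it in $4$ points each; hence $G$ is the unique genus $2$ component of $\partial X$ and so is invariant under the covering involution $\iota$ of the double branched cover $X\to R$. If $G$ were compressible in $X$, the $\zed/2$-equivariant Disk Theorem (\cite{GLi},\cite{KT},\cite{YM}) would furnish an $\iota$-invariant essential disk $\wD\subset X$ with $\partial\wD\subset G$, which after an equivariant isotopy we may take transverse to the branch locus $\tilt$. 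A local model for the branched cover near $\tilt$ shows that $\iota$ rotates $\wD$ by $\pi$ about each point of $\wD\cap\tilt$, so $\iota|_{\wD}$ is an involution of the disk with isolated fixed points; such an involution has exactly one fixed point, whence $|\wD\cap\tilt|=1$ and $\wD$ descends to a properly embedded disk $D\subset R$ with $\partial D\subset S$ and $|D\cap t|=1$. Moreover $\partial D$ cannot bound a disk in $S$ meeting $t$ in at most one point, for the preimage of such a disk would be a disk in $G$ bounded by $\partial\wD$, contradicting the essentiality of $\wD$. Thus it suffices to show that the tangle of Figure~\ref{bgz4-fig4} admits no such essential disk.

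Suppose $D$ is such a disk. Its boundary lies on the $2$-sphere $S$ and hence separates $S$ into subdisks $D_1',D_2'$; set $j_i=|D_i'\cap t|$, so that $j_1+j_2=6$ and, by the preceding paragraph, $j_1,j_2\geq 2$. Capping $\partial D$ with $D_1'$ gives a $2$-sphere in $S^3$, so $D$ separates $R$ into pieces $R_1,R_2$, where $\partial R_i$ is the union of a $2$-sphere $\Sigma_i$ (obtained by capping $D_i'$ with a copy of $D$) and a possibly empty subcollection of $\{S_1,S_2,S_3\}$. Cutting $t$ along $D$ yields a properly embedded $1$-manifold in $R_i$ meeting $\Sigma_i$ in $j_i+1$ points and the spheres $S_j\subset R_i$ in a multiple of $4$ points; since such a $1$-manifold has an even number of endpoints, $j_i$ is odd. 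Combined with $j_1,j_2\geq 2$ and $j_1+j_2=6$ this forces $j_1=j_2=3$. It then remains to read off from Figure~\ref{bgz4-fig4} that the six points of $S\cap t$ cannot be split into two groups of three by a disk in $R$ meeting $t$ once, the six strand-ends on $S$ being interlocked through the twist regions so that no such sphere can separate them. This contradiction shows that $\T$ has no essential disk with boundary on $S$, and therefore $G$ is incompressible in $X$.

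The main work is this final step: a direct check, from the diagram of Figure~\ref{bgz4-fig4} with all its twist boxes, that no properly embedded disk $D\subset R$ with $|D\cap t|=1$ partitions the six points of $S\cap t$ as $3+3$; equivalently, that the tangle $\T$ is prime relative to its genus two boundary sphere $S$. Once this is in hand, the analogous but easier facts about the tori $T_1,T_2,T_3$ are precisely what is needed to carry out the successive rational-tangle fillings used in the proof of Proposition~\ref{prop2}.
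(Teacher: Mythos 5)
Your reduction via the $\zed/2$-equivariant Disk Theorem is the right framework and mirrors the paper's setup, and your parity argument pinning down $j_1=j_2=3$ in the one-point case is a nice touch. However, there are two genuine gaps.

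First, you only treat one of the two outputs of the equivariant theorem. That theorem produces either an $\iota$-invariant essential disk (your case, necessarily meeting $\tilt$ since a disk has no free involution) \emph{or} a pair of disjoint essential disks exchanged by $\iota$. In the second case the quotient disk $D\subset R$ satisfies $D\cap t=\emptyset$, i.e.\ case (i) of the paper's definition of an essential disk in a tangle. Running your parity argument there gives $j_i$ even, so $(j_1,j_2)=(2,4)$ or $(4,2)$ with $j_1,j_2\geq 2$ — a configuration your proposal never rules out. The lemma needs both cases excluded.

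Second, your final step (``the six strand-ends on $S$ are interlocked through the twist regions so that no such sphere can separate them'') is asserted without identifying the feature of $\T$ that makes it true, and in fact it is \emph{not} true for an arbitrary $3$-braid $\sigma$. The paper's proof hinges on Remark~\ref{rem1}: since the branched double cover $K$ is a knot (connected), the permutation underlying $\sigma$ sends $1$ to $2$ or $3$, and it is precisely this connectivity pattern (schematized in Figure~\ref{bgz4-fig5}) that forces every essential sphere attempt to fail — in both the $|D\cap t|=0$ and $|D\cap t|=1$ cases. Without invoking the connectedness of $K$, the claimed reading-off from Figure~\ref{bgz4-fig4} has no basis.
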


\begin{proof}
Because of Remark \ref{rem1}  above, the arrangement of the components of $t$
with respect to the boundary components of $R$ is as illustrated schematically
in Figure \ref{bgz4-fig5}. It follows easily that $\T= (R,t)$ cannot contain any
essential disk $D$ with $\partial D\subset S$.
\end{proof}

\begin{figure}[!ht]
\centerline{\includegraphics{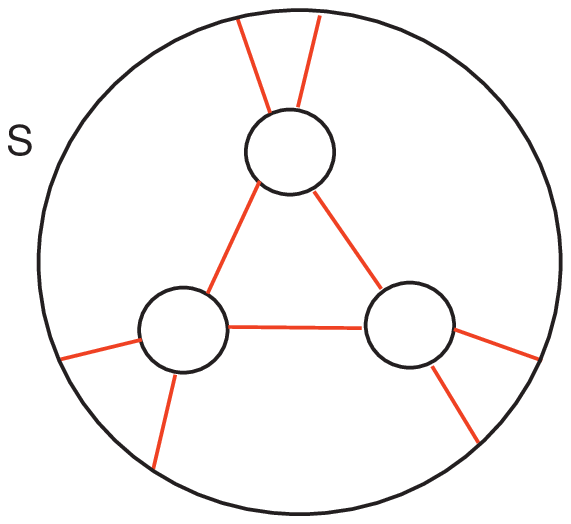}} \caption{ }\label{bgz4-fig5}
\end{figure}

In the sequel, a ``$*$" will indicate that the corresponding boundary component is left unfilled.

\begin{lemma}\label{lem4'}
$G$ is incompressible in $X(\sfrac{a}b, *, *)$.
\end{lemma}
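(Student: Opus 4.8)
\textbf{Plan for the proof of Lemma~\ref{lem4'}.}
The goal is to show that the genus~$2$ surface $G$ remains incompressible after filling the torus boundary component $T_1$ of $X$ with the slope $\sfrac ab$. My plan is to argue by contradiction: suppose $G$ compresses in $X(\sfrac ab,\ast,\ast)$, i.e. there is an essential disk $\wD$ with $\partial\wD\subset G$. Passing to the tangle $\T$ and capping off $S_1$ with the rational tangle $\R(\sfrac ab)$, the $\zed/2$-equivariant Disk Theorem (\cite{GLi}, \cite{KT}, \cite{YM}) tells us that $X(\sfrac ab,\ast,\ast)$ has a compressing disk for $G$ exactly when the capped-off tangle $\T(\sfrac ab,\ast,\ast)$ contains an essential disk $D$ with $\partial D\subset S$. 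So it suffices to rule out such a disk $D$ in the tangle.

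\textbf{Key steps.} First I would record, from Lemma~\ref{lem3'} and its proof, that $\T=(R,t)$ itself contains no essential disk with boundary in $S$, because of the schematic arrangement of the strands of $t$ relative to the boundary spheres illustrated in Figure~\ref{bgz4-fig5}; in particular the strands joining $S$ to $S_1$ are ``in the way.'' Next I would take a hypothetical essential disk $D$ in $\T(\sfrac ab,\ast,\ast)$ with $\partial D\subset S$, put $D$ in general position with respect to the $2$-sphere $S_1$ separating the rational tangle $\R(\sfrac ab)$ from the rest, and consider $D\cap S_1$, a union of circles in $S_1$ meeting $t$ in an even number of points. An innermost such circle $c$ on $D$ bounds a subdisk $D_0$ of $D$ lying either inside $\R(\sfrac ab)$ or in $R$, and meeting $t$ in $0$, $1$, or $2$ points. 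Since a rational tangle is a trivial $2$-string tangle in a ball, any disk it contains is inessential there, so after an isotopy I can push $D_0$ out of $\R(\sfrac ab)$ (dealing separately with the $0$-, $1$-, and $2$-point cases, using that $S_1\cap t$ consists of $4$ points so a circle meeting $t$ in $2$ points bounds a once-punctured or twice-punctured disk on each side of $S_1$). Iterating, I remove all of $D\cap S_1$, arriving at an essential disk for $\T$ with boundary in $S$ — contradicting Lemma~\ref{lem3'}. The one case needing care is when $c$ meets $t$ in exactly two points and bounds a disk on the $\R(\sfrac ab)$ side that is \emph{not} boundary-parallel there; but a rational tangle is ``$\partial$-irreducible'' in the relevant sense (every properly embedded disk meeting the strands in $\le 2$ points is parallel into the boundary sphere), so this does not occur, or if it does, the resulting reduction still produces a disk meeting $t$ in fewer points overall, and an innermost/least-intersection argument closes the induction.

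\textbf{Main obstacle.} The delicate point will be the bookkeeping of how circles of $D\cap S_1$ meet the four points $S_1\cap t$, and making sure the isotopies pushing subdisks across $\R(\sfrac ab)$ do not create new, worse intersections — this is the standard but fiddly innermost-disk argument for tangle fillings, and it has to be run while keeping track of the punctures so that the output disk is genuinely essential in $\T$ (not, say, a disk meeting $t$ once whose boundary bounds a once-punctured disk in $S$). I expect that once the arrangement of Figure~\ref{bgz4-fig5} is taken into account, together with the fact that $X$ is the double branched cover of a tangle whose strands have the connectivity forced by Remark~\ref{rem1}, the argument goes through cleanly, and the same template will then be reused in the subsequent filling steps along $T_3$ and $T_2$ in the proof of Proposition~\ref{prop2}.
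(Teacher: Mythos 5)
There is a genuine gap, and the clearest symptom is that nowhere in your outline do you use the hypothesis $a>1$. That hypothesis is essential: in $R$ there is an essential annulus $A_1$ disjoint from $t$ with one boundary circle on $S$ and the other of slope $\sfrac01$ on $S_1$ (Figure~\ref{bgz4-fig6}). If you fill $S_1$ with $\R(\sfrac01)$, the $S_1$--end of $A_1$ bounds a disk disjoint from the two strands, and capping off produces an essential disk for $\T(\sfrac01,\ast,\ast)$ with boundary in $S$; upstairs this is a compressing disk for $G$. So the lemma is \emph{false} for $a=1$, and a proof that never invokes $a>1$ cannot be right. Concretely, your innermost-circle reduction breaks at exactly this point: an innermost circle $c$ of $D\cap S_1$ can be essential in the four-punctured sphere $S_1\setminus t$ and bound a subdisk $D_0\subset\R(\sfrac ab)$ disjoint from $t$ (every rational tangle ball contains such a separating disk), and then $D_0$ cannot be isotoped across $S_1$ nor surgered away. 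Likewise your fallback claim, that every properly embedded disk in $\R(\sfrac ab)$ meeting the strands in at most two points is parallel into $S_1$, is not correct in general — such disks realize the various boundary slopes on the four-punctured sphere, and only the one of slope $\sfrac ab$ is boundary-parallel. Nothing in Figure~\ref{bgz4-fig5} or Remark~\ref{rem1} by itself forbids these configurations.

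The paper's argument routes around all of this by staying at the level of the annulus rather than a disk: one lifts $A_1$ to an essential annulus in $X$ running from $G$ to $T_1$ with $T_1$--slope $\sfrac01$, and then, because $G$ is incompressible in $X$ (Lemma~\ref{lem3'}) and $\Delta(\sfrac ab,\sfrac01)=a>1$, Short's theorem \cite{Sh} gives incompressibility of $G$ in $X(\sfrac ab,\ast,\ast)$. This is where $a>1$ enters, and it replaces the delicate innermost-disk bookkeeping entirely. If you want to salvage a downstairs proof, you would need to quantify how the slope $\sfrac ab$ of the filling tangle interacts with the slope $\sfrac01$ coming from $A_1$ — effectively reproving the $\Delta>1$ criterion in the tangle setting — rather than hoping the problematic innermost disks simply do not occur.
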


\begin{proof}
There is an essential annulus $A_1 \subset R$, disjoint from $t$, with one
boundary component in $S$ and the other having slope $\sfrac01$ on $S_1$; see
Figure \ref{bgz4-fig6}. A component of the inverse image of $A_1$ in $X$ is an
essential annulus with one boundary component on $G$ and the other having slope
$\sfrac01$ on $T_1$. Since $\Delta (\sfrac{a}b, \sfrac01) = a>1$, it follows
from \cite{Sh} and Lemma~\ref{lem3'} that $G$ is incompressible in $X(\sfrac{a}b,
*,*)$.
\end{proof}

\begin{figure}[!ht]
\centerline{\includegraphics{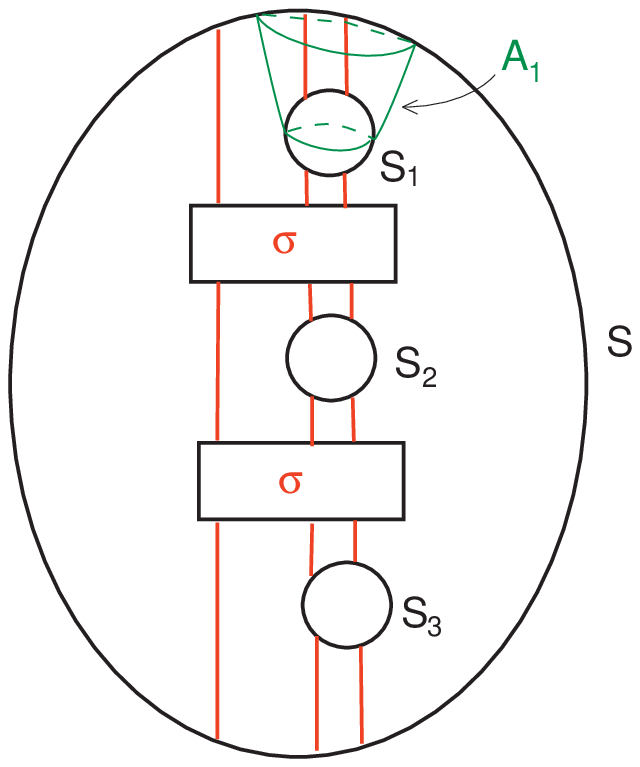}} \caption{ }\label{bgz4-fig6}
\end{figure}

\begin{lemma}\label{lem5'}
$G$ is incompressible in $X(\sfrac{a}b, *,\sfrac{a}b)$.
\end{lemma}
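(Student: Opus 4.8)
The plan is to repeat, almost verbatim, the argument used for Lemma~\ref{lem4'}, now with the torus $T_3$ in place of $T_1$ and with Lemma~\ref{lem4'} in place of Lemma~\ref{lem3'}. The only thing that is genuinely new is locating the right essential annulus near $S_3$.

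First I would read off from Figure~\ref{bgz4-fig4} an essential annulus $A_3 \subset R$, disjoint from $t$, with one boundary component in $S$ and the other of slope $\sfrac01$ on $S_3$; this is visible in exactly the way $A_1$ was (compare Figure~\ref{bgz4-fig6}), using the schematic description of how the components of $t$ sit relative to $S,S_1,S_2,S_3$ from Figure~\ref{bgz4-fig5} together with Remark~\ref{rem1}. Since $A_3$ is properly embedded in $R$ with $\partial A_3 \subset S \cup S_3$, it is automatically disjoint from $S_1$; and one checks from the figure that $A_3$ is essential in $\T$ (neither boundary curve bounds a disk in the corresponding punctured sphere) and that the curve $\partial A_3 \cap S_3$ has slope $\sfrac01$ in the chosen marking of $S_3$.

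Then a component $\widetilde A_3$ of the inverse image of $A_3$ in the double branched cover $X$ is an essential annulus with one boundary component on $G$ and the other of slope $\sfrac01$ on $T_3$. Because $A_3$ misses $S_1$, the annulus $\widetilde A_3$ misses $T_1$, hence lies in the copy of $X$ sitting inside $X(\sfrac{a}b, *, *)$, where $G$ is incompressible by Lemma~\ref{lem4'}. Since $\Delta(\sfrac{a}b, \sfrac01) = a > 1$, it follows from \cite{Sh} and Lemma~\ref{lem4'} that $G$ remains incompressible after Dehn filling $T_3$ along $\sfrac{a}b$, i.e. $G$ is incompressible in $X(\sfrac{a}b, *, \sfrac{a}b)$.

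The step I expect to require the most care is the first one: verifying from the picture that an annulus $A_3$ with all the required properties (disjoint from $t$, essential in $\T$, and of slope $\sfrac01$ on $S_3$) really exists, and checking — as in Lemma~\ref{lem4'} — that its lift stays essential in the partially filled manifold $X(\sfrac{a}b, *, *)$, which is clear here since $\widetilde A_3$ is disjoint from the filling solid torus attached along $T_1$. Everything after that is a mechanical transcription of the proof of Lemma~\ref{lem4'}.
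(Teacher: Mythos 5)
Your strategy is exactly the paper's: produce an essential annulus from $G$ to $T_3$ of slope $\sfrac01$, invoke Lemma~\ref{lem4'} for incompressibility of $G$ in $X(\sfrac{a}b,*,*)$, and finish with $\Delta(\sfrac{a}b,\sfrac01)=a>1$ and \cite{Sh}. The one point where you diverge from the paper is where the annulus is supposed to live: you claim $A_3$ can be found in the \emph{unfilled} tangle $(R,t)$, disjoint from $t$ and (automatically) from $S_1$, whereas the paper locates $A_3$ inside $R(\sfrac{a}b,*,*)$, i.e.\ in the tangle obtained \emph{after} inserting the $\sfrac{a}b$-rational tangle at $S_1$. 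This is a weaker assertion, and it is all the argument needs: once you have an essential annulus in $X(\sfrac{a}b,*,*)$ joining $G$ to $T_3$ at slope $\sfrac01$, the lemma follows by \cite{Sh}, with no need to know that the annulus avoids the $T_1$-filling torus. Your stronger claim that $A_3 \subset R$ already exists disjoint from $S_1$ is not forced by the argument and is precisely the step you flagged as needing care; before committing to it you should check Figure~\ref{bgz4-fig4} carefully, since the paper's deliberate switch from ``$A_1\subset R$'' in Lemma~\ref{lem4'} to ``$A_3\subset R(\sfrac{a}b,*,*)$'' here suggests the annulus they have in mind is most naturally drawn after $S_1$ is filled. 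If the figure does not produce $A_3$ inside $R$ itself, simply replace that part of your first paragraph by the paper's weaker statement and the rest of your proof goes through unchanged.
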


\begin{proof}
There is an essential annulus $A_3\subset R(\sfrac{a}b, *,*)$ with one boundary
component on $S$ and the other having slope $\sfrac01$ on $S_3$. The result now
follows as in the proof of the previous lemma.
\end{proof}

\begin{proof}[Proof of Proposition \ref{prop2}]
There is an essential disk in $\T (\sfrac{a}b\,,\sfrac01\,,\sfrac{a}b)$, meeting
$t(\sfrac{a}b,\sfrac01,\sfrac{a}b)$ in a single point; see Figure
\ref{bgz4-fig7}. Hence $G$ is compressible in
$X(\sfrac{a}b,\sfrac01,\sfrac{a}b)$. Since $\Delta (\sfrac{a}b,\sfrac01) = a>1$,
it follows from Lemma \ref{lem5'} and \cite{Wu2} that either $G$ is
incompressible in $X(\sfrac{a}b,\sfrac{a}b, \sfrac{a}b)$, or there is an
essential annulus $A\subset X(\sfrac{a}b,*, \sfrac{a}b)$ with one boundary
component on $G$ and the other having slope $\sfrac{r}s$ on $T_2$, where $\Delta
(\sfrac{r}s,\sfrac01) = \Delta(\sfrac{r}s,\sfrac{a}b) =1$. We may assume the
latter, in which case, by Dehn twisting $X(\sfrac{a}b,*,\sfrac{a}b)$ along $A$,
we have that $X(\sfrac{a}b,\sfrac{a}b,\sfrac{a}b) \cong
X(\sfrac{a}b,\sfrac01,\sfrac{a}b)$. {From} Figure \ref{bgz4-fig7} we see that
$X(\sfrac{a}b,\sfrac01,\sfrac{a}b)$ is the boundary connected sum of two copies
of $Y$, the double branched cover of the tangle shown in Figure \ref{bgz4-fig8}.

\begin{figure}[!ht]
\centerline{\includegraphics{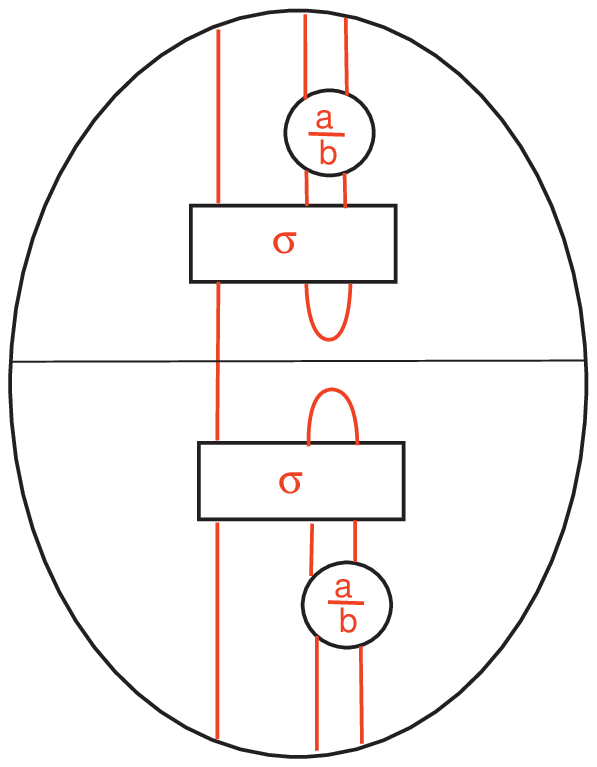}} \caption{ }\label{bgz4-fig7}
\end{figure}

The disk $D$ shown in Figure \ref{bgz4-fig8} separates the tangle into two
rational tangles $\R,\R'$ and lifts to an annulus $A\subset Y$ which separates
$Y$ into two solid tori $U$ and $U'$, the double branched covers of $\R,\R'$
respectively. Note that $A$ has winding number $a$ in $U$. Also, it is easy to
see (by Remark \ref{rem1}) that $A$ is not meridional on $U'$. Hence $Y$ is
either a Seifert fibre space with base orbifold  $D^2 (a,d)$, for some $d>1$, or
a solid torus, giving conclusions (2) and (3) respectively.
\end{proof}

\begin{figure}[!ht]
\centerline{\includegraphics{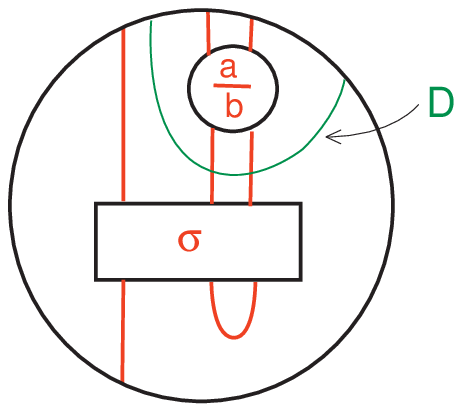}} \caption{ }\label{bgz4-fig8}
\end{figure}

Let $Z$ be the double branched cover of the tangle $(Q, s)$ shown in Figure
\ref{bgz4-fig9}. Then $\partial Z$ has one torus component and two genus two
components.

\begin{lemma}\label{lem6}
$Z(\sfrac{a}b)$ has incompressible boundary.
\end{lemma}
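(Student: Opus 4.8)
The plan is to follow the pattern of Lemmas~\ref{lem3'}--\ref{lem5'}. Write $T$ for the torus boundary component of $Z$ and $G_1, G_2$ for its two genus~2 boundary components, so that $T$ is the double branched cover of the $4$-punctured boundary sphere $S$ of $(Q,s)$ and each $G_i$ is the double branched cover of one of the two $6$-punctured boundary spheres. Here $Z(\sfrac{a}b)$ is the double branched cover of the tangle obtained by attaching $\R(\sfrac{a}b)$ to $(Q,s)$ along $S$, and its boundary is $G_1 \sqcup G_2$; we must show both $G_i$ are incompressible.

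First I would check that $G_1$ and $G_2$ are already incompressible in $Z$ itself. By the $\zed/2$-equivariant Disk Theorem (\cite{GLi}, \cite{KT}, \cite{YM}) this is equivalent to showing that $(Q,s)$ contains no essential disk whose boundary lies on a $6$-punctured boundary sphere. As in the proof of Lemma~\ref{lem3'}, this is verified by recording schematically how the arcs of $s$ meet the boundary spheres of $Q$ (the analogue of Figure~\ref{bgz4-fig5}); reading off the arc pattern from Figure~\ref{bgz4-fig9} leaves no room for such a disk.

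Next, for $i = 1, 2$ I would exhibit an essential annulus $A_i \subset Q$, disjoint from $s$, with one boundary component on the $6$-punctured sphere bounding $G_i$ and the other of slope $\sfrac01$ on $S$ --- the obvious vertical annulus visible in Figure~\ref{bgz4-fig9}, exactly as the annuli $A_1, A_3$ were used in Lemmas~\ref{lem4'} and~\ref{lem5'}. A component of the preimage of $A_i$ in $Z$ is then an essential annulus with one boundary component on $G_i$ and the other of slope $\sfrac01$ on $T$. Since $\Delta(\sfrac{a}b, \sfrac01) = a > 1$, it follows from \cite{Sh} together with the incompressibility of $G_i$ in $Z$ that $G_i$ remains incompressible in $Z(\sfrac{a}b)$. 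As $i$ was arbitrary, $\partial Z(\sfrac{a}b)$ is incompressible.

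The only real work is the first step: one must confirm directly from Figure~\ref{bgz4-fig9} that no essential disk of $(Q,s)$ is based on a $6$-punctured sphere, and that the vertical annuli $A_i$ are genuinely essential (neither boundary curve bounding a disk in $\partial R$ meeting $s$ in at most one point). Both are routine once the arc pattern of $s$ is drawn schematically, as in the earlier lemmas, and require no tools beyond \cite{Sh} and the equivariant Disk Theorem.
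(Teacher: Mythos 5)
Your argument is correct and is essentially the paper's own proof: the paper likewise produces, for $i=0,1$, an annulus $A_i\subset Q$ disjoint from $s$ running from $S_i$ to the slope $\sfrac01$ on $S$, and then invokes the argument of Lemma~\ref{lem4'} (Short's theorem together with $\Delta(\sfrac{a}b,\sfrac01)=a>1$). The only step the paper leaves implicit is the incompressibility of the genus-$2$ boundary components in $Z$ itself, which you supply via the equivariant Disk Theorem; this also follows immediately from the remark just after Lemma~\ref{lem6}, that $Z\cong G\times I-\operatorname{int}N(C)$ for a curve $C\subset G\times\{\sfrac12\}$.
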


\begin{proof}
For $i=0,1$, there is an annulus $A_i\subset Q$, disjoint from $s$, with one
boundary component on $S_i$ and the other having slope $\sfrac01$ on $S$, as
shown in Figure \ref{bgz4-fig9}. Since $\Delta (\sfrac{a}b,\sfrac01) =a>1$, the
result follows as in the proof of Lemma \ref{lem4'}.
\end{proof}

\begin{figure}[!ht]
\centerline{\includegraphics{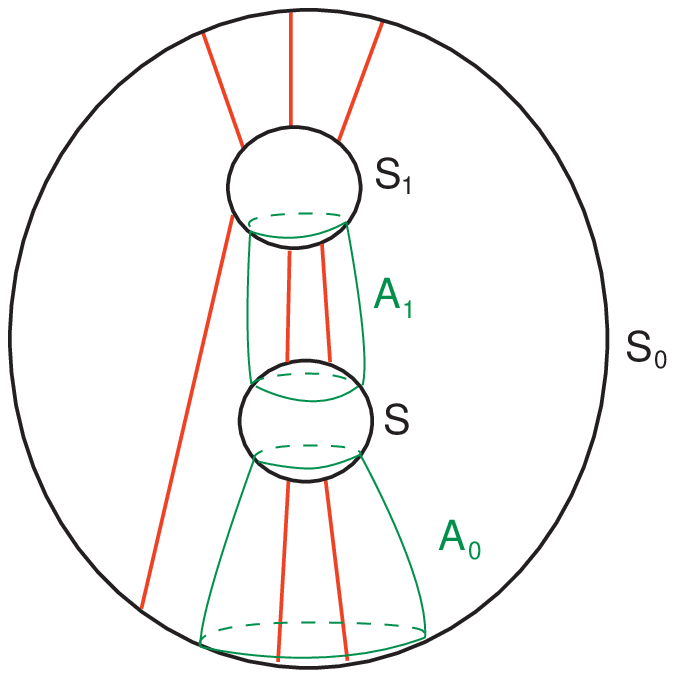}} \caption{ }\label{bgz4-fig9}
\end{figure}

Note that filling $(Q,s)$ along $S$ with the rational tangle $\R(\sfrac10)$
gives a product tangle; hence $Z\cong G\times I- \text{int }N(C)$, where $G$ is
a surface of genus two and $C$ is a simple closed curve $\subset G\times
\{\sfrac12\}$.

\begin{prop}\label{prop7}
The double branched cover $W$ of $(S^3, K)$ either \\
$(1)$ contains a separating incompressible surface of genus~2; or \\
$(2)$ contains four disjoint tori, each cutting off a manifold which is
Seifert fibred over  $D^2(a,d)$, $d>1$; or \\
$(3)$ has Heegaard genus at most 3.
\end{prop}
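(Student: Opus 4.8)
The plan is to obtain the trichotomy for $W$ by realizing $W$ as a Dehn filling of $Z(\sfrac{a}b)$ and then applying Proposition \ref{prop2} together with the structural description of $Z$ obtained just above. Recall that $Z$ is the double branched cover of the tangle $(Q,s)$ of Figure \ref{bgz4-fig9}, that $Z(\sfrac{a}b)$ has incompressible boundary by Lemma \ref{lem6}, and that capping off the torus boundary component $S$ of $(Q,s)$ with the $\sfrac10$-tangle yields a product tangle, so $Z \cong G\times I - \mathrm{int}\,N(C)$ for a genus $2$ surface $G$ and a simple closed curve $C \subset G\times\{\sfrac12\}$. The key point is that $W$ itself is obtained from $Z(\sfrac{a}b)$ by gluing back the piece $X$ that was removed: explicitly, from Figure \ref{7ab} one sees that $(S^3,K)$ is the union of the tangle $(R,t)$ of Figure \ref{bgz4-fig4} (capped off along $S_1,S_2,S_3$ by $\sfrac{a}b$-rational tangles, i.e.\ the manifold $X(\sfrac{a}b,\sfrac{a}b,\sfrac{a}b)$ of Proposition \ref{prop2}) and the tangle $(Q,s)$ (capped off along its two genus-$2$ boundary components appropriately, i.e.\ $Z(\sfrac{a}b)$), glued along the genus $2$ surface $G = \partial X \cap \partial Z$. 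So first I would make this decomposition $W = X(\sfrac{a}b,\sfrac{a}b,\sfrac{a}b) \cup_G Z(\sfrac{a}b)$ precise by tracking the two tangle pieces inside Figure \ref{7ab}, checking that the branch-set identifications match.

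Next I would run the case analysis coming from Proposition \ref{prop2}. In case (1) of that proposition, $G$ is incompressible in $X(\sfrac{a}b,\sfrac{a}b,\sfrac{a}b)$; since $G$ is also incompressible in $Z(\sfrac{a}b)$ by Lemma \ref{lem6}, the surface $G$ is a separating incompressible surface of genus $2$ in $W$ (one must check it does not become compressible on the $Z$-side after the gluing, which is exactly the content of Lemma \ref{lem6} since $G$ is a boundary component of $Z(\sfrac{a}b)$). This gives conclusion (1) of Proposition \ref{prop7}. In case (2), $X(\sfrac{a}b,\sfrac{a}b,\sfrac{a}b)$ is the boundary connected sum of two copies of a Seifert fibred manifold over $D^2(a,d)$ with $d>1$; here the separating essential annulus in the boundary connected sum caps off to a torus in $W$, and its two copies give, after pushing into $Z(\sfrac{a}b)$, the four disjoint tori each bounding a Seifert piece over $D^2(a,d)$ — here I would use that $Z \cong G\times I - \mathrm{int}\,N(C)$ to see that the complementary region on the $Z$-side contributes no further essential tori of the wrong type and simply reproduces the two Seifert pieces on the other side of $G$. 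This yields conclusion (2). In case (3), $X(\sfrac{a}b,\sfrac{a}b,\sfrac{a}b)$ is a genus $2$ handlebody, so gluing it to $Z(\sfrac{a}b) \cong (G\times I - \mathrm{int}\,N(C))$ along $G$ produces a manifold with a Heegaard splitting of genus at most $3$: the handlebody provides one side, and $(G\times I - \mathrm{int}\,N(C))$ is obtained from a genus $2$ handlebody by removing a neighbourhood of a curve, hence has a spine giving the other compression body of genus $\le 3$. This gives conclusion (3).

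The main obstacle I anticipate is the bookkeeping in the first step: correctly identifying, from the pictures, which sub-tangles of Figure \ref{7ab} are $(R,t)$ and $(Q,s)$ and verifying that the rational-tangle fillings along $S_1,S_2,S_3$ really produce the slope $\sfrac{a}b$ on each $T_i$ consistently with the conventions in Figures \ref{bgz4-fig4} and \ref{bgz4-fig9}, so that the gluing along $G$ is the one that reconstitutes $(S^3,K)$. A secondary subtlety is ensuring, in case (2), that after the gluing the four tori remain essential and genuinely cut off the claimed Seifert pieces — this requires checking that no annulus in $Z(\sfrac{a}b)$ "connects across" and merges two of the Seifert pieces, for which the explicit model $Z \cong G\times I - \mathrm{int}\,N(C)$ and the incompressibility statement of Lemma \ref{lem6} should suffice. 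Once the decomposition is pinned down, the rest is a direct translation of Proposition \ref{prop2}'s trichotomy.
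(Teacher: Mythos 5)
Your high-level strategy is right — decompose $W$ into copies of $X(\sfrac{a}b,\sfrac{a}b,\sfrac{a}b)$ and $Z(\sfrac{a}b)$ and run the trichotomy of Proposition~\ref{prop2} — but the decomposition you write down is wrong, and the error propagates. You assert $W = X(\sfrac{a}b,\sfrac{a}b,\sfrac{a}b)\cup_G Z(\sfrac{a}b)$ with a single copy of $X(\sfrac{a}b,\sfrac{a}b,\sfrac{a}b)$. Recall, however, that $\partial Z$ consists of one torus and \emph{two} genus-$2$ components; the filling $Z(\sfrac{a}b)$ fills the torus component (not the genus-$2$ ones, contrary to what your parenthetical says), leaving two genus-$2$ boundary components $G$ and $G'$. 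The correct decomposition (the paper reads it off Figure~\ref{bgz4-fig10}, not Figure~\ref{7ab}) is
$$W \;\cong\; P \cup_G Z(\sfrac{a}b) \cup_{G'} P',$$
with $P$ and $P'$ \emph{two} copies of $X(\sfrac{a}b,\sfrac{a}b,\sfrac{a}b)$, one glued to each of $G$ and $G'$. With your decomposition $W$ would have a nonempty genus-$2$ boundary, contradicting the fact that $W$ is the closed double branched cover of $(S^3,K)$.

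This is not just bookkeeping: it is exactly why conclusion (2) asserts \emph{four} disjoint tori. In case (2) of Proposition~\ref{prop2} each copy of $X(\sfrac{a}b,\sfrac{a}b,\sfrac{a}b)$ is a boundary connected sum of two Seifert pieces over $D^2(a,d)$ and hence contains two disjoint tori each cutting off such a piece; the two copies $P,P'$ supply all four tori directly, with no need for the ad hoc ``pushing into $Z(\sfrac{a}b)$'' you invoke. Likewise your case (3) argument doesn't close up: the clean route (the paper's) is to observe that $P\cup_G (G\times I)\cup_{G'}P'$ is a closed manifold with a genus-$2$ Heegaard splitting, that $W$ is obtained from it by Dehn surgery on $C\subset G\times\{\sfrac12\}$, which lies in the Heegaard surface, and that a curve in a genus-$2$ Heegaard surface has tunnel number at most $2$, giving Heegaard genus $\le 3$. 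Your claim that ``$G\times I - \mathrm{int}\,N(C)$ has a spine giving a compression body of genus $\le 3$'' is not a valid substitute — removing a curve neighbourhood from a handlebody does not by itself yield a compression body of controlled genus — and in any case you'd still be short a piece. Once you correct the decomposition to include both copies of $X(\sfrac{a}b,\sfrac{a}b,\sfrac{a}b)$, the three cases go through essentially as you outlined for case (1).
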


\begin{figure}[!ht]
\centerline{\includegraphics{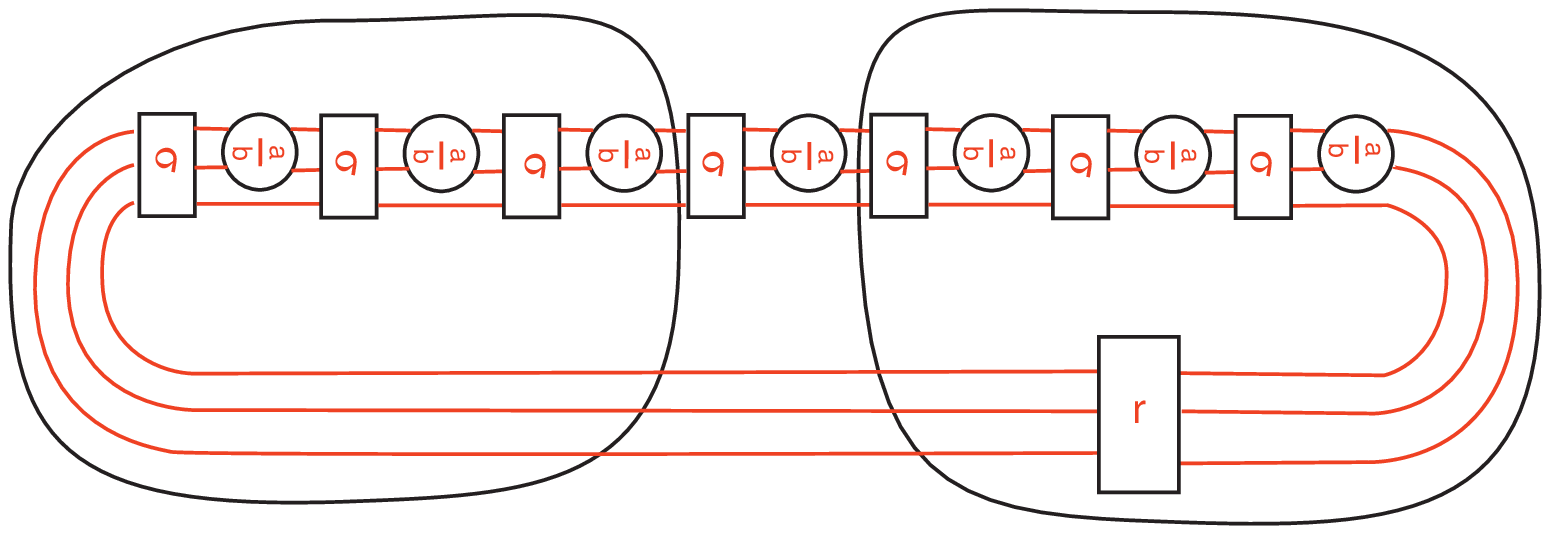}} \caption{ }\label{bgz4-fig10}
\end{figure}

\begin{proof}
{From} Figure \ref{bgz4-fig10} we see that $W\cong P\cup_G Z
(\sfrac{a}b)\cup_{G'} P'$, where $P$ and $P'$ are copies of
$X(\sfrac{a}b,\sfrac{a}b,\sfrac{a}b)$.

Case (1) of Proposition \ref{prop2}, together with Lemma \ref{lem6}, gives
conclusion~(1).

In Case (2) of Proposition~\ref{prop2}, each of $P$, $P'$ contains two disjoint
tori, each cutting off a manifold which is Seifert fibred over $D^2(a,d)$, and
we have conclusion (2).

In Case (3) of Proposition~\ref{prop2}, $P$ and $P'$ are handlebodies of genus
$2$. Also, by the remark after the  proof of Lemma \ref{lem6}, $Z(\sfrac{a}b)$
is obtained from $G\times I$ by Dehn surgery on a curve in
$G\times\{\sfrac12\}$. Hence $W$ is obtained from a closed manifold with a
Heegaard splitting of genus $2$ by a Dehn surgery on a curve in the Heegaard
surface. Since such a curve has tunnel number at most 2, $W$ has Heegaard genus
at most $3$.
\end{proof}

\begin{proof}[Proof of Theorem \ref{thm1}]
To get a contradiction, suppose $W\cong W'$.

Recall that $W'$ is the double branched cover of $(S^3, K')$
and is a Seifert fibred space with base
orbifold
$S^2(m,m,m,m,m,m,m)$.

In Case (1) of Proposition~\ref{prop7},
$W'$ would contain a separating incompressible surface of genus $2$.
This surface would have to be horizontal, and would then separate $W'$
into two twisted $I$-bundles.
Thus $W'$ would contain a non-orientable surface.
But since $W'$ is the double branched cover of a knot in $S^3$,
$H_1 (W';\zed/2)=0$, a contradiction.

In Case (2) of Proposition~\ref{prop7},
the tori in question are incompressible (otherwise $W'$ would have base orbifold
 $S^2 (a,d,r)$ for some $r\geq 1$).
Hence they are vertical in $W'$.
But since $W'$ has only 7 exceptional fibres, this is clearly impossible.

Finally, since $W'$ has base orbifold  $S^2(m,m,m,m,m,m,m)$, every
irreducible Heegaard splitting of $W'$ is either horizontal or vertical
by \cite{MSch}.
It also follows from \cite{MSch}  that when $W'$  has an irreducible horizontal Heegaard splitting,
its genus is bigger than $6$ and that any irreducible vertical  Heegaard splitting
of $W'$ has genus $6$. Hence Case (3) of Proposition \ref{prop7} is
impossible.
\end{proof}

\section{The case $\Delta(\alpha, \beta) = 5$ and $M(\alpha)$ is a lens space}    \label{lens space delta = 5}

In this section we suppose that assumptions \ref{assumptions 1} hold and show that $M(\alpha)$ cannot be a lens space, thus completing our proof of Baker's theorem \cite{Ba}. As we noted at the end of \S \ref{sec: once-punctured}, it suffices to show that the link depicted in Figure \ref{fig3}, considered as lying in a Heegaard solid torus in $L(5, 2q)$, is not isotopic to either the core of a Heegaard solid torus or the boundary of a M\"{o}bius band spine of a Heegaard solid torus.

The proof of the following lemma is straightforward.

\begin{lemma} \label{2=lens}
Let $V_1$ be a Heegaard solid torus in a lens space $L(p, q)$ and let
$K$ be either a core of $V_1$ or a $(2,k)$-cable of a core of $V_1$.
In the first case assume that $p$ is odd.
Then the double branched cover of $(L(p,q),K)$ is a lens space.
\qed
\end{lemma}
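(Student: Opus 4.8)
The statement asserts that if $V_1$ is a Heegaard solid torus in a lens space $L(p,q)$ and $K$ is either a core of $V_1$ (with $p$ odd) or a $(2,k)$-cable of such a core, then the double branched cover of $(L(p,q),K)$ is again a lens space. The plan is to argue directly by exhibiting an explicit genus one Heegaard splitting of the branched cover. Write $L(p,q) = V_1 \cup_T V_2$ as a union of two solid tori glued along their common boundary torus $T$.

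First I would treat the core case. If $K$ is a core of $V_1$, then the branch locus meets $V_1$ in an unknotted spine. The preimage of $V_1$ in the double branched cover is the double \emph{branched} cover of a solid torus over its core, which is again a solid torus (the map $D^2 \to D^2$, $z \mapsto z^2$, crossed with $S^1$); note that the induced map on $T = \partial V_1$ is the double \emph{unbranched} cover in the meridian direction and a homeomorphism in the longitude direction, so it is well-defined on the boundary precisely when things match up — this is where the hypothesis that $p$ is odd enters, guaranteeing that the cover restricted to $T$ is connected and that $V_2$ lifts to a single solid torus as well (since $K$ is disjoint from $V_2$, the preimage of $V_2$ is an unbranched double cover of a solid torus with the relevant boundary identification, which is a solid torus because the meridian of $V_2$ hits $T$ with odd $p$-coordinate). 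Gluing the two solid tori along the lifted torus gives a genus one Heegaard splitting, hence a lens space.

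For the cable case, I would use the standard fact that the double branched cover of a solid torus over a $(2,k)$-cable of its core is a solid torus: indeed a $(2,k)$-cable bounds an annulus (a Möbius-band-like or annular spanning surface) whose neighborhood analysis shows the branched cover of $V_1$ is Seifert fibred over a disk with at most one cone point, and a direct tangle/rational-tangle computation (the $(2,k)$-cable sits inside $V_1$ as the $\infty$-closure of an integer-twist tangle) shows the cone point has order one, so the cover of $V_1$ is a solid torus. Since the cable is contained in $\operatorname{int}(V_1)$ and disjoint from $V_2$, the preimage of $V_2$ is an unbranched double cover of the solid torus $V_2$; here the relevant parity is controlled by the homology class of the cable in $L(p,q)$ (it is twice the core class), so the covering of $V_2$ is again connected and a solid torus. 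Assembling, the double branched cover has a genus one Heegaard splitting and is a lens space.

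The main obstacle — and the only place where real care is needed — is bookkeeping the gluing map on the torus $T$: one must check in each case that the covering involution restricted to $T$ extends compatibly over \emph{both} $V_1$ and $V_2$ so that the two lifted solid tori can be glued, and that the resulting manifold is connected (a single lens space rather than two copies or a connected sum). This is exactly the role of the oddness hypothesis in the core case and of computing the homology class of the cable in the cable case; once the covering of $\partial V_1 = \partial V_2$ is pinned down, identifying the preimages of $V_1$ and $V_2$ as solid tori is routine, and a genus one Heegaard splitting of a closed orientable $3$-manifold is a lens space (or $S^3$, which is $L(1,0)$). Since this is the claim that the paper labels as having a straightforward proof, I would present the explicit solid-torus-gluing argument above without belaboring the elementary covering-space computations.
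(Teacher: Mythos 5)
Your argument for the core case is essentially correct (the only imprecision is that there are two double branched covers of $(V_1,\text{core})$ up to equivalence, distinguished by whether the longitude lifts or unwraps, and only one of them matches the cover of $\partial V_1$ induced from the ambient branched cover; but since both are solid tori the conclusion survives). The cable case, however, contains a genuine gap: both of the key topological assertions you make there are false. You claim (a) that the double branched cover of $(V_1,K)$, for $K$ a $(2,k)$-cable of the core, is a solid torus, and (b) that the preimage of $V_2$ is a connected solid torus. Test this on the simplest instance, $L(p,q)=S^3 = V_1\cup_T V_2$ with $K$ the $(2,3)$-cable of the core of $V_1$, i.e.\ the trefoil: the double branched cover of $(S^3,K)$ is $L(3,1)$, and the core of $V_2$ has linking number $2$ with $K$, which is \emph{even}, so the covering restricted to $V_2$ is the trivial double cover, namely two disjoint solid tori — the opposite of (b). (Your phrase ``it is twice the core class, so the covering of $V_2$ is again connected'' runs the logic backwards: evenness of that class is exactly what forces disconnectedness.) Consequently $\partial V_1 = T$ lifts to two tori, and the preimage of $V_1$ — which is by definition the double branched cover of $(V_1,K)$ — has two boundary components and is certainly not a solid torus; concretely it is $L(3,1)$ minus two disjoint open solid tori, which one can identify as $T^2\times I$ via the Seifert fibration of $L(3,1)$ over $S^2(3,3)$. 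This behaviour is generic, not special to $S^3$: $\mu_1$ bounds a disk meeting $K$ twice, so $[\mu_1]=2[\mu_K]$ is always even in $H_1(L(p,q)\setminus K)$, and when $p$ is odd $[\lambda_1]$ maps to $0$ in $\mathbb{Z}/2$ as well, so $\partial V_1$ always lifts to two tori.

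The statement of the lemma is nevertheless true, and your overall plan can be salvaged, but not with the decomposition you describe. For $p$ odd one should instead show that the preimage of $V_1$ is $T^2\times I$ and the preimage of $V_2$ is a disjoint union of two solid tori, and then observe that absorbing the product region into one of them produces a genus-one Heegaard splitting. (An alternative route, arguably cleaner, is to note that in both the core and cable cases $K$ is a fibre of a Seifert fibration of $L(p,q)$ over $S^2$ with two cone points and to track the base orbifold through the branched cover.) As written, though, the cable-case argument does not go through, and the ``direct tangle/rational-tangle computation'' you invoke to show that ``the cone point has order one'' would need to be supplied in full — I do not believe it shows that the branched cover of $V_1$ is a solid torus.
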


\begin{rem}
{\rm The condition that $p$ be odd in the first case is needed to guarantee
the existence of a double branched cover. Furthermore, in that case
we have $L(p,q) \cong L(p,2r) \cong L(p,2r')$, where $4rr' \equiv 1$ (mod
 $p$), and then the double branched cover is homeomorphic to either
$L(p,r)$ or $L(p,r')$.}
\end{rem}

\begin{lemma} \label{d12346}
Let $Q$  be a once-punctured torus bundle over $S^1$, with $\beta$ the boundary
slope of the fibre, and let $\gamma$ be a slope on $\partial Q$ such that
$Q(\gamma)$ is reducible.
Then $\Delta (\beta,\gamma) =1,2,3,4$ or $6$.
\end{lemma}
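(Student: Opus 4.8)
The plan is to organise the argument according to the Nielsen--Thurston type of the monodromy $\phi$ of $Q$, viewed as an element of the mapping class group of the once-punctured torus, which is isomorphic to $\mathbb{Z}\text{-free part aside } SL(2,\mathbb{Z})$: either $\phi$ is pseudo-Anosov (equivalently $|\operatorname{tr}\phi|>2$), or reducible (a non-zero power of a Dehn twist about a non-separating curve $c$), or periodic of finite order, necessarily $1,2,3,4$ or $6$. In all three cases, filling along $\beta$ caps each once-punctured-torus fibre with a meridian disc, so $Q(\beta)$ is a closed torus bundle over $S^1$ with monodromy the closed-up $\phi$; since $\pi_1$ of the fibre torus $T$ (which is $\mathbb{Z}^2$) injects into $\pi_1(Q(\beta))$, $T$ is incompressible and, being closed, non--boundary--parallel, so $Q(\beta)$ is irreducible and toroidal. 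In particular $\gamma\neq\beta$, so $\Delta(\beta,\gamma)\geq 1$.

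If $\phi$ is pseudo-Anosov, then $Q$ is hyperbolic, hence simple, and $Q(\beta)$ is a Sol torus bundle, which is toroidal. Since $Q(\gamma)$ is reducible, the bound on the distance between a toroidal filling slope and a reducible filling slope (\cite{Oh}, \cite{Wu1}) gives $\Delta(\beta,\gamma)\leq 3$, which finishes this case.

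If $\phi$ is a non-zero power of the Dehn twist about $c$, then cutting $Q$ along the essential torus $c\times S^1$ exhibits it as the Seifert fibred space $P\times S^1$, where $P$ is the pair of pants obtained from the fibre by deleting a neighbourhood of $c$, with two of its three boundary tori glued together by a map containing a non-trivial twist. Filling the remaining boundary torus $\partial_0 Q$ along a slope at distance $d$ from the Seifert fibre slope $\varphi$ then produces, by the standard description of Dehn fillings of Seifert fibred spaces over bounded surfaces: a connected sum of an $S^1\times S^2$ with a Nil torus bundle when $d=0$, i.e.\ $\gamma=\varphi$; a Nil torus bundle when $d=1$; and a prime graph manifold (carrying an exceptional fibre of order $d$) when $d\geq 2$. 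Only $d=0$ gives a reducible filling, and there $\Delta(\beta,\gamma)=\Delta(\beta,\varphi)=1$, because on $\partial_0 Q$ the bundle fibre slope $\beta$ (the class of the boundary of the surface fibre) and the suspension slope $\varphi$ are dual. If instead $\phi$ is periodic of order $k$, then $Q$ is Seifert fibred over the orbifold obtained from $T/\phi$ by deleting the cone point corresponding to the puncture, namely $T_0$ ($k=1$), $D^2(2,2,2)$ ($k=2$), $D^2(3,3)$ ($k=3$), $D^2(2,4)$ ($k=4$) and $D^2(2,3)$ ($k=6$), the last being the trefoil exterior. Such a Seifert fibred space has incompressible boundary, and its only reducible Dehn filling is the one along the fibre slope $\varphi$, which yields a connected sum of copies of $S^1\times S^2$ and of the lens spaces determined by the cone orders; and counting the winding numbers on $\partial_0 Q$ of a regular orbit of the suspension flow, which closes up after running $k$ times around the base circle, gives $\Delta(\beta,\varphi)=k$. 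Hence in the periodic case $\Delta(\beta,\gamma)=k\in\{1,2,3,4,6\}$.

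Combining the three cases yields $\Delta(\beta,\gamma)\in\{1,2,3,4,6\}$, as asserted. The pseudo-Anosov case is immediate from the cited toroidal-versus-reducible distance bound, so the real work is in the reducible and periodic cases: reading off the graph-manifold/Seifert structure of $Q$ from $\phi$, identifying $\varphi$ as the unique reducible filling slope, and pinning down $\Delta(\beta,\varphi)$. The most delicate point I expect is ruling out, in the periodic case, a non-fibre filling slope that produces $S^1\times S^2$; this is excluded because the fractional part of the Euler number of $Q$ relative to a section is non-zero, which one reads off from the rotation numbers of $\phi$ at its fixed points, so that capping $\partial_0 Q$ by a disc can never give a lens space with first Betti number one.
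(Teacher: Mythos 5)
Your split by Nielsen--Thurston type of the monodromy $\phi$ matches the paper's, and your treatments of the pseudo-Anosov case (via the Oh--Wu toroidal/reducible bound, where the paper instead uses \cite[Lemma 4.1]{BZ1} to get the sharper $\Delta=1$) and of the periodic case are both sound. The gap is in the reducible, aperiodic case.

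Up to conjugacy, the aperiodic reducible mapping classes of the once-punctured torus are $\pm T_c^r$ with $r\neq 0$: not only the twist powers $T_c^r$ (trace $+2$ in $SL(2,\mathbb Z)$), but also the compositions $-T_c^r$ with the hyperelliptic involution (trace $-2$). You only treat $T_c^r$. When $\phi=-T_c^r$ the invariant curve $c$ has its orientation reversed by $\phi$, so its suspension is a Klein bottle, not a torus; the complement of a neighbourhood of this Klein bottle is a Seifert piece fibred over an annulus with one cone point of order $2$, and since a regular fibre wraps twice around the base circle, the fibre slope lies at distance $2$, not $1$, from $\beta$. One cannot simply conclude $\Delta=1$ here; the paper proves only $\Delta\mid 4$ in this case, via a group-theoretic argument (Lemma~\ref{4t=0}) that treats both signs uniformly: every nontrivial free product quotient $A*B$ of $\pi_1(Q)$ satisfies $4t=0$ in $H_1(A*B)$, and since the filling relation $t^{\Delta}[x,y]^q=1$ forces $t$ to have order $\Delta$ in $H_1(Q(\gamma))$, this yields $\Delta\mid 4$. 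You need either to supply the missing $-T_c^r$ analysis or to pass to an argument, like the paper's, that handles both signs at once.

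Two smaller points in the $T_c^r$ subcase: the identification $Q(\varphi)\cong S^1\times S^2\,\#\,(\text{Nil torus bundle})$ is incorrect --- a short $\pi_1$ computation gives $\pi_1(Q(\varphi))\cong (\mathbb Z/r)*\mathbb Z$, so in fact $Q(\varphi)\cong L(r,s)\,\#\,(S^1\times S^2)$ --- and the claim that $\gamma=\varphi$ is the only reducible filling slope needs justification: for $\Delta(\gamma,\varphi)\ge 1$ the filled Seifert piece $(P\times S^1)(\gamma)$ is $T^2\times I$, or is Seifert fibred over an annulus with one cone point and has incompressible boundary, and gluing two incompressible boundary tori of an irreducible manifold yields an irreducible manifold, so $Q(\gamma)$ is indeed irreducible.
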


\begin{proof}
We consider separately three possibilities for $Q$.

(1) {\em $Q$ is hyperbolic.} Here $\Delta (\beta,\gamma) =1$ by \cite[Lemma 4.1]{BZ1}.

(2) {\em $Q$ is Seifert fibred.}  In this case the monodromy of the bundle has finite order, $d$, say, where
$d= 1,2,3,4$ or $6$. If $Q(\gamma)$ is reducible then $\gamma$ is the Seifert fibre slope, and
hence $\Delta (\beta,\gamma)=d$.

(3) {\em $Q$ is toroidal and not Seifert fibred.}
Let $T_0$ be the once-punctured torus fibre of $Q$.
Here the  monodromy of the bundle is $\pm$ the $r$th.\ power of a Dehn twist
along an essential loop $x$ in $T_0$, where $r\ne0$ and $+/-$ denotes
composition with the identity and the elliptic involution, respectively.
The free group $\pi_1 (T_0)$ has basis $\{x,y\}$ with
$[\partial T_0] = [x,y] = xyx^{-1}y^{-1}$.
Then $\pi_1(Q)$ has presentation
\begin{equation}
\langle x,y,t : t^{-1} xt = x,\ t^{-1} yt = yx^r\rangle \tag{i}
\end{equation}
\noindent or
\begin{equation}
\langle x,y,t : t^{-1} xt = (xy)x^{-1} (xy)^{-1},\ t^{-1}yt = x(x^{-r}y^{-1})
x^{-1}\rangle \tag{ii}
\end{equation}
in the $+/-$ cases mentioned above.
In both cases $\pi_1 (\partial Q) = \langle t,[x,y]\rangle$.

For the proof in this case we will use the following lemma.

\begin{lemma} \label{4t=0}
If $A*B$ is a non-trivial free product quotient of $\pi_1(Q)$, then
$4t =0\in H_1 (A*B)$.
\end{lemma}

\begin{proof}
Let $A*B$ be a quotient of $\pi_1(Q)$ with $A\ne 1\ne B$.
We adopt the convention that a word in $x,y$ and $t$ denotes the image in
$A*B$ of the corresponding element of $\pi_1(Q)$.
\smallskip

\noindent {\bf Case (i).}
Here $x$ and $t$ commute.
Hence either
\begin{itemize}
\item[(a)] $x$ and $t$ are powers of some element $z$, or
\item[(b)] $x$ and $t$ lie in a conjugate of a factor.
\end{itemize}

In subcase (a) we have $x=z^m$, $t=z^n$, say.
The second relation in the presentation (i) gives $z^{-n}yz^n = yz^{rm}$,
and therefore $y^{-1}z^n y = z^{n-rm}$.
By applying an inner automorphism of $A*B$ we may assume that $z$ is
represented by a cyclically reduced word in the factors.
It follows that $|n| = |n-rm|$, otherwise we have two cyclically reduced
words, $z^n$ and $z^{n-rm}$, of different lengths in the same conjugacy class.
Hence either $m=0$ or $y^{-1} z^n y = z^{-n}$.
If $m=0$ then $x=1$ and so $A*B$ is a quotient of
$\langle y,t : t^{-1}yt =y\rangle \cong \zed\times\zed$, a contradiction.
If $y^{-1} z^n y = z^{-n}$ then $y^{-1} ty = t^{-1}$ and so
$2t = 0 \in H_1 (A*B)$.

In subcase (b) we may assume, by applying an inner automorphism of $A*B$,
that $x,t \in A$.
Then $y^{-1} t^{-1} y = x^r t^{-1} \in A$.
But $t^{-1}\in A$, and hence $y\in A$.
Therefore $B=1$, a contradiction.
\smallskip

\noindent {\bf Case (ii).}
Let $s= txy$.
Then $\pi_1(Q)$ has the presentation
$$\langle x,y,s : s^{-1} xs = x^{-1},\ s^{-1} ys = y^{-1}x^{-r}\rangle$$
Since $x$ and $s^2$ commute, either
\begin{itemize}
\item[(a)] $x$ and $s^2$ are powers of some element $z$, or
\item[(b)] $x$ and $s^2$ lie in a conjugate of a factor.
\end{itemize}

In subcase (a), suppose $x=z^m$, $s^2 = z^n$.
The second relation in the presentation of $\pi_1(Q)$ implies
$s^{-2} ys^2 = x^r yx^r$, i.e. $z^{-n} yz^n = z^{rm} yz^{rm}$,
giving $y^{-1} z^{(n+rm)} y = z^{n-rm}$.
As in Case~(i) we may assume that $z$ is cyclically reduced, and hence
$|n+rm| = |n-rm|$, i.e. either $m=0$ or $n=0$.
If $m=0$ then $x=1$ and so $A*B$ is a quotient of the Klein bottle group
$\langle y,s :s^{-1} ys = y^{-1}\rangle$, which is easily seen to imply
$A*B\cong \zed_2 *\zed_2$.
If $n=0$ then $s^2 =1$.
Hence $2s =0\in H_1 (A*B)$.
But in $H_1(Q)$ $s= t+x+y$, $2x=0$, and $4y =0$.
Therefore $4t =0 \in H_1(A*B)$.

In subcase (b) we may assume that $x,s^2\in A$.
Hence $s\in A$.
{From} the second relation in the above presentation of $\pi_1(Q)$ we
get $(ys^{-1})^2 = x^{-r} s^{-2} \in A$.
Therefore $ys^{-1} \in A$, and hence $y\in A$.
This implies that $B=1$, a contradiction.~\qed

We now complete the proof of Lemma \ref{d12346}.

Let $\Delta = \Delta (\beta,\gamma)$.
Then $\pi_1 (Q(\gamma))$ is obtained from $\pi_1(Q)$ by adding the
relation $t^\Delta [x,y]^q =1$, for some integer $q$ coprime to $\Delta$.
It is easy to see from the presentations (i) and (ii) that $H_1(Q(\gamma))\not\cong\zed$.
Therefore $Q(\gamma)$ is a non-trivial connected sum and hence $\pi_1(Q(\gamma))$
is a non-trivial free product.
The relation $t^\Delta [x,y]^q =1$ shows
that $t$ has order $\Delta$ in $H_1 (Q(\gamma))$.
Hence by Lemma \ref{4t=0}, $\Delta$ divides $4$.
\end{proof}

Now we complete the proof that $M(\alpha)$ cannot be a lens space under the assumption that the conditions \ref{assumptions 1} hold.
Suppose otherwise.
By Lemma \ref{once-punctured cyclic case}, $M(\alpha)/\tau_\alpha \cong L(5,2q)$,
$L$ is either the core of a Heegaard solid torus in $L(5,2q)$ or
a $(2,k)$-cable of such a core, and furthermore $L(5,2q)$ has a genus~1
Heegaard splitting $V\cup V_0$ such that $L$ is isotopic to a curve in $V$
of the form shown in Figure \ref{fig3}, where $a$ and $b$ are coprime integers with
$a\ge 2$ and $\sigma$ is a 3-braid.
We will show that these conditions on $L$ lead to a contradiction.

Remove from the solid torus $V$ in Figure \ref{fig3} the interior of the 3-ball $B$
containing the $a/b$-rational tangle.
We then get a tangle $\T$ in
$Y = (V- \text{int }B)\cup V_0 = L(5,2q) \setminus \text{int }B$.
Let $X$ be the double branched cover of $(Y,\T)$.

Since $\T(a/b) = L$, by Lemma \ref{2=lens} we have
\begin{itemize}
\item $X(a/b)$ is a lens space.
\end{itemize}

Also, clearly $\T(0/1) = (\text{core of }V) \, \#\, (\text{knot in }S^3)$,
so
\begin{itemize}
\item $X(0/1) \cong L(5,r)\,\#\, N$ for some closed 3-manifold $N$.
\end{itemize}

\begin{lemma}\label{lem6.12}
$X(1/k)$ is irreducible for all $k\in\zed$.
\end{lemma}

\begin{proof}
$\T(1/k)$ is (the 3-braid $\sigma_1^k\sigma$ in $V$) $\cup V_0$.
Hence $X(1/k) = Q_k\cup \widetilde V_0$, where $Q_k$ is the double branched
cover of $(V,\sigma_1^k \sigma)$ and $\widetilde V_0$ is a solid torus.
Now $Q_k$ is a $T_0$-bundle over $S^1$, where $T_0$ is the double branched
cover of ($D^2$, 3~points), i.e. a once-punctured torus.
Let $\beta$ be the boundary slope of the fibre of $Q_k$; note that
$\beta$ projects to the meridian $\mu$ of $V$.
Let $\mu_0,\tilde\mu_0$ be the meridians of $V_0,\widetilde V_0$, respectively.
Since $\Delta (\mu,\mu_0) =5$, we have $\Delta (\beta,\tilde\mu_0)=5$.
 Hence by Lemma 8.3, $X(1/k)$ is irreducible.
\end{proof}

There is a $\zed/2$-action on $X$ with quotient $Y = L(5,2q) \setminus \text{int }B$.
It follows easily that $X$ is not a solid torus.
We consider the following three possibilities for $X$.

\noindent
(1) {\em $X$ is reducible.}
Here we must have $X\cong X'\,\#\, X(a/b)$, where $X' (a/b)\cong S^3$.
By Lemma \ref{lem6.12}, $X'(1/k) \cong S^3$ for infinitely many $k$, and hence $X'$
is a solid torus with meridian $0/1$.
Since $\Delta (a/b,\,0/1) = a>1$, this contradicts the fact that
$X'(a/b)\cong S^3$.

\noindent
(2) {\em $X$ is irreducible and not Seifert fibred.}
Since $\Delta (a/b,\, 0/1) = a>1$, the forms of $X(a/b)$ and $X(0/1)$
stated above contradict \cite{CGLS} if $N\cong S^3$ and \cite[Corollary 1.4]{BZ2} otherwise.

\noindent
(3) {\em $X$ is Seifert fibred with incompressible boundary.}

If $X$ is not the twisted $I$-bundle over the Klein bottle let $\varphi$
be the slope on $\partial X$ of the Seifert fibre in the unique Seifert
fibring of $X$.
If $X$ is the twisted $I$-bundle over the Klein bottle let $\varphi$ be the
slope of the Seifert fibre in the Seifert structure on $X$ with orbifold
$D^2 (2,2)$.
In both cases, $\varphi$ is the only slope on $\partial X$ such that
$X(\varphi)$ is a non-trivial connected sum.
Therefore, if $N\not\cong S^3$, then $\varphi =0/1$.
But $X(a/b)$ is a lens space, and so $\Delta (a/b,\, 0/1)=1$,
contradicting our assumption that $a>1$.
Hence $N\cong S^3$, and so
$\Delta (a/b,\, \varphi) = \Delta (0/1,\, \varphi) =1$.
In particular $\varphi = 1/s$ for some integer $s$.
Therefore $X(1/s)$ is reducible.
But this contradicts Lemma \ref{lem6.12}.
\end{proof}

\section{The case $\Delta (\alpha,\beta) =6$ and the involution $\tau_\alpha$ reverses the orientations of the Seifert fibres of $M(\alpha)$}
\label{sec6.3}

In this section we suppose that assumptions \ref{assumptions 1} hold and show that it is impossible for $\Delta(\alpha, \beta)$ to be $6$ and for $\tau_\alpha$ to reverse the orientations of the Seifert fibres of $M(\alpha)$. We assume otherwise in order to obtain a contradiction. Here $M(\alpha)/\tau_\alpha = L(3,q) \cong L(3,1)$.
By Lemma \ref{types}, $L$ is as shown in Figure \ref{fig3}.
By Lemma \ref{--quotient 2} parts (2) and (3), $n$ is even, $m$ is odd,
$|L| =1$, and $L_\alpha = L\cup K_\alpha$ is as shown in Figure \ref{bgz4-fig24}.
Since $L$ is a component of $L_\alpha$,  we see that $L$ is a core of some Heegaard solid torus
of $L(3,1)$.
Hence the double branched cover of $(L(3,1),L)$ is homeomorphic to $L(3,1)$.

Let $Y,\T,X$ be as in the previous section,
with $L(5,2q)$ replaced by $L(3,1)$.
Again as in that proof, here we have $X(a/b) \cong L(3,1)$ and
$X(0/1) \cong L(3,1)\,\#\, N$ for some closed 3-manifold $N$.
In the current situation we only have the following  weaker version of Lemma \ref{lem6.12}.

\begin{lemma}\label{lem6.12-weaker}
$X(1/k)$ is irreducible for infinitely many $k\in\zed$.
\end{lemma}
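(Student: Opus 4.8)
The plan is to follow the proof of Lemma~\ref{lem6.12}, and then to add the extra argument forced on us by the fact that here the relevant distance is $3$, which --- unlike $5$ --- belongs to the list $\{1,2,3,4,6\}$ of Lemma~\ref{d12346}. First I would set up the same decomposition: writing $\T(1/k)$ as the $3$-braid $\sigma_1^k\sigma$ in $V$ capped off by $V_0$, one gets $X(1/k)=Q_k\cup_{\partial}\widetilde V_0$, where $Q_k$ is the double branched cover of $(V,\sigma_1^k\sigma)$ and $\widetilde V_0$ is a solid torus. Exactly as in the proof of Lemma~\ref{lem6.12}, $Q_k$ is a once-punctured torus bundle over $S^1$ whose fibre $T_0$ is the double branched cover of $(D^2$, $3$ points$)$, and its fibre slope $\beta$ projects to the meridian $\mu$ of $V$; hence the meridian $\tilde\mu_0$ of $\widetilde V_0$ satisfies $\Delta(\beta,\tilde\mu_0)=\Delta(\mu,\mu_0)=3$, since $V\cup V_0=L(3,1)$. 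Thus $X(1/k)=Q_k(\tilde\mu_0)$, and it suffices to prove that $Q_k(\tilde\mu_0)$ is irreducible for all but finitely many $k\in\zed$.

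The key point I would then record is that, by the proof of Lemma~\ref{d12346}, a once-punctured torus bundle that is either hyperbolic or toroidal and not Seifert fibred admits no reducible Dehn filling at distance $3$ from its fibre slope: in the first case such a filling would force that distance to equal $1$, and in the second it would force that distance to divide $4$, and $3$ is neither. Hence it suffices to show that $Q_k$ is hyperbolic, or toroidal and not Seifert fibred, for all but finitely many $k\in\zed$.

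For this I would use a trace computation on the monodromy. The monodromy of $Q_k$ is the lift to $T_0$ of the braid $\sigma_1^k\sigma$, so on $H_1(T_0;\zed)\cong\zed^2$ it acts, in a fixed basis, by the matrix $A_k=\left(\begin{smallmatrix}1&1\\0&1\end{smallmatrix}\right)^kA_0\in SL_2(\zed)$, where $A_0$ is the matrix of $\sigma$ and $\left(\begin{smallmatrix}1&1\\0&1\end{smallmatrix}\right)$ is the matrix of the Dehn twist $\sigma_1$. Writing $A_0=\left(\begin{smallmatrix}a&b\\c&d\end{smallmatrix}\right)$, we get $A_k=\left(\begin{smallmatrix}a+kc&b+kd\\c&d\end{smallmatrix}\right)$, of trace $a+d+kc$. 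If $c\ne0$, then $|a+d+kc|>2$ for all but at most five values of $k$, so for those $k$ the monodromy of $Q_k$ is Anosov and $Q_k$ is hyperbolic. If $c=0$, then $ad=1$ forces $a=d=\pm1$, so $A_k=\pm\left(\begin{smallmatrix}1&\ast\\0&1\end{smallmatrix}\right)$, which for every $k$ other than the one making the off-diagonal entry vanish is $\pm$ a nonzero power of a Dehn twist along an essential simple closed curve of $T_0$; for those $k$, $Q_k$ is toroidal and not Seifert fibred. In either case the dichotomy of the previous paragraph applies for all but finitely many $k$, and therefore $X(1/k)=Q_k(\tilde\mu_0)$ is irreducible for all but finitely many $k$, in particular for infinitely many $k$.

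The one step needing care is the bookkeeping behind the claim that $A_k$ equals $\left(\begin{smallmatrix}1&1\\0&1\end{smallmatrix}\right)^k$ times a single fixed matrix --- i.e. that replacing $k$ by $k+1$ post-composes the monodromy of $Q_k$ with one and the same Dehn twist, rather than altering it only up to conjugacy. Once that is made precise the conclusion is automatic, since the trace of $A_k$ is an affine function of $k$ and the entire topological content of the argument has already been packaged into Lemma~\ref{d12346}.
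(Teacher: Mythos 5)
Your proof is correct and is essentially the paper's argument: the same decomposition $X(1/k)=Q_k\cup\widetilde V_0$, the same trace computation for the lifted monodromy, and the same observation that $\Delta(\beta,\tilde\mu_0)=3$ rules out a reducible filling of $Q_k$. The only difference is in how the case $c=0$ is dispatched: the paper notes that since $L$ is connected (from Lemma~\ref{--quotient 2}(3)), $\sigma$ is not a power of $\sigma_1$ and hence $c\ne 0$, whereupon $Q_k$ is hyperbolic for all but at most five $k$ and \cite[Lemma 4.1]{BZ1} applies directly; you instead keep both cases alive and invoke the ``toroidal and not Seifert fibred'' branch of the proof of Lemma~\ref{d12346} when $c=0$, which works equally well and has the small advantage of not relying on $|L|=1$. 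Your closing worry about the monodromy bookkeeping is unfounded: $T_0$ is concretely the double branched cover of ($D^2$, 3~points), the lift $\rho\mapsto\tilde\rho$ is a homomorphism from $B_3$ to the mapping class group of $T_0$, and the monodromy of $Q_k$ is the actual composition $\tilde\sigma_1^k\tilde\sigma$ of specific homeomorphisms of this fixed surface, so the matrices multiply literally as claimed.
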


\begin{proof} As in the proof of Lemma \ref{lem6.12},
$\T(1/k)$ is (the 3-braid $\sigma_1^k\sigma$ in $V$) $\cup V_0$,
 and $X(1/k) = Q_k\cup \widetilde V_0$, where $Q_k$
 is the double branched
cover of $(V,\sigma_1^k \sigma)$ and $\widetilde V_0$ is a solid torus.
Now $Q_k$ is a $T_0$-bundle over $S^1$, where $T_0$ is  a once-punctured torus.
If $\rho \in B_3$, let $\tilde \rho$ denote the corresponding homeomorphism
$T_0\to T_0$.
Then $\tilde\sigma_1$ and $\tilde\sigma_2$ are Dehn twists about a pair
of curves in $T_0$ with intersection number~1.
With respect to this basis, $\tilde\rho$ defines an element of $SL_2(\zed)$.
Note that since $L$ is connected, $\sigma$ is not a power of $\sigma_1$.
The elements of $SL_2(\zed)$ corresponding to $\tilde\sigma_1^k$ and
$\tilde\sigma$ are therefore
$\left[\begin{smallmatrix} 1&k\\ 0&1\end{smallmatrix}\right]$
and
$\left[\begin{smallmatrix} a&b\\ c&d\end{smallmatrix}\right]$, say, where
$c\ne 0$.
Then the matrix corresponding to $\tilde\sigma_1^k\tilde\sigma$ has
trace $a+d+kc$, which has absolute value greater than 2 for all but at
most five values of $k$.
For such $k$ the manifold $Q_k$ is therefore hyperbolic.

Let $\beta$ be the boundary slope of the fibre  of $Q_k$; note that $\beta$
projects to the meridian $\mu$ of $V$.
Let $\mu_0,\tilde\mu_0$ be the meridians of $V_0$, $\widetilde V_0$,
respectively.
Since $\Delta (\mu,\mu_0)=3$, we have $\Delta (\beta,\tilde\mu_0)=3$.
If $Q_k$ is hyperbolic, then by \cite[Lemma 4.1]{BZ1}
$Q_k(\gamma)$ reducible implies  $\Delta (\beta,\gamma)=1$.
Therefore $X(1/k) = Q_k(\tilde\mu_0)$ is irreducible for infinitely many $k$.
\end{proof}

As in the previous section,
we have possibilities (1), (2) and (3) for $X$.
Cases~(1) and (2) are ruled out exactly as before (applying Lemma \ref{lem6.12-weaker}
instead of Lemma \ref{lem6.12}).
In case~(3) we may conclude that  both $X(a/b)$ and $X(0/1)$ are $L(3,1)$, $X(1/s)$
 is reducible for some integer $s$ and $\Delta (\beta,\tilde\mu_0)=3$.
 The proof of
Lemma \ref{d12346} shows that the monodromy of the once-punctured torus bundle
$Q_s$ has order~3.
Therefore $Q_s$ has base orbifold $D^2 (3,3)$, and so
$X(1/s)\cong Q_s (\tilde\mu_0) \cong L(3,q_1)\,\#\, L(3,q_2)$.
This implies that $X$ has base orbifold $D^2(3,3)$.
But then no two distinct fillings on $X$ can give the lens space $L(3,1)$, yielding a contradiction.

\section{The case $\Delta(\alpha, \beta) = 5$ and the involution $\tau_\alpha$ reverses the orientations of the Seifert fibres of $M(\alpha)$} \label{delta = 5}

In this section we suppose that assumptions \ref{assumptions 1} hold and show that it is impossible for $\Delta(\alpha, \beta)$ to be $5$ and for $\tau_\alpha$ to reverse the orientations of the Seifert fibres of $M(\alpha)$. We assume otherwise in order to obtain a contradiction.

As in section \ref{dist7}, we just need to show that the two knots,  $K,K'$,
shown in Figures \ref{bgz4-5ab}  and \ref{bgz4-5mn} respectively are
inequivalent in $S^3$.

\begin{figure}[!ht]
\centerline{\includegraphics{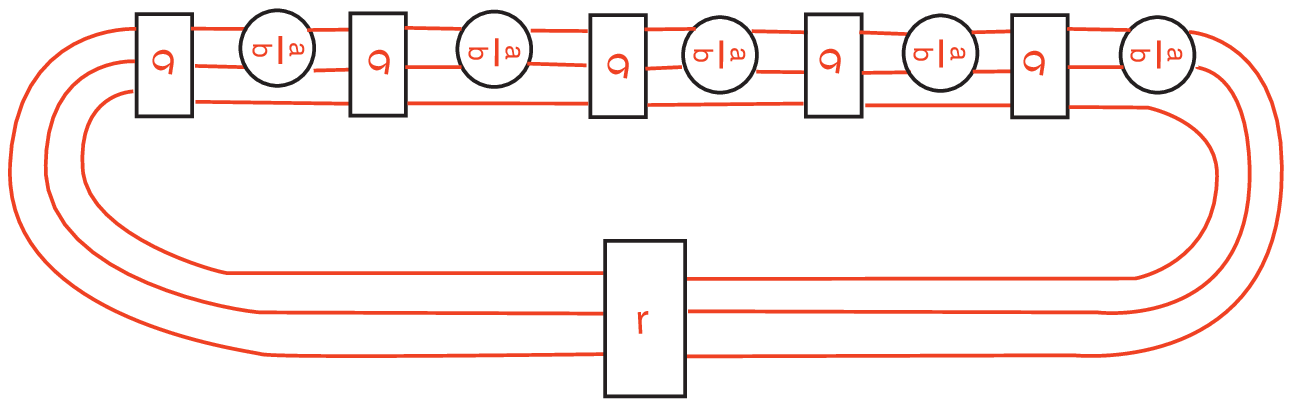}} \caption{ }\label{bgz4-5ab}
\end{figure}

\begin{figure}[!ht]
\centerline{\includegraphics{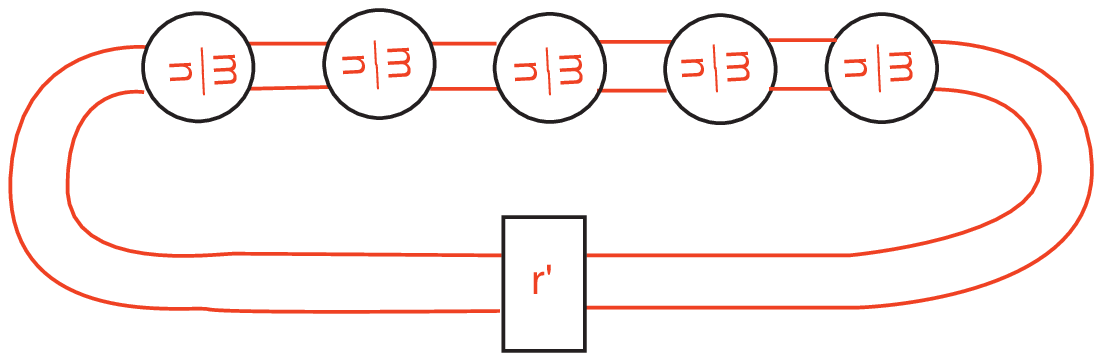}} \caption{ }\label{bgz4-5mn}
\end{figure}

\begin{thm}\label{thm8}
The knots $K$ and $K'$ are inequivalent.
\end{thm}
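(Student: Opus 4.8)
Looking at this theorem statement, I need to plan a proof that two knots $K$ and $K'$ (from Figures \ref{bgz4-5ab} and \ref{bgz4-5mn}) are inequivalent in $S^3$, in the case $\Delta(\alpha,\beta)=5$ with $\tau_\alpha$ reversing the fibre orientations.

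The key thing I notice is that this section (\S\ref{delta = 5}) is structurally parallel to \S\ref{dist7} (the $\Delta=7$ case), where the analogous Theorem \ref{thm1} was proven. So my plan would be to mimic that argument.

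The plan is to pass to double branched covers: let $W, W'$ denote the double covers of $S^3$ branched over $K, K'$ respectively, and show $W \not\cong W'$. By the same reasoning as in \S\ref{dist7} (via Lemma \ref{--quotient} and the $7$-fold, here $5$-fold, cyclic cover structure), $W'$ is a Seifert fibred manifold with base orbifold $S^2(m,m,m,m,m)$ for some odd $m \geq 3$ — since $p = 5$ now, there are $5$ copies of $m$ rather than $7$. I would then analyze $W$ by decomposing $K$ using a tangle decomposition analogous to the one in Figures \ref{bgz4-fig4}--\ref{bgz4-fig10}. The goal is to establish a structural trichotomy for $W$ paralleling Proposition \ref{prop7}: either $W$ contains a separating incompressible genus $2$ surface, or $W$ contains some small number of disjoint incompressible tori each cutting off a Seifert piece over $D^2(a,d)$ with $d>1$, or $W$ has Heegaard genus at most $3$. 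The tangle arguments (incompressibility of the genus $2$ boundary under the relevant rational fillings, using \cite{Sh}, \cite{Wu2}, and the $\mathbb{Z}/2$-equivariant disk theorem) should go through verbatim with $5$ in place of $7$, since $a \geq 2$ is all that was used.

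Then I would derive the contradiction exactly as in the proof of Theorem \ref{thm1}: in the incompressible-genus-$2$-surface case, the surface would be horizontal in $W'$, forcing a non-orientable surface and hence $H_1(W';\mathbb{Z}/2) \neq 0$, contradicting the fact that $W'$ is a double branched cover of a knot in $S^3$. In the incompressible-tori case, the tori would be vertical, but $W'$ has only $5$ exceptional fibres, which is too few to accommodate them (here I should double-check the arithmetic: with $7$ fibres it was "clearly impossible," and with $5$ fibres I expect the count to still fail, but this needs verification). In the Heegaard-genus case, I would invoke \cite{MSch}: irreducible Heegaard splittings of $W'$ are horizontal or vertical, a vertical one has genus $4$ (the number of exceptional fibres minus one, $=5-1=4$... again arithmetic to check), and a horizontal one has larger genus; comparing with genus $\leq 3$ gives the contradiction.

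The main obstacle I anticipate is that the numerics are tighter in the $\Delta=5$ case than in $\Delta=7$: with only $5$ exceptional fibres, the vertical Heegaard genus bound from \cite{MSch} might be close to $3$, and I need to confirm it is strictly larger than $3$ (it should be $4$, since $W'$ is Seifert over $S^2$ with $5$ singular fibres of coprime-to-each-other... actually orders all equal to $m$, so one must be careful about whether \cite{MSch}'s genus formula gives exactly the number of exceptional fibres minus one). A secondary worry is whether the tangle decomposition of $K$ in Figure \ref{bgz4-5ab} genuinely produces the same building-block tangles $X(a/b,a/b,a/b)$ and $Z(a/b)$ as in \S\ref{dist7} — I expect it does, with one fewer copy, but the explicit picture in Figure \ref{bgz4-5mn} must be matched against $S^2(m,m,m,m,m)$ rather than $S^2(m^{\times 7})$. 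If any of these counts turn out to be borderline, I would need a supplementary argument (e.g. a finer homological invariant of $W'$, or using that the exceptional fibres all have the same order $m$) to separate the two manifolds.
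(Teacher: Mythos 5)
Your proposal follows the paper's approach essentially exactly: pass to double branched covers $W,W'$, note $W'$ is Seifert over $S^2(m,m,m,m,m)$, derive structural alternatives for $W$ via a tangle decomposition, and get a contradiction using the horizontal-surface / Heegaard-genus arguments from the proof of Theorem~\ref{thm1}. The one place where the paper is simpler than you anticipate: the analogue of Proposition~\ref{prop7} here is Proposition~\ref{prop12}, which is a \emph{dichotomy}, not a trichotomy. The relevant tangle $\T$ (Figure~\ref{bgz4-fig11}) has only two torus boundary components $T_1,T_2$ rather than three, and Lemma~\ref{lem9} shows that if the genus-$2$ boundary $G$ compresses in $X$ then $\T$ is forced to be a product tangle, so $X(\sfrac{a}{b},\sfrac{a}{b})$ is outright a genus-$2$ handlebody (Corollary~\ref{cor10}). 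Thus the ``disjoint tori cutting off $D^2(a,d)$-pieces'' case that worried you never arises, and only Cases~(1) and~(3) of Proposition~\ref{prop7} need to be ruled out. Your Heegaard-genus arithmetic concern resolves the right way: by \cite{MSch} the vertical Heegaard genus of a Seifert manifold over $S^2$ with $5$ exceptional fibres is $4 > 3$, so the genus-$\leq 3$ case is still contradictory. (For what it is worth, the tori argument would also have worked with $5$ fibres — four pieces over $D^2(a,d)$ would require $\geq 8$ exceptional fibres — but the point is moot since that case does not occur here.)
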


As in Subsection \ref{dist7}, we will show that the double branched covers $W,
W'$ of $(S^3,K), (S^3,K')$ are not homeomorphic.

Here we consider the tangle $\T = (R,t)$ shown in Figure \ref{bgz4-fig11}, with
double branched cover $X$. Let the boundary components of $R$ be $S,S_1,S_2$
(see Figure \ref{bgz4-fig11}), and the corresponding boundary components of $X$
be $G,T_1,T_2$, so that $T_1$ and $T_2$ are tori and $G$ has genus two.

\begin{figure}[!ht]
\centerline{\includegraphics{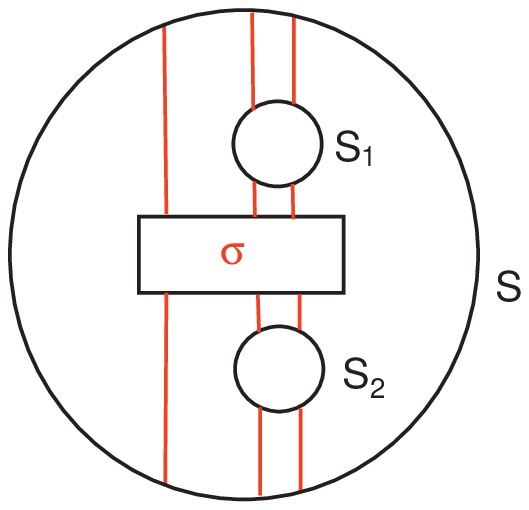}} \caption{ }\label{bgz4-fig11}
\end{figure}

\begin{lemma}\label{lem9}
If $G$ is compressible in $X$ then $\T$ is isotopic to the tangle shown in
Figure \ref{bgz4-fig12}.
\end{lemma}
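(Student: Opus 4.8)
The plan is to translate the hypothesis into a statement about $\T$ and then confront the resulting essential disk with the explicit picture of $\T$ in Figure \ref{bgz4-fig11}. By the $\zed/2$-equivariant Disk Theorem, $G$ is compressible in $X$ if and only if $\T=(R,t)$ contains an essential disk $D$ with $\partial D\subset S$, so I would fix such a $D$. Since $|t\cap S|=6$, the curve $\partial D$ separates $S$ into two disks meeting $t$ in $j$ and $6-j$ points. Pushing the $S$-side subdisk bounded by $\partial D$ slightly off $S$ and capping $D$ produces a $2$-sphere in the interior of $R$ meeting $t$ in $j$ points in case (i) and $j+1$ points in case (ii); this number must be even, so together with the requirement that $D$ be essential one concludes that $\partial D$ realises the partition $(2,4)$ when $D$ is disjoint from $t$, and the partition $(3,3)$ when $D$ meets $t$ once.

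Next I would record the analogue of Remark \ref{rem1} for the present tangle: since the knot $K$ of Figure \ref{bgz4-5ab} is connected, the underlying permutation of the $3$-braid $\s$ appearing in $\T$, and the manner in which the three strands of $t$ issuing from $S$ are joined to one another through the fixed part of $\T$ and through $S_1,S_2$, are constrained. With this in place I would put $D$ in minimal general position with respect to the decomposing disks and spheres visible in Figure \ref{bgz4-fig11} --- in particular the pair of disks, each meeting $t$ in three points, that cut the box containing $\s$ away from the rest of $\T$ --- and run the standard innermost-circle and outermost-arc arguments, using the incompressibility of the complement of $t$ in the individual pieces. Either these reductions make $D$ disjoint from the braid box, in which case $D$ lies in a fixed sub-tangle of $\T$ and its essentiality, combined with the constraint on the partition of $t\cap S$ and with the connectedness of $K$, forces $\T$ to be isotopic to the tangle of Figure \ref{bgz4-fig12}; or an outermost subdisk of $D$ exhibits $\s$ as reducible, and unthreading it leads to the same conclusion.

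The step I expect to be the main obstacle is this last one. After the innermost/outermost reductions only finitely many configurations remain --- a $(2,4)$- or $(3,3)$-curve $\partial D$ together with a choice of fixed sub-tangle of Figure \ref{bgz4-fig11} containing $D$ --- and for each I must check, using the precise plumbing in Figure \ref{bgz4-fig11} and the connectedness of $K$, that the mere existence of the essential disk $D$ pins $\T$ down up to isotopy rather than only constraining it; this is where the shape of Figure \ref{bgz4-fig12} is actually forced. Ensuring that the reductions terminate --- that no new intersection circles are created and that essentiality of $D$ is preserved at each step --- and that no admissible configuration is overlooked is the real work; the equivariant Disk Theorem, the parity count, and the innermost/outermost bookkeeping are otherwise routine.
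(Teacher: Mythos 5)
Your proposal correctly identifies the first step (the $\zed/2$-equivariant Disk Theorem converts compressibility of $G$ into the existence of an essential disk $D$ in $\T$ with $\partial D\subset S$), but from that point on it diverges from the paper's proof and, more importantly, the route you sketch has real gaps.

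The paper's whole argument hinges on a single observation you do not make: $t\cup S_1\cup S_2$ is connected. This immediately forces $D$ to meet $t$ --- if $D\cap t=\emptyset$ then $D$ separates $R$, the connected set $t\cup S_1\cup S_2$ lies entirely on one side, yet essentiality of $\partial D$ puts points of $t\cap S$ on both sides, a contradiction. So $D$ meets $t$ in exactly one point, and a further application of the same connectivity shows $D$ must meet the unique strand of $t$ joining $S_1$ to $S_2$. By contrast, your parity count keeps the ``$(2,4)$, $D$ disjoint from $t$'' case alive; besides being the wrong conclusion, the parity argument itself is not quite sound as stated, since the sphere you build by capping $D$ with a subdisk of $S$ need not bound a ball contained in $R$ (it may enclose $S_1$ or $S_2$), in which case the intersection count with $t$ has no reason to be even.

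Having located where $D$ hits $t$, the paper then cuts $\T$ along $D$ into two tangles $\T_1,\T_2$ and shows each is a product tangle by an explicit deletion argument (remove the strand of $t$ that joins $S_2$ to $S$ through the braid box; what remains is the visibly trivial tangle of Figure \ref{bgz4-fig13}). Your proposal replaces this with a plan to ``run the standard innermost-circle and outermost-arc arguments'' against the decomposing disks of the braid box and then enumerate ``finitely many configurations,'' but you acknowledge yourself that this is ``the real work'' and do not carry it out; nothing in the sketch explains why the existence of $D$ determines $\T$ up to isotopy rather than merely constraining it. So the conclusion --- that $\T$ is isotopic to the specific tangle of Figure \ref{bgz4-fig12} --- is not actually reached. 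In short: the missing connectivity observation and the unfinished identification of $\T_1,\T_2$ as product tangles are genuine gaps, not just differences in presentation.
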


\begin{figure}[!ht]
\centerline{\includegraphics{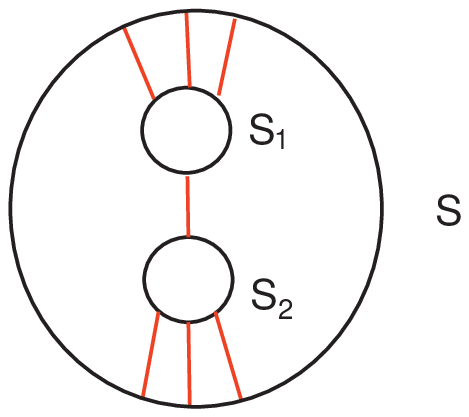}} \caption{ }\label{bgz4-fig12}
\end{figure}

\begin{proof}
Since $t\cup S_1 \cup S_2$ is connected, any essential disk $D$ in $\T$ with
$\partial D \subset S$ must meet $t$ in a single point. Hence $D$ meets the
unique strand of $t$ connecting $S_1$ and $S_2$, decomposing $\T$ into two
tangles $\T_1$ and $\T_2$. We claim that each of $\T_1$ and $\T_2$ is a product
tangle. To see this, note that deleting the strand of $t$ that joins $S_2$ to
$S$ and runs through the braid $\s$ gives the tangle shown in Figure
\ref{bgz4-fig13}. It follows that $\T_1$ is as stated. Similarly, $\T_2$ is also
a product tangle.
\end{proof}

\begin{figure}[!ht]
\centerline{\includegraphics{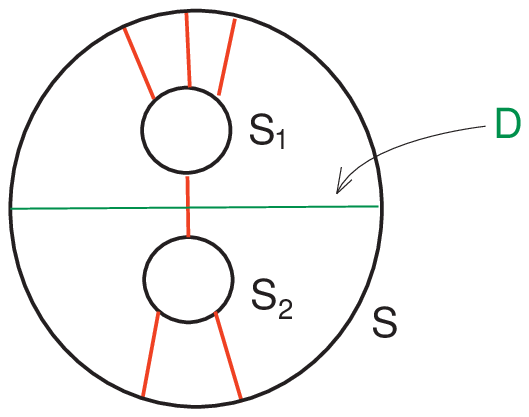}} \caption{ }\label{bgz4-fig13}
\end{figure}

\begin{cor}\label{cor10}
If $G$ is compressible in $X$ then $X(\sfrac{a}b,\sfrac{a}b)$ is a genus~2
handlebody.
\end{cor}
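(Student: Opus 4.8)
The plan is to combine Lemma \ref{lem9} with Lemma \ref{2=lens} applied to the rational tangle fillings. By Lemma \ref{lem9}, if $G$ is compressible in $X$ then $\T$ is isotopic to the tangle in Figure \ref{bgz4-fig12}, which is assembled from two product tangles $\T_1, \T_2$ glued along the disk $D$ that meets $t$ once. Filling both $S_1$ and $S_2$ with the rational tangle $\R(\sfrac ab)$ therefore produces, in the double branched cover, a manifold obtained by gluing the double branched covers of the two resulting rational tangles across the annulus $\widetilde D$ lying above $D$.

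First I would make the structure of $\T(\sfrac ab, \sfrac ab)$ explicit: capping off each of the two free ends of the product tangles by the rational tangle $\R(\sfrac ab)$ turns each $\T_i(\sfrac ab)$ into a rational tangle in a ball, so $X(\sfrac ab, \sfrac ab)$ is obtained by gluing two solid tori (the double branched covers of these rational tangles, by the standard fact that the double branched cover of a rational tangle is a solid torus) along the annulus $\widetilde D$. Here $\widetilde D$ is the double branched cover of $(D, D\cap t)$ where $|D \cap t| = 1$, so $\widetilde D$ is a disk; thus we are really gluing two solid tori along a meridian disk of each, and the union is a genus $2$ handlebody. Equivalently, one can phrase this via Lemma \ref{2=lens}: since $\T(\sfrac ab)$ on each side is a rational tangle, each ball-with-tangle has double branched cover a solid torus, and the connected-sum-type decomposition along the once-punctured disk $D$ gives the boundary connected sum of two solid tori, i.e.\ a genus $2$ handlebody.

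The only point requiring care is verifying that the disk $D$ of Lemma \ref{lem9} still meets $t(\sfrac ab, \sfrac ab)$ in a single point after filling — but it does, since the filling is performed near $S_1$ and $S_2$, away from $D$, and $D \cap t$ was already a single point of the strand joining $S_1$ to $S_2$, which is unaffected. Hence $\widetilde D$ is a properly embedded disk in $X(\sfrac ab, \sfrac ab)$ that splits it into the two solid tori $\widetilde{\T_1(\sfrac ab)}$ and $\widetilde{\T_2(\sfrac ab)}$, each of which is glued to $\widetilde D$ along a disk in its boundary. A solid torus cut along a boundary-parallel meridian disk is a ball, so each piece contributes a $1$-handle and $X(\sfrac ab, \sfrac ab)$ is a handlebody; counting, $\chi = 2(1) - 1 \cdot (\text{disk count})$ or simply noting $\partial X(\sfrac ab,\sfrac ab) = G$ has genus $2$, we conclude it is a genus $2$ handlebody. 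The main (modest) obstacle is just being careful that the gluing really is along a full disk rather than an annulus, which is exactly what $|D \cap t| = 1$ guarantees.
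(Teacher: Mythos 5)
Your argument is correct, and it supplies exactly the (unstated) proof the paper intends: Lemma \ref{lem9} identifies $\T$ as two product tangles $\T_1,\T_2$ glued along a disk $D$ meeting $t$ once; filling $S_1,S_2$ with $\R(\sfrac ab)$ pushes the rational tangle through each product, so $\T_i(\sfrac ab)$ is a rational tangle in a ball; hence each $\widetilde{\T_i(\sfrac ab)}$ is a solid torus, $\widetilde D$ is a disk (the double branched cover of a disk over one point), and $X(\sfrac ab,\sfrac ab)$ is the boundary connected sum of two solid tori, i.e.\ a genus $2$ handlebody.

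A few points of sloppiness you should clean up, none of which undermine the argument. First, Lemma \ref{2=lens} does not apply here and isn't needed: that lemma concerns double branched covers of lens spaces over cores or cables, not double branched covers of rational tangles in a ball. The relevant standard fact is simply that the double branched cover of a rational tangle is a solid torus; cite that directly rather than deflecting to Lemma \ref{2=lens}. Second, in your first paragraph you call $\widetilde D$ an annulus and in the second a disk; it is a disk, since $D$ meets $t$ once, and this is precisely the point that makes the gluing a boundary connected sum (you do note this at the end, correctly, but keep it consistent throughout). Third, the phrases ``gluing two solid tori along a meridian disk of each'' and ``a solid torus cut along a boundary-parallel meridian disk is a ball'' are incorrect terminology: $\widetilde D$ is a disk in the \emph{boundary} of each solid torus, not a properly embedded meridian disk, and ``boundary-parallel meridian disk'' is self-contradictory. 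What you mean, and what is true, is that $\widetilde D$ separates $X(\sfrac ab,\sfrac ab)$ into two solid tori, each meeting $\widetilde D$ in a disk in its boundary; this is exactly the boundary connected sum $V_1\natural V_2$, which is a genus $2$ handlebody, and indeed $\partial X(\sfrac ab,\sfrac ab)=G$ has genus $2$.
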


\begin{lemma}\label{lem11}
If $G$ is incompressible in $X$ then $G$ is incompressible in
$X(\sfrac{a}b,\sfrac{a}b)$.
\end{lemma}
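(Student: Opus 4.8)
The plan is to use the annulus-and-distance machinery exactly as in the proof of Lemma~\ref{lem4'} in \S\ref{dist7}, transplanted to the present tangle $\T = (R,t)$ of Figure~\ref{bgz4-fig11}. The key observation to establish first is that there is an essential annulus $A_1 \subset R$, disjoint from $t$, with one boundary component on $S$ and the other boundary component having slope $\sfrac01$ on $S_1$ (and, symmetrically, an essential annulus $A_2 \subset R(\sfrac{a}{b},*)$ with one boundary component on $S$ and the other of slope $\sfrac01$ on $S_2$). Both annuli are visible directly in Figure~\ref{bgz4-fig11}: the strand structure there (in particular the fact that the braid $\sigma$ and the cabling data are localized away from a collar of $S_1$, resp.\ $S_2$) leaves a product region between $S$ and each $S_i$ that lifts to the desired annulus. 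A component of the preimage of $A_1$ in $X$ is then an essential annulus with one boundary component on $G$ and the other of slope $\sfrac01$ on $T_1$.

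Next I would fill in the two boundary tori one at a time. Since the rational tangle $\R(\sfrac{a}{b})$ satisfies $\Delta(\sfrac{a}{b}, \sfrac01) = a > 1$ (using the hypothesis $a \geq 2$), the incompressibility of $G$ in $X$ together with the presence of the above annulus and the theorem of Scharlemann \cite{Sh} gives that $G$ remains incompressible in $X(\sfrac{a}{b}, *)$. Repeating the argument with $A_2$ — now an essential annulus in $R(\sfrac{a}{b}, *)$ running from $S$ to $S_2$ with slope $\sfrac01$ on $S_2$, again using $\Delta(\sfrac{a}{b}, \sfrac01) = a > 1$ — yields that $G$ is incompressible in $X(\sfrac{a}{b}, \sfrac{a}{b})$, which is the assertion of the lemma. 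This is the same two-step filling scheme used in Lemmas~\ref{lem4'} and \ref{lem5'}, so once the annuli are exhibited the argument is essentially formal.

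The main obstacle I anticipate is purely the verification that the annuli $A_1$ and $A_2$ exist with the stated boundary slopes — i.e., reading off from Figure~\ref{bgz4-fig11} that the relevant strand of $t$ does not obstruct a product region between $S$ and $S_i$, and confirming the slope $\sfrac01$ on each $S_i$ is the correct one to make the distance computation come out to $a$. This requires care about the marking of each $S_i$ (the identification of $(S_i, S_i \cap t)$ with $(S^2, \{NE, NW, SW, SE\})$) so that "slope $\sfrac01$" genuinely corresponds to the boundary of the disjoint-from-$t$ spanning annulus. Once the markings are fixed consistently with Figures~\ref{bgz4-fig11} and the subsequent rational-tangle fillings, the inequality $\Delta(\sfrac{a}{b}, \sfrac01) = a > 1$ is immediate and \cite{Sh} applies verbatim. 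I expect no difficulty in the homological or $\pi_1$ bookkeeping, since everything reduces to the already-cited incompressibility-under-filling criterion.
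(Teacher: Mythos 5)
Your proposal is correct and follows essentially the same route as the paper: the paper's own proof of Lemma~\ref{lem11} simply says it is ``exactly like the proof of Lemma~\ref{lem5'}'' using two annuli $A_1$ and $A_2$ (exhibited in Figure~\ref{bgz4-fig14}) running from $S$ to $S_1$ and $S_2$ with slope $\sfrac01$, and then invoking $\Delta(\sfrac{a}{b},\sfrac01)=a>1$ and \cite{Sh} in two stages. The only cosmetic difference is that you locate the annuli by inspecting Figure~\ref{bgz4-fig11} rather than the dedicated Figure~\ref{bgz4-fig14}, and you flag the marking/slope bookkeeping as the point needing care, which is exactly what the paper's figure is there to settle.
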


\begin{proof}
This is exactly like the proof of Lemma \ref{lem5'} in section \ref{dist7},
using the annuli $A_1$ and $A_2$ shown in Figure \ref{bgz4-fig14}.
\end{proof}

\begin{figure}[!ht]
\centerline{\includegraphics{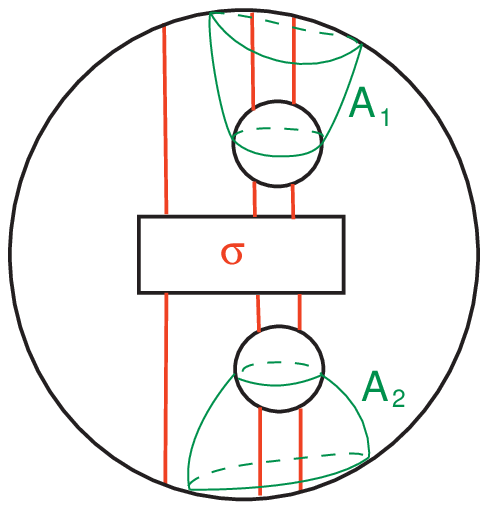}} \caption{ }\label{bgz4-fig14}
\end{figure}

\begin{prop}\label{prop12}
$W$ either \newline $(1)$ contains a separating incompressible surface of genus
$2$; or \newline $(2)$ has Heegaard genus at most $3$.
\end{prop}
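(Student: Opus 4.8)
The plan is to mirror the structure already used for the distance-$7$ case in \S\ref{dist7}. Recall that $W'$, the double branched cover of $(S^3,K')$, is Seifert fibred over $S^2(m,m,m,m,m)$ (five cone points, since $p=5$ and the $p$-fold cyclic cover of the base orbifold $S^2(5,5,m)$ unwrapped at the two order-$5$ points is $S^2(m,m,m,m,m)$). So Proposition~\ref{prop12} is exactly the analogue of Proposition~\ref{prop7}: we will decompose $W$ along the genus-$2$ surface $G$ coming from the tangle decomposition of Figure~\ref{bgz4-fig11}, and show that each possible behaviour of $G$ forces $W$ to have one of the two listed structures.

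First I would fill $X$ along the rational tangle slopes to reconstruct $W$. As in the previous section, Figure~\ref{bgz4-fig11} exhibits $W$ as $P\cup_G Z'\cup_{G'}P'$ (or a similar small gluing), where $P,P'$ are copies of $X(\sfrac{a}b,\sfrac{a}b)$ and the middle piece $Z'$ is a product-with-surgery region $G\times I$ minus a neighbourhood of a curve in $G\times\{\sfrac12\}$; the latter follows because filling the relevant boundary component of the tangle $(R,t)$ with $\R(\sfrac10)$ yields a product tangle, exactly as in the remark after Lemma~\ref{lem6}. Then I split into the two cases according to whether $G$ is compressible in $X$. If $G$ is incompressible in $X$, Lemma~\ref{lem11} says it stays incompressible in $X(\sfrac{a}b,\sfrac{a}b)=P$, and combined with the incompressibility of the boundary of the middle piece (the analogue of Lemma~\ref{lem6}, proved by the same annulus argument via \cite{Sh} since $\Delta(\sfrac{a}b,\sfrac01)=a>1$), $G$ remains incompressible in $W$ and separates it; this gives conclusion~(1). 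If $G$ is compressible in $X$, then Corollary~\ref{cor10} says $X(\sfrac{a}b,\sfrac{a}b)=P=P'$ is a genus-$2$ handlebody, and then $W$ is obtained from a genus-$2$ Heegaard-split closed manifold by surgery on a curve in the Heegaard surface; since such a curve has tunnel number at most $2$, $W$ has Heegaard genus at most $3$, giving conclusion~(2).

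The only real work is verifying that the tangle analysis for the $\Delta=5$ picture genuinely parallels the $\Delta=7$ one: that $G$ is indeed the genus-$2$ boundary component, that the two torus boundary slopes $\sfrac{a}b$ and $\sfrac01$ have distance $a>1$ so Wu's theorem and the Scharlemann annulus trick apply, and that the $\sfrac01$-filling really does cut $W$ into the advertised pieces. These are exactly Lemmas~\ref{lem9}--\ref{lem11} and Corollary~\ref{cor10}, all of which are already in place; so the proof of Proposition~\ref{prop12} is essentially a bookkeeping assembly of those lemmas.

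The main obstacle — deferred to the next step in the paper rather than to Proposition~\ref{prop12} itself — will be deriving the contradiction: showing $W\not\cong W'$. That is where one must use that $W'$ is Seifert over $S^2(m^5)$: a separating incompressible genus-$2$ surface in such a manifold would be horizontal and force a non-orientable subsurface, contradicting $H_1(W';\zed/2)=0$; and a genus-$\le 3$ Heegaard splitting is incompatible with the Moriah--Schultens classification of Heegaard splittings of Seifert manifolds over $S^2$ with five cone points of order $m\ge 3$ (vertical splittings of such a manifold have genus $4$, horizontal ones genus $>4$, so genus $3$ is impossible). I expect the Heegaard-genus case to be the delicate one, since one must check the exact genus bounds coming from \cite{MSch} for a base orbifold with five cone points rather than seven.
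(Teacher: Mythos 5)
Your proof follows the paper's argument exactly: decompose $W\cong U\cup_G Z(\sfrac{a}b)\cup_{G'}U'$ with $U,U'$ copies of $X(\sfrac{a}b,\sfrac{a}b)$, obtain conclusion~(1) from Lemmas~\ref{lem11} and~\ref{lem6} when $G$ is incompressible in $X$, and obtain conclusion~(2) from Corollary~\ref{cor10} plus the tunnel-number argument of Proposition~\ref{prop7}(3) when $G$ is compressible. The only slips are cosmetic (the decomposition is shown in Figure~\ref{bgz4-fig15} rather than Figure~\ref{bgz4-fig11}, and the product tangle is $(Q,s)$ of Figure~\ref{bgz4-fig9}, not $(R,t)$), and the material you deferred about $W'$'s Seifert structure belongs to the proof of Theorem~\ref{thm8}, not to this proposition.
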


\begin{proof}
From Figure \ref{bgz4-fig15} we see that $W\cong U \cup_G Z(\sfrac{a}b)
\cup_{G'} U'$, where $U$ and $U'$ are copies of $X(\sfrac{a}b,\sfrac{a}b)$.

\begin{figure}[!ht]
\centerline{\includegraphics{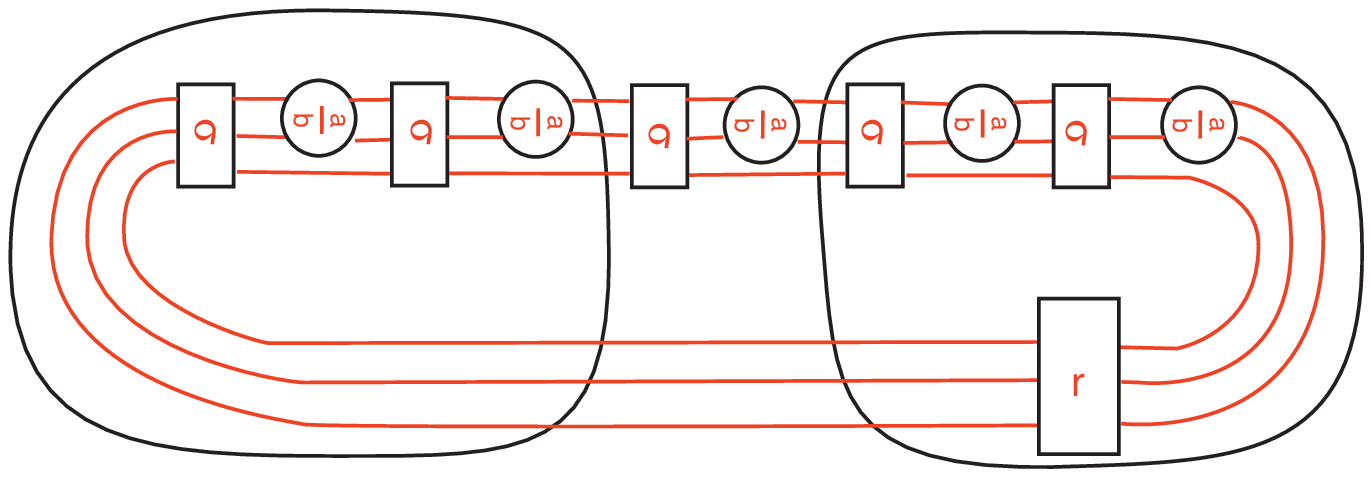}} \caption{ }\label{bgz4-fig15}
\end{figure}

If $G$ is incompressible in $X$ then we get conclusion (1) by Lemmas \ref{lem11}
and \ref{lem6}.

If $G$ is compressible in $X$ then we get conclusion  (2) by
Corollary~\ref{cor10} and the proof of part (3) of Proposition \ref{prop7}.
\end{proof}

\begin{proof}[Proof of Theorem~\ref{thm8}]
Assume $W\cong W'$. Since $W'$ is a Seifert fibre space over $S^2$ with 5
exceptional fibres, we get a contradiction to Proposition~\ref{prop12} as in the
proof of Theorem~\ref{thm1} in Cases~(1) and (3) of Proposition~\ref{prop7}.
\end{proof}

\section{A family of examples realizing $\Delta(\alpha, \beta) = 4$} \label{d=4}

We show in this section that distance $4$ between a prism manifold filling slope and a once-punctured torus slope
can be realized on infinitely many hyperbolic knot manifolds.

\begin{figure}[!ht]
\centerline{\includegraphics{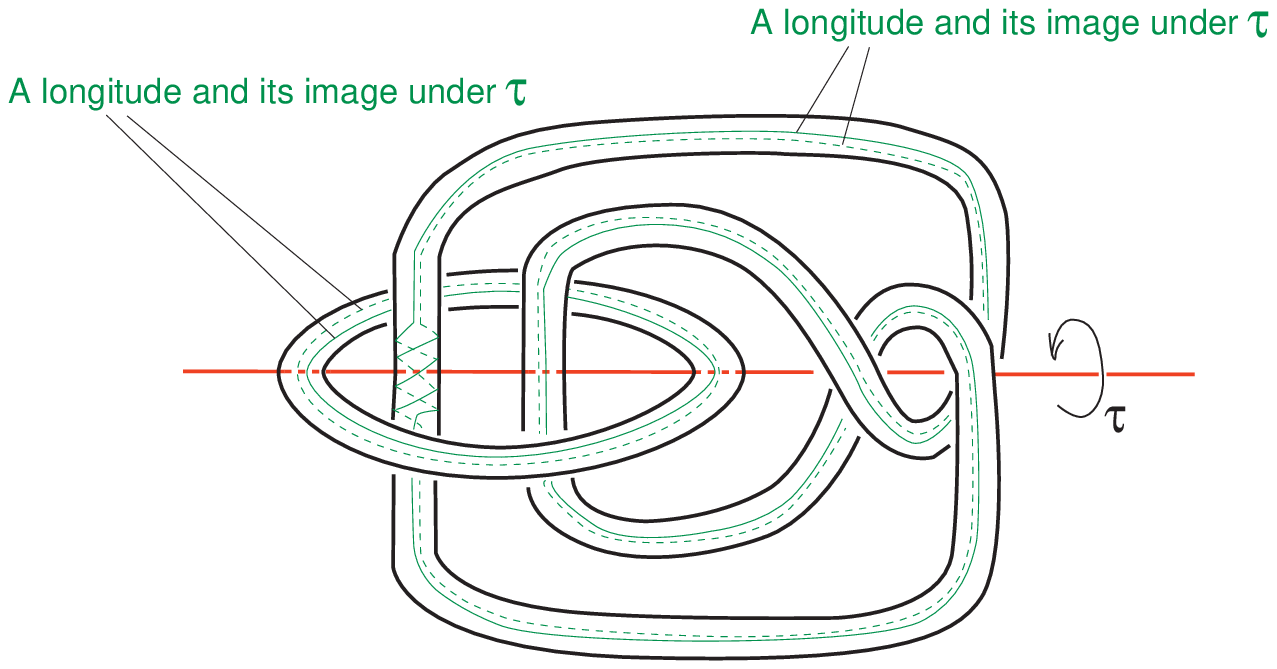}}
\caption{}\label{bgz4-whitehead}
\end{figure}

Let $Wh$ be the exterior of the Whitehead link with standard meridian-longitude
coordinates on $\partial Wh$. We use $Wh(\gamma)$ to denote the manifold of
Dehn filling one boundary component of $Wh$ with slope $\gamma$, and $Wh(\gamma,\delta)$ the
manifold of Dehn filling one boundary component with slope $\gamma$ and the other
with slope $\delta$.

\begin{thm}
For each integer $n$ with $|n|>1$, $ Wh(\frac{-2n\pm1}{n})$ is a hyperbolic
knot manifold whose $0$-slope is the boundary slope of an essential
once-punctured torus and whose $-4$-slope yields a prism manifold whose base orbifold is $S^2(2,2, |\mp2n-1|)$.
\end{thm}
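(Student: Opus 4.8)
The plan is to verify the three assertions about $Wh(\frac{-2n\pm 1}{n})$ directly, using the well-understood Dehn surgery description of the Whitehead link exterior. First I would recall that $Wh$ is a hyperbolic two-cusped manifold and that, with the standard meridian--longitude coordinates, the exceptional slopes on each cusp are $\infty, 0, -1, -2, -3$; for all other slopes $\gamma$ the manifold $Wh(\gamma)$ remains a hyperbolic one-cusped manifold. Since $|n| > 1$ forces $\frac{-2n\pm1}{n} = -2 \pm \frac{1}{n}$ to be a non-integral slope (hence not among $\{\infty,0,-1,-2,-3\}$), it follows immediately that $Wh(\frac{-2n\pm1}{n})$ is a hyperbolic knot manifold. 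This disposes of the first claim.

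Next I would identify the once-punctured torus. The key point is that $Wh$ fibers appropriately: one of the two components of the Whitehead link bounds an obvious genus-one Seifert surface in the complement of the other (the Whitehead link has a component that is an unknot, and the other component, viewed in the solid torus complement of the first, bounds a once-punctured torus because it is homologically trivial there with a natural spanning surface of genus one). Filling the first component with slope $\frac{-2n\pm1}{n}$ leaves this once-punctured torus $F$ properly embedded in $Wh(\frac{-2n\pm1}{n})$ with boundary slope $0$ (the longitude), since the surface meets the second cusp in the longitude. I would then check that $F$ remains essential: incompressibility and boundary-incompressibility can be verified either by a direct normal-surface / innermost-disk argument in the Whitehead link complement, or — more cheaply — by observing that $Wh(\frac{-2n\pm1}{n})(0) = Wh(\frac{-2n\pm1}{n}, 0)$ is toroidal (indeed $Wh(\gamma,0)$ contains an essential torus for all $\gamma$, as $Wh(\ast,0)$ is itself toroidal, being $Q(2,2)\cup(\text{something})$ up to the relevant filling), so $\widehat F$ caps off to an essential torus and hence $F$ is essential with boundary slope $\beta = 0$. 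I would cite the tables of \cite{MP} for the toroidality of the $0$-filling.

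For the third claim I would compute $Wh(\frac{-2n\pm1}{n}, -4)$ and show it is the prism manifold with base orbifold $S^2(2,2,|\mp 2n - 1|)$. Here $\Delta(\frac{-2n\pm1}{n}\text{-slope on the first cusp})$ is irrelevant; what matters is the slope $-4$ on the second cusp, which has $\Delta(-4, 0) = 4$, giving the claimed distance. The cleanest route is to recognize $Wh$ as a Dehn filling $N(\cdot)$ of the three-chain link exterior $N$ of \cite{MP} (as is done in \S\ref{sec: reduction2}: $M_1 \cong N(1)$ is the Whitehead link exterior), re-express $Wh(\frac{-2n\pm1}{n}, -4)$ as a doubly-filled $N$, and then read off from \cite[Tables 2--4]{MP} — together with the automorphisms of $N$ listed in \cite[Proposition 1.5]{MP} — that the resulting closed manifold is Seifert fibred with base orbifold $S^2(2,2,|\mp 2n-1|)$. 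Alternatively, and perhaps more transparently, one can pass to the double branched cover picture: $Wh(\frac{-2n\pm1}{n}, -4)$ is the double branched cover of a Montesinos link whose rational tangle parameters are read off from the $-4$ and $\frac{-2n\pm1}{n}$ fillings, and the base orbifold $S^2(2,2,|\mp2n-1|)$ emerges from the tangle data, with the two order-$2$ cone points coming from the two rational tangles of slope $\pm\frac12$ and the order-$|\mp2n-1|$ cone point coming from the $\frac{-2n\pm1}{n}$-tangle. Since $|\mp 2n-1| \geq 3$ for $|n|>1$, this is a genuine prism manifold.

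\emph{Main obstacle.} The delicate step is the third one: carrying out the Seifert-fibred identification of $Wh(\frac{-2n\pm1}{n},-4)$ uniformly in $n$ and pinning down the base orbifold exactly as $S^2(2,2,|\mp2n-1|)$ (rather than, say, $S^2(2,2,k)$ for some other $k$, or a lens space in degenerate cases). I expect this to require either a careful bookkeeping of continued-fraction / tangle data in the double branched cover, or a patient translation through the $N(\cdot)$-filling coordinates and the homeomorphisms of \cite{MP}; the sign ambiguity $\pm$ and the need to track $|n|>1$ versus small $|n|$ are where errors are most likely to creep in. The hyperbolicity and the essential-torus facts, by contrast, are immediate from the known exceptional-slope data for $Wh$ and the $0$-filling tables.
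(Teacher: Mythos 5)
Your route (b) is exactly the paper's approach. The proof in \S\ref{d=4} takes an explicit orientation-preserving involution $\tau$ of $Wh$ (pictured in Figure \ref{bgz4-whitehead}), passes to the quotient tangle in $S^3$ (Figure \ref{bgz4-whitehead2}), and then fills with the $-4$-tangle and the $\frac{-2n\pm1}{n}$-tangle to exhibit the branch set of $Wh(\frac{-2n\pm1}{n},-4)/\tau$ as a Montesinos link of type $(2,2,\frac{\mp 2n-1}{2})$ (Figures \ref{bgz4-whitehead3} and \ref{bgz4-whitehead4}); the double branched cover is then the prism manifold with base orbifold $S^2(2,2,|\mp 2n-1|)$. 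You have correctly identified this tangle bookkeeping as the entire substance of the third claim, but you have not actually carried it out, and that computation is the proof --- without it the statement that the base orbifold is $S^2(2,2,|\mp 2n-1|)$ rather than some other $S^2(2,2,k)$, or that the cone orders are $(2,2)$ rather than something else, is just an assertion. The paper settles it by drawing the quotient tangle explicitly; you would need to do the same (or an equivalent continued-fraction calculation) to complete the argument. Two small remarks: your alternative route via $N$-filling coordinates and the \cite{MP} tables is not what the paper does and would require separate justification since those tables give the Seifert data only for finitely many fillings per slot, not the one-parameter family $\gamma = \frac{-2n\pm 1}{n}$; and your list of exceptional slopes of $Wh$ omits $-4$ (the paper uses $\{-1,-2,-3,-4,0,\infty\}$) --- harmless here since you only need that non-integral fillings are hyperbolic, but worth fixing. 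The hyperbolicity and once-punctured-torus parts are handled at the same level of detail as the paper, which simply asserts both as well known/obvious from the diagram.
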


\begin{proof} It is well known that $Wh(\gamma)$ is hyperbolic for each  $\gamma\notin\{ -1,
-2, -3, -4, 0, 1/0\}$. That $Wh(\gamma)$, $\gamma\ne 1/0$, contains an
essential once-punctured torus with boundary slope $0$ is obvious from the
Whitehead link diagram.

The Whitehead link admits an  involution $\tau$ as shown in Figure
\ref{bgz4-whitehead}. This involution restricts to an involution, still denoted
$\tau$, on $Wh$ and then extends to an involution $\tau_\gamma$ on $Wh(\gamma)$
and to an involution $\tau_{\gamma,\delta}$ on $Wh(\gamma,\delta)$ for all slopes $\gamma$
and $\delta$. The quotient space under $\tau$ is shown in Figure
\ref{bgz4-whitehead2}. Note that the branch set of $Wh(\gamma)/\tau_\gamma$
is obtained by removing the two $1/0$-tangles in Figure
\ref{bgz4-whitehead2} and then filling one $\gamma$-tangle. Figure
\ref{bgz4-whitehead3} shows the branch set in $Wh(-4)/\tau_{-4}$ and Figure
\ref{bgz4-whitehead4} shows the branch set in
$Wh(\frac{-2n\pm1}{n},-4)/\tau_{\frac{-2n\pm1}{n},-4}$. As the branch set in
$Wh(\frac{-2n\pm1}{n},-4)/\tau_{\frac{-2n\pm1}{n},-4}=S^3$ is a Montesinos link of type
$ (2,2, \frac{\mp 2n-1}{2})$, the double branched cover
$Wh(\frac{-2n\pm1}{n},-4)$ is a prism manifold whose base orbifold is $S^2(2,2, |\mp2n-1|)$.
\end{proof}

\begin{figure}[!ht]
\centerline{\includegraphics{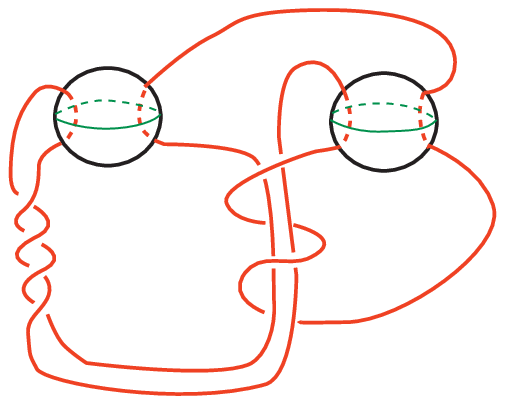}}
\caption{}\label{bgz4-whitehead2}
\end{figure}

\begin{figure}[!ht]
\centerline{\includegraphics{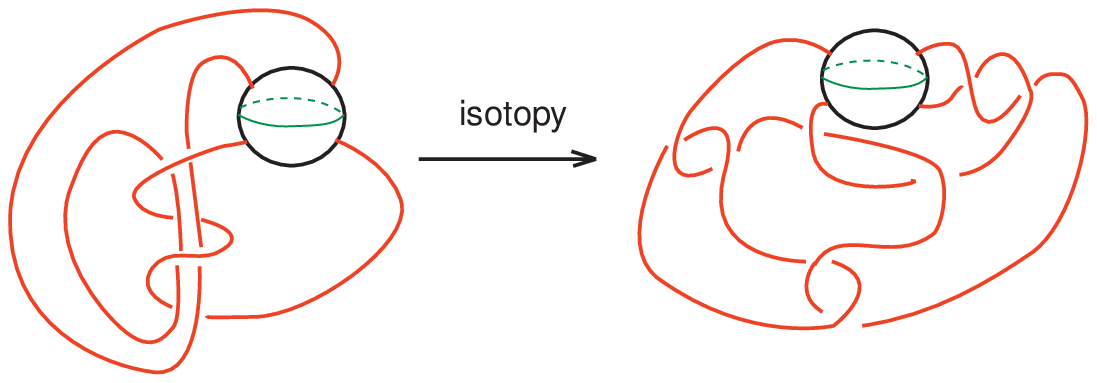}}
\caption{}\label{bgz4-whitehead3}
\end{figure}

\begin{figure}[!ht]
\centerline{\includegraphics{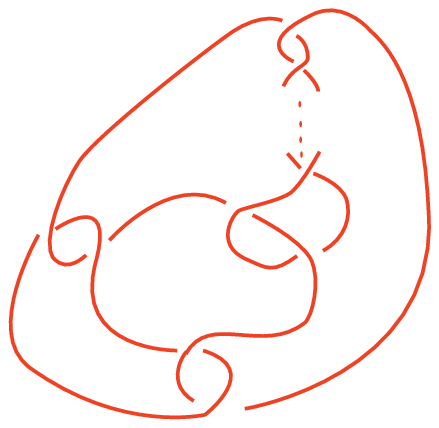}}
\caption{}\label{bgz4-whitehead4}
\end{figure}

\section{The case when $\D(\alpha,\beta)=4$ and $M(\a)$ is a prism manifold}\label{prism-section}

In this section we show
\begin{thm}\label{unique in prism case}Let $M$ be a hyperbolic knot exterior containing an essential
 once-punctured torus
with slope $\b$. If  $M(\a)$ is a prism manifold with $\D(\alpha,\beta)=4$, then
$M$ is one of the examples given in \S \ref{d=4},
that is, $\displaystyle (M; \alpha, \beta)\cong (Wh(\frac{-2n\pm1}{n}); -4, 0)$
for some integer $n$ with  $|n|>1$.
\end{thm}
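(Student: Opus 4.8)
The plan is to confront the two explicit descriptions of the branch set $L_\alpha$ supplied by the earlier sections and show they are mutually consistent only for the one-parameter family realised in \S \ref{d=4}. Under assumptions \ref{assumptions 1} with $\Delta(\alpha,\beta)=4$, Lemmas \ref{once-punctured cyclic case}, \ref{+-quotient 2} and \ref{--quotient 2} force $M(\alpha)$ to be a non-lens small Seifert manifold with base orbifold $S^2(2,2,m)$, $m\geq 3$ odd, on which $\tau_\alpha$ reverses the orientations of the Seifert fibres, with $M(\alpha)/\tau_\alpha\cong V(\bar\alpha)\cong L(2,q)\cong\mathbb{RP}^3$, $\bar\alpha=2\bar\mu+q\bar\beta$, $|L|=1$, $n$ even, and $L_\alpha=L\cup K_\alpha$ the two-component link $K(m/n)$ of Figure \ref{bgz4-fig24} lying in a genus one Heegaard solid torus of $\mathbb{RP}^3$, where $K_\alpha$ is a core of a Heegaard solid torus. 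On the other hand Lemma \ref{types} presents $L\subset V$ as the link of Figure \ref{fig3}, determined by coprime integers $a\geq 2$, $b$ and a $3$-braid $\sigma$.

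To compare these I would, as in \S \ref{dist7} and \S \ref{delta = 5}, pass to the $2$-fold cyclic cover $\widetilde M_2\to M$ of Lemma \ref{cyclic-cover} obtained by unwrapping the two order-$2$ cone points of $S^2(2,2,m)$. Then $\widetilde M_2(\widetilde\alpha)$ is Seifert over $S^2(m,m)$, hence a lens space $L(2m,\ast)$, while $\widetilde M_2(\widetilde\alpha)/\widetilde\tau_\alpha\cong S^3$ is the double cover of $\mathbb{RP}^3$ and carries the preimage $\widehat{L_\alpha}=\widehat L\cup\widehat K_\alpha$ of $L_\alpha$ as its branch set. Lemma \ref{types} exhibits $\widehat{L_\alpha}$ as a diagram assembled from $\sigma$ and the $a/b$-twists, its exterior hyperbolic because $V\setminus L$ is, whereas Lemma \ref{--quotient 2}(2) exhibits it as a rational-tangle closure governed by $(m,n)$. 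Using that the double branched cover of $(S^3,\widehat{L_\alpha})$ is the lens space $L(2m,\ast)$, together with the classification of links with lens-space double branched cover and, in the spirit of \S \ref{m=1 Seifert}, an analysis of the intersection graphs $\Gamma_P,\Gamma_Q$ of the thrice-punctured meridian disk of $V$ with a compressing disk of the Heegaard solid torus complementary to $K_\alpha$, I would cut the admissible data $(a,b,\sigma)$ down to a single integer's worth of possibilities, each occurring inside Figure \ref{fig3}.

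It then remains to promote this to a statement about $(M;\alpha,\beta)$ and to identify the family. Since the complement of a Heegaard core of $\mathbb{RP}^3$ is a solid torus, the pair $(V,L)$ together with its meridian $\bar\beta$ and filling slope $\bar\alpha$ is recovered from $(\mathbb{RP}^3;L_\alpha,K_\alpha)$, and passing to the double branched cover recovers $(M;\alpha,\beta)$; so $(M;\alpha,\beta)$ lies in a one-parameter family. By \S \ref{d=4} the triple $(Wh(\frac{-2n\pm1}{n});-4,0)$ lies in that family for every $|n|>1$, with $M(\alpha)$ of prism parameter $m=|\mp 2n-1|$; since the reconstruction is pinned down once $m$ and the sign are fixed, every odd $m\geq 3$ already occurs in the Whitehead family, and as $Wh(\frac{-2n\pm1}{n})(0)$ contains a non-separating torus the slope $\beta$ is recognised, whence $(M;\alpha,\beta)\cong(Wh(\frac{-2n\pm1}{n});-4,0)$.

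The main obstacle is the combinatorial heart of the second paragraph. Unlike \S \ref{dist7}, \S \ref{sec6.3}, \S \ref{lens space delta = 5} and \S \ref{delta = 5}, where the configuration is driven to a contradiction, here one must genuinely enumerate which intersection-graph (equivalently, tangle) configurations survive and verify that exactly the Whitehead ones are compatible with both Figure \ref{fig3} and Figure \ref{bgz4-fig24}; simultaneously controlling the $S$-cycles, the separating essential annulus in $V\setminus L$, and the lens-space constraint on $\widehat{L_\alpha}$ is where the difficulty lies. A secondary complication, absent in the odd-distance cases, is that $\bar p=2$ is even, so $L$ itself is not mod-$2$ null-homologous in $\mathbb{RP}^3$ and the double branched cover of $(\mathbb{RP}^3,L)$ does not exist; the whole argument must accordingly be carried out with $L_\alpha=L\cup K_\alpha$ in $\mathbb{RP}^3$, or with $\widehat{L_\alpha}\subset S^3$.
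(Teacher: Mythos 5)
Your proposal outlines a plausible strategy but does not actually prove the result, and the paper's own proof takes a fundamentally different route. You correctly assemble the constraints from the earlier sections: under assumptions \ref{assumptions 1} with $\Delta(\alpha,\beta)=4$, Lemma \ref{delta = 4 case} forces $M(\alpha)$ to be a prism manifold, $\tau_\alpha$ reverses fibre orientation, $\bar p=2$, $M(\alpha)/\tau_\alpha\cong\mathbb{RP}^3$, and $L_\alpha=L\cup K_\alpha$ has the form of Figure \ref{bgz4-fig24} while $L$ has the form of Figure \ref{fig3}. You also correctly flag the subtlety that $L$ is not $\mathbb{Z}/2$-null-homologous in $\mathbb{RP}^3$, so one must work with $L_\alpha$ rather than $L$. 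However, the substance of the argument --- ``cut the admissible data $(a,b,\sigma)$ down to a single integer's worth of possibilities'' --- is exactly what is missing, and you say so yourself. The techniques of \S \ref{dist7}, \S \ref{sec6.3} and \S \ref{delta = 5} terminate in a contradiction (showing two double branched covers cannot be homeomorphic), and the one place the paper does positively reconstruct the triple, \S \ref{m=1 Seifert}, yields a single triple rather than an infinite family; you give no concrete mechanism for carrying the one free parameter through the graph-intersection analysis. So the ``combinatorial heart'' is not a step you have only sketched, it is a step you have not shown can be done.

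The paper in fact abandons the involution/branch-set framework for this case entirely. Its proof of the theorem takes the once-punctured torus $F$ of slope $\beta$ and a punctured Klein bottle $P\subset M$ of slope $\alpha$ (a Klein bottle exists in $M(\alpha)$ because it is a prism manifold), studies their intersection graphs $\Gamma_F,\Gamma_P$ following Teragaito, shows $|\partial P|=1$, reduces to one of two configurations, rules out one by exhibiting a Klein bottle inside a solid torus, and in the surviving configuration builds a codimension-zero submanifold $W\subset M$ with torus boundary components $\partial M$ and $T_0$. It then shows $W$ is hyperbolic, $\alpha$ is a toroidal filling slope of $W$, $\beta$ is an annular filling slope with essential annulus meeting $\partial M$ once, and invokes Gordon--Wu's classification of toroidal/annular Dehn fillings \cite{GW2} to conclude $W\cong Wh$. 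The free integer parameter $n$ then appears as the Dehn filling slope on $T_0$, pinned down by the requirement that $\widehat X^+=\widehat Q\cup_{T_0}N$ be a solid torus. This is structurally cleaner than what you propose: it turns the one-parameter family into a Dehn-filling parameter on a fixed manifold, rather than trying to parametrise a family of tangle pictures, and it sidesteps the even $\bar p$ issue altogether. You should study \S \ref{prism-section} and, if you still want to push your route, be explicit about how the $S$-cycle and separating-annulus analysis would control $(a,b,\sigma)$ modulo a $\mathbb{Z}$-action; as written the proposal leaves the decisive step as an unproved hope.
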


Let $F$ be an essential once-punctured torus in $M$ with slope $\b$.
Choose a Klein bottle $\hat P$ in $M(\a)$ which has the minimal number of intersection
components with $\partial M$ and let $P=M\cap \hat P$. Then  $p=|\partial P|>0$ since $M$ is hyperbolic.
 The punctured Klein bottle $P$ is essential in $M$, i.e. it is incompressible
and boundary-incompressible in $M$.
The proof of this statement is essentially contained
in \cite[Proofs of Lemmas 2.1 and 2.2]{Te2} and we only need to add the
condition that $M(\a)$ is a prism manifold which is thus
irreducible and does not contain a projective plane.

As usual, the two surfaces $F$ and $P$ define
two labeled intersection graphs which we denote by $\G_F$ and $\G_P$.
Then neither $\G_F$ nor $\G_P$ contain trivial loops (\cite[Lemma 3.1]{Te2} with the same proof).
The graph $\G_F$ has a unique vertex whose valency is $4p$, and the graph $\G_P$ has $p$ vertices
each having valency $4$. Note that every edge of $\G_F$ is positive since $F$ is orientable and has only one boundary component.

\begin{lemma}\label{3properties}
(1) When $p\geq 2$, $\G_F$ has no $S$-cycle.
\newline
(2) When $p\geq 3$, $\G_F$ has no generalized $S$-cycle (See \cite{Te2} for its definition).
\newline
(3) $\G_F$  cannot have more than $\displaystyle\frac{p}{2} + 1$ mutually parallel edges.
\end{lemma}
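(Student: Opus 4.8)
The plan is to run the standard combinatorial argument for intersection graphs of surfaces, adapted to the case where one surface $F$ is a once-punctured torus and the other $P$ is a punctured Klein bottle in the prism-manifold filling $M(\alpha)$. The key structural facts I would lean on are: every edge of $\G_F$ is positive (since $F$ is orientable with a single boundary component), $\G_F$ has a single vertex of valency $4p$, $\G_P$ has $p$ four-valent vertices, and neither graph has trivial loops. Throughout I would use that $M(\alpha)$ is irreducible and contains no projective plane, and that $\hat P$ was chosen to minimize the number of intersection curves with $\partial M$, so that $P$ is essential in $M$ and no ``innermost'' reduction of the graphs is possible.

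For part (1), suppose $p\ge 2$ and $\{e_1,e_2\}$ is an $S$-cycle in $\G_F$. An $S$-cycle bounds a bigon face, and the standard argument (as in \cite[\S 2]{Te2} or the usual CGLS-type reasoning) shows that pushing $P$ across this bigon face, together with the disk bounded by part of the $S$-cycle on $\partial M$ inside the filling solid torus, produces either a Möbius band or a reduction of the Klein bottle in $M(\alpha)$ — more precisely, it yields a Klein bottle meeting $\partial M$ in fewer curves, or a projective plane, contradicting either the minimality of $p$ or the fact that $M(\alpha)$ is a prism manifold (hence irreducible and $P^2$-irreducible). This is the routine part; the only care needed is to check that the parity hypotheses on the labels of an $S$-cycle are consistent with all edges of $\G_F$ being positive, which forces the label pair to be of the form $\{j,j+1\}$ in the usual way.

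For part (2), the same philosophy applies to a generalized $S$-cycle, using its definition from \cite{Te2}: a generalized $S$-cycle again produces a Möbius band or an essential annulus/torus in $M(\alpha)$ whose presence is incompatible with $M(\alpha)$ being a prism manifold, provided $p\ge 3$ (the extra vertex is what makes the cutting-and-pasting argument go through). Here I would essentially transcribe the relevant lemma of \cite{Te2}, adding only the observation that the conclusions it derives about the filled manifold are excluded for prism manifolds. Part (3) is the counting step: if $\G_F$ had $k\ge \frac{p}{2}+2$ mutually parallel edges, then among these parallel edges one finds a sequence of labels that must contain an $S$-cycle (two consecutive edges with label pair $\{j,j+1\}$ at a vertex of $\G_P$), because each vertex of $\G_P$ has valency only $4$, so a family of more than $\frac{p}{2}+1$ parallel edges in $\G_F$ forces repeated labels around some vertex of $\G_P$ in a pattern producing an $S$-cycle; this contradicts part (1).

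The main obstacle I expect is part (3): turning ``many parallel edges in $\G_F$'' into ``an $S$-cycle'' requires a careful pigeonhole argument on how the $k$ parallel edges distribute their endpoints among the $p$ four-valent vertices of $\G_P$ and in what cyclic order the labels appear, and one must verify that the worst-case distribution still yields two edges forming an $S$-cycle precisely when $k>\frac{p}{2}+1$ — i.e., that the bound $\frac{p}{2}+1$ is exactly the threshold. The positivity of all edges of $\G_F$ simplifies this (all the parallel edges are positive, so the label bookkeeping is cleaner), but getting the constant right is where the real work lies; I would handle it by the standard ``Scharlemann cycle from a large parallelism class'' counting, keeping track of the two possible orientations of the Klein bottle side.
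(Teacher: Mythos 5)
Your treatment of parts (1) and (2) tracks the paper exactly: the paper proves both by citing Teragaito's Lemmas 3.2 and 3.3 from \cite{Te2} and observing that the same arguments apply, with the additional input for (2) that a prism manifold contains no projective plane. That is the same cut-and-paste idea you describe (pushing $\hat P$ across a bigon face to produce a forbidden Klein bottle, projective plane, or a Klein bottle with fewer boundary intersections).

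For part (3), however, the paper does not deduce the bound from part (1); it cites \cite[Lemma~6.2(4)]{LT} and says the same proof applies. Your proposal instead tries to obtain (3) from (1) alone by pigeonhole, and I believe that route has a genuine gap. In a family of $k$ mutually parallel (all positive) edges of $\G_F$, the $i$-th edge has label pair $\{a+i-1,\; b-i+1\}$ for fixed residues $a,b$ mod $p$, so two adjacent edges form an $S$-cycle precisely when $b-a \equiv 2i-1 \pmod p$ for some $i\le k-1$. When $b-a$ is \emph{even} (and in particular when $p$ is even this parity is a real obstruction), no choice of $i$ produces an $S$-cycle; instead the family contains a level edge flanked by two edges with the same non-consecutive label pair, i.e.\ a generalized $S$-cycle. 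So ruling out a family of size $>p/2+1$ needs part (2) as well as part (1), and even then (2) is stated only for $p\ge 3$, while the paper applies (3) precisely in the residual case $p=2$. Your write-up flags ``getting the constant right'' and ``the two possible orientations of the Klein bottle side'' as the real work, but that work is exactly where the reduction to (1) breaks down: the missing case analysis on the parity of the label offset, and the separate handling of level edges / generalized $S$-cycles, is the content of \cite[Lemma~6.2(4)]{LT}, and it is not a consequence of ``no $S$-cycle'' alone.
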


\begin{proof}
Part (1)  is  \cite[Lemma 3.2]{Te2} with the same proof,
part (2) is  \cite[Lemma 3.3]{Te2} with a similar argument plus the fact that $M(\a)$ does not contain
projective plane, and part (3) is \cite[Lemma 6.2 (4)]{LT} with the same proof.
\end{proof}

\begin{lemma}
$p=1$.
\end{lemma}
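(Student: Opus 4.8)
The plan is to rule out $p \geq 2$ by a combinatorial analysis of the intersection graphs $\G_F$ and $\G_P$, exactly in the spirit of the Gordon--Luecke / Teragaito arguments invoked above. Assume $p \geq 2$ for contradiction. Since $\D(\alpha,\beta)=4$ and $F$ has a single boundary component of valency $4p$, while $P$ has $p$ vertices each of valency $4$, a counting/parity argument together with Lemma \ref{3properties} will force the edge structure of $\G_F$ to be very restricted: with at most $\frac{p}{2}+1$ mutually parallel edges and $4p$ edge-endpoints at the unique vertex, the reduced graph $\overline{\G}_F$ cannot be too large, and one uses the absence of $S$-cycles (and, for $p \geq 3$, generalized $S$-cycles) to bound the number of parallel edge families from below, producing a contradiction for all $p\geq 3$. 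The small cases $p=2$ (and possibly a residual sub-case of $p=3$) will need to be treated separately, presumably by pinning down $\G_F$ exactly and then showing that the resulting identification of $P$ is incompatible with $\hat P$ being an embedded Klein bottle in the prism manifold $M(\alpha)$, or that it contradicts minimality of $|\partial P \cap \partial M|$.

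Concretely, first I would record the basic numerology: each vertex of $\G_P$ has valency $4$, so $\G_P$ has $2p$ edges, hence $\G_F$ has $2p$ edges and its unique vertex absorbs all $4p$ endpoints. Since every edge of $\G_F$ is positive and there are no trivial loops, the edges of $\G_F$ fall into parallelism classes; a standard Euler-characteristic count on the once-punctured torus $F$ shows there can be at most $3$ such classes in the reduced graph $\overline{\G}_F$ (a loop graph on one vertex on a once-punctured torus has at most three non-parallel essential loops). With at most $3$ classes and at most $\frac{p}{2}+1$ edges per class by Lemma \ref{3properties}(3), we get $2p \leq 3(\frac{p}{2}+1)$, i.e. $2p \leq \frac{3p}{2}+3$, giving $p \leq 6$. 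This already bounds $p$; then, for the range $3 \leq p \leq 6$, I would push the $S$-cycle and generalized-$S$-cycle restrictions harder. A family of more than $\frac{p}{2}+1$ parallel edges is forbidden, and within a large parallel family on $F$ the innermost pair of edges forms an $S$-cycle unless the labels are arranged so as to avoid it, which on a once-punctured torus is impossible once the family is long enough; this is precisely how Lemma \ref{3properties}(1)--(2) gets used to eliminate the middle values of $p$.

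For the remaining small value(s), principally $p=2$, the combinatorics no longer suffice and I would switch to a topological argument. When $p=2$, $\G_F$ has $4$ edges and $8$ endpoints at one vertex, and $\G_P$ has two valency-$4$ vertices; the possible reduced graphs are easily enumerated. In each case one reconstructs the Klein bottle $\hat P \subset M(\alpha)$ from the faces of $\G_F$ and checks whether $P$ can actually be incompressible and boundary-incompressible in $M$ with $\widehat P$ embedded: since $M(\alpha)$ is a prism manifold it is irreducible and $\mathbb{P}^2$-irreducible, which constrains how the annuli/disks coming from the graph faces can fit together. I expect the conclusion to be that either such a configuration forces a compression (contradicting essentiality of $P$) or it forces $|\partial P|$ to be reducible to a smaller number (contradicting the minimality choice), so $p=2$ cannot occur.

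The main obstacle I anticipate is the case analysis for the small values of $p$, especially $p=2$: the purely combinatorial bounds get one down to a short list, but distinguishing ``$\hat P$ is a genuine embedded Klein bottle in a prism manifold'' from ``the graph data is formally consistent'' requires the kind of careful hands-on surface surgery that does not follow from a clean inequality. I would lean on the fact, already cited, that the relevant statements in \cite{Te2} and \cite{LT} have ``the same proof'' in this setting, and try to arrange the argument so that the $p\geq 2$ cases are excluded by the same mechanism Teragaito uses (there $P$ is a punctured Klein bottle in a toroidal filling; here the ambient filling is a prism manifold, so the irreducibility and absence of $\mathbb{P}^2$ are what make the minimality argument go through). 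Once $p=1$ is established, the subsequent sections use the single boundary curve to set up the involution and finish the proof of Theorem \ref{unique in prism case}.
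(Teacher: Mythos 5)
Your overall framework is the same as the paper's — bound $\overline{\G}_F$ by at most three parallelism classes via \cite[Lemma 5.1]{Go1}, then use Lemma \ref{3properties} to restrict the weights — but you stop short of the clean numerical conclusion and as a result invent unnecessary work for the small values of $p$. The weak inequality $2p \leq 3\bigl(\tfrac{p}{2}+1\bigr)$ only gives $p \leq 6$, and you then propose to treat $3 \leq p \leq 6$ by ``pushing the $S$-cycle restrictions harder'' and to treat $p=2$ by an entirely separate topological reconstruction of $\hat P$ from the faces of $\G_F$, which you acknowledge you have not carried out and anticipate to be delicate. The paper instead observes that the label pattern on any parallel family $e_1,\dots,e_{w_k}$ at the single vertex of $\G_F$ forces $w_k \in \{0,1\}$ for \emph{every} $p\geq 2$: if $w_k$ is even and positive there is an $S$-cycle (excluded by Lemma \ref{3properties}(1), which holds for $p\geq 2$); if $w_k$ is odd and $\geq 3$ with $p\geq 3$ there is a generalized $S$-cycle (excluded by Lemma \ref{3properties}(2)); and if $w_k$ is odd and $\geq 3$ with $p=2$ then $w_k > \tfrac{p}{2}+1$ (excluded by Lemma \ref{3properties}(3)). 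Since $2(w_1+w_2+w_3)=4p$, this gives $4p\leq 6$, contradicting $p\geq 2$.

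The concrete gap, then, is that you did not combine the three parts of Lemma \ref{3properties} with the parity of the weight and the label structure of a parallel family to get $w_k\leq 1$ directly. Because of this you believed the combinatorics ``no longer suffice'' at $p=2$, when in fact they do: for $p=2$ an even $w_k\geq 2$ gives an $S$-cycle and an odd $w_k\geq 3$ violates the $\tfrac{p}{2}+1$ bound, so $w_k\leq 1$ and $\G_F$ has at most $3$ edges, whereas it must have $2p=4$. Your proposed fallback — reconstructing the Klein bottle from the graph faces and arguing via irreducibility and $\mathbb{P}^2$-irreducibility of the prism manifold — is not needed and would be considerably harder than the uniform combinatorial argument; had you needed it, the sketch as written is too vague to count as a proof.
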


\begin{proof}
The lemma was proved in \cite[Lemma 5.2]{Te2} when $M$ was a genus one non-cabled
knot exterior in $S^3$, in which case $p$ was an odd integer.
 In our situation, we need to extend the argument of
\cite[Lemma 5.2]{Te2} slightly, using Lemma \ref{3properties} (3) instead of
\cite[Lemma 3.4]{Te2}.

\begin{figure}[!ht]
\centerline{\includegraphics{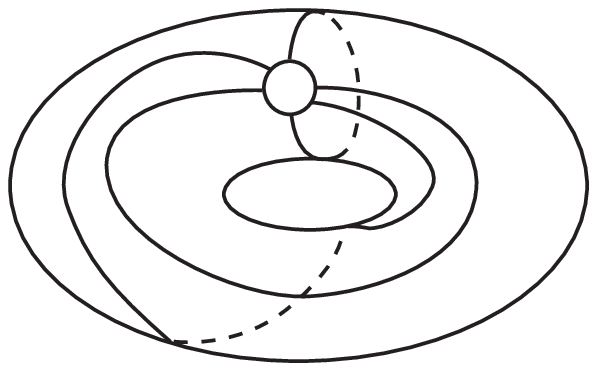}} \caption{ }\label{prism1}
\end{figure}

Suppose otherwise  that
$p\geq 2$.
The reduced graph $\overline{\G}_F$  is a subgraph of the graph shown in Figure \ref{prism1} (\cite[Lemma 5.1]{Go1}).
In particular $\overline{\G}_F$ has at most three edges. Suppose these edges of $\overline{\G}_F$
have weights $w_k$, $k=1,2,3$, some of which may possibly be zero.
Then $2(w_1+w_2+w_3)=4p$.
Let $e_1,...,e_{w_k}$ be a parallel family of consecutive edges in $\G_F$.
Reading the labels around the vertex of $\G_F$, we see that the labels of the
edges $e_1,e_2,..,e_{w_k}$ are
as illustrated in Figure \ref{prism2}.

\begin{figure}[!ht]
\centerline{\includegraphics{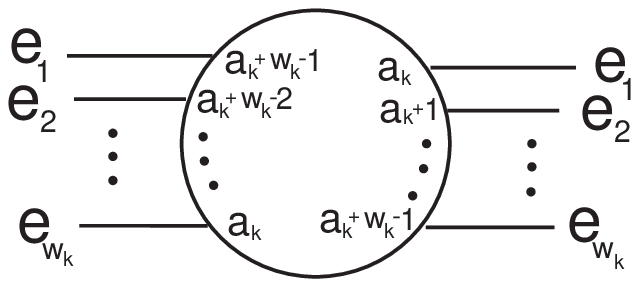}} \caption{ }\label{prism2}
\end{figure}

By Lemma \ref{3properties}, $w_k = 0$ or $1$, $1 \leq k \leq 3$. (More precisely, this follows
from Lemma \ref{3properties} (1) if $w_k$ is even, Lemma \ref{3properties} (2) if
$w_k$ is odd and $p \ge 3$,
and Lemma \ref{3properties} (3) if $w_k$ is odd and $p = 2$.) This is a contradiction.
\end{proof}

So $\G_F$ has exactly two edges and  both are
level edges (i.e. having the same label at the two endpoints of the edge).
Let $e_1, e_2$ be the two edges of $\G_F$ and
 of $\G_P$.
Note that each $e_i$  is an orientation-reversing loop in $P$ by the parity rule.

\begin{figure}[!ht]
\centerline{\includegraphics{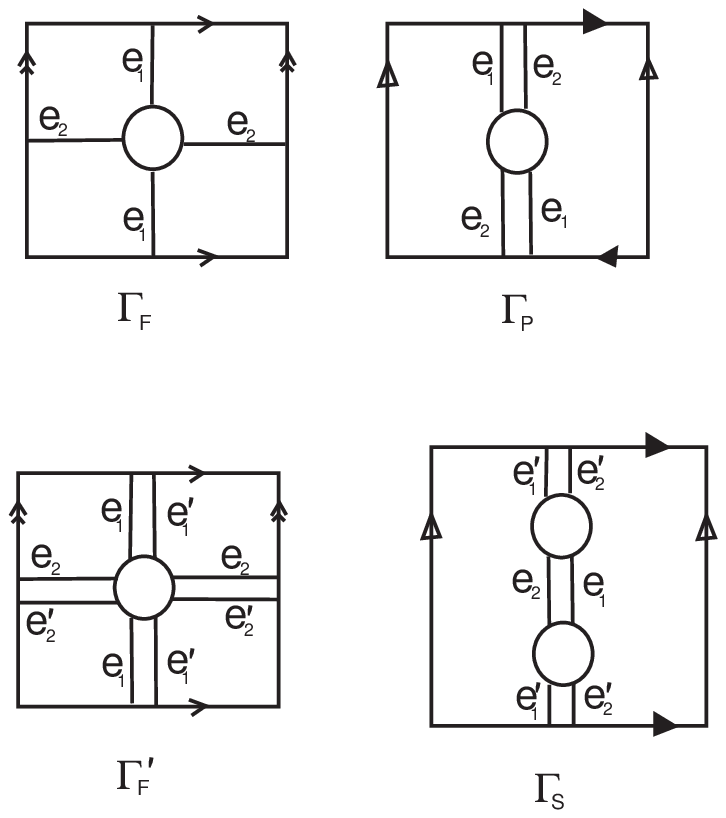}} \caption{ }\label{prism3}
\end{figure}

\begin{figure}[!ht]
\centerline{\includegraphics{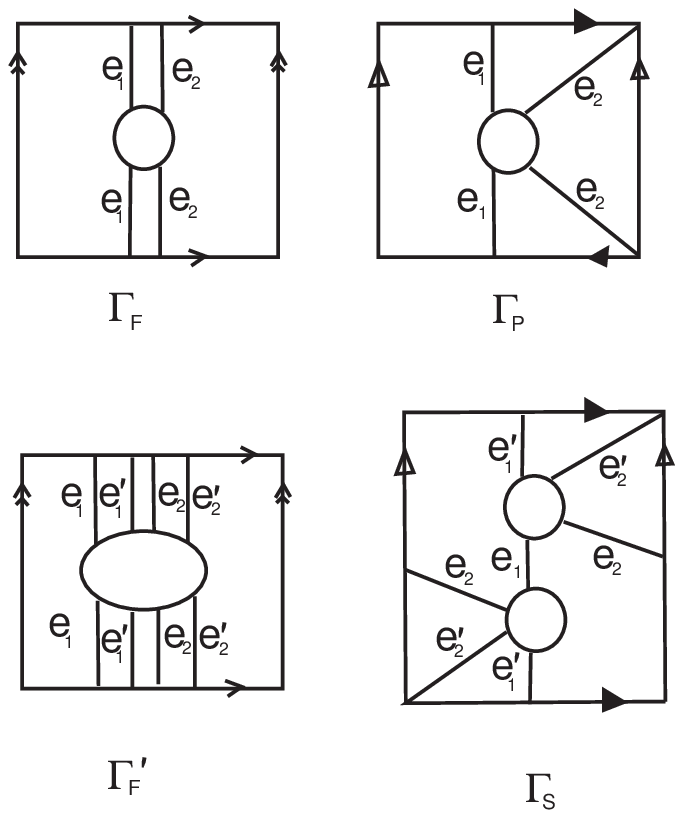}} \caption{ }\label{prism4}
\end{figure}

Since $\D(\alpha,\beta)=4$, if the endpoints of the two edges around the vertex $\partial F$ are
labeled consecutively
by $1,2,3,4$,
the labels around $\partial P$ are also consecutive.
It follows from this fact that
if the two edges
in $\G_F$ are not parallel, then the two edges in $\G_P$ must be parallel.
Also,
combining this fact with the proof of \cite[ Lemma 4.1]{Te2}, we have
that the two edges $e_1$ and $e_2$ cannot be parallel in both $\G_P$ and $\G_F$.
So there are only two possible configurations for
the pair of the graphs $\G_F$ and $\G_P$,  which we illustrate in
Figure \ref{prism3} and Figure \ref{prism4} respectively.

Let $S$ be the frontier of a thin  regular neighborhood of $P$ in $M$.
Then $S$ is a separating  twice-punctured torus in $M$.
The surfaces $F$ and $S$ define two labeled intersection graphs
$\G_F'$ and $\G_S$. Note that $\G_F'$ is obtained by doubling the edges of $\G_F$ and $\G_S$
 double covers $\G_P$.
 See Figures \ref{prism3} and \ref{prism4} for  illustrations of the graphs $\G_F, \G_P,\G_F'$ and $\G_S$.

The surface $S$ separates $M$ into two components which we denote by
$X^+$ and $X^-$, where $X^-$ is a twisted $I$-bundle over $P$.
Note that $\widehat X^-$ is a twisted $I$-bundle over the Klein bottle
$\widehat P$ and   $\widehat X^+$ is a solid torus since $M(\a)=\widehat X^-\cup \widehat X^+$ is a prism manifold.
Let $H^\e$ denote the part of the filling solid torus of $M(\a)$
contained in $\widehat X^\e$, $\e\in\{\pm\}$ and let $\partial_0 H^\e=\partial H^\e\cap \partial M$.

We first show

\begin{lemma}
The case given by Figure \ref{prism3} cannot occur.
\end{lemma}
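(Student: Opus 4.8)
The configuration in Figure \ref{prism3} is the one in which the two edges $e_1, e_2$ are parallel in $\G_P$ but not in $\G_F$; equivalently, in the double cover $\G_S$ the four edges form two bigons lying on the two sides of $S$. The strategy is to extract from these bigon faces a pair of words in $\pi_1(\widehat X^+)$ and $\pi_1(\widehat X^-)$ and to show the resulting presentations are incompatible with the fact that $\widehat X^+$ is a solid torus and $\widehat X^-$ is the twisted $I$-bundle over the Klein bottle (so $\M(\a)=\widehat X^-\cup\widehat X^+$ is a prism manifold). Concretely, I would first use Figure \ref{prism3} to read off, around the two vertices of $\G_S$, exactly which pairs of the four parallel edges cobound bigon faces in $X^+$ and which cobound bigon faces (or a single square face) in $X^-$, keeping careful track of the labels $1,2,3,4$ forced by $\D(\a,\b)=4$. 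This is the bookkeeping step that pins down the combinatorial type.

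Next I would compute $\pi_1$ of the relevant piece. Take a base rectangle on $S$ (as in the proof of Lemma \ref{lem3} above), let $s,t$ be generators of $\pi_1(S)$ (or of $\pi_1(P)$ pulled back), and let $x$ be the element corresponding to the $1$-handle arising from the filling solid-torus portion $H^+$ (respectively $H^-$). Each bigon face of $\G_S$ lying in $X^\e$ gives a relation of the form $x^{2}t^{\pm1}=1$ or $x\,(\text{word in }s,t)=1$, exactly as in the computations in Lemma \ref{lem3}. Using that $\widehat X^+$ is a solid torus, the relations coming from the $X^+$-faces must present $\z$; using that $\widehat X^-$ fibres over the Klein bottle, the relations from the $X^-$-faces must be consistent with $\pi_1$ of a twisted $I$-bundle over $K$. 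I would then show that the two sets of relations, taken together with the gluing along $S$, force the Seifert fibre of $\widehat X^-$ to be identified with a slope on $\partial\widehat X^+$ that is \emph{not} a longitude of the solid torus $\widehat X^+$ in the way required — or, more likely, force a curve that should be primitive in $\pi_1(\widehat X^+)\cong\z$ to be a proper power, which is the contradiction.

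An alternative, possibly cleaner, route is the branched-cover/tangle approach used elsewhere in the paper: pass to $M(\a)/\tau_\a$, where the image of $F$ and of $\widehat P$ become explicit tangles in $L(p,q)$, realize the configuration of Figure \ref{prism3} as a specific tangle filling, and observe that the resulting double branched cover cannot be the prism manifold dictated by $M(\a)$ (e.g.\ by an order-of-$H_1$ or incompressible-torus count). I would use whichever of the two bookkeeping diagrams (edges in $X^+$ versus $X^-$) produces the sharper obstruction.

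\textbf{Main obstacle.} The delicate point is the first step: correctly reading the cyclic order of the labels $1,2,3,4$ around the two vertices of $\G_S$ in the Figure \ref{prism3} configuration, and hence correctly identifying which parallel edges bound bigons on which side of $S$. A wrong matching produces a spurious (consistent) presentation and no contradiction, so the argument stands or falls on getting this combinatorics exactly right; once the faces are correctly identified, deriving the contradiction from ``$\widehat X^+$ is a solid torus and $\widehat X^-$ is an $I$-bundle over $K$'' is a short $\pi_1$ computation of the type already carried out several times in the paper.
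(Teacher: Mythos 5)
Your general direction — translate the face combinatorics of Figure \ref{prism3} into relations in $\pi_1$ and derive a contradiction with the structure of $\widehat X^{\pm}$ — is the right one, but the specific mechanism you envision misses the decisive observation, and several details point the wrong way. First, in the Figure \ref{prism3} configuration the relevant face of $\G_F'$ is not a collection of bigons to be compared across $X^+$ and $X^-$: it is a single $4$-gon face $f$ lying in $X^+$ whose boundary edges form a Scharlemann cycle of \emph{order four}. You mention ``a single square face'' only as a possibility on the $X^-$ side, which is the wrong side, and your expected contradiction (``a curve that should be primitive in $\pi_1(\widehat X^+)\cong\z$ is a proper power'', or a fibre-slope mismatch between the two sides) is not what actually goes wrong.

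The paper's argument uses only the $X^+$ side. Since $\partial f \cap S$ lies in an essential annulus $A$ of $\widehat S$ and $\partial f$ is non-separating in $S\cup\partial_0 H^+$, one forms $U=N(A\cup H^+ \cup f)\subset \widehat X^+$, a compact manifold with torus boundary. Reading the $4$-gon relation gives $\pi_1(U)=\langle x,t : x^3 t x t =1\rangle$, and the substitution $y=xt$ turns this into $\langle x,y : x^2y^2=1\rangle$. Thus $U$ is Seifert fibred over $D^2(2,2)$, i.e.\ a twisted $I$-bundle over the Klein bottle, so $U$ contains a Klein bottle. But $U$ sits inside the solid torus $\widehat X^+$, which contains no Klein bottle --- contradiction. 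This is the standard Scharlemann-cycle phenomenon: an order-$4$ Scharlemann cycle on one side of the incompressible surface produces a small Seifert piece whose topology is incompatible with that side. Without recognizing the $4$-gon as an order-$4$ Scharlemann cycle in $X^+$ and running this Klein-bottle-in-a-solid-torus argument, the bookkeeping you describe is unlikely to close up: your plan of comparing $\pi_1$ presentations across $S$ and looking for a proper power or a fibre mismatch would most likely produce consistent-looking data and no contradiction.
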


\begin{proof}
The $4$-gon face of $\G_F'$, which we denote by $f$,  is contained in $X^+$
and its boundary edges form a Scharlemann cycle of order $4$.
From Figure \ref{prism3} we see that $\partial f$ is a non-separating curve
in the genus two surface $S\cup \p_0 H^+$ and
 $\partial f\cap S$ is contained in an essential annulus $A$ in $\widehat S$.
Let $U$ be a regular neighborhood of
$A\cup H^+\cup f$ in $\widehat X^+$.
Then $U$ is a compact $3$-manifold with $\partial U$ a torus
and the fundamental group of $U$  has the following presentation
$$<x,t; x^3txt=1>$$
where we take a fat base  point in $\widehat S$ containing $\partial S\cup e_1\cup e_2$, $x$ is
a based loop formed by a cocore arc of $\p_0 H^+$ and $t$ is represented by a core circle of $A$.
 Let $y=xt$, then
 $$\pi_1(U)=<x,y; x^2y^2=1>.$$
 So $U$ is Seifert fibred with base orbifold $D(2,2)$.
 Thus $U$ contains a Klein bottle. But $U$ is contained in the solid torus $\widehat X^+$.
 This gives a contradiction.
\end{proof}

So the case of Figure \ref{prism4} must occur.
In this case we are going to show that $M$ is obtained by
Dehn filling one boundary component of the Whitehead link exterior.

In this case, the bigon faces of $\G_F'$ between $e_1$ and $e_1'$ and between $e_2$ and $e_2'$
lie in $X^-$, and the bigon face between $e_1'$ and $e_2$, which we denote by $B$,
is contained in $X^+$.
Let $Q$ be a regular neighborhood of
$S\cup \p_0 H^+\cup B$ in $X^+$, and $\widehat Q=Q\cup H^+$.
Then it's easy to see that
$\widehat Q$ is a Seifert fibred manifold whose base orbifold is
an annulus with a single cone point of order $2$.
The boundary of $\widehat Q$ consists of two tori, one of which is the torus
$\widehat S$.
Let $T_0$ be the other component.
Note that $T_0$ is contained in the interior of $X^+$.
Since $\widehat X^+$ is a solid torus,
 $T_0$ must bound a solid torus in $\widehat X^+\setminus \widehat Q$, which we denote by
$N$.

\begin{lemma}\label{not match}The Seifert structure of $\widehat Q$ does not match
with the Seifert structure of $\widehat X^-$ whose base orbifold is
$D(2,2)$.
\end{lemma}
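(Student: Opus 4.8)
\textbf{Proof plan for Lemma \ref{not match}.}
The plan is to argue by contradiction. Suppose the Seifert structure on $\widehat Q$ (base orbifold an annulus with one cone point of order $2$) matches, along the torus $\widehat S$, the Seifert structure on $\widehat X^-$ (base orbifold $D^2(2,2)$). Then these two fibrations assemble to a single Seifert fibration on $M(\alpha) = \widehat X^- \cup_{\widehat S} \widehat X^+$, or at least on $\widehat X^- \cup_{\widehat S} \widehat Q$, which is $M(\alpha)$ minus an open solid torus neighbourhood of the core of $N$. I would first compute the slope on $\widehat S$ determined by the common Seifert fibre: on the $\widehat X^-$ side it is the fibre $\varphi^-$ of the $D^2(2,2)$-structure, which is a well-defined slope on $\widehat S = \partial \widehat X^-$ (the twisted $I$-bundle over the Klein bottle has exactly two Seifert structures, over a Möbius band and over $D^2(2,2)$, with distinct fibre slopes); on the $\widehat Q$ side it is the fibre $\varphi^+$ of the annulus-with-one-cone-point structure, read off explicitly from the description of $\widehat Q$ as a regular neighbourhood of $S \cup \partial_0 H^+ \cup B$, using the labelling of the graphs in Figure \ref{prism4}.

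The key step is then to observe that a matched fibration would force $T_0$ to be a vertical torus in the resulting Seifert manifold, hence $N$ to be a fibred solid torus, so that $M(\alpha)$ itself would admit a Seifert structure with base orbifold obtained by gluing the annulus-with-one-cone-point to $D^2(2,2)$ along a boundary circle — that is, $S^2(2,2,n)$ for some $n \ge 1$ coming from the core of $N$ — \emph{but with the gluing along $\widehat S$ respecting the fibres}. The contradiction I expect to extract is numerical: the gluing of $\widehat Q$ to $\widehat X^-$ is not arbitrary but is dictated by the graph configuration of Figure \ref{prism4} (equivalently, by the fact that $\widehat S$ doubly covers the intersection graph $\Gamma_P$ and $\partial F$ has $\Delta(\alpha,\beta) = 4$ intersection points with each boundary component of $P$). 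Concretely, I would compute $\Delta(\varphi^-, \varphi^+)$ on $\widehat S$ from the face structure — the Scharlemann/bigon faces $B$ and the two bigons in $X^-$ pin down how the two fibre slopes sit relative to each other and to $\partial S$ — and show this distance is not $0$, so the fibres cannot be identified. An alternative, perhaps cleaner route: note that if the structures matched then $\widehat X^+ = \widehat Q \cup_{T_0} N$ would be a Seifert fibred solid torus containing the vertical torus $\widehat S$, forcing $\widehat Q$ to be a fibred solid torus too; but $\widehat Q$ has base orbifold an annulus with a cone point of order $2$, which is not a solid torus (its fundamental group is not $\mathbb Z$), a contradiction. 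I would check which of these two contradictions is actually available given that we only know $\widehat X^+$ is a solid torus, not that the whole of $M(\alpha)$ is being refibred.

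The main obstacle will be making precise the claim that the two fibre slopes on $\widehat S$ are \emph{forced} by the graph data rather than merely consistent for some gluing; this requires carefully extracting from Figure \ref{prism4} the homology classes on $\widehat S = S \cup \partial_0 H^+$ of (i) $\partial S$, (ii) the boundaries of the bigon faces, and (iii) the fibre $\varphi^+$ of $\widehat Q$, and comparing with the known fibre slope $\varphi^-$ of the $D^2(2,2)$-structure on the twisted $I$-bundle over the Klein bottle (for which one can use that $\varphi^-$ and the Möbius-band fibre slope intersect once, and that $\partial S$ has a specified position relative to both). Once those slopes are computed, the non-matching is a one-line arithmetic check. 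I would be prepared, if the direct fibre-slope computation is awkward, to fall back on the second argument above — that $\widehat Q$ cannot be a solid torus — which sidesteps the slope bookkeeping entirely provided one first verifies that a matched fibration on $\widehat Q \cup_{\widehat S} \widehat X^-$ makes $\widehat S$ vertical in $\widehat X^+$ and hence (since $\widehat X^+$ is a solid torus with $\widehat S$ as boundary) forces $\widehat Q$ onto the solid-torus side.
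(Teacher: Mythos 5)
Your primary route is exactly the paper's proof: the paper identifies the fibre slope of $\widehat X^-$ (in its $D^2(2,2)$ structure) with the cycle $e_1\cup e_1'$ on $\widehat S$, using the $S$-cycle $\{e_1,e_1'\}$ whose bigon lies in $X^-$, and the fibre slope of $\widehat Q$ with the cycle $e_1'\cup e_2$, using the $S$-cycle $\{e_1',e_2\}$ whose bigon $B$ lies in $X^+$; it then observes directly from Figure \ref{prism4} that these two cycles represent different slopes on $\widehat S$. So the ``one-line arithmetic check'' you anticipate is carried out by reading the two fibre slopes off the graph picture, and your first plan is the correct one.

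Your fallback argument, as stated, has a flaw worth flagging: you conclude it would ``force $\widehat Q$ to be a fibred solid torus,'' but $\widehat Q$ has two boundary tori and is never a solid torus, so that sentence cannot be the contradiction. The argument can be salvaged, but only after more work: one must first rule out the possibilities that $\mu_N$ equals the fibre slope on $T_0$ (giving a reducible $M(\alpha)$) or has distance $\ge 2$ from it (giving $S^2(2,2,2,d)$, which is toroidal or not small Seifert); in the remaining case $M(\alpha)$ is Seifert over $S^2(2,2,2)$ with $\widehat S$ and $T_0$ vertical, and then one argues that cutting the fibred solid torus $\widehat X^+$ along the vertical torus $T_0$ must place the singular fibre in the solid-torus piece $N$, so that the outer piece $\widehat Q$ would have to be fibred over an annulus with \emph{no} cone point, contradicting its base orbifold $A(2)$. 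This is considerably more delicate than the direct slope comparison, and you were right to treat it as a fallback to be verified rather than a clean alternative.
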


\begin{proof}
The $S$-cycle $\{e_1, e_1'\}$ in $\G_F'$ implies  that as a cycle in $\G_S$,
$e_1\cup e_1'$ is a fibre  of the  Seifert structure of $\widehat X^-$ whose base orbifold is
$D(2,2)$.
Similarly the $S$-cycle $\{e_1', e_2\}$ in $\G_F'$ implies that as a cycle in $\G_S$,
$e_1'\cup e_2$ is a fibre  of the  Seifert structure of $\widehat Q$.
Obviously from Figure \ref{prism4} these two cycles have different slopes
in $\widehat S$.
\end{proof}

Let $\displaystyle W=X^-\cup_S Q$.
Note that $\displaystyle M=W\cup_{T_0} N$. So we just need to show
that $W$ is the Whitehead link exterior. We use the notation $W(\partial M, \gamma)$ to denote the Dehn filling of $W$
along a slope $\gamma$ in $\partial M \subset \partial W$.

\begin{lemma}\label{4properties}
(1) $W$ is irreducible.\newline
(2) The twice-punctured torus $S$ is incompressible  in $W$.
\newline
(3) $F\cap W$ has a component which is  an essential once-punctured annulus in $W$
with the puncture lying in $\partial M$ of slope $\b$
and with the boundary of the annulus lying in $T_0$.
\newline
(4) $W(\partial M, \a)$ contains an essential torus which is $\widehat S$.
\end{lemma}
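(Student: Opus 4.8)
The plan is to prove all four statements by working inside $M$ with the splitting $M = W\cup_{T_0}N$, where $N$ is the solid torus cut off by $T_0$, and to lean on two pieces of structure already established: $X^-$ is a twisted $I$-bundle over the once-punctured Klein bottle $P$ (hence a genus $2$ handlebody, in particular irreducible), and $\widehat Q$ is Seifert fibred over an annulus with a single cone point of order $2$, with $\partial\widehat Q = \widehat S\sqcup T_0$; consequently $\widehat Q$ is irreducible, has incompressible boundary, and is not homeomorphic to $T^2\times I$ since it carries an exceptional fibre. Note also that $X^+=Q\cup_{T_0}N$ and that $S$, being essential in $M=X^+\cup_SX^-$, is incompressible in each of $X^+$ and $X^-$ (cutting an irreducible manifold along an incompressible surface gives irreducible pieces).

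For (1) and (2) I would first observe that any compressing disk for $S$ in $Q$ is also one in $X^+$, so $S$ is incompressible in $Q$; and $Q$ is irreducible, being a submanifold of the irreducible manifold $X^+=Q\cup_{T_0}N$ whose complementary piece $N$ is a solid torus with $T_0$ incompressible in $Q$. Then $W=X^-\cup_SQ$ is a union of two irreducible manifolds glued along the incompressible surface $S$, so the standard innermost-disk argument (isotope any $2$-sphere, respectively any compressing disk for $S$, off $S$ so that it lies in one of the two pieces, where it must bound a ball, respectively cannot exist) shows at once that $W$ is irreducible and that $S$ remains incompressible in $W$.

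For (3) I would place $F$ in the position relative to $S$ and $T_0$ dictated by Figure \ref{prism4} and analyse $F\cap W = F\setminus(F\cap N)$. The combinatorial input is the arrangement of $\Gamma_F'$ and $\Gamma_S$ in Figure \ref{prism4}: the Scharlemann cycles $\{e_1,e_1'\}$ and $\{e_2,e_2'\}$ carried by $X^-$, together with the bigon $B$ carried by $X^+$, assemble into a subsurface of $F$ lying in $W$, while the rest of $F$ lies in $\widehat Q\cup N$; tracing these pieces one checks that $F\cap N$ contains a $\partial$-parallel annulus in $N$ whose core is non-separating in $F$, any remaining components of $F\cap N$ being meridian disks of $N$. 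Cutting $F$ along this annulus (and those disks), the component of $F\cap W$ containing $\partial F$ is then a pair of pants, i.e.\ a once-punctured annulus $A$, with puncture equal to $\partial F\subset\partial M$ of slope $\beta$ and with its two remaining boundary circles on $T_0$; an Euler characteristic count confirms $\chi(A)=-1$. Finally $A$ is essential in $W$: a compressing or $\partial$-compressing disk for $A$, reattached along the annulus in $N$, would produce a compressing or $\partial$-compressing disk for the essential surface $F$ in $M$, or would exhibit $F$ as boundary parallel, a contradiction in every case.

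For (4) I would note that filling $W$ along $\alpha\subset\partial M$ turns $X^-$ into $\widehat X^-$ and $Q$ into $Q\cup H^+ = \widehat Q$, so $W(\partial M,\alpha) = \widehat X^-\cup_{\widehat S}\widehat Q$, a manifold with boundary the torus $T_0$ containing the separating torus $\widehat S$. Here $\widehat S$ is incompressible on the $\widehat X^-$ side (the boundary torus of a twisted $I$-bundle over the Klein bottle is incompressible) and on the $\widehat Q$ side (noted above), hence incompressible in $W(\partial M,\alpha)$, and it is not boundary parallel because the region it cobounds with $T_0$ is $\widehat Q$, which is not $T^2\times I$. Thus $\widehat S$ is an essential torus, giving (4). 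The heart of the argument is the combinatorial step in (3) --- reading off from Figure \ref{prism4} how $F$ sits across $S$ and $T_0$, locating the non-separating annulus in $F\cap N$, and confirming that the resulting component of $F\cap W$ is exactly a once-punctured annulus of the stated boundary slopes; parts (1), (2) and (4) are routine once the $I$-bundle structure of $X^-$ and the Seifert structure of $\widehat Q$ are in hand.
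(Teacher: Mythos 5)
Your proposal is correct and follows essentially the same route as the paper. For (1) and (2) you glue the irreducible pieces $X^-$ and $Q$ along the incompressible surface $S$, which is exactly what the paper does (the paper derives irreducibility of $Q$ and incompressibility of $S$ in $Q$ directly ``by the construction of $Q$'', while you deduce them from the ambient irreducible $X^+$; same content). For (4) you both observe $W(\partial M,\alpha)=\widehat X^-\cup_{\widehat S}\widehat Q$; you additionally spell out why $\widehat S$ is incompressible and not $\partial$-parallel, which the paper leaves implicit. The only real difference is in the framing of (3): the paper exhibits the once-punctured annulus as the exterior in $F$ of a slightly shrunk copy of the annulus face of $\Gamma_F'$, and verifies essentiality by noting it is an essential subsurface of $F$ and has a single boundary arc on $\partial M$; you instead cut $F$ along $T_0$, identify $F\cap N$ as a $\partial$-parallel annulus in $N$ with non-separating core (which is correct), and argue essentiality by reassembling a hypothetical (boundary-)compressing disk into one for $F$ in $M$. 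These are two descriptions of the same subsurface and both are somewhat terse on the essentiality check; neither contains a gap that the other repairs.
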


\begin{proof}
By the construction of $Q$, one can easily see that $Q$ is irreducible and $S$ is incompressible in
$Q$. Obviously $X^-$ is irreducible and $S$ is incompressible in $X^-$.
Thus $S$ is incompressible in $\displaystyle W=X^-\cup_S Q$ and $W$ is irreducible. So we get (1) and (2).

Part  (3) follows from the graph $\G_F'$ shown in Figure \ref{prism4} and the construction of $Q$.
In fact  the  exterior in $F$ of  the annulus which is the annulus face of $\G_F'$  shrunk slightly
into the interior of the face is
the required punctured annulus. It is incompressible in $W$ because it is an essential subsurface of $F$.
It is boundary incompressible in $W$ because it has only one intersection component with $\partial M$
and $M$ does not contain an essential  disk with slope $\b$.

For (4),  we just need to note that $W(\partial M, \a)=\widehat X^-\cup_{\widehat S}\widehat Q$.
\end{proof}

 \begin{lemma}
$W$ is hyperbolic.
\end{lemma}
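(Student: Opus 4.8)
The plan is to establish the five properties which, by Thurston's hyperbolization theorem for Haken $3$-manifolds, imply that $\operatorname{int}(W)$ admits a complete finite-volume hyperbolic structure: namely that $W$ is Haken, irreducible, boundary-irreducible, atoroidal, and not Seifert fibred (in particular not $T^2\times I$). Irreducibility is Lemma \ref{4properties}(1), and $W$ is Haken since by Lemma \ref{4properties}(2),(3) it contains the essential surfaces $S$ and $F\cap W$. It is not $T^2\times I$: $\pi_1(W)$ contains the non-abelian free group $\pi_1(S)$, whereas $\pi_1(T^2\times I)$ is abelian. So the work is concentrated in boundary-irreducibility, atoroidality, and the exclusion of Seifert fibred structures.

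For boundary-irreducibility: the component $\partial M$ of $\partial W$ is incompressible in $W$, since a compressing disc would be a properly embedded disc in $M\supseteq W$ with boundary on $\partial M$, and as $M$ is hyperbolic that boundary curve bounds a disc in $\partial M\subseteq\partial W$, contradicting essentiality. The component $T_0$ is incompressible in $W$: using the incompressibility of $S$ (Lemma \ref{4properties}(2)) and an innermost-circle argument, a hypothetical compressing disc for $T_0$ may be made disjoint from $S$, hence contained in $Q$ and so in $\widehat Q=Q\cup H^+$; but $\widehat Q$ is Seifert fibred over the annulus with one cone point of order $2$, hence is not a solid torus, so $\partial\widehat Q$ is incompressible.

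For atoroidality, suppose $T\subseteq\operatorname{int}(W)$ were an essential torus. Since $M=W\cup_{T_0}N$ with $N$ a solid torus, $T_0$ incompressible in $W$, and $M$ irreducible, a standard innermost-disc argument shows $T$ stays incompressible in $M$. Moreover $T$ is not boundary-parallel in $M$: the only boundary component of $M$ is $\partial M$, and a product region between $T$ and $\partial M$ would be disjoint from $N$ (as $\partial N=T_0$ separates $W$ from $N$ in $M$), hence lie in $W$, contradicting that $T$ is not boundary-parallel in $W$. So $T$ would be essential in the hyperbolic manifold $M$, which is impossible.

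Finally I would rule out that $W$ is Seifert fibred. As $W$ is orientable with two torus boundary components and incompressible boundary, a Seifert fibration of $W$ has base a compact orientable surface with boundary carrying cone points, and I would split into two cases according to whether this base orbifold admits an essential two-sided simple closed curve, disjoint from the cone points and not boundary-parallel. If it does, then $W$ contains an essential vertical torus, contradicting the previous paragraph; this case in particular handles any base with at least two cone points and any non-orientable base. If it does not, the base must be an annulus with at most one cone point, so filling $\partial M$ along $\alpha$ — which is not the fibre slope, since $W(\partial M,\alpha)=\widehat X^-\cup_{\widehat S}\widehat Q$ is irreducible, being the union of two irreducible manifolds along the incompressible torus $\widehat S$ — produces a Seifert fibred space over a disc with at most two cone points, which is atoroidal; this contradicts the fact that $W(\partial M,\alpha)$ contains the essential torus $\widehat S$ (Lemma \ref{4properties}(4)). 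The mismatch of Seifert structures in Lemma \ref{not match} can also be used here to see directly that $\widehat S$ cannot be vertical in a Seifert fibration of $W(\partial M,\alpha)$. The main obstacle is precisely this last step: organizing the case analysis so that no Seifert fibration of $W$ is consistent with $\widehat S$ remaining an essential torus after the $\alpha$-filling, which is where Lemma \ref{not match} and the structure of $\widehat X^-$ and $\widehat Q$ are brought to bear.
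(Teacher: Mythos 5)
Your overall strategy is genuinely different from the paper's, and while your irreducibility, boundary-irreducibility and Seifert-fibred-exclusion parts are sound, the atoroidality argument has a gap at exactly the point where the real work lies. You try to promote an essential torus $T\subset W$ to an essential torus in $M=W\cup_{T_0}N$ and then cite hyperbolicity of $M$, but the assertion that \emph{a standard innermost-disc argument shows $T$ stays incompressible in $M$} is not justified. After minimizing $|D\cap T_0|$ for a hypothetical compressing disc $D$ of $T$ in $M$, the innermost circle of $D\cap T_0$ on $D$ bounds a subdisc that lies on one side of $T_0$; that side can be $N$, where the subdisc is simply a meridian disc of the solid torus $N$, and $|D\cap T_0|$ cannot be reduced further. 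There is no general principle that a closed surface incompressible in $W$ survives a Dehn filling of $T_0\subset\partial W$; that is precisely the difficulty one must confront. The boundary-parallelism step has a parallel gap: the product region $P$ between $T$ and $\partial M$ has $\partial P=T\cup\partial M$ disjoint from $T_0$, so the fact that $T_0$ separates $W$ from $N$ does not rule out $T_0\subset\text{int}(P)$ and hence $N\subset P$.

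The paper avoids both issues by analyzing $T$ inside $W$ directly: $T$ must intersect $S$, is cut into essential annuli in $(X^-,S)$ and $(Q,S)$, these are isotoped to be vertical in the relevant characteristic $I$-bundles, and the resulting configuration forces $W$ to be a graph manifold, contradicting hyperbolicity of $M=W\cup_{T_0}N$. This case analysis is the heart of the proof and cannot be replaced by a soft ``transport to $M$'' argument. (For the Seifert-fibred exclusion, the paper's one-line observation that a Seifert fibration of $W$ would make $M=W\cup_{T_0}N$ Seifert fibred or reducible is quicker than, but consistent with, your argument via Lemma~\ref{4properties}(4).)
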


\begin{proof} We already know that $W$ is irreducible (Lemma \ref{4properties} (1)).
Obviously $W$ cannot be  Seifert fibred since $M=W\cup N$ is hyperbolic.
So we just need to show that $W$ is atoroidal.
Suppose otherwise that $W$ contains an essential torus $T$.
Note that $T$ is separating since $M$ is hyperbolic.

Note that $Q$  (a compression body) is  of the form $T_0\times [0, 1]$
union a $1$-handle attached to $T_0\times {1}$. It is now easy to see
that any incompressible torus  in $Q$ is isotopic into $T_0\times [0,1]$, and
therefore boundary parallel.
Hence $T$ cannot be contained in  $Q$.
Obviously $X^-$ is atoroidal because it is a twisted $I$-bundle over a punctured Klein bottle.
So $T$ cannot be contained in $X^-$ either.
Therefore $T$ must intersect $S$.
As $S$ is incompressible in $W$ (Lemma \ref{4properties} (2)), we may assume that every component of
$S\cap T$ is a circle which is  essential in both $T$ and $S$.
As $S$ is separating, $T\cap S$ has even number of components.
We may further assume that each component of $T\setminus (S\cap T)$
is an essential annulus in $(X^-, S)$ or in $(Q, S)$ (using isotopy of $T$ to eliminate inessential ones), and thus can be further assumed
to be a vertical annulus in the characteristic $I$-bundle of $(X^-, S)$
or $(Q,S)$.
Note that the  characteristic $I$-bundle for  the pair $(Q,S)$ is
isotopic to a regular neighborhood of $B\cup \p_0 H^+$ in $Q$ such that
the  horizontal boundary of the  $I$-bundle
 is a twice-punctured annulus $\phi$ contained in $S$
 such that $\widehat \phi$  is an essential annulus in $\widehat S$, and the vertical
 boundary of the $I$-bundle has two components: one is $\p_0H^+$  and the other is
 the frontier of the $I$-bundle in $Q$.
So we may assume that $S\cap T$ is contained in $\phi$.

\begin{figure}[!ht]
\centerline{\includegraphics{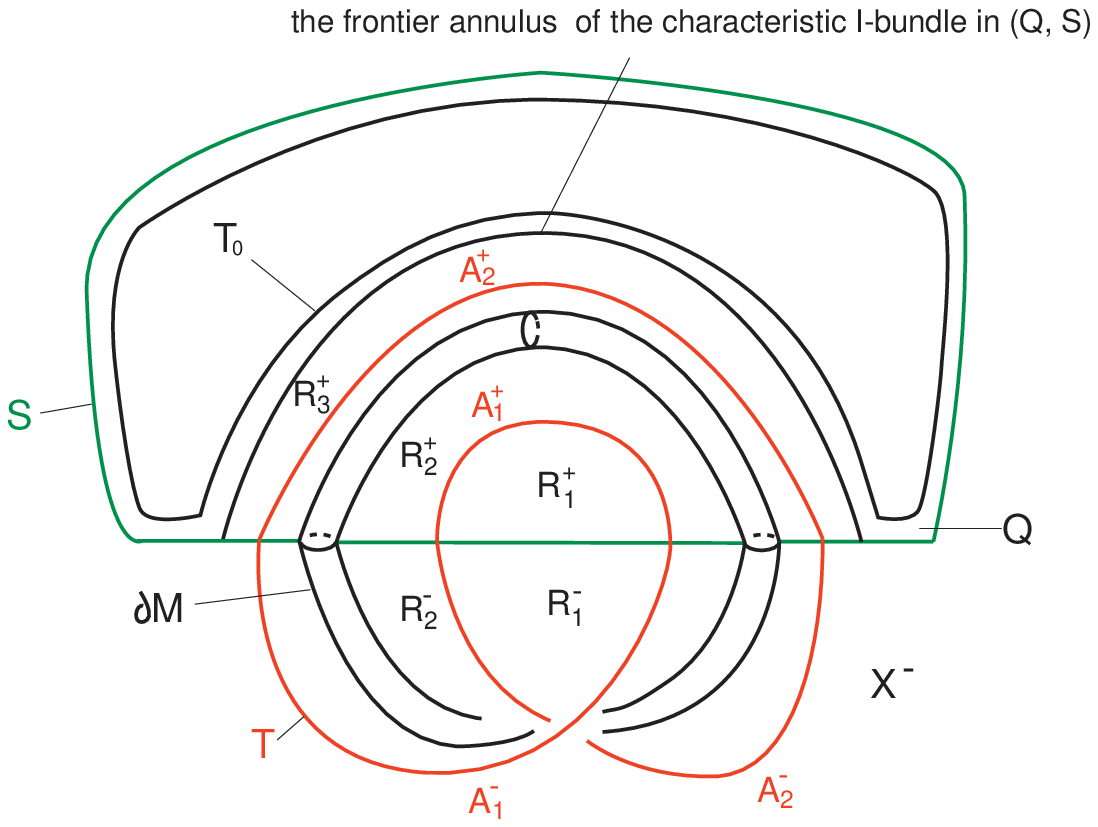}} \caption{ }\label{prism5}
\end{figure}

Let $A$ be a component of $T\setminus (T\cap S)$.
It's easy to see that
$\partial A$ is $\widehat S$-essential for otherwise $A$ would be isotopic to $\p_0 H^\e$ and
 $T$ would be parallel to $\partial M$.
Now if $A$ is contained in $Q$, its two boundary components are either isotopic in $\phi$ to the
two inner boundary components of $\phi$ respectively or bound an annulus in $\phi$ which separates
$\phi$ into two once-punctured annuli. Moreover
$A$ is a vertical annulus in the Seifert fibred structure
of $\widehat Q$. If $A$ is contained in $X^-$, it is a vertical annulus
in one of the two Seifert fibred structures of $\widehat X^-$.
So the Seifert structure of $\widehat Q$ matches a Seifert structure
of $\widehat X^-$. By Lemma \ref{not match}, the Seifert structure
of $\widehat X^-$ must be the one whose base orbifold is a M\"{o}bius band.
Thus if a component $A$ of $T\setminus (S\cap T)$ is contained in $X^-$,
it is a non-separating annulus in $X^-$. In particular if $A$ is contained in $X^-$,
$\partial A$ cannot be parallel in $S$. For otherwise the union of $A$ with
the annulus in $S$ bounded by $\partial A$  would be a Klein bottle in $W\subset M$,
giving a contradiction.

With the above information we have obtained on the components of $T\setminus (S\cap T)$
we see that the following case must occur: $T\setminus (S\cap T)$ has exactly four components,
two in $Q$ which we denote by $A_1^+$ and $A_2^+$, and two in $X^-$ which we denote by
$A_1^-$ and $A_2^-$, and they are connected as shown in Figure \ref{prism5}.
More specifically the annuli $A_1^+$ and $A_2^+$ separate $Q$ into three components $R_1^+, R_2^+, R_3^+$
such that $R_1^+$ is a solid torus in which  $A_1^+$ has winding number $2$, $R_2^+$
contains  $\p_0H^+$ and is a product $I$-bundle over a once-punctured
annulus, and $R_3$ is a regular neighborhood of $T_0$. The annuli $A_1^-$ and $A_2^-$ separate $X^-$ into two components $R_1^-, R_2^-$ such that $R_1^-$ contains $\p_0H^-$ and is a product
$I$-bundle over a once-punctured
annulus, and $R_2^-$ is a solid torus. (cf. Figure \ref{prism5}).
Moreover  $R_2^+\cup R_2^-$ is a once-punctured annulus bundle over $S^1$ with finite
order monodromy and thus is Seifert fibred. In fact one can see that the monodromy has order two.
On the other hand $R_1^+\cup R_1^-\cup R_3^+$ is  Seifert fibred over an annulus with
one cone point of order two.
Hence $W$ is a graph manifold. But $M=W\cup N$ is hyperbolic. We get a contradiction.
\end{proof}

\begin{lemma}   $W(\partial M, \beta)$ contains an essential annulus which is the cap off of
the once-punctured annulus given  in part (3) of Lemma \ref{4properties}.
\end{lemma}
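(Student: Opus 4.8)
The plan is to cap off the once-punctured annulus $A^- \subset F \cap W$ from part (3) of Lemma~\ref{4properties} by attaching a meridian disk in the $\beta$-filling and then verify that the resulting closed annulus $\widehat A$ is essential in $W(\partial M, \beta)$. First I would record the basic picture: $A^-$ is a properly embedded surface in $W$ with a single boundary component on $\partial M$ of slope $\beta$ and the remaining boundary on $T_0$, so in $W(\partial M,\beta)$ the arc-plus-puncture on $\partial M$ becomes a single essential circle on the filling solid torus which cobounds with the other boundary circle of $A^-$ an annulus $\widehat A$ with $\partial \widehat A \subset T_0$. (Here one uses that $A^-$ meets $\partial M$ in exactly one puncture, which is exactly the content of part (3).) So $\widehat A$ is an embedded annulus in $W(\partial M,\beta)$ with both boundary components on $T_0$, and it remains to check incompressibility and boundary-incompressibility, i.e.\ essentiality.

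Next I would prove incompressibility. Since $A^-$ is an essential subsurface of the essential once-punctured torus $F$ in $M$, it is incompressible in $W$; the only way $\widehat A$ could compress in $W(\partial M, \beta)$ is via a compressing disk meeting the core of the filling solid torus $H_\beta$ (cf.\ the filling torus notation $\widetilde U_\beta$), and a standard innermost-disk/outermost-arc argument together with the fact that $M_F$, hence the geometry of $F$, is well controlled (via Proposition~\ref{reduction}) rules this out — alternatively, if $\widehat A$ compressed, $T_0$ would compress in $W(\partial M,\beta)$, but $T_0$ bounds the solid torus $N$ on the other side in $M=W\cup_{T_0} N$ and a compression on the $W$-side would make $M(\beta)$ reducible or would force $\widehat A$ to be $\partial$-parallel, both of which we can exclude using that $\widehat F$ is an \emph{essential} torus in $M(\beta)$ (Assumptions~\ref{assumptions 1}). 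For $\partial$-incompressibility of $\widehat A$, a $\partial$-compression would produce a meridian disk of the solid torus $N$ meeting $\widehat A$'s boundary slope once, forcing $\widehat A$ to be boundary-parallel in $W(\partial M,\beta)$, hence $\widehat S$ (or a torus parallel to $T_0$) to be compressible — again contradicting essentiality of $\widehat F$ in $M(\beta)=W(\partial M,\beta)\cup N$, since capping $\widehat A$ off with a meridian annulus in $N$ recovers (an isotopic copy of) $\widehat F$.

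I expect the main obstacle to be the last point: showing that the closed-up annulus is genuinely essential rather than boundary-parallel in $W(\partial M,\beta)$. The cleanest route is to observe that $\widehat F \subset M(\beta)$ is, up to isotopy, exactly $\widehat A$ glued to a vertical annulus in $N$ (since $A^- = F\cap W$ and $F\cap N$ is an annulus), so essentiality of $\widehat A$ in $W(\partial M,\beta)$ is \emph{equivalent} to essentiality of $\widehat F$ in $M(\beta)$, which holds by hypothesis; any compression or $\partial$-compression of $\widehat A$ would either propagate to a compression of $\widehat F$ or be absorbed by an isotopy across $N$, and one checks that neither is compatible with $\widehat F$ being incompressible and non-$\partial$-parallel. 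This reduces the lemma to bookkeeping about how $F$ meets the decomposition $M = W \cup_{T_0} N$, which is already laid out in the proof of Lemma~\ref{4properties}.
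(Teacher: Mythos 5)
Your basic setup is right: cap off $A^-$ to get an embedded annulus $\widehat A$ in $W(\partial M,\beta)$ with both boundary circles on $T_0$, and a compressing disk for $\widehat A$ gives a compressing disk for $T_0$ in $W(\partial M,\beta)$. But you miss the one observation that makes the rest clean, and your replacement argument has real gaps.

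The observation you miss is that $A^-$ is \emph{non-separating} in $W$, hence $\widehat A$ is non-separating in $W(\partial M,\beta)$. Since a $\partial$-parallel annulus necessarily separates, non-separating immediately rules out $\partial$-parallelism; so if $\widehat A$ is inessential it must be compressible, and the $\partial$-incompressibility case you spend effort on simply does not arise. Your $\partial$-incompressibility discussion is in fact incorrect as stated: a $\partial$-compression of $\widehat A$ is performed along a disk $D$ with one boundary arc on $\widehat A$ and the other on $T_0 = \partial W(\partial M,\beta)$; it has nothing a priori to do with meridian disks of $N$, and the chain ``forces $\widehat A$ to be boundary-parallel, hence $\widehat S$ compressible, hence $\widehat F$ inessential'' is not established. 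Likewise the claim that essentiality of $\widehat A$ in $W(\partial M,\beta)$ is ``equivalent'' to essentiality of $\widehat F$ in $M(\beta)$ is left at the level of ``one checks''; this is exactly the hard part, and it is where your argument is not yet a proof. Finally, your use of essentiality of $\widehat F$ to rule out compressibility of $T_0$ is not quite what you need: what you actually need is irreducibility of $M(\beta)$ (which under the standing assumptions follows from $\Delta(\alpha,\beta)>3$ and Proposition~\ref{reduction}(b), not directly from essentiality of $\widehat F$) together with the classification of closed irreducible $3$-manifolds containing a torus compressible on both sides, plus the fact that $M(\beta)$ is toroidal.

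The paper avoids all of this. Once the non-separating observation reduces the problem to showing $\widehat A$ is incompressible, a compression of $\widehat A$ would give a compressing disk for $T_0$, i.e.\ $\beta$ would be a boundary-reducing Dehn filling slope on $\partial M$ for $W$. By Lemma~\ref{4properties}(4), $\alpha$ is a \emph{toroidal} filling slope on $\partial M$ for $W$. The theorem of Gordon and Luecke on toroidal and boundary-reducing Dehn fillings \cite{GL} then gives $\Delta(\alpha,\beta)\leq 2$, contradicting $\Delta(\alpha,\beta)=4$. Your proposal never uses Lemma~\ref{4properties}(4) or \cite{GL}, which is the engine of the paper's argument; absent that, you would need to carry out the irreducibility-plus-classification route carefully, and in particular you cannot sidestep the $\partial$-parallel case without the non-separating fact.
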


\begin{proof}
Note that the punctured annulus given in part (3) of Lemma \ref{4properties}
is non-separating in $W$.
So it caps off to a non-separating annulus in $W(\partial M,\beta)$.
If this annulus is inessential in $W(\partial M,\beta)$, then it must be compressible,
from which  we  may get a compressing disk for $T_0$ in $W(\partial M,\beta)$.
That is, $\b$ becomes   a boundary-reducing Dehn filling slope on $\partial M$ for $W$.
On the other hand, $\a$ is a toroidal filling slope on $\partial M$ for $W$
by Lemma \ref{4properties} (4).
Hence by \cite{GL}, we have $\D(\alpha,\beta)\leq 2$. But this contradicts the assumption of  $\D(\alpha,\beta)=4$.
Thus the above annulus is essential in $W(\partial M, \beta)$.
\end{proof}

Now we have shown that $W$ is hyperbolic, and for $(W,\partial M)$,  $\a$ is  a
toroidal filling slope and $\b$ an annular filling slope. Furthermore
$W(\partial M,\beta)$ contains an essential annulus whose intersection with $\partial M$ has only one component.
Applying \cite[Theorem 1.1]{GW2}, we see that $W$ is the Whitehead link exterior.

So $W \cong Wh$.
By tubing off the once-punctured annulus  in $W$ (given by Lemma \ref{4properties} (3)) with an annulus in $T_0$,
we get a once-punctured torus in $(W, \partial M)$ with slope $\b$.
So $\b$ corresponds to the zero slope with respect to the standard coordinates
on $\partial Wh$. Similarly we see that $\a$ is the slope $-4$.
As $\widehat Q$ is Seifert fibred over an annulus with a single cone point,
$\widehat X^+=\widehat Q\cup_{T_0} N$ is a solid torus if and only if the filling slope on $T_0$ is distance
one from the Seifert slope of $\widehat Q$ on $T_0$.
This Seifert slope is unique.
From the examples given in \S \ref{d=4}, we see that the Seifert slope of $\widehat Q$ on $T_0$ is $-2$
and those examples are the only examples realizing Theorem \ref{once-punctured} (1).
That is, we have $\displaystyle (M; \alpha,\beta)\cong (Wh(\frac{-2n\pm1}{n}); -4,0)$
for some integer $n$ with $|n|>1$.

\section{Proof of Theorems \ref{once-punctured-exceptional} and  \ref{genusones3}} \label{sec: genus 1}

\begin{proof}[Proof of Theorem \ref{once-punctured-exceptional}]
Let $M$ be a hyperbolic knot manifold containing an essential once-punctured torus $F_\beta$ with boundary slope $\beta$. Let $\gamma$ be an exceptional slope on $\partial M$.

We may suppose that the capped-off torus $\hat F_\beta$ is incompressible in $M(\beta)$ by Proposition \ref{compresses}. Now $M(\gamma)$ is either reducible, small Seifert, or toroidal. In the first case $\Delta(\beta,\gamma) = 1$ by \cite[Lemma 4.1]{BZ1}, while in the second case Theorem \ref{once-punctured} implies that $\Delta(\beta,\gamma) \leq 5$
with equality only if $(M; \gamma, \beta) \cong (Wh(-3/2); -5, 0)$ and $M(\gamma)$ has base orbifold $S^2(2,3,3)$, and $\Delta(\beta,\gamma) = 4$ only if  $(M;\gamma,\beta)\cong (Wh(\frac{-2n\pm1}{n}); -4,0)$
 for some integer $n$ with $|n|>1$  and $M(\gamma)$ has base orbifold $S^2(2,2,|\mp 2n-1|)$.

So suppose that $M(\gamma)$ is toroidal. We then have a punctured torus $F_\gamma$ in $M$ with boundary slope $\gamma$, such that the capped-off torus $\hat F_\gamma$ in $M(\gamma)$ is incompressible. Assume that $n_\gamma$, the number of boundary components of $F_\gamma$, is minimal over all such punctured tori. Similarly, assuming for the moment only that $M(\beta)$ is toroidal, we have a punctured torus $F_\beta$ in $M$ with boundary slope $\beta$ and $n_\beta$ boundary components.
Triples $(M;F_\beta,F_\gamma)$ of this kind with $\Delta(\beta,\gamma) \ge 4$ are classified in \cite{Go1} (in the case
$\Delta(\beta,\gamma) \ge 6$) and \cite{GW} (in the case $\Delta(\beta,\gamma) = 4$ or 5). In particular, it is shown in \cite{GW} that if $M$ is a hyperbolic knot manifold with a once-punctured torus slope $\beta$ and a toroidal slope $\gamma$ with $\Delta(\beta,\gamma) = 4$, then $(M;\gamma, \beta) \cong (Wh(\delta); -4, 0)$ for some slope $\delta$ on the other boundary component of $Wh$. This proves part (3)(a) of the theorem.

The only examples with $n_\beta = 1$ and $\Delta(\beta,\gamma) \ge 5$ are $M = Wh(-5/2)$, with $\Delta(\beta,\gamma) = 7$ \cite{Go1}, and $M = M_5$ or $M_{10}$ in \cite{GW}, with $\Delta(\beta,\gamma) = 5$.
In fact the only examples with $\Delta(\beta,\gamma) = 5$ where $M(\beta)$ (say) contains a non-separating torus are $M_5, M_{10}$ and $M_{11}$ (see \cite[Lemma 23.1]{GW}). Now in \cite{MP} three examples of hyperbolic knot manifolds are given, each with a pair of toroidal fillings at distance 5, one of which contains a non-separating torus: these are $Wh(-7/2), Wh(-4/3)$ and $N(-5,5)$, described in Tables A.3, A.4 and A.9, respectively. By comparing the description in these tables of the second toroidal filling at distance 5 with that given in \cite[Lemma 22.2]{GW}, we see that $Wh(-7/2) = M_{10}$, $N(-5,5) = M_{11}$, and (hence) $Wh(-4/3) = M_5$. It is well-known that $Wh(\delta)$ contains a once-punctured essential torus of slope $0$. The determination of the slopes $\gamma, \beta$ as listed in parts (3)(b) and (3)(c) has been done by Martelli and Petronio. See \cite[Tables A.2 and A.3]{MP}.
\end{proof}

\begin{proof}[Proof of Theorem \ref{genusones3}]
Let $K \subset S^3$ be a hyperbolic knot of genus one with exterior $M_K$ and suppose $p/q$ is an exceptional filling slope on $\partial M_K$ where $q \geq 1$.

Hyperbolic genus one knots in the $3$-sphere do not admit reducible surgery slopes \cite{BZ1}, so an exceptional surgery slope is either toroidal or irreducible, atoroidal, small Seifert. If $K$ is fibred, it is necessarily the figure eight knot, and the theorem holds in this case. Assume that $K$ is not a fibred knot. Then \\
\indent \hspace{.3cm} (a) $M_K(0)$ is not fibred \cite{Ga}

\indent \hspace{.3cm} (b) $K$ admits no $L$-space surgery \cite{Ni}

\indent \hspace{.3cm} (c) $K$ is not a Eudave-Mu\~noz knot \cite{E-M}

A genus one Seifert surface for $K$ completes to an essential torus in $M_K(0)$ \cite{Ga}. Suppose that $M_K(0)$ is Seifert fibred. As its first homology group is infinite cyclic, its base orbifold must have underlying space $S^2$ and $M_K(0)$ must have non-zero Euler number. Thus it admits a non-separating, horizontal surface, which implies $M_K(0)$ fibres over the circle, contrary to (a). Thus $M_K(0)$ is not Seifert fibred, so assertion (1) of the theorem holds.

By (b), $K$ has no finite surgery slopes. Thus if $M_K(p/q)$ is small Seifert with base orbifold $S^2(a,b,c)$, then $p \ne 0$ and $(a,b,c)$ is either a Euclidean or hyperbolic triple, so $|p| \leq 3$ by Theorem \ref{once-punctured}. Consideration of $H_1(S^2(a,b,c))$ shows that $(a,b,c)$ is a hyperbolic triple. Hence assertion (2) of the theorem holds.

Theorem \ref{once-punctured} combines with (b) and assertion (2) to show that if $M_K(p/q)$ is small Seifert then $0 < |p| \leq 3$. Thus assertion (3) of the theorem holds.

Since $K$ is not a Eudave-Mu\~noz knot, each toroidal slope of $K$ is integral. It follows from \cite{Go1} and \cite{Te1} that no genus one knot in the $3$-sphere admits a toroidal filling slope of distance $5$ or more from the longitude. Such knots with toroidal slopes of distance $4$ are determined in \cite[Theorem 24.4]{GW}. In particular, all such knots are twist knots and the non-longitudinal slope is $\pm 4$. This proves assertion (4).
\end{proof}

\def\bysame{$\underline{\hskip.5truein}$}


\begin{thebibliography}{CCGLS}\label{refs}


{\scriptsize

\bibitem[Ba]{Ba} K. Baker, {\it Once-punctured tori and knots in lens spaces}, Comm. Anal. Geom. {\bf 19} (2011), 347--399.


\bibitem[BiMe]{BiMe}
J. Birman and Wm. Menasco, {\it Studying links via closed braids III}, Pac.. J. Math.  {\bf 161} (1993), 25-113.

\bibitem[BCSZ1]{BCSZ1}
S. Boyer, M. Culler, P. Shalen, and X. Zhang,
{\it Characteristic submanifold theory and Dehn filling},
Trans. Amer. Math. Soc. {\bf 357} (2005), 2389--2444.

\bibitem[BCSZ2]{BCSZ2} \bysame,         
{\it Characteristic subvarieties, character varieties, and Dehn fillings},
Geometry \& Topology 12 (2008) 233-297.

\bibitem[BGZ1]{BGZ1}
S. Boyer, C. McA. Gordon and X. Zhang,
{\it Dehn fillings of large hyperbolic 3-manifolds},
J. Diff. Geom. {\bf 58} (2001), 263--308.

\bibitem[BGZ2]{BGZ2}
\bysame, {\it Characteristic submanifold theory and toroidal Dehn filling}, to appear in Adv. Math.

\bibitem[BGZ3]{BGZ3}
\bysame, {\it Dehn fillings of knot manifolds containing essential twice-punctured tori}, in preparation.

\bibitem[BZ1]{BZ1} S. Boyer and X. Zhang,
 {\it Reducing Dehn filling and toroidal Dehn filling},
Topology Appl. 68 (1996) 285-303.

\bibitem[BZ2]{BZ2} \bysame,
{\it On Culler-Shalen seminorms and Dehn fillings},
Ann. Math. {\bf 148} (1998), 737--801.

\bibitem[CGLS]{CGLS}
M. Culler, C. M. Gordon, J. Luecke and P. Shalen,
{\it Dehn surgery on knots},
Ann. of Math. {\bf 125} (1987) 237--300.

\bibitem[CJR]{CJR}
M. Culler, W. Jaco and H. Rubinstein,
{\it Incompressible surfaces in once-punctured torus bundles},
Proc. Lond. Math. Soc. {\bf 45} (1982) 385--419.

\bibitem[Du]{Du}
Wm. Dunbar,
{\it Geometric orbifolds},
Rev. Mat. {\bf 1} (1988) 67--99.


\bibitem[E-M]{E-M} M. Eudave-Mu\~noz, {\it Non-hyperbolic manifolds obtained by Dehn surgery on hyperbolic knots},   Geometric topology (Athens, GA, 1993),  35--61, AMS/IP Stud. Adv. Math., 2.1, Amer. Math. Soc., Providence, RI, 1997.

\bibitem[FH]{FH} W. Floyd and A. Hatcher, {\it Incompressible surfaces in punctured-torus bundles}, Top. Appl. {\bf 13} (1982), 263--282.

\bibitem[FKP]{FKP} D. Futer, E. Kalfagianni, and J. Purcell, {\it Cusp areas of Farey manifolds and applications to
knot theory}, to appear in Int. Math. Res. Not.

\bibitem[Ga]{Ga} D. Gabai, {\it Foliations and the topology of $3$-manifolds II}, J. Diff. Geom. {\bf 26} (1987), 461--478.

\bibitem[Go1]{Go1}
C. McA. Gordon,
{\it Boundary slopes of punctured tori in $3$-manifolds},
Trans. Amer. Math. Soc. {\bf 350} (1998) 1713--1790.

\bibitem[Go2]{Go2}
\bysame,
{\it Dehn filling: a survey}, in Knot Theory, Banach Center Publications 42, Institute of Mathematics, Polish Academy of Sciences, Warsaw, 1998, 129--144.

\bibitem[GLi]{GLi}
C. McA. Gordon and R. A. Litherland,  {\it Incompressible surfaces in branched
coverings}, The Smith conjecture,  Pure Appl. Math., 112, Academic Press (1984)
139--152.

\bibitem[GL]{GL} C. McA. Gordon and J. Luecke, {\em Toroidal and boundary-reducing Dehn Fillings}, Topology Appl. 93 (1999) 77-90.

\bibitem[GW]{GW}
C. McA. Gordon and Y.-Q. Wu,
{\em Toroidal Dehn fillings on hyperbolic 3-manifolds},
Mem. Amer. Math. Soc. {\bf 194} (2008), no.909.


\bibitem[GW2]{GW2} \bysame,
 {\em Toroidal and annular Dehn fillings}, Proc. London Math. Soc.
78 (1999) 662-700.


\bibitem[HR]{HR}
C. Hodgson and H. Rubinstein, {\it Involutions and isotopies of lens spaces}, in Knot Theory and Manifolds, ed. D. Rolfsen, Lecture Notes in Mathematics {\bf 1144}, Springer-Verlag, 1983, 60--96.


\bibitem[KT]{KT} P. K.  Kim and J. L.  Tollefson, {\it  Splitting the PL
involutions of nonprime $3$-manifolds},  Michigan Math. J. 27 (1980) 259--274.

\bibitem[LM]{LM} M.~Lackenby and R.~Meyerhoff, {\em The maximal
    number of exceptional Dehn surgeries}, preprint (2008),
  arXiv:0808.1176.

\bibitem[L1]{L1}
S. Lee,
{\em Dehn fillings yielding Klein bottles},
Int. Math. Res. Not. 2006,
Art. ID 24253, 34pp.


\bibitem[L2]{L2}
\bysame,
{\em Klein bottle and toroidal Dehn fillings at distance~5},
Pacific J. Math. {\bf 247} (2010), 407--434.

\bibitem[L3]{L3}
\bysame,
{\em Lens spaces and toroidal Dehn fillings},
Math. Z. {\bf 267} (2011), 781--802.


\bibitem[LT]{LT} S. Lee and  M. Teragaito, {\em Boundary structure of hyperbolic 3-manifolds admitting
annular and toroidal fillings at large distance}, Canad. J. Math. 60 (2008) 164-188.

\bibitem[MP]{MP}
B. Martelli and C. Petronio,
{\em Dehn filling of the ``magic'' 3-manifold},
Comm. Anal. Geom. {\bf 14} (2006), 969--1026.

\bibitem[MSch]{MSch} Y. Moriah and J. Schultens,  {\it
 Irreducible Heegaard
splittings of Seifert fibred spaces are either vertical or horizontal},
Topology 37 (1998) 1089--1112.

\bibitem[Ni]{Ni} Y. Ni, {\it
Knot Floer homology detects fibred knots}, Invent. Math. 170 (2007), no. 3,
577--608.

\bibitem[Oe]{Oe}
U. Oertel,
{\em Closed incompressible surfaces in complements of star links},
Pacific J. Math. {\bf 111} (1984), 209--230.

\bibitem[Oh]{Oh}
S. Oh,
{\it Reducible and toroidal manifolds obtained by Dehn filling},
Top. Appl. {\bf 75} (1997), 93--104.

\bibitem[Sh]{Sh}
H. Short,
{\em Some closed incompressible surfaces in knot complements which survive
surgery}, in Low dimensional topology, ed. Roger Fenn, London Math. Soc. Lecture Notes 95, Cambridge University Press, 1985, 179--194.

\bibitem[Te1]{Te1}
M. Teragaito,
{\em Distance between toroidal surgeries on hyperbolic knots in the 3-sphere}, Trans. Amer. Math. Soc. {\bf 358} (2006), 1051--1075.


\bibitem[Te2]{Te2} \bysame,
{\em Creating Klein bottles by surgery on knots}, J. Knot Theory Ramifications
10 (2001) 781-794.

\bibitem[Wu1]{Wu1}
Y.-Q. Wu,
{\it Incompressibility of surfaces in surgered 3-manifolds}, Topology, 31 (1992)
271--279.

\bibitem[Wu2]{Wu2}
\bysame,
{\it Dehn fillings producing reducible manifolds and toroidal manifolds},
Topology {\bf 37} (1998), 95--108.


\bibitem[YM]{YM} S-T Yau and W.  Meeks, {\it
 The equivariant loop theorem for three-dimensional manifolds
 and a review of the existence theorem
 for minimal surfaces}, in
  The Smith conjecture,  Pure Appl. Math., 112, Academic Press (1984)
153--163.
}

\end{thebibliography}
\end{document}